\documentclass[11pt]{article}

\usepackage[english]{babel}
\usepackage{amssymb}
\usepackage{amsmath}
\usepackage{hyperref}
\usepackage{enumerate}
\usepackage{amsthm}
\usepackage{amsfonts}
\usepackage{listings}
\usepackage[curve]{xypic}

\usepackage{MnSymbol}

\DeclareMathSymbol{\mlq}{\mathord}{operators}{``}
\DeclareMathSymbol{\mrq}{\mathord}{operators}{`'}

\title{Derivator Six-Functor-Formalisms --- Construction II}
\date{April 6, 2022}
\author{Fritz H\"ormann\\ Mathematisches Institut, Albert-Ludwigs-Universit\"at Freiburg}

\lstset{breaklines=true, language=C++}

\usepackage[numbers,sort&compress]{natbib}

\usepackage{color}

\oddsidemargin 0pt \evensidemargin \oddsidemargin\marginparwidth 0.5in     \textwidth 6.5in
\setlength{\topmargin}{-2cm} \textheight 8.9in 

\usepackage{tikz}
\newcommand*\numcirc[1]{\tikz[baseline=(char.base)]{
            \node[shape=circle,draw,inner sep=2pt] (char) {#1};}}

\setlength{\parindent}{0cm}

\newtheorem{SATZ}{Theorem}[section]
\newtheorem{HAUPTSATZ}[SATZ]{Main Theorem}
\newtheorem{LEMMA}[SATZ]{Lemma}
\newtheorem{DEF}[SATZ]{Definition}
\newtheorem{PROP}[SATZ]{Proposition}

\newtheorem{BEISPIEL}[SATZ]{Example}

\newtheorem{FUNDLEMMA}[SATZ]{Fundamental Lemma}

\newtheorem{KOR}[SATZ]{Corollary}

\newtheorem{BEM}[SATZ]{Remark}

\newtheoremstyle{bare}        
  {}            
  {}            
  {\normalfont}                 
  {}                            
  {\bfseries}                   
  {}                            
  {.0em}                           
  {\thmnumber{#2}#1{\thmnote{ \normalfont(#3)}}. } 

\theoremstyle{bare}
\newtheorem{PAR}[SATZ]{}

\newcommand{\comment}[1]{}
\newcommand{\commentempty}[1]{}

\newcommand{\iso}{\stackrel{\sim}{\longrightarrow}}
\newcommand{\isor}{\stackrel{\sim}{\longleftarrow}}

\newcommand{\N}{ \mathbb{N} }

\newcommand{\DD}{ \mathbb{D} }
\newcommand{\EE}{ \mathbb{E} }

\newcommand{\PP}{ \mathbb{P} }
\newcommand{\SSS}{ \mathbb{S} }
\newcommand{\OO}{ {\cal O} }

\DeclareMathOperator{\Cor}{Cor}

\DeclareMathOperator*{\hocolim}{hocolim}
\DeclareMathOperator*{\holim}{holim}

\DeclareMathOperator{\id}{id}

\DeclareMathOperator{\op}{op}
\DeclareMathOperator{\cart}{cart}
\DeclareMathOperator{\cocart}{cocart}
\DeclareMathOperator{\ws}{ws}

\DeclareMathOperator{\Hom}{Hom}
\DeclareMathOperator{\Fun}{Fun}

\DeclareMathOperator{\Mor}{Mor}

\DeclareMathOperator{\pr}{pr}

\DeclareMathOperator{\cor}{cor}
\DeclareMathOperator{\comp}{comp}

\DeclareMathOperator{\lax}{lax}
\DeclareMathOperator{\oplax}{oplax}
\DeclareMathOperator{\Cat}{Cat}
\DeclareMathOperator{\Dia}{Dia}
\DeclareMathOperator{\Dir}{Dir}

\DeclareMathOperator{\Catlf}{Catlf}

\DeclareMathOperator{\Posf}{Posf}

\DeclareMathOperator{\Invpos}{Invpos}
\DeclareMathOperator{\Dirlf}{Dirlf}
\DeclareMathOperator{\Invlf}{Invlf}
\DeclareMathOperator{\Cof}{Cof}
\DeclareMathOperator{\Fib}{Fib}

\DeclareMathOperator{\dia}{dia}

\newcommand{\tw}[1]{ {{}^{\downarrow \uparrow} #1 }} 
\newcommand{\tww}[1]{ {{}^{\downarrow\downarrow \uparrow} #1 }} 
\newcommand{\twwc}[1]{ {{}^{\downarrow \uparrow \uparrow \downarrow} #1 }} 
\newcommand{\twc}[1]{ {{}^{\downarrow \uparrow \downarrow} #1 }}

\begin{document}

\maketitle

{\footnotesize  {\em 2020 Mathematics Subject Classification:} 55U35, 14C15, 14F08, 18N40, 18G80 }

{\footnotesize  {\em Keywords:} derivator six-functor-formalisms, stable homotopy categories, Abelian sheaves on topological spaces  }

\section*{Abstract}

Starting from simple and necessary axioms on a (derivator enhanced) four-functor-formalism, we construct 
derivator six-functor-formalisms using compactifications. This works, for instance, for the stable homotopy categories of Morel-Voevodsky-Ayoub, and also 
for the classical setting of unbounded complexes of sheaves of Abelian groups on `nice' topological spaces. 
The formalism of derivator six-functor-formalisms elegantly encodes
all isomorphisms between compositions of the six functors (and their compatibilities) and moreover it gives coherent enhancements over diagrams of correspondences. 
Such a formalism allows to extend six-functor-formalisms to stacks using (co)homological descent.

\tableofcontents

\section{Introduction}

This is the second article in a series of three \cite{Hor16, Hor22} concerning derivator six-functor-formalisms.

For a detailed introduction to classical six-functor-formalisms we refer to the previous article \cite{Hor16}. Recall that those formalisms are defined on a base category $\mathcal{S}$ and specify a (usually derived) category $\mathcal{D}_S$ for each object in $\mathcal{S}$, adjoint pairs of functors
\vspace{0.2cm}
\[
\begin{array}{lcrp{1cm}l}
f^* & &f_* &  & \text{\em for each $f$ in $\Mor(\mathcal{S})$}  \\
f_! & &f^! & & \text{\em for each $f$ in $\Mor(\mathcal{S})$} \\
\otimes & & \mathcal{HOM} & & \text{\em in each fiber $\mathcal{D}_S$}
\end{array}
\]
\vspace{0.1cm}

and isomorphisms between the left adjoints (corresponding isomorphisms between the right adjoints follow by adjunction):
\vspace{0.3cm}
\begin{center}
\begin{tabular}{r|l}
& isomorphisms \\ 
& between left adjoints \\
\hline
$(*,*)$ & $(fg)^* \iso g^* f^*$ \\
$(!,!)$ & $(fg)_! \iso f_! g_!$ \\ 
$(!,*)$ & $g^* f_! \iso F_! G^*$  \\
$(\otimes,*)$ & $f^*(- \otimes -) \iso f^*- \otimes f^* -$  \\
$(\otimes,!)$ & $f_!(- \otimes f^* -) \iso  (f_! -) \otimes -$  \\ 
$(\otimes, \otimes)$ &  $(- \otimes -) \otimes - \iso - \otimes (- \otimes -)$  
\end{tabular}
\end{center}
\vspace{0.3cm}
as well as isomorphisms $f_! \cong f_*$ for isomorphisms\footnote{Variant: for all proper morphisms, provided such a class has been specified.} $f$  in $\mathcal{S}$. 

Of course, these isomorphisms have to fulfill compatibilities as, for example, the pentagon axiom and many more. 

Let $\mathcal{S}^{\mathrm{cor}}$ be the symmetric 2-multicategory whose objects are the objects of $\mathcal{S}$ and in which a 1-morphism $\xi \in \Hom(S_1, \dots, S_n; T)$ is a multicorrespondence
\begin{equation}\label{excor}
 \vcenter{ \xymatrix{ 
 &&&  \ar[llld]_{g_1} A \ar[ld]^{g_n} \ar[rd]^{f} &\\
 S_1 & \cdots & S_n & ; &  T   } }
 \end{equation}
The composition of 1-morphisms is given by forming fiber products and the 2-morphisms are the isomorphisms of such multicorrespondences. 

In \cite{Hor15, Hor15b} it was explained that, using the language of (op)fibrations of 2-multicategories, one can package all isomorphisms and compatibilities in a six-functor-formalism into a neat definition: 
\vspace{0.2cm}

{\bf Definition. }{\em 
A (symmetric) {\bf six-functor-formalism} on $\mathcal{S}$ is a 1-bifibration and 2-bifibration of (symmetric) 2-multicategories with 1-categorical fibers}
\[ p: \mathcal{D} \rightarrow \mathcal{S}^{\mathrm{cor}}. \]
Such a bifibration can also be seen as a pseudo-functor of 2-multicategories
\[ \mathcal{S}^{\cor} \rightarrow \mathcal{CAT} \]
with the property that all multivalued functors in the image have right adjoints w.r.t.\@ all slots. 
Note that $\mathcal{CAT}$, the ``category''\footnote{having, of course, a {\em higher class} of objects} of categories, has naturally the structure of a symmetric 2-``multicategory'' where the 1-multimorphisms are functors of several variables. 
The pseudo-functor maps the correspondence (\ref{excor}) to a functor isomorphic to 
\[ f_! ((g_1^* -) \otimes_A \cdots \otimes_A (g_n^* -)) \]
where $\otimes_A$, $f_!$, and $g^*_i$, are the images of  the following correspondences
\[
 \vcenter{ \xymatrix{
&&&  A \ar@{=}[rd] \ar@{=}[ld] \ar@{=}[llld] \\
A &  & A  & &  A
} } \quad
 \vcenter{ \xymatrix{
& A \ar[rd]^f \ar@{=}[ld]  \\
A   & &  T
} } \quad
 \vcenter{ \xymatrix{
&  A \ar@{=}[rd] \ar[ld]_{g_i}  \\
S_i  & &  A
} }
\]

Six-functor-formalisms were first introduced by Grothendieck, Verdier and Deligne \cite{Ver77, SGAIV1, SGAIV2, SGAIV3} and there 
has been increasing interest in them in various contexts in the last two decades \cite{FHM, Ayo07I, Ayo07II, LO08I, LO08II, CD19, LH09, Ayo14, ZL14, ZL14b, Zh10, BS15, Sch15, GR16, Sch17, Hoy17, Dre18, AGV20, DG20, Cho21, KR21, GHW22}.

A {\bf derivator six-functor-formalism} $\DD \rightarrow \SSS^{\cor}$ (cf.\@ Definition~\ref{DEF6FUDER}) specifies not only a (derived) category for each object in $\mathcal{S}$ but
also a (derived) category for each diagram $I \rightarrow \mathcal{S}^{\cor}$ of correspondences in $\mathcal{S}$ for each small category $I$.
Over {\em constant diagrams} with value $S \in \mathcal{S}$, such datum gives back the derivator enhancement of the derived category $\mathcal{D}_S$.  
Objects in these categories $\DD(I)_{p^*S}$ should be seen as coherent versions of diagrams $I \rightarrow \mathcal{D}_S$ (for instance, they are
commutative diagrams of actual complexes up to point-wise weak-equivalences as opposed to diagrams commuting only up to homotopy).
The observation of Grothendieck and Heller was that this datum suffices to reconstruct the triangulated structure. Furthermore, all homotopy (co)limits (for example total complexes) exist, which cannot be constructed in the world of triangulated categories. 

What should the category $\DD(I)_{X}$ over an arbitrary diagram of correspondences  $X: I \rightarrow \mathcal{S}^{\cor}$ be?
A functor $\mathcal{E}: I \rightarrow \mathcal{D}$ lying over $X$ specifies for each $i \in I$ an object $\mathcal{E}(i)$ over $X(i)$ and for each morphism
$\alpha: i \rightarrow j$ a morphism
\[ f_! g^* \mathcal{E}(i) \rightarrow \mathcal{E}(j) \quad \text{or equivalently} \quad  \mathcal{E}(i) \rightarrow g_*  f^! \mathcal{E}(j) \]
in which $f$ and $g$ are the components of the correspondence $X(\alpha)$. Again, objects in $\DD(I)_{X}$ are certain ``coherent enhancements'' of such objects. 
Having those at our disposal allows for very general constructions, as for example the extension of six-functor-formalisms to higher stacks. Very roughly, a stack $S$ presented by
a simplicial object $(\Delta^{\op}, \widetilde{S})$ in $\mathcal{S}$ gives rise to two categories\footnote{Here $\widetilde{S}^{\op}: \Delta \rightarrow \mathcal{S}^{\cor}$ is the diagram obtained from $\widetilde{S}$ by flipping all correspondences.}
\begin{equation}\label{eqcats} \DD(\Delta)_{\widetilde{S}^{\op}}^{\cocart} \quad \text{and} \quad   \DD(\Delta^{\op})_{\widetilde{S}}^{\cart}.   \end{equation}
Cohomological (resp.\@ homological) descent as developed in \cite{Hor15, Hor21c} shows that these categories do not depend (up to equivalence) on the actual presentation of the stack, under mild assumptions on the derivator six-functor-formalism. The left hand side version of the category allows easily for the construction of $f^*, f_*$-functors for morphisms between stacks and the
right hand side version of the category allows easily for the construction of $f_!, f^!$-functors. However, one can show that for a higher geometric stack in the sense of \cite{TV08} we actually have a diagram of correspondences $\widetilde{X}$ of shape $\Delta^{\op} \times \Delta$ and morphisms
\[ \xymatrix{
& (\Delta^{\op} \times \Delta, \widetilde{X}) \ar[ld] \ar[rd]  \\
 (\Delta^{\op}, \widetilde{S}) & & (\Delta, \widetilde{S}^{\op})
} \]
in which the diagram $\widetilde{X}$ might still not have values in $\mathcal{S}$ but does have values that are ``less stacky'' than $\widetilde{S}$ which makes this construction amenable to induction.
Using the full derivator six-functor-formalism, this allows to show that the two categories (\ref{eqcats}) are equivalent, and also that base change and projection formula still hold for the combined operations. This will be explained in detail in a forthcoming article \cite{Hor22}. 

The purpose of this article is to {\em construct} such derivator six-functor-formalisms starting from a ``derivator four-functor-formalism'' by which we mean
a fibered multiderivator $\DD \rightarrow \SSS^{\op}$, where $\SSS^{\op}$ is the pre-multiderivator represented by $\mathcal{S}^{\op}$ considered as multicategory, setting:
\[ \Hom(S_1, \dots, S_n; T):= \Hom(T, S_1) \times \cdots \times \Hom(T, S_n). \]
This encodes a monoidal product $\otimes_S$ as push-forward along $(\id_S, \id_S)$ and pull-backs $f^*$ as pull-forwards along 1-ary morphisms $f^{\op}$ in $\mathcal{S}^{\op}$ together with their compatibilities. 

\begin{PAR}\label{PARFIBWE}
In contrast to six-functor-formalisms, a ``derivator four-functor-formalism'' is quite easy to construct. 
For the definition one needs merely a bifibration of multicategories
\[ p: \mathcal{D} \rightarrow \mathcal{S}^{\op} \]
equipped with a class of weak equivalences $\mathcal{W}_S \subset \Mor(\mathcal{D}_S)$ for each object $S$ of $\mathcal{S}$.
\end{PAR}

\begin{DEF}\label{DEFFIBDERMODEL}
We define a  pre-multiderivator as follows\footnote{cf.\@ \cite[Appendix~A.3]{Hor15} for localizations of multicategories}.  For any $I \in \Cat$ let
\[ \DD(I) := \Fun(I, \mathcal{D})[\mathcal{W}_I^{-1}]  \]
where $\mathcal{W}_I$ is the class of natural transformations which are element-wise in the union $\mathcal{W}:=\bigcup_S \mathcal{W}_S$.
The functor $p$ obviously induces a morphism of pre-multiderivators
\[ \widetilde{p}: \DD \rightarrow \SSS^{\op}. \]
\end{DEF}

\begin{BEISPIEL}\label{MAINEXAMPLE}
A basic example for the situation \ref{PARFIBWE} is the bifibration of symmetric multicategories
\[ p: \mathrm{Ch}(\mathrm{Mod}) \rightarrow  \mathrm{RSite}^{\op} \]
where $\mathrm{RSite}^{\op}$ is  the opposite of the category of {\em ringed sites}, considered as multicategory as above. 
The fiber $\mathrm{Ch}(\mathrm{Mod}(X, \OO_{X}))$ over a space $(X, \OO_{X})$ is the category of {\em unbounded} chain complexes of sheaves of $\OO_X$-modules on $X$, and the class $\mathcal{W}_{(X, \OO_{X})}$ is the class of quasi-isomorphisms. 
The push-forward along a multimorphism $g=(g_1, \dots, g_n)$ in $\mathrm{Ch}(\mathrm{Mod})^{\op}$ is given by 
\[ Lg_1^* \overset{L}{\otimes} \cdots \overset{L}{\otimes} L g_n^*. \]
Note that the multicategory structure is even the more natural structure because no particular tensor-product, resp.\@ pull-back, has to be chosen a priori to define it (even on complexes, cf.\@ \cite[1.5]{Rec19}). 
\end{BEISPIEL}

Of course, we would like the morphism of pre-multiderivators $\widetilde{p}: \DD \rightarrow \SSS^{\op}$ to be a left (resp.\@ right) fibered multiderivator. This is true, provided that the fibers are model categories whose structures are compatible with the structure of bifibration in the following sense:

\begin{DEF}[{\cite[Definition~5.1.3]{Hor15}}]\label{PARQUILLENN}
A {\bf bifibration of multi\nobreakdash-model categories}
is a bifibration of multicategories $p: \mathcal{D} \rightarrow \mathcal{S}^{\op}$ 
together with the collection of a closed model category structure on the fiber
\[ (\mathcal{D}_S, \Cof_S, \Fib_S, \mathcal{W}_S) \]
for any object $S$ in $\mathcal{S}$
 such that the following two properties hold:
\begin{enumerate}
\item For any $n\ge 1$ and for every multimorphism 
$f \in \Hom(S_1, \dots, S_n; T)$, the push-forward $f_\bullet$ and the various pull-backs $f^{\bullet,j}$ define a Quillen
adjunction in $n$-variables
\[ \xymatrix{ \prod_i (\mathcal{D}_{S_i}, \Cof_{S_i}, \Fib_{S_i}, \mathcal{W}_{S_i}) \ar[rr]^-{f_\bullet} && (\mathcal{D}_T, \Cof_T, \Fib_T, \mathcal{W}_T) } \]
\[ \xymatrix{(\mathcal{D}_T, \Cof_T, \Fib_T, \mathcal{W}_T) \times  \prod_{i \not= j} (\mathcal{D}_{S_i}, \Cof_{S_i}, \Fib_{S_i}, \mathcal{W}_{S_i}) \ar[rr]^-{f^{\bullet,j}} && (\mathcal{D}_{S_j}, \Cof_{S_j}, \Fib_{S_j}, \mathcal{W}_{S_j}) } \]

\item  a technical condition involving units (i.e.\@ push-forwards along 0-ary morphisms). 
\end{enumerate}
\end{DEF}

\begin{BEM}
If $\mathcal{S}=\{\cdot\}$ is the final multicategory, the above notion coincides with the notion of {\em closed monoidal model-category} in the sense of \cite[Definition 4.2.6]{Hov99}.
In this case it is enough to claim property 1. for $n=2$. 
\end{BEM}

It is quite easy to show (cf.\@ \cite[Theorem~5.1.5]{Hor15}) that under the conditions of Definition~\ref{PARQUILLENN} the morphism $\widetilde{p}: \DD \rightarrow \SSS^{\op}$
defined in \ref{DEFFIBDERMODEL} is a right fibered multiderivator on directed categories and a left fibered multiderivator on inverse categories. However, 
we have the following generalization of a theorem of Cisinski \cite{Cis03}:

\begin{SATZ}[{\cite[Theorem~5.8]{Hor17b}}]\label{SATZEXISTENCEFMULTIDER}
Under the conditions of Definition~\ref{PARQUILLENN} the morphism of pre-multiderivators 
\[ \widetilde{p}: \DD \rightarrow \SSS^{\op} \]
defined in \ref{DEFFIBDERMODEL} is a (left and right) fibered multiderivator with domain $\Cat$ (small categories).
\end{SATZ}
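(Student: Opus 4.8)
The plan is to verify directly the axioms of a (left and right) fibered multiderivator for $\widetilde p$, reducing each of them --- over a fixed base diagram --- to Cisinski's theorem for ordinary model categories, the only genuinely new input being the Quillen adjunctions in $n$ variables of Definition~\ref{PARQUILLENN}. Fix $I \in \Cat$ and a diagram $S \in \SSS^{\op}(I) = \Fun(I, \mathcal{S}^{\op})$. The fiber $\DD(I)_S$ is the localization at pointwise weak equivalences of the category $\Fun_S(I, \mathcal{D})$ of functors $I \to \mathcal{D}$ lying over $S$, i.e.\ of the category of sections of the pulled-back bifibration $S^* \mathcal{D} \to I$. Because $p$ is a bifibration whose fiberwise model structures are compatible with it, $S^*\mathcal{D} \to I$ is again a bifibration over $I$ with model-category fibers whose cocartesian (resp.\ cartesian) transition functors are left (resp.\ right) Quillen.

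The heart of the argument is the computation of homotopy Kan extensions. Following \cite{Cis03}, for a (say, directed) category $I$ one equips $\Fun_S(I,\mathcal{D})$ with ``projective'' and ``injective'' model structures having the pointwise weak equivalences, whose (co)fibrations are detected, after composition with the (co)cartesian projections onto the fibers, by latching/matching conditions; the case of general $I \in \Cat$ is then reduced to such $I$ by Cisinski's resolution arguments. From this one reads off the underlying infinite-derivator axioms (Der1) and (Der2) --- using $\Fun(I \sqcup J, \mathcal{D}) = \Fun(I,\mathcal{D}) \times \Fun(J,\mathcal{D})$ and the fact that localization commutes with finite products --- as well as (FDer0): for $\alpha: I \to J$ in $\Cat$ the restriction $\alpha^*$ is right Quillen for the projective structures and left Quillen for the injective ones, so the ordinary left and right Kan extensions $\alpha_!$ and $\alpha_*$ admit left resp.\ right derived functors; restricting to a fixed base diagram yields the homotopy Kan extensions relative to $\SSS$. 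Their pointwise computation (FDer4), e.g.\ $(\alpha_!\mathcal{E})(j) \cong \hocolim_{\alpha/j}(\cdots)$, follows by further pullback along $j \to J$ and along $\alpha/j \to I$, which reduces it to the homotopy-colimit statement inside a single model-category fiber, i.e.\ to Cisinski's theorem.

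It then remains to handle the multi-structure: the compatibility (FDer3) of homotopy Kan extensions with the multivalued push-forward and pull-back functors encoded by $\SSS^{\op}$, and the compatibility (FDer5) among Kan extensions along different functors. For $\xi \in \Hom(S_1,\dots,S_n;T)$ over $J$ and $\alpha: I \to J$, the relevant base-change morphisms, of the form $\alpha_!(\xi_\bullet(\beta_1^* -, \dots, -)) \to \xi_\bullet(-, \dots, \alpha_!\beta_i^* -, \dots)$, must be isomorphisms; this is exactly the statement that the left-derived functor of a left Quillen functor in $n$ variables commutes with homotopy colimits in each slot, which follows from Definition~\ref{PARQUILLENN}(1) together with the compatibility of the bifibration with composition, while the $0$-ary (unit) case is precisely what the technical condition in Definition~\ref{PARQUILLENN}(2) provides. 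The remaining compatibilities are verified by the same type of pointwise reduction, and the dual (right Quillen) arguments give the right-hand analogues, so that $\widetilde p$ is both a left and a right fibered multiderivator.

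The main obstacle is the construction in the second paragraph: producing, for an arbitrary small category $I$ --- the domain being all of $\Cat$, so Reedy arguments alone do not suffice --- a model structure, or a usable surrogate, on the section category of a bifibration of model categories, and proving that left and right homotopy Kan extensions are given by the expected pointwise (co)limit formulas. This is precisely the fibered, multi-variable generalization of Cisinski's theorem; the additional subtlety over the absolute case is that a bifibration of model categories carries strictly more data than a pseudo-functor into $\mathcal{CAT}$, so one must check that the various strictifications do not destroy the Quillen conditions relied upon in the previous steps. I would dispose of the general-$I$ case either by adapting Cisinski's resolution arguments verbatim to the fibered setting, or by reducing to directed categories via a bar-type resolution of $I$.
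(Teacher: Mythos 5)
First, note that the paper does not prove this statement at all: it is imported verbatim from \cite[Theorem~5.8]{Hor17b}, so there is no in-paper proof to compare against. Judged on its own terms, your outline reproduces the expected first layer of the argument correctly --- fiberwise Reedy-type (``projective''/``injective'') structures on the section categories $\Fun(I,\mathcal{D})_S$ of the pulled-back bifibration, with (co)fibrations detected via latching/matching objects relative to the (co)cartesian transition functors (this is exactly the construction the present paper later quotes as \cite[5.1.18, Lemma~5.1.24]{Hor15}); $\alpha^*$ right/left Quillen for these structures, giving $\alpha_!$, $\alpha_*$ and the pointwise formulas (FDer4) by restriction to comma categories; and the multi-aspect (FDer0/FDer5) reduced to the Quillen adjunctions in $n$ variables of Definition~\ref{PARQUILLENN}, with the $0$-ary case absorbed by condition~2. (Your axiom labels are shuffled --- existence of the relative Kan extensions is (FDer3), not (FDer0) --- but that is cosmetic.)

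The genuine gap is the step you yourself flag and then defer: passing from directed (or inverse) diagrams to arbitrary $I\in\Cat$. That step is not a routine adaptation --- it is essentially the entire content of the cited theorem. For a general small category $I$ and fibers that are not assumed cofibrantly or fibrantly generated, the projective and injective model structures on $\Fun(I,\mathcal{D})_S$ need not exist, and in the fibered multi-setting one must additionally show that the localized total category $\Fun(I,\mathcal{D})[\mathcal{W}_I^{-1}]\to\Fun(I,\mathcal{S}^{\op})$ is still a bifibration of multicategories with the localized fibers (the first half of (FDer0)), which you assume implicitly. Saying you would ``adapt Cisinski's resolution arguments verbatim'' or use ``a bar-type resolution'' does not discharge this: Cisinski's resolutions of $I$ by direct categories interact nontrivially with the base $S$ and with the $n$-ary push-forwards, and checking that the resulting comparison functors are compatible with the multi-structure is precisely the work. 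In H\"ormann's own development this passage is handled by a separate body of theory --- the construction over $\Dirlf$/$\Dirpos$ from \cite{Hor15} combined with the enlargement machinery of \cite{Hor17b}, as the introduction of the present paper indicates --- rather than by rerunning Cisinski's argument inside the fibered setting. So your proposal is a correct skeleton with the load-bearing beam missing.
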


\begin{SATZ}[Hovey, Gillespie, Recktenwald, cf.\@ Theorem~\ref{SATZRECKTENWALD}]
\label{SATZEXAMPLETOP}
The bifibration of multicategories from Example~\ref{MAINEXAMPLE} 
\[ p: \mathcal{D} \rightarrow  \mathcal{S}^{\op} \]
can be equipped with the structure of  bifibration of symmetric multi-model categories, yielding thus by Theorem~\ref{SATZEXISTENCEFMULTIDER} a symmetric ``derivator four-functor-formalism'', i.e.\@ a symmetric fibered multiderivator
\[ \DD \rightarrow \SSS^{\op}. \]
\end{SATZ}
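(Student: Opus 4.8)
The plan is to equip every fibre $\mathcal{D}_S$ --- the category of (unbounded) complexes of $\OO_S$-modules --- with Gillespie's \emph{flat model structure}, to verify the two conditions of Definition~\ref{PARQUILLENN} for this choice, and then to apply Theorem~\ref{SATZEXISTENCEFMULTIDER}. For a ringed space $S=(X,\OO_X)$ the category of $\OO_X$-modules is a Grothendieck abelian category in which the flat modules form a Kaplansky class with enough flats; combining Hovey's correspondence between (compatible, complete) cotorsion pairs and abelian model structures \cite{Hov07} with Gillespie's construction of the induced cotorsion pairs on complexes \cite{Gil06} one obtains on $\mathcal{D}_S$ a cofibrantly generated model structure whose weak equivalences are the quasi-isomorphisms, whose cofibrant objects are the DG-flat complexes of \cite{Gil06} (in particular $K$-flat complexes with flat terms), and which is symmetric monoidal for $\otimes_{\OO_S}$ with unit the complex $\OO_S$ placed in degree $0$. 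Since $\OO_S$ is a bounded complex of flat modules it is cofibrant, so the unit axiom is automatic and $\otimes_{\OO_S}$ is a left Quillen bifunctor on $\mathcal{D}_S$; by the usual induction using associativity it follows that the $n$-fold tensor product $T_n$ on $\mathcal{D}_S$ is a left Quillen functor of $n$ variables.

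Next I would record the compatibility with pull-back of sheaves. A multimorphism $f\in\Hom(S_1,\dots,S_n;T)$ in $\SSS^{\op}$ is by definition a tuple $(g_i\colon T\to S_i)_i$ of morphisms of ringed spaces, and the associated push-forward is $f_\bullet=(\mathcal{F}_1,\dots,\mathcal{F}_n)\mapsto g_1^\ast\mathcal{F}_1\otimes_{\OO_T}\cdots\otimes_{\OO_T}g_n^\ast\mathcal{F}_n=T_n(g_1^\ast\mathcal{F}_1,\dots,g_n^\ast\mathcal{F}_n)$. The key point is that each sheaf pull-back $g^\ast\colon\mathcal{D}_S\to\mathcal{D}_T$ is a left Quillen functor for the flat model structures (equivalently $g_\ast$ is right Quillen): $g^\ast$ is a left adjoint, it is symmetric monoidal with $g^\ast\OO_S=\OO_T$, and --- although it is only right exact --- it is exact on short exact sequences with degreewise flat quotient and commutes with filtered colimits, hence by standard properties of $K$-flat complexes it preserves DG-flat complexes and sends acyclic DG-flat complexes to acyclic complexes (the latter because $g^\ast$ computes $\mathbf{L}g^\ast$ on DG-flat input); therefore $g^\ast$ preserves cofibrations and trivial cofibrations. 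Since the $n$-ary Leibniz (pushout--product) construction of $f_\bullet=T_n\circ(g_1^\ast\times\cdots\times g_n^\ast)$ is simply the Leibniz construction of $T_n$ evaluated at $(g_1^\ast(-),\dots,g_n^\ast(-))$, and $T_n$ is a left Quillen functor of $n$ variables while the $g_i^\ast$ are left Quillen, $f_\bullet$ is a left Quillen functor of $n$ variables; passing to right adjoints slot-wise, the pull-backs $f^{\bullet,j}$ are right Quillen functors of $n$ variables. This verifies condition~(1) of Definition~\ref{PARQUILLENN}. Condition~(2), concerning push-forward along $0$-ary morphisms, comes down here to the monoidal unit $\OO_T$ being cofibrant and compatible with pull-back, which has already been observed. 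Hence both conditions hold and Theorem~\ref{SATZEXISTENCEFMULTIDER} produces the fibered multiderivator $\DD\to\SSS^{\op}$.

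I expect the main obstacle to lie entirely in the input from \cite{Hov07,Gil06} invoked above: the existence of the flat model structure on complexes of $\OO_X$-modules (a set-theoretic point, via the Kaplansky-class property of the flat modules and the smallness of the generating cotorsion pairs), its symmetric monoidality, and the stability of DG-flatness under sheaf pull-back. This is exactly where the \emph{flat} model structure is indispensable: for the injective model structure neither $-\otimes_{\OO_S}-$ nor a general pull-back $g^\ast$ is left Quillen. Once these facts are granted, the reduction to Quillen adjunctions in $n$ variables and the verification of Definition~\ref{PARQUILLENN} are formal, and the conclusion is immediate from Theorem~\ref{SATZEXISTENCEFMULTIDER}.
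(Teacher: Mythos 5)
Your proposal is correct and is precisely the argument the paper delegates to the cited references: the theorem is stated without proof, attributed to Hovey and Gillespie, and their work is exactly the construction of the monoidal flat model structure on complexes of $\OOO_X$-modules via the flat cotorsion pair, together with the observation that sheaf pull-back is left Quillen for it, which is what you reconstruct. Your reduction of the $n$-ary Quillen condition of Definition~\ref{PARQUILLENN} to the binary monoidal case plus left-Quillenness of each $g_i^*$ is the standard formal step and is sound.
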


\begin{BEISPIEL}\label{MAINEXAMPLE2}
Let $\mathcal{S} := \mathcal{SCH}_S$ the category of quasi-projective schemes over a base scheme $S$ and
consider a triple $(\tau, \mathcal{M}, T)$ as in Ayoub \cite[Section~4.5]{Ayo07I} in which:
\begin{itemize}
\item $\tau$ is either the etale or Nisnevich topology on $\mathcal{SCH}_S$;
\item $\mathcal{M}$ is a category of coefficients in the sense of Definition~\ref{DEFCATCOEFF};
\item $T$ is a projectively cofibrant object of $\mathrm{PreShv}(\mathrm{Sm}/S, \mathcal{M})$ with the condition in \cite[4.5.18]{Ayo07I}.
\end{itemize}
Let $\SSS^{\op}$ be the symmetric pre-multiderivator represented by $\mathcal{S}^{\op} = \mathcal{SCH}_S^{\op}$ and the symmetric multicategory structure \ref{PAROPMULTCAT}.
It follows from work of Ayoub (see~Theorem~\ref{SATZAYOUB1}) that there is a symmetric fibered multiderivator with domain $\Cat$
\[ \mathbb{SH}^T_{\mathcal{M}} \rightarrow \SSS^{\op} \]
such that for a diagram $F: I \rightarrow \mathcal{S}^{\op}$ of quasi-projective schemes, we have  an equivalence of closed monoidal categories
\[ \mathbb{SH}^T_{\mathcal{M}}(I)_F = \mathbb{SH}^{T}_{\mathcal{M}}(F^{\op}, I^{\op}) \]
where the right hand side is the ``algebraic derivator'' defined by Ayoub \cite[D\'efinition 4.2.21]{Ayo07I} and such that the push-forward along a multimorphism $g=(g_1, \dots, g_n)$ in $\mathcal{S}^{\op}$ is given up to unique isomorphism by
\[ (L g^*_1 - ) \overset{L}{\otimes} \cdots \overset{L}{\otimes} (L g_n^*-) \]
with the functors $g_i^*$ as in \cite[Th\'eor\`eme~4.5.23]{Ayo07I}.
\end{BEISPIEL}

\begin{PAR}Coming back to the case of derivator six-functor-formalisms, 
the procedure carried out in this article is analogous to the construction of the push-forward with proper support $f_!$ in \cite[Expos\'e XVII]{SGAIV3} using compactifications. We thus assume that $\mathcal{S}$ is a category with compactifications (cf.\@ \ref{DEFCATCOMP}), i.e.\@ that we are given abstract classes of (dense) open embeddings and proper morphisms satisfying the usual properties, and such that
every morphism in $\mathcal{S}$ can be factored into a dense open embedding followed by a proper map. 
\end{PAR}

We prove the following:

\vspace{0.2cm}

{\bf Theorem }(cf.\@ Corollary~\ref{KORDER6FU}).  {\em 
Let $\mathcal{S}$ be a category with compactifications and $\SSS^{\op}$ the symmetric pre-multiderivator represented by $\mathcal{S}^{\op}$ with domain $\Cat$. 
Let $\DD \rightarrow \SSS^{\op}$ be a (symmetric)  fibered multiderivator with stable and perfectly generated fibers satisfying axioms (F1--F6) and (F4m--F5m) below. Assume that $\DD$ is infinite (i.e.\@ satisfies (Der1${}^\infty$)). 
Then there exists a natural (symmetric) derivator six-functor-formalism
\[ \EE \rightarrow \SSS^{\cor} \]
with domain $\Cat$ such that the pull-back of $\EE$ along the natural morphism 
\[ \SSS^{\op} \rightarrow \SSS^{\cor} \]
is equivalent to $\DD$ and such that there is a canonical isomorphism $f_! \cong f_*$ for proper morphisms $f$ and a canonical isomorphism $\iota^* \cong \iota^!$ for embeddings $\iota$. 
}

\vspace{0.2cm}

Actually, it suffices for the construction that $\DD$ is defined on $\Invlf$ (inverse locally finite small categories). Using the theory of enlargement  \cite{Hor17b} it even suffices
 to have $\DD$ defined on $\Invpos$ (inverse posets). However, in many cases of interest, Theorem~\ref{SATZEXISTENCEFMULTIDER}  already gives a fibered multiderivator with domain $\Cat$.

If $\DD \rightarrow \SSS^{\op}$ has well-generated fibers then also $\EE \rightarrow \SSS^{\cor}$ does, and the main theorems of cohomological and homological descent of \cite{Hor15, Hor21c} apply. 

The actual construction of $\EE \rightarrow \SSS^{\cor}$ is very formal. Stable and perfectly generated fibers are only needed to obtain the right-fiberedness of $\EE \rightarrow \SSS^{\cor}$ via Brown representability. The construction and left-fiberedness of $\EE \rightarrow \SSS^{\cor}$ can even be obtained without assuming stable fibers.  

The axioms (F1--F6) and (F4m--F5m) are very mild and obviously all necessary. Only two of them, (F1) and (F3) are actually concerned with the derivator enhancement. They 
require that the pull-back $\iota^*$ along a point-wise open embedding $\iota$ has a left adjoint $\iota_!$, that $\iota^*$, as morphism of derivators, commutes with homotopy limits\footnote{Equivalently: $\iota_!$ is computed point-wise for constant diagrams.}, 
and that $f_*$ for a proper morphism $f$, as morphism of derivators, commutes with homotopy colimits. 
The other axioms are the usual formulas (proper projection formula and base change, open embedding ``coprojection formula'' and base change, etc.\@) and it is sufficient to check them at the level of usual derived categories. Thus, in all cases of interest, the axioms are well-known statements. 
Only (F4m) and (F5m) are concerned with the multi- (i.e.\@ monoidal) aspect. 

The precise formulation of the axioms is as follows (the notions {\em proper} and {\em embedding} refer to the chosen compactification on $\mathcal{S}$):

\begin{itemize}
\item[(F1)] For each diagram $I \in \Invlf$ and point-wise embedding $\iota: S \hookrightarrow T$ in $\SSS(I^{\op})$, the functor $\iota^*$ (aka $(\iota^{\op})_\bullet$), as morphism of derivators, commutes with homotopy limits\footnote{By (Der2), for this statement one may assume $I=\cdot$.}, and has a left adjoint
\[ \iota_!: \DD(I)_{S^{\op}} \rightarrow \DD(I)_{T^{\op}}. \]
\item[(F2)] For each embedding $\iota: S \hookrightarrow T$ in $\mathcal{S}$ the corresponding functor
\[ \iota_!: \DD_S(\cdot) \rightarrow \DD_T(\cdot) \]
is fully faithful. 
\item[(F3)] For each proper morphism $f$ in $\mathcal{S}$ the functor $f_*$ (as morphism of derivators) commutes with homotopy colimits. 
\item[(F4)] For each proper morphism $f$ in $\mathcal{S}$ and any Cartesian square
\[ \xymatrix{
 \ar@{->>}[r]^F \ar[d]_G &  \ar[d]^g \\
 \ar@{->>}[r]_f & 
} \]
the natural exchange morphism (base change)
\[ G^* F_* \rightarrow f_* g^*   \]
is an isomorphism. 
\item[(F5)] For each embedding $\iota$ in $\mathcal{S}$ and for any Cartesian square
\[ \xymatrix{
 \ar[r]^{F} \ar@{^{(}->}[d]_{I} &  \ar@{^{(}->}[d]^\iota \\
 \ar[r]_{f} & 
} \]
the natural exchange morphism (base change)
\[ \iota^* f_* \to F_* I^* \]
is an isomorphism.
\item[(F6)] For each proper morphism $f$  and  embedding $\iota$ forming a Cartesian square
\[ \xymatrix{
 \ar@{->>}[r]^{F} \ar@{^{(}->}[d]_{I} &  \ar@{^{(}->}[d]^\iota \\
 \ar@{->>}[r]_{f} & 
} \]
the exchange of the base change isomorphism from (F4) (equivalently from (F5)) 
\[ \iota_! F_* \rightarrow f_* I_! \]
is an isomorphism.

\item[(F4m)] For each proper morphism $f$ in $\mathcal{S}$ we have projection formulas, i.e.\@ the natural exchange morphisms
\[  (f_* -) \otimes - \to f_*(- \otimes f^*-) \quad  - \otimes (f_* -) \to f_*((f^*-) \otimes -)  \]
are isomorphisms\footnote{In case $\DD$ is symmetric, these two assertions are equivalent.}. 
\item[(F5m)] For each embedding $\iota$ in $\mathcal{S}$ we have ``coprojection formulas'', i.e.\@ the natural exchange morphisms
\[ \iota^* \mathcal{HOM}_l(-, -) \to  \mathcal{HOM}_l(\iota^*-, \iota^*-) \quad \iota^* \mathcal{HOM}_r(-, -) \to  \mathcal{HOM}_r(\iota^*-, \iota^*-)  \]
are isomorphisms\footnote{In case $\DD$ is symmetric, we have $\mathcal{HOM}_l=\mathcal{HOM}_r$.}.
\end{itemize}

\begin{PAR}In Section~\ref{SECTIONTOP} it is discussed that the axioms are satisfied in the classical context of topological spaces, i.e.\@ for the restriction of the fibered multiderivator of Theorem~\ref{SATZEXAMPLETOP} to a nice class of ringed {\em spaces}. In particular, the
classical six-functor-formalism on `nice' topological spaces with values in unbounded chain complexes of sheaves of Abelian groups as e.g.\@ discussed in Spaltenstein~\cite{Spa88} has thus an enhancement as a derivator six-functor-formalism. 
\end{PAR}
\begin{PAR}
In Section~\ref{SECTIONAYOUB} it is discussed that the axioms are satisfied in Ayoub's setting (cf.\@ \ref{MAINEXAMPLE2}) which comprises the stable homotopy theory of Morel-Voevodsky and 
various kinds of Voevodsky motives. Thus also Ayoub's six-functor-formalism has an enhancement as a derivator six-functor-formalism.
\end{PAR}

\section{Compactification: The axioms}

\begin{PAR}\label{DEFCATCOMP}
Let $\mathcal{S}$ be a category with finite limits (equivalently: with final object and pull-backs). We say that $\mathcal{S}$ is a {\bf category with compactifications} if we are given 
subclasses $\mathcal{S}_i$, $i=0,1,2$ of morphisms of $\mathcal{S}$ satisfying properties (S0)--(S5) below. We call morphisms in $\mathcal{S}_2$ {\bf embeddings}, morphisms in $\mathcal{S}_1$ {\bf dense embeddings} and morphisms in $\mathcal{S}_0$ {\bf proper}.
\begin{itemize}
\item[(S0)] $\mathcal{S}_1 \subset \mathcal{S}_2$ and if $\gamma = \beta \alpha$ with $\alpha, \beta \in \mathcal{S}_2$ and $\gamma \in \mathcal{S}_1$ then $\alpha, \beta \in \mathcal{S}_1$.
\item[(S1)] $\mathcal{S}_0 \cap \mathcal{S}_1$ is the class of isomorphisms in $\mathcal{S}$. 
\item[(S2)] If $g \in \mathcal{S}_i$ then  $f \in \mathcal{S}_i \Leftrightarrow g \circ f \in \mathcal{S}_i$;
\item[(S3)] $\mathcal{S}_0$ and $\mathcal{S}_2$ are stable under pull-back;
\item[(S4)] For any object $S$ the diagonal $\Delta: S \rightarrow S \times S$ is in $\mathcal{S}_0$;
\item[(S5)] Any morphism $f: S \rightarrow T$ can be factored as $f = \overline{f} \circ \iota$ with $\overline{f} \in \mathcal{S}_0$ and $\iota \in \mathcal{S}_1$.
\end{itemize}
In diagrams we will denote embeddings by the symbol $\xymatrix{\ar@{^{(}->}[r] & }$ and proper morphisms by the symbol $\xymatrix{\ar@{->>}[r] & }$. A choice of factorization as in (S5) will be called a {\bf compactification} of $f$. 
 In the relevant examples morphisms in $\mathcal{S}_2$ will be something like open embeddings and morphisms in $\mathcal{S}_1$ dense open embeddings. 
\end{PAR}

\begin{BEISPIEL}
$\mathcal{S}$ is the category of quasi-compact, separated schemes, $\mathcal{S}_2$ (resp.\@ $\mathcal{S}_1$) the class of (dense) open immersions and 
$\mathcal{S}_0$ the class of proper morphisms. (S5) is Deligne's extension of Nagata's compactification Theorem\footnote{For (S5) it is actually sufficient that the destination of the morphism is quasi-separated but in the sequel we will need property (S4).}.
\end{BEISPIEL}
\begin{BEISPIEL}
$\mathcal{S}$ is the category of locally compact Hausdorff topological spaces, $\mathcal{S}_2$ (resp.\@ $\mathcal{S}_1$) the class of (dense) open immersions and $\mathcal{S}_0$ the class of proper morphisms. 
\end{BEISPIEL}

We say that a subclass  $\mathcal{S}_i$ of morphisms in $\mathcal{S}$ is {\bf stable under limits of shape $I$} if 
for each morphism $f$ in the category $\Fun(I, \mathcal{S})$, which is point-wise in $\mathcal{S}_i$, it follows that $\lim_I f$ is in $\mathcal{S}_i$.

\begin{LEMMA}\label{LEMMAPROPERTIESCOMP}
Let $\mathcal{S}$ be a category with compactifications.   
\begin{enumerate}
\item $\mathcal{S}_0$ and $\mathcal{S}_2$ are stable under finite products, i.e.\@ stable under limits of shape a finite set. 
\item $\mathcal{S}_0$ is stable under fiber products, i.e.\@ stable under limits of shape $\righthalfcup$. 
\item $\mathcal{S}_0$ is stable under arbitrary finite limits. 
\item For a 
finite diagram $F: I \rightarrow \mathcal{S}$ where $I$ has final object, and all morphisms in $F$ are in $\mathcal{S}_0$, the projections $\lim_I F \rightarrow F(i)$ are in $\mathcal{S}_0$ for all $i \in I$.
\end{enumerate}
\end{LEMMA}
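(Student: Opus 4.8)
The plan is to reduce all four assertions to features of the classes $\mathcal{S}_i$ that are immediate from the axioms: closure under composition and the left cancellation of (S2); stability under base change (S3); and the fact that every isomorphism lies in $\mathcal{S}_0$ and in $\mathcal{S}_2$ (by (S1) and (S0)). One preliminary observation, which I would establish first and which is the only non-formal input, is that for \emph{any} morphism $f\colon S\to T$ of $\mathcal{S}$ the relative diagonal $\Delta_f\colon S\to S\times_T S$ lies in $\mathcal{S}_0$: the canonical map $(p_1,p_2)\colon S\times_T S\to S\times S$ is the base change of the absolute diagonal $\Delta_T$ along $f\times f$, hence lies in $\mathcal{S}_0$ by (S3) and (S4), and since $(p_1,p_2)\circ\Delta_f=\Delta_S\in\mathcal{S}_0$ by (S4), the cancellation in (S2) yields $\Delta_f\in\mathcal{S}_0$.

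For part 1, a finite product $f_1\times\cdots\times f_n$ of morphisms in $\mathcal{S}_0$ (resp.\ in $\mathcal{S}_2$) is the composite of the morphisms $\id\times\cdots\times f_k\times\cdots\times\id$, each of which is a base change of $f_k$ along a product projection, so it lies in $\mathcal{S}_0$ (resp.\ $\mathcal{S}_2$) by (S3) and (S2); the empty product is an identity, hence an isomorphism. For part 2, let a morphism of cospans from $(X_1\to X_0\leftarrow X_2)$ to $(Y_1\to Y_0\leftarrow Y_2)$ be given with all three legs $f_0,f_1,f_2$ in $\mathcal{S}_0$; I would factor the induced morphism on pull-backs as
\[ X_1\times_{X_0}X_2 \longrightarrow X_1\times_{Y_0}X_2 \longrightarrow Y_1\times_{Y_0}X_2 \longrightarrow Y_1\times_{Y_0}Y_2. \]
Here the first arrow is the base change of $\Delta_{f_0}\colon X_0\to X_0\times_{Y_0}X_0$ along the canonical map $X_1\times_{Y_0}X_2\to X_0\times_{Y_0}X_0$, hence lies in $\mathcal{S}_0$ by the preliminary observation and (S3); the second and third arrows are base changes of $f_1$ and of $f_2$ respectively, hence lie in $\mathcal{S}_0$ by (S3); so the composite lies in $\mathcal{S}_0$ by (S2).

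For part 3, I would use the functorial presentation of a finite limit by iterated pull-backs: for $F\colon I\to\mathcal{S}$ with $I$ finite, set $P_0:=\prod_{i\in I}F(i)$ and, enumerating the morphisms $\alpha\colon i\to j$ of $I$ as $\alpha_1,\dots,\alpha_m$, let $P_{k}$ be the pull-back of $P_{k-1}\xrightarrow{(F(\alpha_k)\pi_i,\pi_j)}F(j)\times F(j)\xleftarrow{\Delta_{F(j)}}F(j)$ (with $\pi_i,\pi_j$ the projections of $P_{k-1}$ to $F(i),F(j)$); then $P_m=\lim_I F$, naturally in $F$. For a natural transformation $\eta\colon F\to G$ with all components $\eta_i$ in $\mathcal{S}_0$, an induction on $k$ shows $P_k^F\to P_k^G\in\mathcal{S}_0$: the base case is part 1, and the inductive step is part 2 applied to the morphism of cospans whose vertical legs are $P_{k-1}^F\to P_{k-1}^G$ (inductive hypothesis), $\eta_j\times\eta_j$ (part 1) and $\eta_j$, and which commutes by naturality of $\eta$ and of the projections and diagonals. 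Hence $\lim_I\eta\in\mathcal{S}_0$. For part 4, if $I$ has a final object $\omega$ and all morphisms of $F$ lie in $\mathcal{S}_0$, I would apply part 3 to the natural transformation $F\to\underline{F(\omega)}$ into the constant diagram, whose component at $i$ is $F(i\to\omega)\in\mathcal{S}_0$; since $I$ is connected, $\lim_I\underline{F(\omega)}=F(\omega)$ and $\lim_I\eta$ is precisely the projection $\pi_\omega\colon\lim_I F\to F(\omega)$, so $\pi_\omega\in\mathcal{S}_0$, and then $\pi_i\in\mathcal{S}_0$ for every $i$ because $F(i\to\omega)\circ\pi_i=\pi_\omega$ with $F(i\to\omega)\in\mathcal{S}_0$, using the cancellation in (S2).

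The main obstacle will be part 2 (and the diagonal observation it rests on): the obvious square comparing $X_1\times_{X_0}X_2\to Y_1\times_{Y_0}Y_2$ with $f_1\times f_2$ is \emph{not} Cartesian, so one genuinely has to pass through $X_1\times_{Y_0}X_2$ and to exploit — this is the one place (S4) is used — that relative diagonals are proper. The remaining steps are routine bookkeeping with (S2) and (S3).
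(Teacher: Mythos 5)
Your proof is correct, and its overall architecture coincides with the paper's: part 1 by base change along product projections, part 2 by exploiting (S4) via diagonals, part 3 by reducing finite limits to products and equalizer-type pullbacks, and part 4 by applying part 3 to the map to the constant diagram and then cancelling with (S2). The only real divergence is in part 2. The paper compares the map on fiber products with the map on absolute products: both $X\times_Y Z\to X\times Z$ and $X'\times_{Y'}Z'\to X'\times Z'$ are base changes of absolute diagonals, hence in $\mathcal{S}_0$, so the left vertical arrow is recovered from part 1 by the cancellation in (S2). You instead factor the map on fiber products as three successive base changes through the mixed fiber products $X_1\times_{Y_0}X_2$ and $Y_1\times_{Y_0}X_2$, the first step being a base change of the relative diagonal $\Delta_{f_0}$, which you correctly show is proper by the same (S4)+(S3)+(S2) pattern. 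Your route exhibits the comparison map directly as a composite of base changes of proper maps and needs no cancellation in part 2 itself (only in the relative-diagonal lemma); the paper's route avoids relative diagonals entirely but leans on cancellation. Both arguments, as you observe, never use that the middle leg $f_0$ of the cospan is proper. Your explicit iterated-pullback induction in part 3 merely spells out what the paper compresses into ``a finite limit is a finite product followed by equalizers,'' and your part 4 is the paper's argument verbatim. No gaps.
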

\begin{proof}
1. Using (S3) the Cartesian diagram
\[ \xymatrix{
X \times Z \ar[r]^{\alpha \times \id} \ar[d] & Y \times Z \ar[d] \\
X  \ar[r]^\alpha & Y
} \]
shows that morphisms of the form $\alpha \times \id$ are in $\mathcal{S}_i$ (and similarly also of the form $\id \times \alpha$) hence also products of morphisms. 

2. Since diagonals are in $\mathcal{S}_0$ by (S4),  the Cartesian diagram
\[ \xymatrix{
X \times_Y Z \ar@{->>}[r] \ar[d] & X \times Z \ar[d] \\
Y \ar@{->>}[r]^-\Delta & Y \times Y
} \]
shows that the morphisms $X \times_Y Z \rightarrow X \times Z$ are in $\mathcal{S}_0$. 
We have a commutative diagram
\[ \xymatrix{
X \times_Y Z \ar@{->>}[r] \ar[d] & X \times Z \ar@{->>}[d] \\
X' \times_{Y'} Z'  \ar@{->>}[r] & X' \times Z'
} \]
All morphisms except possibly the left vertical one are in $\mathcal{S}_0$. Hence by (S2) also the left vertical one is. 

3. Let $f, f': S \rightarrow T$ be two morphisms. The equalizer of $f$ and $f'$ can be computed by the following Cartesian diagram
\[ \xymatrix{
\mathrm{Eq}(f, f') \ar[r] \ar[d] & T \ar[d]^{\Delta} \\
S \ar[r]_-{(f, f')} & T \times T
} \]
i.e.\@ as a fiber product. 

Since a finite limit can be computed by a finite product and an equalizer, $\mathcal{S}_0$ is stable under finite limits by 1.\@ and 2.

4. 
Let $I$ have a final object $j$ and let  $F: I \rightarrow \mathcal{S}$ be a functor. We have $F(j) = \lim_I F(j)$, where $F(j)$, by abuse of notation, also denotes the constant diagram with value $F(j)$. 
Then the morphism
\[ \lim_I F \rightarrow \lim_I F(j) \]
is in $\mathcal{S}_0$, as was just shown. For any object $i \in I$, it factors as follows (where $\lim_I F(j)$ denotes the limit over the {\em constant} diagram with value $F(j)$): 
\[ \lim_I F \rightarrow F(i) \rightarrow F(j)=\lim_I F(j) \]
and the rightmost morphism is in $\mathcal{S}_0$ by assumption, hence so is the projection $\lim_I F \rightarrow F(i)$ by (S2). 
\end{proof}

\begin{LEMMA}\label{LEMMACART1}
Any diagram of the form
\[ \xymatrix{
 \ar@{->>}[r] \ar@{^{(}->}[d] &  \ar@{^{(}->}[d] \\
 \ar@{->>}[r] & 
} \]
in which the left vertical embedding is dense, is Cartesian. 
\end{LEMMA}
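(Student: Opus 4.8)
The plan is to exhibit the claimed square as a pullback by constructing the fiber product and identifying it with the given corner. Label the square as
\[ \xymatrix{
A \ar@{->>}[r]^{g} \ar@{^{(}->}[d]_{\iota} & B \ar@{^{(}->}[d]^{\iota'} \\
C \ar@{->>}[r]_{f} & D
} \]
with $\iota$ a dense embedding, $\iota'$ an embedding, $f,g$ proper, and $f \iota = \iota' g$. First I would form the genuine fiber product $P := C \times_D B$ with its projections $p\colon P \to C$ and $q\colon P \to B$, and let $h\colon A \to P$ be the induced comparison morphism (so $ph = \iota$ and $qh = g$). The goal is to show $h$ is an isomorphism.

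The key observation is that $h$ is itself an embedding with dense domain in a suitable sense, so that (S1) forces it to be an isomorphism once we check it is also proper. First, since $\iota' \in \mathcal{S}_2$ is stable under pull-back by (S3), the projection $p\colon P \to C$ lies in $\mathcal{S}_2$; similarly, since $f \in \mathcal{S}_0$, the projection $q\colon P \to B$ lies in $\mathcal{S}_0$. Now $p h = \iota$ with $\iota \in \mathcal{S}_1 \subset \mathcal{S}_2$ and $p \in \mathcal{S}_2$, so by (S0) both $h$ and $p$ lie in $\mathcal{S}_1$; in particular $h$ is a dense embedding. On the other hand, $q h = g$ with $g \in \mathcal{S}_0$ and $q \in \mathcal{S}_0$, so by (S2) we get $h \in \mathcal{S}_0$. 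Thus $h \in \mathcal{S}_0 \cap \mathcal{S}_1$, which by (S1) is exactly the class of isomorphisms. Hence $A \cong C \times_D B$ and the square is Cartesian.

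I expect the only subtle point to be the bookkeeping with the axioms (S0) and (S2): one must be careful that (S2) is stated as ``if $g \in \mathcal{S}_i$ then $f \in \mathcal{S}_i \Leftrightarrow gf \in \mathcal{S}_i$'', so it applies to $q h = g$ with the known-proper morphism $q$ in the role of the outer map and $h$ the inner one, giving $h \in \mathcal{S}_0$ from $g \in \mathcal{S}_0$. Everything else is a formal diagram chase using the stability of $\mathcal{S}_0$ and $\mathcal{S}_2$ under pull-back (S3) and the cancellation property (S0) for embeddings. No finiteness or derivator input is needed; this is purely a statement about the class structure on $\mathcal{S}$.
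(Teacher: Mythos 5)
Your proof is correct and follows essentially the same route as the paper: form the fiber product, show via (S3) that the projections are an embedding resp.\@ proper, deduce that the comparison morphism is simultaneously proper and a dense embedding using (S2) and (S0), and conclude by (S1). The only pedantic point is that (S0) as stated requires $h \in \mathcal{S}_2$ as a hypothesis, so you should first deduce $h \in \mathcal{S}_2$ from (S2) applied to $ph = \iota$ before invoking (S0) to upgrade to density --- which is exactly the order the paper uses.
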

\begin{proof}
If we form the pull-back 
\[ \xymatrix{
\ar@{->>}[rrd] \ar@{_{(}->}[rdd] \ar[rd]^a  &  \\
 & \Box \ar@{->>}[r] \ar@{^{(}->}[d] & \ar@{^{(}->}[d] \\
& \ar@{->>}[r] & 
} \]
then the morphisms are embeddings (resp.\@ proper) as indicated using (S3). By (S2) the morphism $a$ is proper and an embedding, which is dense by (S0), hence by (S1) an isomorphism. 
\end{proof}

\begin{PAR}\label{DEFWEAKLYCART}
Let $\mathcal{S}$ be a category with compactifications. 
We say that a square
\[ \xymatrix{
W \ar[r] \ar[d] & Z \ar[d] \\
X \ar[r] & Y
} \]
in $\mathcal{S}$ is {\bf weakly Cartesian} if the induced morphism $W \rightarrow X \times_{Y} Z$ is proper. 
\end{PAR}

\begin{LEMMA}\label{LEMMACART2}
If in the diagram
\[ \xymatrix{
 \ar[r] \ar@{^{(}->}[d] & \ar@{^{(}->}[d] \\
 \ar[r] \ar@{->>}[d] & \ar@{->>}[d] \\
 \ar[r] & 
} \]
the top left vertical embedding is dense and 
 the outer square is weakly Cartesian then the upper square is Cartesian. 
\end{LEMMA}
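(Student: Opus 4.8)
Label the diagram as
\[ \xymatrix{
A \ar[r]^{u} \ar@{^{(}->}[d]_{i} & B \ar@{^{(}->}[d]^{j} \\
C \ar[r]^{v} \ar@{->>}[d]_{p} & D \ar@{->>}[d]^{q} \\
E \ar[r]_{w} & F
} \]
so that $i \in \mathcal{S}_1$, $j \in \mathcal{S}_2$, $p,q \in \mathcal{S}_0$, and the weakly-Cartesian hypothesis asserts that the canonical morphism $A \to E \times_F B$ (with $B \to F$ equal to $q \circ j$) lies in $\mathcal{S}_0$. Since $\mathcal{S}$ has finite limits one may form $P := C \times_D B$, and the plan is to show that the canonical morphism $a \colon A \to P$ coming from commutativity of the upper square is an isomorphism. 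By (S1) it is enough to check $a \in \mathcal{S}_0 \cap \mathcal{S}_1$.

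First I would treat $a \in \mathcal{S}_1$. The projection $\mathrm{pr}_C \colon P \to C$ is the base change of $j$ along $v$, hence lies in $\mathcal{S}_2$ by (S3), and $\mathrm{pr}_C \circ a = i$. Since $i \in \mathcal{S}_1 \subset \mathcal{S}_2$, property (S2) forces $a \in \mathcal{S}_2$, and then (S0) applied to the factorization $i = \mathrm{pr}_C \circ a$ of the dense embedding $i$ into two embeddings upgrades this to $a \in \mathcal{S}_1$. This is the only step where density of the top-left embedding is used.

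The substantive step is $a \in \mathcal{S}_0$, and here I would take a detour through the auxiliary pullback $E \times_F D$. Its projection to $E$ is the base change of the proper morphism $q$ along $w$, hence proper by (S3); since the composite $C \to E \times_F D \to E$ equals $p$, property (S2) shows that the induced morphism $C \to E \times_F D$ is proper. Base-changing this morphism along $B \to D$ and using the identification $(E \times_F D) \times_D B \cong E \times_F B$, property (S3) shows that the induced morphism $\pi \colon P \to E \times_F B$ is proper. But $A \to E \times_F B$ factors as $\pi \circ a$ and is proper by hypothesis, so (S2) gives $a \in \mathcal{S}_0$. Combining this with the previous step and (S1), $a$ is an isomorphism, i.e.\@ the upper square is Cartesian.

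The one point that is not pure bookkeeping with (S0)--(S3) is this detour: there is no properness available in the lower square $C,D,E,F$ on its own, so one has to route the comparison morphism $A \to E \times_F B$ through $C \times_D B$ and extract properness of $C \times_D B \to E \times_F B$ from the base change of the proper morphism $C \to E \times_F D$ (which is proper because $p$ is and $E \times_F D \to E$ is). I expect that observation to be the only step requiring thought; everything else is the standard two-out-of-three manipulation with the classes $\mathcal{S}_i$.
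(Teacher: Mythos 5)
Your proof is correct, and it is a legitimate reorganization of the argument rather than a verbatim reproduction of the paper's. The paper instead forms the two Cartesian squares over $E\times_F D$ and $E\times_F B=(E\times_F D)\times_D B$ on the right-hand side, notes that the induced maps $A\to E\times_F B$ and $C\to E\times_F D$ are proper (the first by the weak-Cartesian hypothesis, the second by (S2)) and that $E\times_F B\to E\times_F D$ is an embedding, and then quotes Lemma~\ref{LEMMACART1} to conclude that the left-hand square $(A, E\times_F B, C, E\times_F D)$ is Cartesian; the upper square of the original diagram is then the composite of two Cartesian squares. You instead form $P=C\times_D B$ and verify directly that the comparison morphism $a\colon A\to P$ lies in $\mathcal{S}_0\cap\mathcal{S}_1$, hence is an isomorphism by (S1). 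In effect you inline the proof of Lemma~\ref{LEMMACART1} (which is exactly the ``proper plus dense embedding implies isomorphism'' device) and apply it to the comparison map into $C\times_D B$ rather than citing the lemma as a black box. The substantive content is identical in both versions: the detour through $E\times_F D$ to manufacture the properness of $C\times_D B\to E\times_F B$ (equivalently, of $C\to E\times_F D$) from $p$ and the base change of $q$, and the use of density of $i$ via (S0). Your version is self-contained and slightly more explicit about where each axiom enters; the paper's version is shorter because it reuses Lemma~\ref{LEMMACART1}.
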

\begin{proof}
We may form a diagram in which the right squares are Cartesian:
\[ \xymatrix{
\ar@{->>}[r]^d \ar@{^{(}->}[d]  & \Box \ar[r] \ar@{^{(}->}[d]^a & \ar@{^{(}->}[d] \\
\ar@{->>}[r]^c \ar@{->>}[rd] & \Box \ar[r] \ar@{->>}[d]^b & \ar@{->>}[d] \\
& \ar[r] & 
} \]
where the morphisms $a, b, c, d$ are an embedding (resp.\@ proper) by (S3) and (S2) and the definition of weakly Cartesian. Now the upper right square is Cartesian by construction and the upper left square is Cartesian because of Lemma~\ref{LEMMACART1}. Hence also the composite square, which is the upper square in the original diagram is Cartesian. 
\end{proof}

\section{Compactification of morphisms of inverse diagrams}

\begin{PROP}\label{PROPCOMPMOR}
Let $\mathcal{S}$ be a category with compactifications and let $I$ be an inverse diagram with finite matching diagrams.
\begin{enumerate}
\item Let $f: F \rightarrow G$ be a morphism in $\Fun(I, \mathcal{S})$. The morphism can be factored
\[ f = \overline{f} \circ \iota  \]
where $\overline{f}$ is point-wise in $\mathcal{S}_0$ and $\iota$ is point-wise in $\mathcal{S}_1$. 
\item Any two such factorizations are dominated by a third in the sense that for two factorizations  $f = \overline{f}_i \circ \iota_i, i=1,2$ we get a third factorization $f = \overline{f}_3 \circ \iota_3$ and a diagram
\[ \xymatrix{
& \ar@{_{(}->}[ldd]_{\iota_1} \ar@{^{(}->}[d]^{\iota_3} \ar@{^{(}->}[rdd]^{\iota_2} \\
& \ar@{->>}[ld]|{\overline{f}_{3,1}} \ar@{->>}[rd]|{\overline{f}_{3,2}} \\
 \ar@{->>}[rd]_{\overline{f}_1}  & &  \ar@{->>}[ld]^{\overline{f}_2} \\
& \\
} \]
such that $\overline{f}_3 = \overline{f}_1 \circ \overline{f}_{3,1} = \overline{f}_2 \circ \overline{f}_{3,2 }$. 
\item 
Any given compactification of $f$ restricted to a final subdiagram of $I$ can be extended to a compactification of the whole morphism $f$.
\end{enumerate}
\end{PROP}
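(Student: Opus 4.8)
The plan is to construct the factorization by induction on an exhaustive filtration of $I$ by final subdiagrams, adding one ``degree'' of the inverse diagram at a time. Since $I$ is inverse with finite matching diagrams, we may write $I = \bigcup_n I_{\le n}$ where $I_{\le n}$ is the full subcategory on objects of degree $\le n$ (using a suitable degree/dimension function witnessing that $I$ is inverse); each $I_{\le n}$ is a final subdiagram of $I$, and the matching category $\partial(I_{\le n})_i$ of a new object $i$ of degree $n$ in $I_{\le n}$ is a finite category. For part 1 we apply part 3 with the empty subdiagram as starting point, so it suffices to prove parts 2 and 3, and part 2 will essentially follow from the same extension construction applied to the pushout/product of the two given factorizations. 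So the heart of the matter is part 3.

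For part 3, suppose a compactification $F|_J \hookrightarrow \overline{F}_J \twoheadrightarrow G|_J$ is given on a final subdiagram $J \subseteq I$, and let $i$ be an object of $I$ not in $J$ of minimal degree, so that the full subcategory $J' = J \cup \{i\}$ is again final and the matching category $M := \partial J'_i$ (the category of objects of $J'$ receiving a nonidentity map from $i$, i.e.\@ $(i \downarrow J')$ minus the identity) is finite and already lies in $J$. The coherence data already constructed gives a canonical object $\lim_M \overline{F} \in \mathcal{S}$ together with compatible maps $F(i) \to \lim_M F \to \lim_M \overline{F}$ and $\lim_M \overline{F} \to \lim_M G \leftarrow G(i)$. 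Form the fiber product $P := G(i) \times_{\lim_M G} \lim_M \overline{F}$; by Lemma~\ref{LEMMAPROPERTIESCOMP}(4) (applied to the finite diagram $M^{\triangleright}$ of proper morphisms $\overline{F}$, whose limit projections are proper, together with base change via (S3)) the projection $P \to G(i)$ is proper, and one gets a canonical map $F(i) \to P$ compatible with the maps to $G(i)$. Now choose, by (S5), a single compactification $F(i) \xhookrightarrow{\ \iota\ } \overline{F}(i) \xtwoheadrightarrow{\ } P$ of the morphism $F(i) \to P$, and define the value of $\overline{F}$ at $i$ to be $\overline{F}(i)$, with $\overline{f}(i): \overline{F}(i) \twoheadrightarrow P \to G(i)$ proper as a composite of proper maps. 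The matching map of $\overline{F}$ at $i$ is the composite $\overline{F}(i) \twoheadrightarrow P \to \lim_M \overline{F}$; this defines $\overline{F}$ as a functor on $J'$ (the universal property of the matching object makes the functoriality automatic once one checks the evident compatibilities), extends $\iota$ and $\overline{f}$, and $\iota$ is point-wise a dense embedding at $i$ by construction. Iterating over all objects not in $J$, degree by degree (taking a colimit over the resulting chain of extensions, which is unproblematic as the underlying category of $I$ does not change), produces the desired compactification of all of $f$.

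For part 2, given two factorizations $f = \overline{f}_k \circ \iota_k$, $k=1,2$, form the point-wise fiber product $\overline{F}_1 \times_G \overline{F}_2$; the diagonal of $G$ is proper by (S4), so both projections $\overline{F}_1 \times_G \overline{F}_2 \to \overline{F}_k$ are proper (pull-backs of $\Delta_G$, using (S3)), and the canonical map $F \to \overline{F}_1 \times_G \overline{F}_2$ is point-wise an embedding by (S2) since its composite with the proper projection to $\overline{F}_k$ is the embedding $\iota_k$. Factoring this map point-wise — but coherently, again by the degree-by-degree procedure of part 3 applied with empty starting subdiagram — as a dense embedding $\iota_3$ followed by a proper map into $\overline{F}_1 \times_G \overline{F}_2$, and composing the latter with the two proper projections, yields the third factorization $f = \overline{f}_3 \circ \iota_3$ and the asserted commutative diagram; that $\iota_1$ and $\iota_2$ factor through $\iota_3$ via proper maps follows from (S0) and (S2) exactly as in Lemma~\ref{LEMMACART1}.

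The main obstacle I anticipate is purely bookkeeping rather than conceptual: making the inductive step genuinely \emph{functorial} on $J'$, i.e.\@ checking that the matching-object construction patches the new value $\overline{F}(i)$ into the already-defined functor $\overline{F}|_J$ in a way compatible with \emph{all} composites of morphisms out of $i$, and that this does not retroactively disturb the values at objects of $J$. The finiteness of the matching diagrams is exactly what keeps $\lim_M \overline{F}$ and the fiber product $P$ inside the class to which Lemma~\ref{LEMMAPROPERTIESCOMP}(4) applies, so one must be careful that the degree function is chosen so that every matching category encountered is finite — this is the role of the ``inverse with finite matching diagrams'' hypothesis — and that part 3, not just part 1, is proved in the stronger form needed to bootstrap parts 1 and 2.
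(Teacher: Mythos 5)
Your proposal is correct and follows essentially the same route as the paper: an element-wise induction over the degree filtration of the inverse diagram, forming the pullback $P = G(i)\times_{\lim_{M_i}G}\lim_{M_i}\overline{F}$ of matching objects and compactifying $F(i)\to P$ via (S5), with part 2 handled by coherently compactifying the induced map into the fiber product $\overline{F}_1\times_G\overline{F}_2$. Only two properness justifications are slightly off --- $\lim_{M_i}\overline{f}$ is proper by Lemma~\ref{LEMMAPROPERTIESCOMP}, 3.\@ (stability of $\mathcal{S}_0$ under finite limits), not part 4, and the projections $\overline{F}_1\times_G\overline{F}_2\to\overline{F}_k$ are proper as base changes of the $\overline{f}_k$ via (S3), not as pullbacks of $\Delta_G$ --- but the conclusions are correct and the argument goes through.
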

\begin{proof}
1. We construct the compactification element-wise using induction on the degree of the element as usual. 
For degree 1 it follows directly from the compactification axiom. 
Let now $f$ be a morphism which for elements of degree $<n$ has been factored as required.
For each $i \in I$ of degree $n$ we get a diagram:
\[ \xymatrix{
F(i) \ar[rrrr]^{f(i)} \ar[d] & && & G(i) \ar[d] \\
\lim_{M_i} F \ar[rr]^{\lim_{M_i} \iota} && \lim_{M_i} \overline{F} \ar@{->>}[rr]^{\lim_{M_i} \overline{f}} & & \lim_{M_i}  G
} \]
in which $M_i$ is the matching diagram, i.e.\@ the full subcategory of the comma-category $i \times_{/I} I$ consisting of non-identities.  
By Lemma~\ref{LEMMAPROPERTIESCOMP}, 3.\@ and the assumption that the matching diagrams be finite,  $\lim_{M_i} \overline{f}$ is again proper. Hence by forming the pull-back and compactifying the induced morphism using (S5), we get:
\[ \xymatrix{
F(i) \ar@{^{(}->}[r]^{\iota(i)} \ar[d] & \overline{F}(i) \ar@{->>}[r] \ar@{->>}@/^10pt/[rrr]^{\overline{f}(i)} & \Box \ar[d] \ar@{->>}[rr] & & G(i) \ar[d] \\
\lim_{M_i} F \ar[rr] & & \lim_{M_i} \overline{F} \ar@{->>}[rr]^{\lim_{M_i} \overline{f}} & & \lim_{M_i}  G
} \]
and define $\iota(i), \overline{f}(i)$ and $\overline{F}(i)$ to be the so denoted objects in this diagram. Note that $\overline{f}(i)$ is proper by axioms (S2) and (S3). 

2. The statement is again proved by induction. For degree 1 elements let two compactifications $f = \overline{f}_i \circ \iota_i$ be given. We form the fiber product of the $\overline{f}_i$ and compactify the induced morphism using (S5):
\[ \xymatrix{
& \ar@{_{(}->}[lddd]_{\iota_1} \ar@{^{(}->}[d]^{\iota} F(i) \ar@{^{(}->}[rddd]^{\iota_2} \\
& \overline{F}_3(i) \ar@{->>}[d]^{\overline{f}}  \\
& \Box \ar@{->>}[ld] \ar@{->>}[rd] \\
\overline{F}_1(i) \ar@{->>}[rd]_{\overline{f}_1}  & & \overline{F}_2(i)  \ar@{->>}[ld]^{\overline{f}_2} \\
& G(i) \\
} \]
It is clear how to extract a diagram as in the statement from this. 
Let now the diagram for elements of degree $<n$ be constructed. We get a diagram
\[ \xymatrix{
& \lim_{M_i} \overline{F}_3 \ar@{->>}[ld] \ar@{->>}[rd] \\
\lim_{M_i} \overline{F}_1 \ar@{->>}[rd]_{}  & & \lim_{M_i} \overline{F}_2  \ar@{->>}[ld]^{} \\
& \lim_{M_i} G\\
} \]
in which all the morphisms are proper. Pulling it back along the morphism $G(i) \rightarrow  \lim_{M_i} G$ and inserting the given compactifications, we arrive at
\[ \xymatrix{
& F(i) \ar@{_{(}->}[ld] \ar@{^{(}->}[rd] \\x
\overline{F}_1(i) \ar@{->>}[dd] & & \overline{F}_2(i) \ar@{->>}[dd] \\
& \Box \ar@{->>}[ld] \ar@{->>}[rd] \\
\Box \ar@{->>}[rd]_{}  & & \Box  \ar@{->>}[ld]^{} \\
& G(i)\\
} \]
Using Lemma~\ref{LEMMAPROPERTIESCOMP}, 4.\@ we see that the limit over the diagram consisting of the proper morphisms fits in a diagram
\[ \xymatrix{
& F(i) \ar@{_{(}->}[ldd] \ar@{.>}[d]^x \ar@{^{(}->}[rdd] \\
& \lim \ar@{->>}[ld] \ar@{->>}[rd] \ar@{->>}[dd]  \\
\overline{F}_1(i) \ar@{->>}[dd] & & \overline{F}_2(i) \ar@{->>}[dd] \\
& \Box \ar@{->>}[ld] \ar@{->>}[rd] \\
\Box \ar@{->>}[rd]_{}  & & \Box  \ar@{->>}[ld]^{} \\
& G(i)\\
} \]
in which all so indicated morphisms are proper. 
Now compactify the dotted morphism. It is clear that we may extract from this a diagram as claimed in the assumption. 

3. Clear. 
\end{proof}

\section{The induced compactification of diagrams}

Recall the following from \cite[7.3]{Hor16}:

\begin{PAR}\label{PARTW}
Let $I$ be a diagram, $n$ a natural number and $\Xi = (\Xi_1, \dots, \Xi_n) \in \{ \uparrow, \downarrow \}^n$ be a sequence of arrow directions. We define a diagram
\[ {}^\Xi I \]
whose objects are sequences of $n$ objects and $n-1$ morphisms in $I$
\[ \xymatrix{
i_1 \ar[r] & i_2 \ar[r] & \cdots \ar[r]  & i_n
} \]
and whose morphisms are commutative diagrams
\[ \xymatrix{
i_1 \ar[r] \ar@{<->}[d] &i_2 \ar[r] \ar@{<->}[d] & \cdots \ar[r]  & i_n \ar@{<->}[d] \\
i_1' \ar[r] &i_2' \ar[r] & \cdots \ar[r]  & i_n' \\
} \]
in which the $j$-th vertical arrow goes in the direction indicated by $\Xi_j$. 
We call a morphism {\bf of type $j$} if at most the morphism $i_j \rightarrow i_j'$ is {\em not} an identity. 

For a diagram $I$ and an object $i \in I$ we adopt the convention that $i$ denotes also the subcategory of $I$ consisting only of $i$ and its identity. 
In coherence with this convention ${}^\Xi i$ denotes the subcategory of ${}^\Xi I$ consisting of the sequence $i = \cdots = i$ and its identity. 

Examples: ${}^{\downarrow \downarrow} I = I \times_{/I} I$ is the comma category, ${}^{\uparrow} I = I^{\op}$, and $\tw I$ is the twisted arrow category. 
\end{PAR}

\begin{PAR}\label{PARTW2}
For any ordered subset $\{i_1, \dots, i_m\} \subseteq \{1, \dots, n\}$, denoting $\Xi'$ the restriction of $\Xi$ to the subset, we get an obvious restriction functor
\[ \pi_{i_1, \dots, i_m}:\  {}^\Xi I \rightarrow {}^{\Xi'} I. \]

If $\Xi = \Xi' \circ \Xi'' \circ \Xi'''$, where $\circ$ means concatenation, then the projection
\[ \pi_{1,\dots,n'}: \ {}^{\Xi}  I \rightarrow {}^{\Xi'} I   \]
is a {\em fibration} if the last arrow of $\Xi'$ is $\downarrow$ and an {\em opfibration} if the last arrow of $\Xi'$ is $\uparrow$ while the projection
\[ \pi_{n-n'''+1,\dots,n}: \  {}^{\Xi} I \rightarrow {}^{\Xi'''} I   \]
is an {\em opfibration} if the first arrow of $\Xi'''$ is $\downarrow$ and a {\em fibration} if the first arrow of $\Xi'''$ is $\uparrow$.
\end{PAR}

For the rest of the section, fix a category $\mathcal{S}$ with compactifications (Definition~\ref{DEFCATCOMP}). 

\begin{DEF}\label{DEFEXTCOMP}
 Let $I$ be a diagram and $S: I \rightarrow \mathcal{S}$ a functor.
 Any morphism $S \hookrightarrow \overline{S}$ in $\Fun(I, \mathcal{S})$ consisting point-wise of dense embeddings, and such that $\overline{S}$ is a diagram in which all morphisms are proper, is called an {\bf exterior compactification} of $S$. 
 A factorization $S \hookrightarrow \overline{S} \twoheadrightarrow \overline{S}'$, in which the composition is an exterior compactification again, is called a {\bf refinement}, 
\end{DEF}

We claim that exterior compactifications exist for $I$ an inverse diagram with finite matching diagrams: First compactify the morphism $F \rightarrow \cdot$ using Proposition~\ref{PROPCOMPMOR}, where $\cdot$ is the induced final object of $\mathrm{Fun}(I, \mathcal{S})$:
\[ \xymatrix{ F \ar[r]^{\iota} & \overline{F} \ar[r]^{\overline{f}} & \cdot   } \]
Then $\iota$ is an exterior compactification because all morphisms in the diagram $\overline{F}$ are automatically proper because of (S2). 

\begin{DEF}\label{DEFINTCOMP}
Let $I$ be a diagram. A functor $\widetilde{S}: {}^{\downarrow \downarrow}I \rightarrow \mathcal{S}$ (see \ref{PARTW} for the notation) together with an isomorphism $\Delta^* \widetilde{S} \cong S$ is called an {\bf interior compactification} of $S$ if every morphism of type 2 (cf.\@ \ref{PARTW}) is mapped to a dense embedding and every morphism of type 1 is mapped to a proper morphism. 
A morphism $\widetilde{S}_1 \rightarrow \widetilde{S}_2$ of compactifications (i.e.\@ a morphism compatible with the isomorphisms $\Delta^* \widetilde{S} \cong S$) is called a {\bf refinement} if it consists point-wise of proper morphisms.  
\end{DEF}

\begin{PROP}\label{PROPCOMPDIA}
Let $I$ be a diagram and $F \hookrightarrow \overline{F}$ be an exterior compactification in $\mathrm{Fun}(I, \mathcal{S})$. Then there is a canonical {\bf induced interior compactification} $\widetilde{F} \in \Hom({}^{\downarrow\downarrow} I, \mathcal{S})$. 
The association 
\[ (F \hookrightarrow \overline{F}) \mapsto \widetilde{F} \]
has the following properties
\begin{enumerate}
\item It is functorial in exterior compactifications, i.e.\@ if 
\[ \xymatrix{
F \ar[r] \ar@{^{(}->}[d] & G \ar@{^{(}->}[d] \\
\overline{F} \ar[r] & \overline{G}
} \]
is a commutative diagram in which the vertical morphisms are exterior compactifications then there is an induced morphism
\[ \widetilde{F} \rightarrow \widetilde{G}. \]
This association is functorial.
\item For a refinement of exterior compactifications consider the diagram
\[ \xymatrix{
F \ar@{=}[r] \ar@{^{(}->}[d] & F \ar@{^{(}->}[d] \\
\overline{F} \ar@{->>}[r] & \overline{F}'
} \]
Then the induced morphism $\widetilde{F} \rightarrow \widetilde{F}'$ is a refinement. 
\end{enumerate}
\end{PROP}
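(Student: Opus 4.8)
The plan is to construct $\widetilde{F}$ from $F \hookrightarrow \overline{F}$ by a fiber-product formula: for an object $(i \to j)$ of ${}^{\downarrow\downarrow}I$, set
\[ \widetilde{F}(i \to j) := \overline{F}(j) \times_{\overline{F}(i)} F(i), \]
where the map $\overline{F}(j) \to \overline{F}(i)$ is the (proper) structure morphism of $\overline{F}$ along $i \to j$ and $F(i) \to \overline{F}(i)$ is the given dense embedding. First I would check that this is functorial in $(i \to j)$, i.e.\@ that a morphism of type $1$ (moving $i$) and a morphism of type $2$ (moving $j$) each induce the expected morphism on fiber products, and that these commute; type-$2$ morphisms induce maps $\overline{F}(j) \to \overline{F}(j')$ which are proper (morphisms of $\overline{F}$), hence the induced map on the $\widetilde F$'s is a base change of a proper map along $F(i)\to \overline F(i)$, i.e.\@ proper by (S3). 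Type-$1$ morphisms $i \to i'$ induce on $\widetilde F$ a morphism $\overline F(j)\times_{\overline F(i)} F(i) \to \overline F(j) \times_{\overline F(i')} F(i')$; one checks this is a dense embedding: it sits in a Cartesian square over the dense embedding $F(i)\hookrightarrow \overline F(i)\times_{\overline F(i')}F(i')$ (density of the latter follows from (S0), since $F(i)\hookrightarrow\overline F(i)$ factors through it and through a proper map, hence using (S2), (S3), Lemma~\ref{LEMMACART1}). Then $\Delta^*\widetilde F$ is $\overline F(i)\times_{\overline F(i)}F(i) = F(i)$, giving the required isomorphism $\Delta^*\widetilde F \cong F$, and the type conditions of Definition~\ref{DEFINTCOMP} are exactly what we just verified.

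For part 1, given the square of exterior compactifications, the induced point-wise proper map $\overline F \to \overline G$ (proper by (S2), since $\overline F(i)\to\overline G(i)\to\cdot$ with both composite and the second factor in $\mathcal S_0$) together with $F\to G$ induces, on each object $(i\to j)$ of ${}^{\downarrow\downarrow}I$, a morphism of fiber products
\[ \overline F(j)\times_{\overline F(i)} F(i) \longrightarrow \overline G(j)\times_{\overline G(i)} G(i), \]
and compatibility with the structure morphisms and with $\Delta^*$ is immediate from the universal property of fiber products. Functoriality (composition and identities preserved) is then formal. For part 2, the refinement $\overline F \twoheadrightarrow \overline F'$ is point-wise proper, so the induced morphism $\widetilde F(i\to j)\to\widetilde F'(i\to j)$ is the map $\overline F(j)\times_{\overline F(i)}F(i)\to\overline F'(j)\times_{\overline F'(i)}F(i)$; since $F(i)=F(i)$ here, this map is obtained by base-changing the proper maps $\overline F(j)\to\overline F'(j)\times_{\overline F'(i)}\overline F(i)$ (proper by (S2)/(S3) as in Lemma~\ref{LEMMAPROPERTIESCOMP}) along $F(i)\to\overline F(i)$, hence proper by (S3), i.e.\@ a refinement.

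I expect the main obstacle to be the bookkeeping needed to verify that type-$1$ morphisms are sent to dense embeddings — one has to carefully identify the relevant Cartesian square and invoke (S0) (the ``cancellation'' property for dense embeddings inside embeddings) at just the right place, rather than merely (S2)/(S3); and to check that ``type $1$'' and ``type $2$'' morphisms, which generate all morphisms of ${}^{\downarrow\downarrow}I$, have compatible induced maps so that $\widetilde F$ is genuinely a functor. Everything else is a diagram chase with universal properties of fiber products plus repeated appeals to Lemma~\ref{LEMMAPROPERTIESCOMP} and Lemma~\ref{LEMMACART1}. The word ``canonical'' should be read as: the construction depends only on the choice of pullbacks in $\mathcal S$, which we have fixed once and for all.
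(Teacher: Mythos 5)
Your overall strategy --- define $\widetilde F$ objectwise as a fiber product of $\overline F$ against the dense embedding $F\hookrightarrow\overline F$, verify the two types of generating morphisms, and deduce parts 1 and 2 from the universal property of fiber products --- is exactly the paper's (the paper packages the same pointwise fiber product as a pullback of diagrams along the $2$-cell $\mu\colon\pi_1\Rightarrow\pi_2$). However, your defining formula is wrong. The expression $\overline F(j)\times_{\overline F(i)}F(i)$ is not even defined: $\overline F$ is a covariant functor on $I$, so its structure morphism along $i\to j$ goes $\overline F(i)\to\overline F(j)$, and there is no map $\overline F(j)\to\overline F(i)$ over which to form your fiber product. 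The correct object is
\[ \widetilde F(i\to j)\;=\;\overline F(i)\times_{\overline F(j)}F(j), \]
formed from $\overline F(i)\to\overline F(j)$ and $F(j)\hookrightarrow\overline F(j)$; concretely, it is the part of the compactification $\overline F(i)$ lying over $F(j)$, which is what makes $F(i)\hookrightarrow\widetilde F(i\to j)\twoheadrightarrow F(j)$ a compactification of $F(i\to j)$ (cf.\@ \ref{INTERIORCOMPCOR}).

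This is not a mere index typo, because your verification is consistent with your formula and therefore assigns the wrong kind of morphism to each type: you make type $2$ morphisms (those moving the target $j$, cf.\@ \ref{PARTW}) proper and type $1$ morphisms (those moving the source $i$) dense embeddings, which is exactly the opposite of what Definition~\ref{DEFINTCOMP} requires. With the corrected formula the types come out right: a type $1$ morphism induces a base change of the proper morphism $\overline F(i)\to\overline F(i')$, hence is proper by Lemma~\ref{LEMMAPROPERTIESCOMP}, 2, while a type $2$ morphism is a dense embedding by the cancellation argument via (S0), (S2), (S3) that you sketch --- that argument is sound, it is just attached to the wrong type of morphism in your write-up. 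Once the formula is fixed, $\Delta^*\widetilde F\cong F$ is immediate and your treatment of parts 1 and 2 (functoriality of fiber products; a refinement induces pointwise proper maps between the fiber products by stability of $\mathcal S_0$ under base change) goes through as in the paper.
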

\begin{proof}
Note that the comma category ${}^{\downarrow \downarrow}I = I \times_{/I} I$ comes equipped with the following 2-commutative diagram
\[ \xymatrix{
{}^{\downarrow \downarrow} I \ar[r]^{\pi_2} \ar[d]_{\pi_1} \ar@{}[rd]|{\Nearrow^\mu} & I \ar@{=}[d] \\
I \ar@{=}[r] & I
} \]

Let $\iota: F \hookrightarrow \overline{F}$ be an exterior compactification. 
Define $\widetilde{F}: {}^{\downarrow \downarrow}I \rightarrow \mathcal{S}$ as the pull-back: 
\[ \xymatrix{
\widetilde{F} \ar[r] \ar[d] & \pi_2^* F \ar[d]^{\pi_2^* \iota} \\
\pi_1^* \overline{F} \ar[r] & \pi_2^* \overline{F}
} \]
where the bottom horizontal morphism is induced by the natural transformation $\mu: \pi_1 \Rightarrow \pi_2$. 
We have to see that $\widetilde{F}$ is an interior compactification. Taking $\Delta^*$ of the diagram (for $\Delta: I \rightarrow {}^{\downarrow \downarrow}I$
being the diagonal), we get
\[ \xymatrix{
\Delta^* \widetilde{F} \ar[r] \ar[d] &  F \ar[d]^{ \iota} \\
 \overline{F} \ar@{=}[r] &  \overline{F}
} \]
hence there is a canonical isomorphism $\Delta^* \widetilde{F} \cong F$. 
By definition a morphism 
\[ \xymatrix{
i \ar[r] \ar[d] & j \ar[d] \\
i' \ar[r] & j'
} \]
in ${}^{\downarrow \downarrow}I$ is mapped by $\widetilde{F}$ to the morphism
\[ \widetilde{F}(i \rightarrow j) = \overline{F}(i) \times_{\overline{F}(j)} F(j) \rightarrow \overline{F}(i') \times_{\overline{F}(j')} F(j') = \widetilde{F}(i' \rightarrow j') \]
Since the morphism $\overline{F}(i) \rightarrow \overline{F}(i')$ is proper this morphism is proper if $j=j'$ by Lemma~\ref{LEMMAPROPERTIESCOMP}, 2.
If $i=i'$ look at the following commuative diagram:
\[ \xymatrix{
F(i) \ar@{=}[d] \ar@{^{(}->}[r] & \widetilde{F}(i \rightarrow j) = \overline{F}(i) \times_{\overline{F}(j)} F(j) \ar@{^{(}->}[r] \ar[d] &  \overline{F}(i) \ar@{=}[d] \\
F(i) \ar@{^{(}->}[r] & \widetilde{F}(i \rightarrow j') = \overline{F}(i) \times_{\overline{F}(j')} F(j') \ar@{^{(}->}[r] & \overline{F}(i)
} \]
The horizontal morphisms are all embeddings by construction, by (S2), and by (S3). Since the composition $F(i) \hookrightarrow \overline{F}(i)$ is dense by construction, the horizontal embeddings are all dense by (S0) and hence so is the the middle vertical one by (S2). 
The observations together imply that $\widetilde{F}$ is an interior compactification of $F$. 
The claimed functoriality is clear. 
\end{proof}

\section{Fibered multiderivators over 2-categorical bases}\label{SECTFIBDER}

In this section, we recall from \cite{Hor16} the notion of 2-pre-multiderivator and fibered multiderivator (with 2-categorical bases).

\begin{DEF}[{\cite[Definition~2.1]{Hor16}}]\label{DEF2PREMULTIDER}
A {\bf 2-pre-multiderivator} is a functor $\SSS: \Dia^{1-\op} \rightarrow \text{2-$\mathcal{MCAT}$}$ which is strict in 1-morphisms (functors) and pseudo-functorial in 2-morphisms (natural transformations). 
More precisely, it associates with a diagram $I$ a 2-multicategory $\SSS(I)$, with a functor $\alpha: I \rightarrow J$ a strict functor
\[ \SSS(\alpha):  \SSS(J) \rightarrow \SSS(I) \] 
denoted also $\alpha^*$ if $\SSS$ is understood, and with a natural transformation $\mu: \alpha \Rightarrow \alpha'$ a pseudo-natural transformation
\[ \SSS(\eta): \alpha^* \Rightarrow (\alpha')^* \]
such that the following holds:
\begin{enumerate}
\item The association
\[ \Fun(I, J) \rightarrow \Fun^{\mathrm{strict}}(\SSS(J), \SSS(I)) \]
given by $\alpha \mapsto \alpha^*$, resp.\@ $\mu \mapsto \SSS(\mu)$, is
a pseudo-functor (this involves, of course, the choice of further data). Here $\Fun^{\mathrm{strict}}(\SSS(J), \SSS(I))$ is the 2-category of strict 2-functors, pseudo-natural transformations, and modifications. 
\item (Strict functoriality w.r.t.\@ compositons of 1-morphisms) For functors $\alpha: I \rightarrow J$ and $\beta: J \rightarrow K$, we have 
an {\em equality} of pseudo-functors $\Fun(I, J) \rightarrow \Fun^{\mathrm{strict}}(\SSS(I), \SSS(K))$
\[ \beta^* \circ \SSS(-) = \SSS(\beta \circ -).   \]
\end{enumerate}

A {\bf symmetric, resp.\@ braided 2-pre-multiderivator} is given by the structure of strictly symmetric (resp.\@ braided) 2-multicategory on $\SSS(I)$ such that
the strict functors $\alpha^*$ are equivariant w.r.t.\@ the action of the symmetric groups (resp.\@ braid groups). 

Similarly we define a {\bf lax, resp.\@ oplax, 2-pre-multiderivator} where the same as before holds but where the 
\[ \SSS(\eta): \alpha^* \Rightarrow (\alpha')^* \]
are lax (resp.\@ oplax) natural transformations and in 1.\@ ``pseudo-natural transformations'' is replaced by ``lax (resp.\@ oplax) natural transformations''.
\end{DEF}

\begin{DEF}[{\cite[Definition~2.2]{Hor16}}]\label{DEF2PREMULTIDERSTRICTMOR}
A strict morphism $p: \DD \rightarrow \SSS$ of 2-pre-multiderivators (resp.\@ lax/oplax 2-pre-multiderivators) is given by a collection of strict 2-functors
\[ p(I): \DD(I) \rightarrow \SSS(I) \]
for each $I \in \Dia$ such that we have $\SSS(\alpha) \circ p(J) = p(I) \circ \DD(\alpha)$ and $\SSS(\mu) \ast p(J) = p(I) \ast \DD(\mu)$ 
 for all functors $\alpha: I \rightarrow J$, $\alpha': I \rightarrow J$ and natural transformations $\mu: \alpha \Rightarrow \alpha'$ as illustrated by the following diagram:
\[ \xymatrix{
\DD(J) \ar[rr]^{p(J)} \ar@/_15pt/[dd]_{\DD(\alpha)}^{\phantom{x}\overset{\DD(\mu)}{\Rightarrow}} \ar@/^15pt/[dd]^{\DD(\alpha')} && \SSS(J) \ar@/_15pt/[dd]_{\SSS(\alpha)}^{\phantom{x}\overset{\SSS(\mu)}{\Rightarrow}} \ar@/^15pt/[dd]^{\SSS(\alpha')} \\
\\
\DD(I) \ar[rr]^{p(I)} && \SSS(I)
} \]
\end{DEF}

\begin{PAR}\label{PARDER12}
As with usual pre-multiderivators we consider the following axioms: 

\begin{itemize}
\item[(Der1)] For $I, J \in \Dia$, the natural functor $\DD(I \coprod J) \rightarrow \DD(I) \times \DD(J)$ is an equivalence of 2-multicategories. Moreover $\DD(\emptyset)$ is not empty.
\item[(Der2)]
For $I \in \Dia$ the `underlying diagram' functor
\[ \dia: \DD(I) \rightarrow \Fun(I, \DD(\cdot)) \quad \text{resp. } \Fun^{\lax}(I, \DD(\cdot)) \quad \text{resp. }  \Fun^{\oplax}(I, \DD(\cdot))\]
is 2-conservative (this means that it is conservative on 2-morphisms and that a 1-morphism $\alpha$ is an equivalence if $\dia(\alpha)$ is an equivalence).
\end{itemize}

A pre-2-multiderivator with domain $\Dia$ is called {\bf infinite} if $\Dia$ is infinite (i.e.\@ closed under arbitrary coproducts) and we have
\begin{itemize}
\item[(Der1${}^\infty$)] For $\{I_i\}_{i \in \mathcal{I}}$ a (possibly infinite) family with $I_i \in \Dia$, the natural functor $\DD(\coprod_{i} I_i) \rightarrow \prod_{i} \DD(I_i)$ is an equivalence of 2-multicategories. Moreover $\DD(\emptyset)$ is not empty.
\end{itemize}

In addition, consider the following axioms on a strict morphism $p: \DD \rightarrow \SSS$ of 2-pre-multiderivators (where (FDer0 left) is assumed for (FDer3--5 left) and similarly for the right case):
\begin{itemize}
\item[(FDer0 left)]
For each $I$ in $\Dia$ the morphism $p$ specializes to an 1-opfibered 2-multicategory with 1-categorical fibers. 
It is, in addition, 2-fibered in the lax case and 2-opfibered in the oplax case. 
Moreover any {\em fibration} $\alpha: I \rightarrow J$ in $\Dia$ induces a  diagram
\[ \xymatrix{
\DD(J) \ar[r]^{\alpha^*} \ar[d] & \DD(I) \ar[d]\\
\SSS(J) \ar[r]^{\alpha^*} & \SSS(I) 
}\]
of 1-opfibered and 2-(op)fibered 2-multicategories, i.e.\@ the top horizontal functor maps coCartesian 1-morphisms to coCartesian 1-morphisms and (co)Cartesian 2-morphisms to (co)Cartesian 2-morphisms.

We assume that corresponding push-forward functors between the fibers have been chosen and those will be denoted by $(-)_\bullet$.

\item[(FDer3 left)]
For each functor $\alpha: I \rightarrow J$ in $\Dia$ and $S \in \SSS(J)$ the functor
$\alpha^*$ between fibers (which are 1-categories by (FDer0 left))
\[ \DD(J)_{S} \rightarrow \DD(I)_{\alpha^*S} \]
has a left adjoint $\alpha_!^{(S)}$.

\item[(FDer4 left)]
For each functor $\alpha: I \rightarrow J$ in $\Dia$, and for any object $j \in J$, and for the 2-commutative square
\[ \xymatrix{  I \times_{/J} j \ar[r]^-\iota \ar[d]_{\alpha_j} \ar@{}[dr]|{\Swarrow^\mu} & I \ar[d]^\alpha \\
\{j\} \ar@{^{(}->}[r]^j & J \\
} \]
 the induced natural transformation of functors \[ \alpha_{j,!}^{(j^*S)} \SSS(\mu)(S)_\bullet \iota^* \rightarrow j^* {\alpha}_!^{(S)} \] is an isomorphism for all $S \in \SSS(J)$.
\item[(FDer5 left)] For any {\em opfibration} $\alpha: I \rightarrow J $ in $\Dia$, and for any 1-morphism $\xi \in \Hom(S_1, \dots, S_n; T)$ in $\SSS(J)$ for some $n\ge 1$, the natural transformations of functors
\[ \alpha_! (\alpha^*\xi)_\bullet (\alpha^*-, \cdots, \alpha^*-,\ \underbrace{-}_{\text{at }i}\ , \alpha^*-, \cdots, \alpha^*-) \cong  \xi_\bullet (-, \cdots, -,\ \underbrace{\alpha_!-}_{\text{at }i}\ , -, \cdots, -) \]
are isomorphisms for all $i=1, \dots,  n$.
\end{itemize}
\end{PAR}

Dually, we consider the following axioms: 

\begin{enumerate}
\item[(FDer0 right)]
For each $I$ in $\Dia$ the morphism $p$ specializes to a 1-fibered 2-multicategory with 1-categorical fibers.
It is, in addition, 2-opfibered in the lax case, and 2-fibered in the oplax case. 
Furthermore, any {\em opfibration} $\alpha: I \rightarrow J$
in $\Dia$  induces a diagram
\[ \xymatrix{
\DD(J) \ar[r]^{\alpha^*} \ar[d] & \DD(I) \ar[d]\\
\SSS(J) \ar[r]^{\alpha^*} & \SSS(I) 
}\]
of 1-fibered and 2-(op)fibered multicategories, i.e.\@ the top horizontal functor maps Cartesian 1-morphisms w.r.t.\@ the $i$-th slot to Cartesian 1-morphisms w.r.t.\@ the $i$-th slot for any $i$ and maps (co)Cartesian 2-morphisms to (co)Cartesian 2-morphisms.

We assume that corresponding pull-back functors between the fibers have been chosen and those will be denoted by $(-)^{\bullet,i}$.

\item[(FDer3 right)]
For each functor $\alpha: I \rightarrow J$ in $\Dia$ and $S \in \SSS(J)$ the functor
$\alpha^*$ between fibers (which are 1-categories by (FDer0 right))
\[ \DD(J)_{S} \rightarrow \DD(I)_{\alpha^*S} \]
has a right adjoint $\alpha_*^{(S)}$.
\item[(FDer4 right)]
For each morphism $\alpha: I \rightarrow J$ in $\Dia$, and for any object $j \in J$, and for the 2-commutative square
\[ \xymatrix{  j \times_{/J} I \ar[r]^-\iota \ar[d]_{\alpha_j} \ar@{}[dr]|{\Nearrow^\mu} & I \ar[d]^\alpha \\
\{j\} \ar@{^{(}->}[r]^j & J \\
} \]
 the induced natural transformation of functors \[  j^* \alpha_*^{(S)} \rightarrow \alpha_{j,*}^{(j^*S)} \SSS(\mu)(S)^\bullet \iota^* \] is an isomorphism for all $S \in \SSS(J)$.

\item[(FDer5 right)] For any {\em fibration} $\alpha: I \rightarrow J$ in $\Dia$, and for any 1-morphism $\xi \in \Hom(S_1, \dots, S_n; T)$ in $\SSS(J)$ for some $n\ge 1$, the natural transformations of functors
\[ \alpha_* (\alpha^*\xi)^{\bullet,i} (\alpha^*-, \overset{\widehat{i}}{\cdots}, \alpha^*-\ ;\ -) \cong  \xi^{\bullet,i} (-, \overset{\widehat{i}}{\cdots}, -\ ;\ \alpha_*-) \]
are isomorphisms for all $i= 1, \dots, n$.
\end{enumerate}

\begin{DEF}\label{DEFFIBDER}
A strict morphism $p: \DD \rightarrow \SSS$ of (op)lax 2-pre-multiderivators is called a  {\bf  (op)lax left (resp.\@ right) fibered multiderivator}
if $\DD$ and $\SSS$ both satisfy (Der1) and (Der2) and if (FDer0 left/right) and (FDer3--5 left/right) hold true. We just say ``fibered''  for ``left and right fibered''. 
\end{DEF}

\begin{BEM}
One can show that the axioms imply, in the plain case, that the second part of (FDer0 left) and (FDer5 right) --- which are then adjoint to each other ---  hold true for any functor $\alpha: I \rightarrow J$. 
Similarly {\em for 1-ary morphisms} the second part of (FDer0 right) and (FDer5 left) hold true for any functor $\alpha: I \rightarrow J$. 

In the oplax case the second part of (FDer0 left) and (FDer5 right) should be claimed to hold for any functor $\alpha: I \rightarrow J$, and in the lax case, and {\em for 1-ary morphisms}, the second part of  (FDer0 right) and (FDer5 left) should be claimed to hold for any functor $\alpha: I \rightarrow J$. It seems that this does not follow from the other axioms as stated. 
For the oplax left and lax right fibered multiderivators constructed in section~\ref{SECTCONSTPROPER} we will show explicitly that these stronger statements hold true. 
\end{BEM}

If in $\SSS$ all 2-morphisms are invertible then there is no difference between lax and oplax and we just say left (resp.\@ right) fibered multiderivator. 

\begin{DEF}\label{DEFSTABLE}For (op)lax fibered multiderivators over an (op)lax 2-pre-multiderivator $p: \DD \rightarrow \SSS$ and an object $S \in \SSS(I)$ we have that
\[ \DD_{I,S}: J \mapsto \DD(I \times J)_{\pr_2^*S}  \]
is a usual derivator. We say that $p$ has {\bf stable fibers} if $\DD_{I,S}$ is stable for all $S \in \SSS(I)$ and for all $I$. In fact, it suffices to require this for $I=\cdot$. 
\end{DEF}

We briefly mention that in \cite{Hor16} a fibered multiderivator was defined in a different, equivalent way, as follows. Above we gave the equivalent patchwork definition because the axioms are anyway the ones to be checked. 

A strict morphism $\DD \rightarrow \SSS$ of (op)lax 2-pre-multiderivators (Definition~\ref{DEF2PREMULTIDERSTRICTMOR}) such that $\DD$ and $\SSS$ each satisfy (Der1) and (Der2) (cf.\@ \ref{PARDER12}) is a
\begin{enumerate}
\item  lax left (resp.\@ oplax right) fibered multiderivator if and only if the corresponding strict functor of 2-multicategories
\[ \Dia^{\cor}(p): \Dia^{\cor}(\DD) \rightarrow \Dia^{\cor}(\SSS) \]
(cf.\@ \cite[Definition~3.6]{Hor16}) is a 1-opfibration (resp.\@ 1-fibration) and 2-fibration 
with 1-categorical fibers.
\item oplax left (resp.\@ lax right) fibered multiderivator if and only if the corresponding strict functor of 2-multicategories
\[ \Dia^{\cor}(p): \Dia^{\cor}(\DD^{2-\op}) \rightarrow \Dia^{\cor}(\SSS^{2-\op}) \]
(cf.\@ \cite[Definition~3.6]{Hor16}) is a 1-opfibration (resp.\@ 1-fibration) and 2-fibration 
with 1-categorical fibers.
\end{enumerate}

\section{Diagrams of correspondences and their compactifications}\label{SECTIONDIACORCOMP}

Let $\Xi \in \{\downarrow \uparrow \}^n$ be a sequence of arrow directions, and let $I$ be a diagram. 
Recall the diagram denoted ${}^\Xi I$ from \ref{PARTW}.

\begin{LEMMA}\label{LEMMACATLF}
\begin{enumerate}
\item
If $I$ is in $\Catlf$ (locally finite diagrams\footnote{A diagram (i.e.\@ a small category) is called {\bf locally finite} if any morphism can be factored only in a finite number of ways into non-identity morphisms.}) then $\tw I$ is inverse and locally finite and has finite matching diagrams. 
\item
If $I$ is in $\Dirlf$ then also ${}^{\uparrow \uparrow \downarrow}I$ and $\twwc{I}$ are in $\Dirlf$.
\end{enumerate}
\end{LEMMA}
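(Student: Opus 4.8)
The plan is to unwind the definitions of the diagrams ${}^\Xi I$ and to verify each property directly from the combinatorics of chains of morphisms in $I$, using the fibration/opfibration facts recalled in \ref{PARTW2}. For part~1, recall $\tw I = {}^{\downarrow\uparrow}I$, whose objects are single morphisms $a\colon i_1 \to i_2$ of $I$ and whose morphisms $a \to a'$ are commutative squares with the left leg going $i_1 \to i_1'$ and the right leg going $i_2' \to i_2$. To see $\tw I$ is \emph{inverse}, I would exhibit a degree function: send $a\colon i_1\to i_2$ to (length data of) a factorization of $a$ into non-identities; local finiteness of $I$ guarantees this is well-defined (bounded) and that every non-identity morphism in $\tw I$ strictly \emph{decreases} it, since a non-identity square either shortens the left leg or shortens the right leg (or both) after composing with $a$ to view everything as a refinement of the factorization of a fixed morphism. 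Local finiteness of $\tw I$ itself follows because a factorization in $\tw I$ of a morphism $a\to a'$ into non-identities projects, via $\pi_1$ and $\pi_2$, to factorizations in $I$, which can only be done finitely many ways; one must also bound the "middle" data, but this too is controlled by factorizations of the fixed morphism $a' $ (or $a$) through intermediate objects, again finite by local finiteness. Finally, for the finite matching diagrams: the matching diagram $M_a$ of an object $a \in \tw I$ is the category of non-identity morphisms out of... wait, since $\tw I$ is inverse we need the \emph{matching} (i.e.\ non-identity morphisms \emph{from} $a$) diagram to be finite; its objects are non-identity squares $a \to b$, and each such is determined by a non-trivial factorization of the left leg together with a non-trivial extension on the right leg, both constrained to "sit between" data determined by $a$; finiteness then follows from local finiteness of $I$ exactly as in the length-function argument.

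For part~2, I would argue that ${}^{\uparrow\uparrow\downarrow}I$ and $\twwc{I} = {}^{\downarrow\uparrow\uparrow\downarrow}I$ are directed and locally finite whenever $I$ is. The key structural input is \ref{PARTW2}: writing $\Xi = (\uparrow\uparrow\downarrow)$, the projection $\pi_{1,2}\colon {}^{\uparrow\uparrow\downarrow}I \to {}^{\uparrow\uparrow}I = (I^{\op})\times_{/I^{\op}}(\cdots)$, more usefully the projection $\pi_3\colon {}^{\uparrow\uparrow\downarrow}I \to {}^{\downarrow}I = I$ is (by the second half of \ref{PARTW2}, first arrow of the tail is $\downarrow$... actually one reads off: $\pi_{3}$ is a fibration or opfibration depending on the relevant boundary arrow) a (co)fibration with fibers built from ${}^{\uparrow\uparrow}$-diagrams over comma categories. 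The strategy is: directedness is preserved under taking ${}^{\uparrow}$ (which is just $(-)^{\op}$, and a directed poset's opposite is directed in the "inverse" sense the paper uses under $\Dir$) and under the comma-type constructions ${}^{\downarrow\downarrow}$, ${}^{\downarrow\uparrow}$, etc., because a (co)fibration with directed base and directed fibers has directed total category; local finiteness is likewise inherited because in each case a factorization of a morphism in ${}^\Xi I$ maps to factorizations in the base and in a fiber, both finite. So I would induct on the length of $\Xi$, peeling off one arrow at a time via the (op)fibration $\pi_{1,\dots,n-1}$ or $\pi_{2,\dots,n}$ of \ref{PARTW2}, reducing to: (a) ${}^{\uparrow}J = J^{\op}$ preserves $\Dirlf$ (in the sense of directed posets / inverse), and (b) the fibers of these (op)fibrations are themselves slices or coslices of $I$ or of already-handled ${}^{\Xi'}I$, which are directed and locally finite.

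The main obstacle I anticipate is \textbf{bounding the "middle" combinatorial data} in the local-finiteness arguments: local finiteness of $I$ directly controls factorizations of a single morphism of $I$, but a morphism in ${}^\Xi I$ for $|\Xi| \ge 2$ records a whole commuting ladder, and one must check that the number of ways to fill in the intermediate rungs of such a ladder (subject to the fixed outer data) is finite. I expect this to come down to the observation that every rung is a morphism appearing in a factorization of one of the finitely many "boundary" morphisms of the ladder, so that local finiteness of $I$ suffices — but making this bookkeeping precise, especially for the four-fold $\twwc{I}$, is where the real work lies. A secondary subtlety is confirming that $\tw I$ is genuinely \emph{inverse} (not just Reedy) when $I$ is only locally finite rather than directed: the degree function must take values in a well-ordered set, and one must check that composites strictly raise degree, which uses that a factorization of a non-identity is again non-identity on at least one side — routine, but worth stating carefully. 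The (op)fibration reductions via \ref{PARTW2} should otherwise make part~2 largely formal once part~1's style of argument is in hand.
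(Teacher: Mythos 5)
Your treatment of part~1 is essentially the paper's proof: the degree functor $\nu\colon \tw I \to (\N_0)^{\op}$ sending $\alpha$ to the maximal number of non-identity factors of $\alpha$, together with the observation that a morphism $\alpha \to \beta$ in $\tw I$ is the same as exhibiting $\beta$ as the middle term of a factorization $\alpha = v\beta u$, so that local finiteness of $I$ bounds both the matching category of $\alpha$ and the factorizations of morphisms in $\tw I$. That part is fine.

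For part~2 your plan diverges from the paper's, and the divergence is exactly where your acknowledged obstacle lives. You propose to peel off one arrow of $\Xi$ at a time using the (op)fibrations of \ref{PARTW2} and to invoke two stability statements: that the total category of a (co)fibration with directed base and directed fibers is directed, and that local finiteness passes to such total categories. The first can be salvaged (e.g.\ with an ordinal-valued degree $\omega\cdot d_B(p(e)) + d_{F}(e)$), but the second is precisely the ``middle rung'' bookkeeping you flag as unresolved: a factorization into non-identities of a morphism of the total category projects to a factorization of the base morphism \emph{with identity steps interspersed}, so it is not directly counted by local finiteness of the base and fibers, and you never close this gap. The paper avoids the induction entirely: it first notes that $\Catlf$ is stable under the comma construction ${}^{\downarrow\downarrow}(-)$, then observes that ${}^{\uparrow\uparrow\downarrow}I$ is a subcategory of $\bigl(\tw({}^{\downarrow\downarrow}I)\bigr)^{\op}$, so that part~1 applied to ${}^{\downarrow\downarrow}I$ already yields ${}^{\uparrow\uparrow\downarrow}I \in \Dirlf$ (directedness and local finiteness restrict to subcategories); finally $\twwc I$ sits inside $I \times {}^{\uparrow\uparrow\downarrow}I$, and $\Dirlf$ is closed under finite products and subcategories. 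If you want to complete your write-up, the shortest route is to replace your fibration induction by this embedding, which reduces all of part~2 to the single combinatorial argument you have already carried out in part~1.
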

\begin{proof}
1. We define a functor $\nu: \tw I \rightarrow (\N_0)^{\op}$ with maps a morphism $\alpha: i \rightarrow J$ to the maximum number of (non-identity) morphisms into which $\alpha$ can be factored or to 0 if $\alpha$ is an identity. The property of $\nu$ being a functor and that pre-images of identities are identities is clear. 
Any morphism from $\alpha$ to another $\beta$ in $\tw I$ corresponds to a factorization of $\alpha$. Since $I$ is locally finite the matching category of $\alpha$ is thus finite. 

2. If $I$ is in $\Catlf$ also ${}^{\downarrow\downarrow} I$ is in $\Catlf$. By 1.\@ thus ${}^{\uparrow\uparrow\downarrow} I$ (which is a subcategory of $(\tw({}^{\downarrow\downarrow} I))^{\op}$) is in $\Dirlf$. Therefore also
$I \times {}^{\uparrow\uparrow\downarrow} I$ and finally ${}^{\downarrow \uparrow\uparrow\downarrow} I$ are in $\Dirlf$.
\end{proof}

\begin{PAR}\label{PAROPMULTCAT}
For the rest of the section, let $\mathcal{S}$ be a category with finite limits. We consider $\mathcal{S}$ as a symmetric opmulticategory (and thus $\mathcal{S}^{\op}$ as symmetric multicategory) setting
\[ \Hom(S; T_1, \dots, T_n):= \Hom(S, T_1) \times \cdots \times \Hom(S, T_n) \]
with the obvious action of the symmetric group. 
\end{PAR}

\begin{PAR}
Let $\Xi \in \{\downarrow \uparrow \}^n$ be a sequence of arrow directions. 
Recall that a morphism $f: S \rightarrow T$ in $\Fun({}^\Xi I, \mathcal{S})$ is called {\bf type-$i$ admissible} if for every type-$i$ morphism $\alpha \rightarrow \beta$ the square
\[ \xymatrix{
S(\alpha) \ar[r]^{f(\alpha)} \ar[d] & T(\alpha) \ar[d] \\
S(\beta) \ar[r]_{f(\beta)} & T(\beta)
} \]
is Cartesian. Having chosen a class $\mathcal{S}_0$ of proper morphisms in $\mathcal{S}$, 
we will call a morphism $f: S \rightarrow T$ in $\Fun({}^\Xi I, \mathcal{S})$ {\bf weakly type-$i$ admissible} if the squares above are weakly Cartesian (cf.\@ \ref{DEFWEAKLYCART}). Of course, this definition depends on the chosen class of proper morphisms.
Note that the analog of \cite[Lemma~7.10]{Hor16} holds true for {\em weakly type-$i$ admissible}.

In the same way, if $f: S \rightarrow T_1, \dots, T_n$ is a multimorphism in $\Fun({}^\Xi I, \mathcal{S})$ then we say that $f$ is {\bf type-$i$ admissible} if for every type-$i$ morphism $\alpha \rightarrow \beta$ the square
\[ \xymatrix{
S(\alpha) \ar[r]^-{f(\alpha)} \ar[d] & T_1(\alpha), \dots, T_n(\alpha) \ar[d] \\
S(\beta) \ar[r]_-{f(\beta)} & T_1(\beta), \dots, T_n(\beta)
} \]
is Cartesian (i.e.\@ a multi-pullback). Since $\mathcal{S}$ carries the particular opmulticategory structure of (\ref{PAROPMULTCAT}) this means more concretely that 
\[ \xymatrix{
S(\alpha) \ar[r]^-{f(\alpha)} \ar[d] & \prod_i T_i(\alpha) \ar[d] \\
S(\beta) \ar[r]_-{f(\beta)} & \prod_i T_1(\beta)
} \]
is Cartesian. 
Similarly for weakly type-$i$ admissible. 
\end{PAR}

\begin{PAR}\label{PARSCOR}
Let $\mathcal{S}^{\cor}$ (resp.\@ $\mathcal{S}^{\cor, 0}$) be the symmetric 2-multicategory of multicorrespondences in $\mathcal{S}$ in which the 2-morphisms are given by
the isomorphisms (resp.\@ proper morphisms --- same class as in the definition of compactification on $\mathcal{S}$), cf.\@ \cite[3.6]{Hor15b}. 
The objects of both categories are the same as the objects of $\mathcal{S}$, and 1-morphisms $S_1, \dots, S_n \rightarrow T$ are multicorrespondences

\begin{equation}\label{excor2}
 \vcenter{ \xymatrix{ 
 &&&  \ar[llld]_{g_1} A \ar[ld]^{g_n} \ar[rd]^{f} &\\
 S_1 & \cdots & S_n & ; &  T   } }
 \end{equation}
The composition of 1-morphisms is given by forming fiber products and the 2-morphisms are the isomorphisms (in $\mathcal{S}^{\cor}$), or proper morphisms (in $\mathcal{S}^{\cor,0}$), of such multicorrespondences. 
The action of the symmetric groups is the obvious one. 
Strictly speaking, the above definitions give bimulticategories because the formation of fiber products is only associative up to isomorphism. One can, however, enlarge the class of objects adjoining strictly associative fiber products, cf.\@ \cite[Remark~5.3]{Hor16}. We will follow a different strictification strategy here (using Appendix~\ref{APPENDIX2MULTICAT}), cf.\@ Definition~\ref{DEFSCOR2} below.
\end{PAR}

\begin{PAR}\label{DEFTREE}
Recall that a {\bf tree} is a finite connected multicategory freely generated by a set of multimorphisms such that each
object occurs at most once as a source and at most once as a destination of one of these generating multimorphisms. 
The generating multimorphisms are allowed to be 0-ary. 

Examples:
\[ \xymatrix{
&&\cdot \ar@{-}[dr] &   \\ 
\cdot \ar[rr] & & \cdot \ar@{-}[r] & \ar[r] & \cdot  \ar@{-}[rd] & &  \\
&&\cdot \ar@{-}[ur] & & & \ar@{->}[r] & \cdot \\
&&\cdot \ar[rr] & & \cdot \ar@{-}[ur] \\
 & \ar@{o->}[r]^{\text{0-ary}} & \cdot \ar@{-}[dr]&   \\
 && \cdot \ar@{-}[r]& \ar[r] & \cdot  \\
} \]

A {\bf symmetric tree} $\tau^S$ is obtained from a tree $\tau$ adding images (in the most free way possible)  of the multimorphisms (not only the generating ones) under the respective symmetric groups. Observe that there is an obvious composition turning a symmetric tree into a symmetric multicategory. Giving a functor (of multicategories) from a tree to a symmetric multicategory $\mathcal{S}$ is the same as giving a functor (of symmetric multicategories) from its symmetric variant to $\mathcal{S}$. 

The most basic trees are the $\Delta_{1,n}$, for $n \in \N_0$, consisting of $n+1$ objects and one $n$-ary morphism connecting them. 
Each tree has a well-defined destination object and a number (possibly zero) of source objects. Two trees $\tau_{1}$ and $\tau_{2}$ can be concatenated to a tree $\tau_{2} \circ_i \tau_{1}$ choosing any source object $i$ of the tree $\tau_{2}$. 
\end{PAR}

\begin{PAR}\label{MORLIST}
Let $M$ be a (small) multicategory. For each pair of ordered set of objects $\mathcal{E}:=(\mathcal{E}_1, \dots, \mathcal{E}_n)$, $\mathcal{F}:=(\mathcal{F}_1, \dots, \mathcal{F}_m)$ in $M$ we define the set of {\bf morphisms} from $\mathcal{E}$ to $\mathcal{F}$ to be
a sequence of integers $0 \le n_1 \le \dots \le n_{m-1} \le n$ and multimorphisms
\[ \mathcal{E}_1, \dots, \mathcal{E}_{n_1} \rightarrow \mathcal{F}_1; \ 
 \mathcal{E}_{n_1+1}, \dots, \mathcal{E}_{n_2} \rightarrow \mathcal{F}_2; \ 
 \dots; \ 
 \mathcal{E}_{n_{m-1}+1}, \dots , \mathcal{E}_{n} \rightarrow \mathcal{F}_m  \]
The integers $n_i$ may be equal and also $n=0$ is allowed.  If $n=m=0$ we understand there to be exactly one morphism. 
\end{PAR}

\begin{PAR}
Let $\Xi \in \{ \downarrow, \uparrow \}^l$ be sequence of arrow directions and let $M$ be a multidiagram (i.e.\@ a small multicategory like $\Delta_{1,n}$).
If $\Xi_l = \downarrow$, we define a small multicategory ${}^\Xi M$ and 
if $\Xi_l = \uparrow$, we define a small opmulticategory ${}^\Xi M$. We concentrate on the case $\Xi_l = \downarrow$ for definiteness. 

Objects are sequences
\[ \xymatrix{ [S_{1,1}, \dots, S_{1,n_1}] \ar[r] & [S_{2,1}, \dots, S_{2,n_2}] \ar[r] & \cdots \ar[r] &   [S_{l,1}] &   } \]
of $l$ lists of objects (can be empty) and morphisms in the sense of \ref{MORLIST} between them, where however the $l$-th list consist of exactly one object. 
Multimorphisms $S^{(1)}, \dots, S^{(n)} \rightarrow T$ are diagrams
\[ \xymatrix{ [S_{1,1}^{(1)}, \dots, S_{1,n_1}^{(1)}, \dots] \ar[r] \ar@{<->}[d] & [S_{2,1}^{(1)}, \dots, S_{2,n_2}^{(1)}, \dots] \ar[r]\ar@{<->}[d] & \cdots \ar[r] &   [S_{l,1}^{(1)}, \dots, S^{(n)}_{l,1}] \ar@{->}[d] &   \\
 [T_{1,1}, \dots, T_{1,n_1}] \ar[r] & [T_{2,1}, \dots, T_{2,n_2}] \ar[r] & \cdots \ar[r] &   [T_{l,1}] &   } \]
where the arrow direction in the $i$-th column is determined by $\Xi_i$. 
Such a morphism is called of {\bf type $i$} if all vertical morphisms except the $i$-th one are identities of lists. There are thus only $n$-ary morphisms for $n\not=1$ of type $l$ and not of any other type.

{\em Example:} For the tree $\Delta_{1,n}$ 
the multidiagram ${}^{\uparrow \downarrow} (\Delta_{1,n})$ is 
\[ \xymatrix{
&& [1,\dots,n] \rightarrow [n+1]  \\
& {\phantom{xxx}} \\ 
\id_{[1]} \ar[urur]^-{\text{type 2}}  & \cdots &  \ar@{-}[lu] \id_{[n]}  &  & \id_{[n+1]}  \ar[ulul]_{\text{type 1}}
} \]
\end{PAR}

\begin{PAR}\label{PARCIRC}
For a (small) opmulticategory $M$ define a usual category $M^\circ$ replacing all multimorphisms in $M$ in $\Hom(j; i_1, \dots, i_n)$ by a set of 1-ary morphisms $j \rightarrow i_1$, \dots,  $j \rightarrow i_n$ \footnote{that means, in particular, forgetting all $0$-ary morphisms}. Then, for the special opmulticategory structure (\ref{PAROPMULTCAT}) on $\mathcal{S}$, 
a functor of opmulticategories $M \rightarrow \mathcal{S}$ is the same as a functor between usual categories $M^\circ \rightarrow \mathcal{S}$.
\end{PAR}

\begin{PAR}\label{PARALTSCOR}
Let $I$ be a (multi)diagram. A diagram of correspondences, i.e.\@ a pseudo-functor $X: I \rightarrow \mathcal{S}^{\cor}$ (or equivalently $X: I \rightarrow \mathcal{S}^{\cor,0}$) will be encoded more conveniently as follows: 

A functor $X: \tw I \rightarrow \mathcal{S}$ is called {\bf admissible} or, by abuse of notation, {\bf diagrams of correspondences}, if 
\begin{itemize}

\item every square in the opmulticategory $\tw I$
\begin{equation} \label{eqsquare12} \vcenter{ \xymatrix{ i \ar[r] \ar[d] & j_1, \dots, j_n \ar[d] \\ i' \ar[r] & j_1', \dots, j_n'  } } \end{equation}
in which the horizontal morphisms are of type 2 and the vertical morphisms are of type 1 
is mapped by $X$ to a homotopy Cartesian diagram.
\end{itemize}
We denote the corresponding full subcategory of the functor category by $\Fun(\tw I, \mathcal{S})^{\mathrm{adm}}$.
Similarly, a functor $X: \tw I \rightarrow \mathcal{S}$ is called {\bf weakly admissible}, if all squares as above are mapped to weakly Cartesian squares (cf.\@ \ref{DEFWEAKLYCART}). 

A multimorphism $\xi: X_1, \dots, X_n \rightarrow Y$ in the multicategory $\Fun(I, \mathcal{S}^{\cor})$ then corresponds to a diagram in $\Fun(\tw I, \mathcal{S})^{\mathrm{adm}}$
\begin{equation}\label{eqcomp1} \vcenter{ \xymatrix{
& & & A \ar[rd]^f \ar[ld]^{g_n} \ar[llld]_{g_1} \\
X_1 & \dots & X_n & ; & Y
} }
\end{equation}
in which  $f$ is type-2 admissible, and $g$ {\em as a multimorphism in $\Fun(\tw I, \mathcal{S})$} is type-1 admissible.
A lax multimorphism $X_1, \dots, X_n \rightarrow Y$, i.e.\@ a multimorphism in $\Fun^{\lax}(I, \mathcal{S}^{\cor, 0})$ corresponds to a diagram of the same shape, in which, however the morphism $f$ is {\em weakly type-2 admissible}.  Similarly an oplax multimorphism $X_1, \dots, X_n \rightarrow Y$ corresponds to the same diagram, in which, however the multimorphism $g$ is only {\em weakly type-1 admissible}. 
A 2-morphism $\mu: \xi \rightarrow \xi'$ corresponds to by a morphism of multicorrespondences
\begin{equation}\label{eqcomp2} \vcenter{\xymatrix{
& & & A \ar[rd]^f \ar[ld]^{g_n} \ar[llld]_{g_1} \ar[dd]^{h} \\
X_1 & \dots & X_n & & Y \\
& & & A \ar[ru]_{f'} \ar[lu]_{{g_n'}} \ar[lllu]^{{g_1'}} 
} }
\end{equation}
in which the morphism $h$ is an isomorphism (or, in the lax and oplax case, any proper map). 
It is (similarly to the case of $\mathcal{S}^{\cor,G}$, cf.\@ \cite[Definition~3.2]{Hor15b}) automatically type-1 admissible and weakly type-2 admissible in the lax case and weakly type-1 admissible and type-2 admissible in the oplax case. 

The two viewpoint are essentially the same, cf.\@ Lemma~\ref{LEMMAPF} below. 
\end{PAR}

\begin{DEF}\label{DEFCOR}
Let $\tau$ be a tree with symmetrization $\tau^S$ and let $I$ be a (usual) diagram. 
We define a category with weak equivalences 
\[ \Cor_I(\tau^S) \]
whose objects are admissible objects $X \in \mathcal{S}^{\tw (\tau \times I)}$ in the sense of \ref{PARALTSCOR} (which are functors of opmulticategories, cf.\@ also \ref{PARCIRC}) and the morphisms are the point-wise isomorphisms. 
This defines a strict functor
\[ \Cor_I: \Delta_S \rightarrow \mathcal{CAT} \rightarrow \mathcal{CATW} \]
from symmetric trees to categories,  considering categories as categories with weak equivalences in which the latter consist of all isomorphisms\footnote{The machinery of Appendix~\ref{APPENDIX2MULTICAT} is more general allowing for non-trivial classes $\mathcal{W}$. This generality will be needed later. }.

Similarly, we define
\[ \Cor_I^{\lax}: \Delta_S \rightarrow \mathcal{CATW} \qquad \Cor_I^{\oplax}: \Delta_S \rightarrow \mathcal{CATW} \]
whose objects are objects $X \in \mathcal{S}^{\tw (\tau \times I)}$ which are {\em point-wise in $\tau$} admissible diagrams in $\mathcal{S}^{\tw I}$, and multimorphisms of type-2 in $\tw \tau$ are mapped to  type-1 admissible (resp.\@ weakly type-1 admissible) multimorphisms and morphisms of type-1 in $\tw \tau$ are mapped to weakly type-2 admissible (resp.\@  type-2 admissible) morphisms. Furthermore, they are admissible in the argument $I$, i.e.\@ every square (\ref{eqsquare12}) in $I$ (for $n=1$) in which the horizontal multimorphisms are of type 2 and the vertical morphisms are of type 1 are mapped to Cartesian squares. 
Morphisms are all point-wise proper morphisms. 
\end{DEF}
Note that $\Cor_I$, $\Cor^{\lax}_I$, and $\Cor^{\oplax}_I$, are indeed defined on {\em symmetric} trees.
For, observe that functors $\tw (\tau \times I) \rightarrow \mathcal{S}$ of opmulticategories are the same as functors of usual categories $(\tw (\tau \times I))^\circ \rightarrow \mathcal{S}$ (cf.\@ \ref{PARMULTICOMP}) and that 
every functor $\tau^S \rightarrow (\tau')^S$ induces an obvious functor $(\tw (\tau \times I))^\circ \rightarrow (\tw (\tau' \times I))^\circ$.

The following follows directly from the definition (admissibility in the $\tw \tau$ argument):
\begin{LEMMA}\label{LEMMAPROPCONSTRSYMMULTI1}
The strict functor $\tau^S \mapsto \Cor_I^{}(\tau^S)$ (resp.\@ $\tau^S \mapsto \Cor_I^{\lax}(\tau^S)$, $\tau^S \mapsto \Cor_I^{\oplax}(\tau^S)$)  satisfies the axioms 1 and 2 of Proposition~\ref{PROPCONSTRSYMMULTI}.
\end{LEMMA}

\begin{DEF}\label{DEFSCOR1}
Let $I$ be a diagram. We define $\SSS^{\cor}(I)$ (resp.\@ $\SSS^{\cor, 0, \lax}(I)$, resp.\@ $\SSS^{\cor, 0, \oplax}(I)$) to be the symmetric 2-multicategory of Proposition~\ref{PROPCONSTRSYMMULTI} constructed from the functor $C_I$ (resp.\@ $C_I^{\lax}$, resp.\@ $C_I^{\oplax})$.
\end{DEF}

\begin{LEMMA}\label{LEMMAPF}
Let $I$ and $J$ be (multi)diagrams. 
\begin{enumerate}
\item Any diagram $D \in \SSS^{\cor}(J \times I)$ gives rise to a canonical pseudo-functor of 2-multicategories
\[ \Dia(D): J \rightarrow \SSS^{\cor}(I) \]
defined on objects by $j \mapsto D|_{\tw I \times (\tw j)}$. 
\item The association $D \mapsto \Dia(D)$ yields an equivalence of 2-multicategories
\[ \SSS^{\cor}(J \times I) \cong \Fun(J, \SSS^{\cor}(I)) \]
and
\[   \SSS^{\cor, 0, \lax}(J \times I) \cong \Fun^{\lax}(J, \SSS^{\cor}(I)) \quad   \SSS^{\cor, 0, \oplax}(J \times I) \cong \Fun^{\oplax}(J, \SSS^{\cor}(I))  \] 
respectively.
\end{enumerate}
\end{LEMMA}
\begin{proof}
1.\@ We sketch the non-multi variant here, and leave the multi-case to the reader. 
Each morphism $\alpha: j \rightarrow j'$ in $J$ gives rise to a functor (pull-back of $D$)
\[ \alpha': \tw \Delta_{1} \rightarrow \mathcal{S}^{\tw I }\]
with values in admissible objects, more precisely, to a diagram of the form (\ref{eqcomp1}), i.e.\@ to a 1-morphism in $\SSS^{\cor}(I)$.
A composition $\beta \circ \alpha$ in $J$ gives  rise to  a functor
\[ \tw \Delta_2  \rightarrow\mathcal{S}^{\tw I } \]
and $(\beta \circ  \alpha)'$ is the pullback along 
\[ e_{02}: \tw \Delta_1 \rightarrow \tw \Delta_2.  \]
By construction, we get a 2-isomorphism 
\[ \beta' \circ \alpha' \Rightarrow (\beta \circ \alpha)'. \]
The analogous reasoning with a composition of three morphisms shows that this construction yields a pseudo-functor. 

2.\@ is left to the reader. 
\end{proof}

\begin{DEF}\label{DEFSCOR2}
We define a (lax, oplax) symmetric 2-pre-multiderivator $\SSS^{\cor}$ (resp.\@ $\SSS^{\cor,0,\lax}$, resp.\@ $\SSS^{\cor,0,\oplax}$). Let $I \in \Cat$ be a diagram. The 2-multicategory $\SSS^{\cor}(I)$ (resp.\@ $\SSS^{\cor,0,\lax}(I)$, resp.\@ $\SSS^{\cor,0,\oplax}(I)$) has been defined in Definition~\ref{DEFSCOR1}.
Those symmetric 2-multicategories are equipped with strict and symmetric pull-back functors
\[ \alpha^*: \SSS^{\cor}(J) \rightarrow \SSS^{\cor}(I). \]
for each functor $\alpha: I \rightarrow J$. For a natural transformation $\mu: \alpha \Rightarrow \beta$ we get a pseudo-natural transformation 
\[ \alpha^* \Rightarrow \beta^* \]
as follows: $\mu$ might be seen as a functor $I \times \Delta_1 \rightarrow J$ and so each admissible $X \in \mathcal{S}^{\tw I}$ gives rise to an admissible
 $\mu^* X \in \mathcal{S}^{\tw (\Delta_1 \times I)}$ which constitutes a 1-morphism from $\alpha^* X$ to $\beta^* X$. For each 1-morphism $\xi: X \rightarrow Y$, w.l.o.g.\@ in $\mathcal{S}^{\tw (\Delta_1 \times I)}$, the square
 \begin{equation}\label{eqsquare} \vcenter{ \xymatrix{ \alpha^*X \ar[r]^{\alpha^* \xi} \ar[d]_{\mu^* X} & \alpha^*Y \ar[d]^{\mu^*Y} \\
 \beta^*X \ar[r]_{\beta^* \xi} & \beta^*Y } 
 } \end{equation}
  commutes up to a uniquely determined 2-isomorphism. To see this one can use the pseudo-functor
  \[ \Delta_1^2 \rightarrow \SSS^{\cor}(I) \] 
  obtained from $\mu^* \xi \in \mathcal{M}^{\tw (\Delta_1^2 \times I)}$ via Lemma~\ref{LEMMAPF}, 1. 
One checks that this construction yields a pseudo-functor:
\[ \Fun(I, J) \rightarrow \Fun^{\mathrm{strict}}(\SSS^{\cor}(J), \SSS^{\cor}(I)). \]
In the lax case $\mu^* \xi \in \mathcal{M} ^{\tw (\Delta_1^2 \times I)}$ is not admissible but satisfies conditions on the images of morphisms of type-1 and type-2 in $\tw (\Delta_1^2)$. More precisely, we obtain a diagram
 \begin{equation}\label{eqsquare} \vcenter{ \xymatrix{ \alpha^*X   & \ar[l] \ar[r] \ar@{}[rd]|\blacksquare  & \alpha^*Y  \\
 \ar[u] \ar[d]  \ar@{}[rd]|\Box & \ar[r] \ar[d] W \ar[u] \ar[l] & \ar[u] \ar[d] \\
 \beta^*X  & \ar[l] \ar[r] & \beta^*Y  }  
 } \end{equation}
in which the morphisms going to the right are only weakly type-1 admissible and where the square denoted $\blacksquare$ is weakly Cartesian and the square denoted $\Box$ is Cartesian. 
Denote by $\eta$ the correspondence $\alpha^*X \leftarrow W \rightarrow \beta^* Y$ in $\mathcal{S}^{\tw (\Delta_1 \times I) }$. By definition of $\SSS^{\cor, 0, \lax}(I)$ this yields a 2-isomorphism
\[ \eta \overset{\sim}{\Longrightarrow} (\beta^* \xi )\circ (\mu^* X) \]
and a 2-morphism (not invertible in general)
\[ \eta \Longrightarrow (\mu^* Y) \circ (\alpha^* \xi ).  \]
Combining the two, we get a lax commutative square (\ref{eqsquare}).  One checks that this endows 
\[ \alpha^* \Rightarrow \beta^* \]
with the structure of pseudo-natural transformation and that one obtains a pseudo-functor
\[ \Fun(I, J) \rightarrow \Fun^{\lax, \mathrm{ strict}}(\SSS^{\cor,0,\lax}(J), \SSS^{\cor,0,\lax}(I)). \]
The oplax case is done similarly.  
\end{DEF}
Actually, by Lemma~\ref{LEMMAPF}, 2., the 2-pre-multiderivator $\SSS^{\cor}$ is equivalent to the 2-pre-multiderivator represented by the 2-multicategory $\mathcal{S}^{\cor}$ (e.g.\@ strictified as in \cite[Remark~5.3]{Hor15b} or taking $\mathcal{S}^{\cor} := \SSS^{\cor}(\cdot)$)

We will now refine these 2-pre-multiderivators taking compactifications into account. The resulting 2-pre-multiderivators will {\em still} be equivalent to the previous ones. This equivalence is the crucial step to show the independence of the main construction of the choice of compactifications later. 

\begin{DEF}\label{DEFCORCOMP}
Let $\tau$ be a tree with symmetrization $\tau^S$, $I$ a (usual) {\em locally finite} diagram. 
We define a category with weak equivalences 
\[ \Cor^{\comp}_I(\tau^S) \]
whose objects are exterior compactifications (cf.\@ Definition~\ref{DEFEXTCOMP}) $X \hookrightarrow \overline{X}$ in $\mathcal{S}^{\tw (\tau \times I)}$ (functors of  opmulticategories)
with $X$ admissible and morphisms are the morphisms of diagrams 
\begin{equation} \label{eqmorextcomp}
\vcenter{ \xymatrix{ X \ar@{^{(}->}[r] \ar[d] & \overline{X} \ar[d] \\
 X' \ar@{^{(}->}[r] & \overline{X}' 
} }
\end{equation}
in which  $X \rightarrow X'$ is an isomorphism. 
Note that $\overline{X}$ is {\em not} assumed to be admissible. These categories are equipped with the class of weak equivalences consisting of {\em all} morphisms. This defines a strict functor
\[ \Cor_I: \Delta_S \rightarrow \mathcal{CATW}  \]
from symmetric trees to categories with weak equivalences. 

Similarly, we define
\[ \Cor_I^{\comp, \lax}: \Delta_S \rightarrow \mathcal{CATW} \qquad \Cor_I^{\comp, \oplax}: \Delta_S \rightarrow \mathcal{CATW} \]
whose objects are exterior compactifications (cf.\@ Definition~\ref{DEFEXTCOMP}) $X \hookrightarrow \overline{X}$ in $\mathcal{S}^{\tw (\tau \times I)}$ such that $X$ is an object in $\Cor_I^{\lax}(\tau)$ (resp.\@ in $\Cor_I^{\oplax}(\tau)$).
Morphisms are morphisms of diagrams (\ref{eqmorextcomp})
 such that $X \rightarrow X'$ is point-wise proper, and weak equivalences are those in which $X \rightarrow X'$ is an isomorphism. 
\end{DEF}
For the functoriality in {\em symmetric} trees the same considerations as following Definition~\ref{DEFCOR} apply.

\begin{LEMMA}\label{LEMMAPROPCONSTRSYMMULTICOMP}
Let $I$ be a locally finite diagram. 
\begin{enumerate}
\item The forgetful functor
\[ \Cor_I^{\comp}(\tau^S)_{(X_o \hookrightarrow \overline{X}_o)_{o \in \tau}}[\mathcal{W}^{-1}_{(X_o \hookrightarrow \overline{X}_o)_{o \in \tau}}] \rightarrow \Cor_I(\tau^S)_{(X_o)_{o \in \tau}} \]
is an equivalence. Similarly for the lax and oplax case. 
\item The strict functor $\tau^S \mapsto \Cor_I^{\comp}(\tau^S)$ (resp.\@ $\tau^S \mapsto \Cor_I^{\comp, \lax}(\tau^S)$, resp.\@ $\tau^S \mapsto \Cor_I^{\comp, \oplax}(\tau^S)$)  satisfies the axioms 1 and 2 of Proposition~\ref{PROPCONSTRSYMMULTI}.
\end{enumerate}
\end{LEMMA}
\begin{proof}
1.\@
We will apply Lemma~\ref{LEMMALOC} below and have to show that the above functor is surjective on objects and morphisms and satisfies the axioms (i) and (ii). 

 The subdiagram \[ \bigcup_{o \in \tau} \tw  I \hookrightarrow \tw (\tau \times I)^\circ \] 
 is final (cf.\@ \ref{PARMULTICOMP} for the notation). 
Hence
 any partial exterior compactification on the left hand side can be extended by Proposition~\ref{PROPCOMPMOR}, 3. Note that by Lemma~\ref{LEMMACATLF} the diagram $\tw I$ and thus also $(\tw (\tau \times I))^\circ$
 is an inverse diagram with finite matching diagrams for any tree. Thus the functor is surjective on objects. The same argument for
 \[ \bigcup_{o \in \tau} (\tw  I) \times \Delta_1 \hookrightarrow (\tw (\tau \times I))^\circ \times \Delta_1 \]
shows that the functor is surjective on morphisms.  
The same argument for 
 \[ (\bigcup_{o \in \tau} (\tw  I) \times \Box) \cup ((\tw (\tau \times I))^\circ \times \righthalfcup) \hookrightarrow (\tw (\tau \times I))^\circ \times \Box \]
 shows axiom (i). 
The same argument for
 \[ (\bigcup_{o \in \tau}( \tw  I) \times \mathbin{\rotatebox[origin=c]{45}{$\boxtimes$}}) \cup ( (\tw (\tau \times I))^\circ \times \mathbin{\rotatebox[origin=c]{45}{$\Box$}}) \hookrightarrow (\tw (\tau \times I))^\circ \times \mathbin{\rotatebox[origin=c]{45}{$\boxtimes$}} \]
 shows axiom (ii). Here $\mathbin{\rotatebox[origin=c]{45}{$\Box$}}$ and $\mathbin{\rotatebox[origin=c]{45}{$\boxtimes$}}$ denote the diagrams
  \[ \xymatrix{ &  \ar[ld] \ar[rd] \\
   &  &  \\
  &  \ar[lu] \ar[ru] } \qquad 
 \xymatrix{ &  \ar[ld] \ar[rd] \\
   &  \ar[u] \ar[d] \ar[l] \ar[r] &  \\
  &  \ar[lu] \ar[ru] }.\]
  Note that $F(f) = F(f')$ implies that the underlying diagram (without exterior compactification) can be extended from $\mathbin{\rotatebox[origin=c]{45}{$\Box$}}$ to $\mathbin{\rotatebox[origin=c]{45}{$\boxtimes$}}$ in a trivial way. 

2.\@ follows from immediately from 1.\@ and Lemma~\ref{LEMMAPROPCONSTRSYMMULTI1}.
\end{proof}

\begin{DEF}\label{DEFSCOR3}
Let $I$ be a locally finite diagram. We define $\SSS^{\cor, \comp}(I)$ (resp.\@ $\SSS^{\cor, \comp, 0, \lax}(I)$, resp.\@ $\SSS^{\cor, \comp, 0, \oplax}(I)$) to be the symmetric 2-multicategory of Proposition~\ref{PROPCONSTRSYMMULTI} constructed from the functor $C_I^{\comp}$ (resp.\@ $C_I^{\comp, \lax}$, resp.\@ $C_I^{\comp, \oplax})$.
\end{DEF}

Note that by Lemma~\ref{LEMMAPROPCONSTRSYMMULTICOMP}, 1., the forgetful functors 
\begin{equation}\label{eqequi}
 \SSS^{\cor, \comp}(I) \to \SSS^{\cor}(I) \quad  \SSS^{\cor, 0, \comp, \lax}(I) \to \SSS^{\cor, 0, \lax}(I) \quad  \SSS^{\cor, 0, \comp, \oplax}(I) \to \SSS^{\cor, 0, \oplax}(I)  
 \end{equation}
induce equivalences on morphism categories.
They are, however, also surjective on objects, because by Proposition~\ref{PROPCOMPMOR}, 1.\@ every $X \in \mathcal{S}^{\tw I}$ has an exterior compactification.

\begin{DEF}\label{DEFSCOR4}
We define a (lax, oplax) symmetric 2-pre-multiderivator $\SSS^{\cor, \comp}$ (resp.\@ $\SSS^{\cor,\comp,0,\lax}$, $\SSS^{\cor,\comp,0,\oplax}$) with domain $\Catlf$. Let $I \in \Catlf$ be a diagram. The 2-multicategory $\SSS^{\cor, \comp}(I)$ (resp.\@ $\SSS^{\cor, \comp,0,\lax}(I)$, $\SSS^{\cor, \comp,0,\oplax}(I)$) has been defined in Definition~\ref{DEFSCOR3}.
To construct the functoriality in $I$ one proceeds as in Definition~\ref{DEFSCOR2}. 
To construct the 2-isomorphisms (resp.\@ 2-morphisms) for the pseudo- (resp.\@ lax, resp.\@ oplax) naturality of the morphism
\[ \alpha^* \Rightarrow \beta^* \]
and to construct the pseudo-functoriality constraints of 
\begin{eqnarray*} 
\Fun(I, J) &\rightarrow& \Fun^{\mathrm{strict}}(\SSS^{\cor, \comp}(J), \SSS^{\cor, \comp}(I)) \quad \text{resp.}  \\
 \Fun(I, J) &\rightarrow& \Fun^{\lax, \mathrm{strict}}(\SSS^{\cor,0,\comp,\lax}(J), \SSS^{\cor,0,\comp,\lax}(I)) \quad \text{resp.} \\
 \Fun(I, J) &\rightarrow& \Fun^{\oplax, \mathrm{strict}}(\SSS^{\cor,0,\comp,\oplax}(J), \SSS^{\cor,0,\comp,\oplax}(I)) 
\end{eqnarray*}  
one can use the equivalences (\ref{eqequi}). 
\end{DEF}

Finally, we put on record the important fact that (\ref{eqequi}) are equivalences of 2-multicategories:
\begin{PROP}\label{PROPEQUIVCOMP}
The forgetful morphisms
\[ \SSS^{\cor,\comp} \rightarrow  \SSS^{\cor} \quad (\text{resp.}\ \SSS^{\cor,\comp,0,\lax} \rightarrow  \SSS^{\cor,0,\lax}, \quad \text{resp.}\ \SSS^{\cor,\comp,0,\oplax} \rightarrow  \SSS^{\cor,0,\oplax})   \]
are equivalences of pre-2-multiderivators. 
\end{PROP}

Above the following Lemma was used:

\begin{LEMMA}\label{LEMMALOC}
Let $F: \mathcal{C} \rightarrow \mathcal{D}$ be a functor which is surjective on objects and morphisms. 
Denote by $\mathcal{W}$ the class of morphisms in $\mathcal{C}$ that are mapped to an identity in $\mathcal{D}$. 
Assume that 
\begin{enumerate}
\item[(i)] A solid diagram of the form 
\[ \xymatrix{  \ar@{.>}[r]^{w'} \ar@{.>}[d]_{f'} &  \ar[d]^f \\
  \ar[r]_{w} &  }\]
with $w \in \mathcal{W}$ can always be completed to a commutative diagram as indicated such that $w' \in \mathcal{W}$. 
\item[(ii)] A solid diagram of the form 
 \[ \xymatrix{ &  \ar[ld]_{w} \ar[rd]^f \\
   &  \ar@{.>}[u]|{w''} \ar@{.>}[d]|{w'''} \ar@{.>}[l] \ar@{.>}[r] &  \\
  &  \ar[lu]^{w'} \ar[ru]_{f'} } \]
 in which $w, w' \in \mathcal{W}$ and such that $F(f) = F(f')$ can always be completed to a commutative diagram as indicated with $w'', w''' \in \mathcal{W}$.
\end{enumerate}
Then the functor $F$ induces an equivalence $\mathcal{C}[\mathcal{W}^{-1}] \cong \mathcal{D}$. 
\end{LEMMA}

 \begin{proof}
 Assumption (i) implies that every morphism in $\mathcal{C}[\mathcal{W}^{-1}]$ can be represented by a roof
 \[ \xymatrix{ & \ar[ld]_{w} \ar[rd]^f \\
   & & } \]
  in which $w \in \mathcal{W}$.
  Now consider two parallel morphisms 
   \[ \xymatrix{ &  \ar[ld]_{w} \ar[rd]^f \\
   &&  \\
  &  \ar[lu]^{w'} \ar[ru]_{f'} } \]
in $\mathcal{C}[\mathcal{W}^{-1}]$ with $F(f) = F(f')$. Extending the diagram as in assumption (ii) we get
\[  f w^{-1} = f  (w'') (w'')^{-1} w^{-1} = f'  (w''') (w''')^{-1} (w')^{-1} = f' (w')^{-1}. \]
Therefore the induced functor $\mathcal{C}[\mathcal{W}^{-1}] \cong \mathcal{D}$ is faithful.  
 \end{proof}

\section{The input for the construction of derivator six-functor-formalisms}

\begin{PAR}\label{PARAXIOMS}

Let $\mathcal{S}$ be a category with compactifications, and $\SSS^{\op}$ the symmetric pre-multiderivator represented by $\mathcal{S}^{\op}$ with the symmetric multicategory structure \ref{PAROPMULTCAT}. 
We consider a (symmetric) fibered multiderivator $\DD \rightarrow \SSS^{\op}$ with domain $\Invlf$. 
This might be seen as a (symmetric) derivator four-functor-formalism encoding $f_*, f^*, \otimes, \mathcal{HOM}$ and their usual properties. 

More precisely, for a morphism $f$ in $\Fun(I, \mathcal{S}^{\op})$ we denote by $f^*$ a push-forward functor $(f^{\op})_\bullet$, by $f_*$ a pull-back functor $(f^{\op})^\bullet$, 
and for a diagram $S \in \Fun(I, \mathcal{S}^{\op})$ by $\otimes$ ($S$ being understood) a push-forward along the multimorphism $(\id_{S}, \id_{S})$, and by 
$\mathcal{HOM}_l$, resp.\@ $\mathcal{HOM}_r$, pull-back functors w.r.t.\@ the first, resp.\@ second slot along the multimorphism $(\id_{S}, \id_{S})$.
All pull-back and push-forward functors exist by (FDer0 left), resp.\@ (FDer0 right).

We consider the following axioms

\begin{itemize}
\item[(F1)] For each diagram $I \in \Invlf$ and point-wise embedding $\iota: S \hookrightarrow T$ in $\SSS(I^{\op})$, the functor $\iota^*$ (aka $(\iota^{\op})_\bullet$), as morphism of derivators, commutes with homotopy limits\footnote{by (Der2), for this statement one may assume $I=\cdot$.}, and has a left adjoint
\[ \iota_!: \DD(I)_{S} \rightarrow \DD(I)_{T}. \]
\item[(F2)] For each embedding $\iota: S \hookrightarrow T$ in $\mathcal{S}$ the corresponding functor
\[ \iota_!: \DD_S(\cdot) \rightarrow \DD_T(\cdot) \]
is fully faithful. 
\item[(F3)] For each proper morphism $f$ in $\mathcal{S}$ the functor $f_*$ (as morphism of derivators) commutes with homotopy colimits. 
\item[(F4)] For each proper morphism $f$ in $\mathcal{S}$ and any Cartesian square
\[ \xymatrix{
 \ar@{->>}[r]^F \ar[d]_G &  \ar[d]^g \\
 \ar@{->>}[r]_f & 
} \]
the natural exchange morphism (base change)
\[ G^* F_* \rightarrow f_* g^*   \]
is an isomorphism. 
\item[(F5)] For each embedding $\iota$ in $\mathcal{S}$ and for any Cartesian square
\[ \xymatrix{
 \ar[r]^{F} \ar@{^{(}->}[d]_{I} &  \ar@{^{(}->}[d]^\iota \\
 \ar[r]_{f} & 
} \]
 the natural exchange morphism (base change)
\[ \iota^* f_* \rightarrow F_* I^* \]
is an isomorphism.
\item[(F6)] For each proper morphism $f$  and  embedding $\iota$ forming a Cartesian square
\[ \xymatrix{
 \ar@{->>}[r]^{F} \ar@{^{(}->}[d]_{I} &  \ar@{^{(}->}[d]^\iota \\
 \ar@{->>}[r]_{f} & 
} \]
the exchange of the base change isomorphism from (F4) (equivalently from (F5)) 
\[ \iota_! F_* \rightarrow f_* I_! \]
is an isomorphism as well.

\item[(F4m)] For each proper morphism $f$ in $\mathcal{S}$ we have projection formulas, i.e.\@ the natural exchange morphisms
\[  (f_* -) \otimes - \rightarrow f_*(- \otimes f^*-) \quad  - \otimes (f_* -) \rightarrow f_*((f^*-) \otimes -)  \]
are isomorphisms\footnote{In case $\DD$ is symmetric, these two assertions are equivalent.}. 
\item[(F5m)] For each embedding $\iota$ in $\mathcal{S}$ we have ``coprojection formulas'', i.e.\@ the natural exchange morphisms
\[ \iota^* \mathcal{HOM}_l(-, -) \rightarrow  \mathcal{HOM}_l(\iota^*-, \iota^*-) \quad \iota^* \mathcal{HOM}_r(-, -) \rightarrow  \mathcal{HOM}_r(\iota^*-, \iota^*-)  \]
are isomorphisms\footnote{In case $\DD$ is symmetric, we have $\mathcal{HOM}_l=\mathcal{HOM}_r$.}.
\end{itemize}
\end{PAR}

\begin{BEM}Except for (F1) and (F3) these axioms only involve the underlying bifibration
\[ \DD(\cdot) \rightarrow \mathcal{S}^{\op} \]
and have thus nothing to do with the derivator enhancement.
If $\DD \rightarrow \SSS^{\op}$ is infinite and has stable fibers, then the commutation with arbitrary homotopy (co)limits can also be checked
on the underlying bifibration {\em together with} the triangulated structure on the fiber. Indeed a morphism of stable (infinite) derivators commutes with all homotopy (co)limits if and only if the functor on the underlying category is exact (i.e.\@ preserves distinguished triangles) and commutes with arbitrary (co)products. The infiniteness is needed to ensure that arbitrary homotopy (co)products are the same as (co)products in the underlying category. 
\end{BEM}
\begin{BEM}\label{BEMPROJFORMULAIOTA}
If $\iota^*$ has a left adjoint $\iota_!$ for any embedding $\iota$ in $\mathcal{S}$ (e.g.\@ if (F1) holds true) then (F5), resp.\@ (F5m), is equivalent to the condition that 
\[ I_! F^*  \to  f^* \iota_!   \qquad   \iota_! (- \otimes (\iota^* -)) \to (\iota_! -) \otimes - \qquad   \iota_! (( \iota^*-) \otimes -) \to -\otimes  (\iota_! -)   \]
are isomorphisms. 
\end{BEM}

\begin{PAR}Assume (F1) and (F2). Then
a morphism $\mathcal{E} \rightarrow \mathcal{F}$ in $\DD(\cdot)$ over an embedding is called {\bf strongly coCartesian}, if it is coCartesian, i.e. if it induces an isomorphism
\[ \iota^* \mathcal{E} \rightarrow \mathcal{F} \]
whose inverse
\[ \mathcal{F} \rightarrow \iota^* \mathcal{E} \]
induces an isomorphism
\[ \iota_! \mathcal{F} \overset{\sim}{\rightarrow}  \mathcal{E} \]

Let $\iota: U \hookrightarrow S$ be an embedding.
We say that an object $\mathcal{E}$ in $\DD(\cdot)_S$ has {\bf support in $U$} if it lies in the essential image of the fully-faithful functor $\iota_!$.
A coCartesian morphism 
\[ \mathcal{E} \rightarrow \mathcal{F} \]
over $\iota^{\op}$ is {\em strongly} coCartesian if and only if $\mathcal{E}$ has support in $U$. We use the notation $\cocart^*$ for strongly coCartesian. It will only be used over embeddings. 
\end{PAR}

\begin{LEMMA}\label{LEMMAF4}
Axioms (F4) and (F4m) are equivalent to the following statement:
For all Cartesian squares
\begin{equation}
 \xymatrix{
W  \ar[r]^-{G} \ar@{->>}[d]_{F} & Z_1, \dots, Z_n \ar@{->>}[d]^{f_1,\dots, f_n} \\ 
Y  \ar[r]_-{g}  & X_1, \dots, X_n  
}
\end{equation}
in which the $f_i$ and $F$ are proper,  for a commutative square
\[ \xymatrix{
\mathcal{H} \ar@{<-}[r]^-\delta \ar@{<-}[d]_\gamma & \mathcal{E}_1, \dots, \mathcal{E}_n  \ar@{<-}[d]^{\alpha_1, \dots, \alpha_n} \\
\mathcal{G} \ar@{<-}[r]_-\beta & \mathcal{F}_1, \dots, \mathcal{F}_n 
} \]
in $\DD(\cdot)$ above it, the following holds: If the $\alpha_i$ are Cartesian $(\mathcal{F}_i \cong f_{i,*}\mathcal{E}_i)$ and $\delta$ is coCartesian $(G^*(\mathcal{E}_1, \dots, \mathcal{E}_n) \cong \mathcal{H})$ then $\beta$ is coCartesian if and only if $\gamma$ is Cartesian or, in other words, the natural exchange 
\[ g^*( f_{1,*} -, \dots, f_{n,*} -)  \rightarrow F_* G^*(-, \dots, -)\]
is an isomorphism.
\end{LEMMA}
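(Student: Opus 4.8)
The plan is to show the two statements are equivalent by unwinding what the "commutative square above" encodes. First I would observe that the datum of the square with $\delta$ coCartesian and the $\alpha_i$ Cartesian is exactly the datum of a morphism from $G^*(\mathcal{E}_1,\dots,\mathcal{E}_n)$ to $F^*\mathcal{G}$ lying over $F$ (using that $\mathcal{H}\cong G^*(\mathcal{E}_1,\dots,\mathcal{E}_n)$ and $\gamma$ can be rewritten, by adjunction along the proper morphism $F$, as a morphism $\mathcal{H}\to F^*\mathcal{G}$, i.e.\@ $\gamma$ Cartesian iff this adjoint morphism $\mathcal{H}\to F^*\mathcal{G}$ together with $F_*$ exhibits $\mathcal{G}\cong F_*\mathcal{H}$). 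Dually, $\beta$ coCartesian says $\mathcal{G}\cong g^*(\mathcal{F}_1,\dots,\mathcal{F}_n)\cong g^*(f_{1,*}\mathcal{E}_1,\dots,f_{n,*}\mathcal{E}_n)$. So the assertion "$\gamma$ Cartesian $\Leftrightarrow$ $\beta$ coCartesian" for all such data is precisely the assertion that the canonical exchange morphism $g^*(f_{1,*}-,\dots,f_{n,*}-)\to F_*G^*(-,\dots,-)$ is an isomorphism — this is what makes the final "in other words" clause a genuine reformulation rather than an additional claim, and I would spell out the construction of that exchange morphism as the composite built from the unit of $(G^*,G_*)$ (really $(F_*,F^*)$-type adjunctions applied slotwise) and the base-change datum.

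Next, the real content is that this multi-variable exchange isomorphism is equivalent to the conjunction of (F4) and (F4m). For the direction (F4)+(F4m) $\Rightarrow$ Lemma: I would factor the $n$-ary situation into simpler pieces. Write $g$ and $G$ as multimorphisms into $X_1,\dots,X_n$ resp.\@ $Z_1,\dots,Z_n$; the multi-pullback $W$ fits into a tower of ordinary Cartesian squares obtained by pulling back one factor at a time. Concretely, $W \to Y\times_{X_1}Z_1$, then base-change that along $Z_2$, etc. Each step is an ordinary Cartesian square with a proper vertical map, so (F4) gives the ordinary base-change isomorphism $G^*F_*\cong f_*g^*$ there; and the interaction with $\otimes_Y$ vs $\otimes_X$, i.e.\@ the passage from $g^*(f_{1,*}\mathcal E_1)\otimes\cdots$ to $f_*$ of a tensor of pullbacks, is governed by the projection formula (F4m) applied to the proper map $F$ (and its base changes). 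I would assemble these, checking the composites of exchange morphisms agree with the canonical one by the usual compatibility of base change with composition and with the monoidal structure — this is bookkeeping using the axioms (FDer0), (FDer3–5) of a fibered multiderivator that let one compose exchange morphisms coherently.

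Conversely, Lemma $\Rightarrow$ (F4)+(F4m): specialize. Taking $n=1$ and $g$, $G$ honest morphisms recovers exactly (F4). Taking $n\ge 1$ but with one of the "legs" of the multicorrespondence being an identity and choosing $W=Y$, $F=\mathrm{id}$ appropriately — more precisely, applying the isomorphism to the Cartesian square where $g=(\mathrm{id}_Y, h)$ with $h\colon Y\to X$ and $f=(\mathrm{id},f)$ — makes the exchange morphism collapse to $(f_*-)\otimes -\to f_*(-\otimes f^*-)$, which is (F4m). (One gets the second projection formula by using the symmetry, or by putting the identity in the other slot.) I would note that the symmetric-group action on the morphism categories, already set up in the preceding sections, makes the two projection formulas in (F4m) literally conjugate, consistent with the footnote.

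The main obstacle I anticipate is purely organizational: constructing the $n$-variable exchange morphism as a well-defined canonical map and verifying that the factorization into one-variable base changes and projection formulas reassembles to \emph{it} (and not merely to \emph{an} isomorphism) — i.e.\@ the coherence/compatibility of exchange morphisms under composition of Cartesian squares and under the tensor. This is where one must be careful rather than clever; the underlying fibered-multiderivator formalism of \cite{Hor15, Hor17b} provides exactly the compatibility lemmas needed, so the argument is a (somewhat lengthy) diagram chase rather than anything requiring a new idea. I would relegate the detailed coherence check to a reference to those compatibility statements and present here only the factorization and the two specializations.
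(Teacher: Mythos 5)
The paper's own ``proof'' of this lemma is the single word \emph{Exercise}, so there is nothing to compare against line by line; your proposal is precisely the intended solution to that exercise and is correct in substance. The translation of the square conditions into the exchange morphism, the decomposition of the $n$-ary multi-pullback into iterated ordinary Cartesian squares (giving (F4) slotwise) interleaved with projection formulas for $F$ and its base changes (giving (F4m)), and the converse by specialization are exactly the standard argument this formalism is set up to make routine.

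Two small points of care. First, the adjoint of $\gamma$ lives in the fiber over $Y$ as a morphism $\mathcal{G} \rightarrow F_*\mathcal{H}$ (factor $\gamma$ through the Cartesian lift of $F^{\op}$), and ``$\gamma$ Cartesian'' means that morphism is an isomorphism; your phrasing ``$\mathcal{H} \rightarrow F^*\mathcal{G}$'' has the wrong variance and would not detect Cartesianness. Second, to extract (F4m) cleanly you should specialize to $n=2$, $Y = X_1 = X_2 = X$, $g = (\id_X,\id_X)$ (so that $g^*$ is literally $\otimes_X$), $f_1 = f$ proper and $f_2 = \id$; then $W = Z$, $F = f$, $G = (\id_Z, f)$ and the exchange collapses to $(f_*-)\otimes - \rightarrow f_*(-\otimes f^*-)$, with the other slot (or the symmetric-group action) giving the second formula. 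With those adjustments the argument is complete; the remaining work is, as you say, the coherence bookkeeping for composites of exchange morphisms.
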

\begin{proof}
Exercise.
\end{proof}

\begin{LEMMA}\label{LEMMAF5}
Axioms (F5) and (F5m) are (in the presence of (F1--2)) equivalent to the following statement:  For all Cartesian squares
\begin{equation}
 \xymatrix{
W  \ar[r]^-{G} \ar@{^{(}->}[d]_{I} & Z_1, \dots, Z_n \ar@{^{(}->}[d]^{\iota_1. \dots, \iota_n} \\ 
Y  \ar[r]_-g  & X_1, \dots, X_n  
}
\end{equation}
in which the $\iota_i$, and $I$ are embeddings,  for a commutative square
\[ \xymatrix{
\mathcal{H} \ar@{<-}[r]^-\delta \ar@{<-}[d]_-\gamma & \mathcal{E}_1, \dots, \mathcal{E}_n \ar@{<-}[d]^{\alpha_1, \dots, \alpha_n} \\
\mathcal{G} \ar@{<-}[r]_-\beta & \mathcal{F}_1, \dots, \mathcal{F}_n
} \]
in $\DD(\cdot)$ above it, the following holds: If the $\alpha_i$ are strongly coCartesian $(\iota_i^*\mathcal{F}_i \cong \mathcal{E}_i$ inducing $\iota_{i,!}\mathcal{E}_i \cong \mathcal{F}_i)$ and $\delta$ is coCartesian $(G^*(\mathcal{E}_1, \dots, \mathcal{E}_n) \cong \mathcal{H})$ then $\beta$ is coCartesian if and only if $\gamma$ is strongly coCartesian or, in other words, the natural exchange 
\[ I_!  G^* (-, \dots, -)  \rightarrow g^* ( \iota_{1,!}-, \dots, \iota_{n,!}-) \] 
is an isomorphism. 
\end{LEMMA}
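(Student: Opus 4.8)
The plan is to split the statement into a purely formal reduction and an axiomatic core. For the formal part, exactly as for Lemma~\ref{LEMMAF4} (where the verification is left as an exercise), one observes that, given the Cartesian square and given that the $\alpha_i$ are strongly coCartesian and $\delta$ is coCartesian, so that $\mathcal{F}_i\cong\iota_{i,!}\mathcal{E}_i$ and $\mathcal{H}\cong G^*(\mathcal{E}_1,\dots,\mathcal{E}_n)$, the remaining data $(\gamma,\beta)$ of a commutative square is equivalent --- by the universal properties defining $g^*$ and the $\iota_i^*$ together with the adjunction $\iota_!\dashv\iota^*$ from (F1) and the full faithfulness of $\iota_!$ from (F2) --- to a single morphism, which one identifies with the canonical exchange $I_!G^*(-,\dots,-)\to g^*(\iota_{1,!}-,\dots,\iota_{n,!}-)$. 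Under this dictionary ``$\beta$ coCartesian $\Leftrightarrow\gamma$ strongly coCartesian (for all such squares)'' becomes ``this exchange is an isomorphism (for all such squares)''. It therefore suffices to prove the latter condition equivalent to (F5) together with (F5m).

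For the implication ``the exchange condition $\Rightarrow$ (F5) and (F5m)'' I would simply specialise. Taking $n=1$ with $g$ an ordinary morphism, the square is a Cartesian square in $\mathcal{S}$ and the exchange reads $I_!G^*\cong g^*\iota_!$, which by Remark~\ref{BEMPROJFORMULAIOTA} is equivalent to (F5). Taking $n=2$ with $X_1=X_2=Y$, $g=\Delta_Y^{\op}$ the diagonal multimorphism, $\iota_1=\id_Y$ and $\iota_2=\iota\colon U\hookrightarrow Y$ an arbitrary embedding, one computes (using Lemma~\ref{LEMMAPROPERTIESCOMP} and (S3) to see that $\id\times\iota$ and its pullback are embeddings) that $W\cong U$, $I=\iota$, $G_1=\iota$, $G_2=\id_U$, and the exchange specialises to $\iota_!\bigl((\iota^*-)\otimes -\bigr)\cong -\otimes(\iota_!-)$; the symmetric choice $\iota_1=\iota$, $\iota_2=\id$ gives the other coprojection formula. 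By Remark~\ref{BEMPROJFORMULAIOTA} these two are equivalent to (F5m).

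For the converse ``(F1), (F2), (F5), (F5m) $\Rightarrow$ the exchange condition'' I would induct on $n$ (the case $n=0$ forcing $W=Y$, $I=\id$ and being trivial, and $n=1$ being (F5)). For $n\rightsquigarrow n+1$, factor $g=(g_1,\dots,g_{n+1})$ as $g=\Delta_Y^{\op}\circ(g_1,g')$ with $g'=(g_2,\dots,g_{n+1})$, so that $g^*(-,\dots,-)=g_1^*(-)\otimes_Y(g')^*(-,\dots,-)$, and split the Cartesian square of multicorrespondences accordingly into a one-variable square with bottom $g_1$ and right edge $\iota_1$ (top-left corner $W_1:=Y\times_{X_1}Z_1$, embedding $J_1\colon W_1\hookrightarrow Y$) and an $n$-variable square with bottom $g'$ and right edge $(\iota_2,\dots,\iota_{n+1})$ (top-left corner $W':=Y\times_{\prod_{i\ge 2}X_i}\prod_{i\ge 2}Z_i$, embedding $J'\colon W'\hookrightarrow Y$). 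One has $W=W_1\times_Y W'$ and $I=J_1\circ\pr=J'\circ\pr$, every map in sight being an embedding by (S3) and Lemma~\ref{LEMMAPROPERTIESCOMP}. Then I would apply (F5) to the first square, the inductive hypothesis to the second, base change for embeddings (again (F5), in the form $I_!F^*\cong f^*\iota_!$) to rewrite $J_1^*J'_!$ as $\pr_!\,\pr^*$ over $W$, and finally the embedding projection formula (Remark~\ref{BEMPROJFORMULAIOTA}, equivalent to (F5m)) twice, to pull both $J_{1,!}$ and the remaining $\pr_!$ inside the tensor product; the result is $I_!$ applied to $\bigotimes_W G_i^*\mathcal{E}_i=G^*(\mathcal{E}_1,\dots,\mathcal{E}_{n+1})$.

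The main obstacle is the last point of the induction: one must verify that the isomorphism so assembled is \emph{the} canonical exchange $I_!G^*(-,\dots,-)\to g^*(\iota_{1,!}-,\dots,\iota_{n,!}-)$ and not merely \emph{an} isomorphism. I would handle this by a diagram chase using the functoriality of exchange morphisms under composition of (multi)morphisms and of Cartesian squares: decompose the exchange attached to $g$ along the factorisation $g=\Delta_Y^{\op}\circ(g_1,g')$ into the exchange for $g_1$ (supplied by (F5)), the exchange for $g'$ (supplied by the inductive hypothesis), the base-change exchange for the square $W=W_1\times_Y W'$, and the two canonical exchanges of the embedding projection formula, and check term by term that they match. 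If the general compatibility of exchange morphisms with composition in the base multicategory is available as a black box from the fibered-multiderivator formalism, this last verification becomes essentially automatic; otherwise it is the only genuinely laborious --- but entirely routine --- part of the argument. All the underlying geometry ($W=W_1\times_Y W'$, the fact that every projection is an embedding, compatibility of the various pullbacks) is immediate from (S0)--(S5) and Lemma~\ref{LEMMACART1}.
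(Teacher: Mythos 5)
Your proof is correct and fills in what the paper leaves as an exercise: the translation of the (co)Cartesian-square formulation into the exchange morphism, the specializations to $n=1$ and to the diagonal $\Delta_Y^{\op}$ recovering (F5) and (F5m) via Remark~\ref{BEMPROJFORMULAIOTA}, and the induction on $n$ through the factorization $g=\Delta_Y^{\op}\circ(g_1,g')$ with $W=W_1\times_Y W'$ are all sound, with all maps in sight embeddings by (S2)--(S3). The remaining verification that the assembled isomorphism is the canonical exchange is exactly the kind of routine compatibility the paper elsewhere dispatches with ``elementary properties of exchange morphisms,'' so your treatment matches the intended route and level of rigour.
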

\begin{proof}
Exercise, cf.\@ also Remark~\ref{BEMPROJFORMULAIOTA}.
\end{proof}

\begin{PAR}
There are two possibilities of constructing the left adjoint $\iota_!$ required by (F1). 
One possibility is to use Brown representability for the dual:
\end{PAR}
\begin{PROP}
Assume that
$\DD \rightarrow \SSS^{\op}$ is infinite and has stable, {\em compactly generated} fibers, 
and $\iota^*$ for all embeddings $\iota: S \hookrightarrow T$ (as morphism of derivators) commutes  with homotopy limits as well.
Equivalently: $\iota^*$ (as functor on the underlying triangulated categories) is exact and commutes with infinite products. 
Then a left adjoint $\iota_!$ to $\iota^*$ exists for all morphisms $\iota$ as in (F1).
\end{PROP}
\begin{proof}
Cf.\@ \cite[Theorem 4.2.2]{Hor15}. Note that for a point-wise embedding $\iota: S \hookrightarrow T$ in $\Fun(I, \mathcal{S}^{\op})$ the associated morphism of derivators
(fibers) commutes with homotopy limits if this is point-wise the case.
\end{proof}

Another possibility by direct construction is available if $\DD \rightarrow \SSS^{\op}$ has been constructed from a bifibration of multi-model categories.  
Let $\mathcal{D} \rightarrow \mathcal{S}^{\op}$ be a bifibration of multi-model categories as in \cite[Definition 5.1.3]{Hor15}. 
In \cite{Hor17b} it was shown that the associated morphism of pre-multiderivators, i.e.\@
\[ \xymatrix{ \DD(I) := \Fun(I, \mathcal{D})[\mathcal{W}_I^{-1}] \ar[d] \\ \SSS^{\op}(I) := \Fun(I, \mathcal{S}^{\op}) }\]
 is a left and right fibered multiderivator {\em with domain $\Cat$}.  
\begin{PROP}\label{PROPIOTA}
Assume that for any embedding $\iota: S \hookrightarrow T$ the functor $\iota^*: \mathcal{D}_T \rightarrow \mathcal{D}_S$ has a left adjoint $\iota_!$ which is left Quillen as well, then $\DD \rightarrow \SSS^{\op}$ satisfies (F1). If $\iota_!$ is fully-faithful then also (F2) holds true. 
\end{PROP}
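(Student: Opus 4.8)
The plan is to reduce both claims to a single observation about the fibers --- the hypothesis turns each $\iota^*$ into a Quillen functor of \emph{both} kinds --- and then to feed this into the model-categorical presentation of $\DD\to\SSS^{\op}$ from \cite{Hor17b}.

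First I would settle the fiberwise picture. For an embedding $\iota: S\hookrightarrow T$ the functor $\iota^*=(\iota^{\op})_\bullet: \mathcal{D}_T\to\mathcal{D}_S$ is left Quillen by the definition of a bifibration of multi-model categories; the hypothesis that it has a left adjoint $\iota_!$ which is left Quillen says precisely that $\iota^*$ is also right Quillen. A functor that is simultaneously left and right Quillen preserves \emph{all} weak equivalences: for a weak equivalence $w: X\to Y$, pick trivial fibrations $QX\to X$, $QY\to Y$ from cofibrant objects with a lift $QX\to QY$; then $\iota^*$ of the lift is a weak equivalence by Ken Brown, $\iota^*(QX\to X)$ and $\iota^*(QY\to Y)$ are trivial fibrations because $\iota^*$ is right Quillen, and $2$-out-of-$3$ gives that $\iota^*(w)$ is a weak equivalence. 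The same argument shows $\iota^*$ preserves all cofibrations and all fibrations. In particular $\mathbb{L}\iota^*=\iota^*=\mathbb{R}\iota^*$, and, applied pointwise, the same is true for the induced endofunctor of $\Fun(I,\mathcal{D})$.

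For (F1) I would then invoke \cite{Hor17b}, according to which the homotopy Kan extensions of $\DD\to\SSS^{\op}$ along a functor $\alpha: I\to J$ --- in particular the homotopy limits $\alpha_*$ --- are right derived functors of ordinary relative Kan extensions, computed with respect to suitable (generalized-Reedy / injective-type) model structures on the diagram categories over $I$. The pointwise $\iota^*$ is right Quillen for these structures as well: its pointwise left adjoint $\iota_!$, being left Quillen on the fibers, preserves the pointwise (trivial) cofibrations out of which these structures are built. Hence, on the one hand, $\iota^*$ preserves fibrant objects and fibrant replacements (it is moreover exact) and, being a pointwise right adjoint, commutes with the strict relative Kan extensions, so that $\iota^*\alpha_*\cong\alpha_*\iota^*$ --- i.e.\@ $\iota^*$ commutes with homotopy limits. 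On the other hand $\iota^*: \Fun(I,\mathcal{D})_T\to\Fun(I,\mathcal{D})_S$, being right Quillen between combinatorial model categories, possesses a left Quillen left adjoint $\iota_!^{(I)}$ (it preserves limits and the categories are presentable), whose left derived functor $\mathbb{L}\iota_!^{(I)}$ is left adjoint to $\mathbb{R}\iota^*=\iota^*$; this is the required $\iota_!: \DD(I)_S\to\DD(I)_T$. Note that $\iota_!^{(I)}$ is \emph{not} computed by the naive pointwise formula, since base change along the (arbitrary) transition morphisms of the diagram is not available; this is exactly the step where the genuine work lies, and where the restriction to $\Dirlf$ enters, guaranteeing that the relevant diagram categories carry the model structures used in \cite{Hor17b}.

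Finally, for (F2): if $\iota_!: \mathcal{D}_S\to\mathcal{D}_T$ is fully faithful, the unit $\mathrm{id}\to\iota^*\iota_!$ is an isomorphism. Since $\iota_!$ is left Quillen and $\iota^*$ preserves all weak equivalences, the functors $\iota^*\circ\mathbb{L}\iota_!$ and $\mathbb{L}(\iota^*\iota_!)=\mathbb{L}(\mathrm{id})$ are both obtained by evaluation on a cofibrant replacement, hence agree, so the unit of the derived adjunction $\mathbb{L}\iota_!\dashv\iota^*$ on $\DD_S(\cdot)=\Ho(\mathcal{D}_S)$ is an isomorphism; that is, $\iota_!: \DD_S(\cdot)\to\DD_T(\cdot)$ is fully faithful. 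I expect the only real obstacle in the whole argument to be the model-categorical bookkeeping in the (F1) step --- pinning down which model structures present the $\alpha_*$ and $\alpha_!$ of $\DD\to\SSS^{\op}$ and checking the pointwise $\iota^*$ is right Quillen for them --- for which I would rely entirely on \cite{Hor17b}.
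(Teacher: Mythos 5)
Your opening observation is correct and useful: by Definition~\ref{PARQUILLENN} the functor $\iota^*=(\iota^{\op})_\bullet$ is left Quillen, and the hypothesis makes it right Quillen as well, so it preserves all weak equivalences, cofibrations and fibrations. Your treatment of (F2) is also essentially the paper's (check the unit over $I=\cdot$ and pass to localizations). The problem is the core of (F1): the existence of the left adjoint $\iota_!:\DD(I)_S\to\DD(I)_T$ for non-trivial $I$. You correctly identify that the naive pointwise formula fails (the base-change exchange against the transition morphisms of the diagram goes the wrong way), and you correctly say that this is "where the genuine work lies" --- but you then discharge it by invoking the adjoint functor theorem for presentable/combinatorial model categories and by deferring to \cite{Hor17b}. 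Neither is available: the fibers $\mathcal{D}_S$ in Definition~\ref{PARQUILLENN} are arbitrary closed model categories, with no accessibility or presentability assumed, and \cite{Hor17b} supplies the fibered multiderivator structure ($\alpha_!,\alpha_*,f_\bullet,f^\bullet$) but not a left adjoint to a push-forward. So the key step is missing.

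The paper fills exactly this gap by an explicit construction, which is also where the restriction to $\Dirlf$ really enters: for $I$ directed, $(\iota_!\mathcal{E})(i)$ is defined by induction on the degree of $i$ as the pushout of $L_i\iota_!\mathcal{E}\leftarrow\iota_!L_i\mathcal{E}\rightarrow\iota_!(\mathcal{E}(i))$, where $L_i$ is the latching object functor; one then checks that this preserves cofibrant objects and weak equivalences between them (Ken Brown plus the fact that the square is a homotopy pushout), so that $\iota_!$ and $\iota^*$ restrict to an adjoint pair on $\Fun(I,\mathcal{D})^{\Cof}$ preserving pointwise weak equivalences, and hence descend to an adjunction on the localizations. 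This construction moreover yields the remaining half of (F1) almost for free: commutation of $\iota^*$ with homotopy limits is equivalent to $\iota_!$ being computed pointwise on constant diagrams, which reduces to $\iota_!L_i\mathcal{E}\to L_i\iota_!\mathcal{E}$ being a weak equivalence, proved by induction using that $\iota_!$ commutes with colimits. Your alternative route to that half (pointwise $\iota^*$ is right Quillen, preserves fibrant replacements, and "commutes with the strict relative Kan extensions") needs care: for a varying base the strict $\alpha_*$ involves the transition pull-backs $S(\mu)_*$, and commuting $\iota^*$ past those is a base-change statement that is not available at this stage; it is only because the homotopy limits in (F1) are taken over constant extensions of the base that such an argument could be salvaged.
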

\begin{proof}We first let $I \in \Dir$ be a directed diagram.
Consider a point-wise embedding $\iota: S \hookrightarrow T$ in $\SSS(I^{\op})$. In \cite[5.1.18]{Hor15} model category structures have been constructed on the categories
\[ \Fun(I, \mathcal{D})_{S^{\op}} \quad \Fun(I, \mathcal{D})_{T^{\op}} \]
in which fibrations and weak equivalences are the point-wise ones, 
turning $\iota^*$ (aka $(\iota^{\op})_\bullet$) into a left Quillen functor. We show that $\iota^*$ has a left adjoint $\iota_!$ which preserves cofibrant objects and weak equivalences between cofibrant objects.  
$(\iota_! \mathcal{E})(i)$ is defined by induction on the degree $n$ of $i$ by the coCartesian square:
\[ \xymatrix{
\iota_! L_i \mathcal{E} \ar[r] \ar[d] & \iota_! (\mathcal{E}(i)) \ar[d] \\
 L_i \iota_! \mathcal{E} \ar[r] &  (\iota_! \mathcal{E})(i) 
} \]
where $L_i$ is the latching object functor (cf.\@ \cite[p.\@ 74]{Hor15}). $L_i$ respects cofibrations and trivial cofibrations because it is a composition of three functors, two of
which are left Quillen and one respects cofibrations and trivial cofibrations by \cite[Lemma 5.1.24]{Hor15}.

For a cofibrant object $\mathcal{E}$ the top horizontal morphism is a cofibration 
and therefore also the bottom horizontal morphism is a cofibration. 
By induction $\iota_! \mathcal{E}$ is cofibrant when restricted to objects of degree $< n$ and thus also $L_i \iota_! \mathcal{E}$ is cofibrant and hence the so extended $\iota_! \mathcal{E}$ is.
Summarizing, all entries in the above square are cofibrant and the top horizontal morphism is a cofibration. In particular, the diagram is also a homotopy push-out. 
For a weak equivalence $\mathcal{E} \rightarrow \mathcal{F}$ between cofibrant objects, we get a morphism of diagrams of shape $\lefthalfcap$ 
which consists point-wise of weak equivalences: For the lower left entry use induction and the fact that $\iota_! \mathcal{E}$ is cofibrant again, for the upper  entries by the assumption and Ken Browns Lemma $\iota_!$ maps weak equivalences between cofibrant objects to weak equivalences.
Therefore also the induced morphism $(\iota_! \mathcal{E})(i) \rightarrow (\iota_! \mathcal{F})(i)$ is a weak equivalence. 
We obtain functors $\iota_!$ and $\iota^*$
\[ \xymatrix{  \Fun(I,\mathcal{D})_{S^{\op}}^{\Cof} \ar@/^5pt/[rr]^{\iota_!} && \ar@/^5pt/[ll]^{\iota^*}  \Fun(I,\mathcal{D})_{T^{\op}}^{\Cof} } \]
which both preserve (point-wise) weak equivalences. By construction they are adjoint.
They thus induce morphisms between the respective localizations $\DD(I)_{S^{\op}}$ and $\DD(I)_{T^{\op}}$ which are adjoint again. 

Lastly, over a point $I= \cdot$, 
if the original $\iota_!$ is fully-faithful, then the unit of the adjunction is an isomorphism and thus also still when passing to the localizations, which is equivalent to the induced $\iota_!$ on the localization
being fully-faithful.

It remains to be shown that $\iota^*$ commutes with homotopy limits or, equivalently, that the functor $\iota_!$
constructed above is computed point-wise {\em on constant diagrams}. By definition of $\iota_!$ this is the case if  the morphism 
\[\iota_! L_i \mathcal{E} \rightarrow L_i \iota_! \mathcal{E} \]
is a weak equivalence. However, by induction, $\iota_!$ is computed point-wise when restricted to objects of degree $<n$ and hence
the statement follows from the fact that $\iota_!$, being a left adjoint, commutes with colimits. 

If $I \in \Cat$ is a general diagram and $\iota: S \rightarrow T$ be a point-wise embedding in $\SSS(I^{\op})$, by \cite[Proposition 3.7]{Hor17b} there is a diagram $N(I) \in \Dir$ with functor $\pi:  N(I) \rightarrow I$ such that $\pi^*$ induces an equivalence
\[ \DD(I)_{S^{\op}} \cong \DD(N(I))_{\pi^*S^{\op}}^{\cocart}  \]
for any $S \in \SSS(I^{\op})$. 
Therefore a left adjoint for $\iota^*$ is given by $\pi_!^{(T^{\op})} (\pi^*\iota)_! \pi^*$ where $(\pi^*\iota)_!$ is the adjoint constructed in the first part. 
\end{proof}

Later we will need more information about the functors $\iota_!$:

\begin{DEF}\label{DEFCOMPUTEDPOINTWISE}
Let $\mathcal{S}$ be a category and let $\SSS^{\op}$ be the pre-derivator represented by $\mathcal{S}^{\op}$.
Let $\DD \rightarrow \SSS^{\op}$ be a fibered derivator with domain $\Dia$ such that for all $I \in \Dia$ the functors
\[ \iota^* := (\iota^{\op})_\bullet:  \DD(I)_{T^{\op}} \rightarrow \DD(I)_{S^{\op}} \] 
have left adjoints $\iota_!$ for all morphisms $\iota: S \rightarrow T$ in $\SSS(I^{\op})$, which are point-wise is some subclass $\mathcal{S}_2$ of morphisms of $\mathcal{S}$.
We will say that $\iota_!$ {\bf commutes with $\alpha^*$} on $(J, S)$ for a functor $\alpha: I \rightarrow J$  if for all objects $\mathcal{E}$ in $\DD(I)_S$ the natural exchange morphism
\[ (\alpha^*\iota)_! \alpha^* \mathcal{E} \rightarrow  \alpha^* \iota_!  \mathcal{E}  \]
is an isomorphism. We will say that $\iota_!$ is {\bf computed point-wise} on $(J, S)$, if it commutes with $j^*$ for all $j \in J$. 
\end{DEF}

Note that it does {\em not} automatically follow from (F1), i.e.\@ the commutation of $\iota^*$ with homotopy colimits, that $\iota_!$ is computed point-wise. 
This is true only over constant diagrams in $\SSS(I)$ --- then it is a well-known and quite trivial statement about usual derivators. 

\begin{LEMMA}\label{LEMMAPOINTWISEEXBYZERO}
Let $\mathcal{S}$ be a category with compactifications, let $\SSS$ be the pre-derivator represented by $\mathcal{S}$, and 
let $\DD \rightarrow \SSS^{\op}$ be a fibered derivator on some diagram category $\Dia$ satisfying axioms (F1--F6).
\begin{enumerate}
\item 
Let $\alpha: I \rightarrow J$ be an opfibration in $\Dia$ and $\iota: S \rightarrow T$ a morphism in $\SSS(I^{\op})$.
If for all coCartesian morphisms $\mu: i \rightarrow i'$ in $I$ the square
\[ \xymatrix{
S_i \ar@{<-}[r]^{S(\mu)} \ar[d]_{\iota_i} & S_{i'} \ar[d]^{\iota_{i'}} \\
T_i \ar@{<-}[r]_{T(\mu)} & T_{i'}
} \]
is Cartesian, then $\iota_!$ commutes with $e_j^*$ for the inclusions $e_j: I_j \hookrightarrow I$ of the fibers. 
\item Consider $I \times J$ in $\Dia$ and a morphism $\iota: S \rightarrow T$ in $\SSS(I^{\op})$. Then $(\pr_1^*\iota)_!$ commutes with the inclusion
$I \times j \hookrightarrow I \times J$ for any $j \in J$ on $(I \times J, \pr_1^*S)$. In particular $\iota_!$ is computed point-wise over constant diagrams in $\SSS(J^{\op})$. 
\item Assume $\DD$ is infinite, or that $I$ is a diagram in $\Dia$ with finite $\Hom$-sets. Let $\iota: S \rightarrow T$ be a morphism in $\SSS(I^{\op})$. Then the functor
\[ \iota_!: \DD(I)_{S^{\op}} \rightarrow \DD(I)_{T^{\op}} \]
is computed point-wise on an object $\mathcal{E}$, if for any pair of morphisms $\alpha: i \rightarrow j$ and $\mu: j \rightarrow k$ in $I$ we have that
\begin{equation} \label{eqcommpointwise2}
 \iota_{k,!} S(\mu)^* S(\alpha)^* i^*\mathcal{E} \rightarrow  T(\mu)^* \iota_{j,!} S(\alpha)^* i^*\mathcal{E}   \end{equation}
is an isomorphism.
 
Hence $\iota_!$ is computed point-wise on an (absolutely) {\em coCartesian} object $\mathcal{E}$, if for any $\mu: j \rightarrow k$ in $I$ we have that
\begin{equation} \label{eqcommpointwise1}
 \iota_{k,!}  S(\mu)^* j^*\mathcal{E} \rightarrow  T(\mu)^* \iota_{j,!} j^*\mathcal{E}   \end{equation}
is an isomorphism. Note that (\ref{eqcommpointwise1}) implies (\ref{eqcommpointwise2}) for a coCartesian object. 
\end{enumerate}
\end{LEMMA}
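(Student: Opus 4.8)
The plan is to handle the three parts in order: part~2 will be the special case $\alpha=\pr_2$ of part~1, and the final assertion of part~2 the case $I=\cdot$ of part~2. For part~1 one must show that the exchange morphism $(e_j^*\iota)_!\, e_j^*\mathcal E\to e_j^*\iota_!\mathcal E$ is an isomorphism; equivalently, passing to right adjoints, that the homotopy right Kan extension $(e_j)_*$ along the fibre inclusion commutes with the push-forward $\iota^*$. By conservativity of the family $\{i^*\}$ (Der2) it suffices to test after $i^*$ for $i\in I$, and one unwinds $i^*(e_j)_*$ by the pointwise formula: the comma category controlling it is generated, using that $\alpha$ is an opfibration, by a coCartesian morphism out of $i$ followed by morphisms inside a fibre. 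The Cartesianity hypothesis on the squares attached to coCartesian morphisms turns the transport of $\iota^*$ past those into an isomorphism by embedding base change (Lemma~\ref{LEMMAF5}, cf.\ Remark~\ref{BEMPROJFORMULAIOTA}), and (F1) --- the commutation of $\iota^*$ with homotopy limits --- handles the fibre part; together these give the required isomorphism. (Alternatively this is an instance of the general fibered-derivator principle that over an opfibration such adjoints are computed fibrewise once the base is suitably compatible.)

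\emph{Part 2.} Apply part~1 to the opfibration $\pr_2\colon I\times J\to J$, whose fibre over $j$ is $I\times\{j\}$ with inclusion $I\times j\hookrightarrow I\times J$, and to $\pr_1^*\iota$. A morphism of $I\times J$ is $\pr_2$-coCartesian precisely when its $I$-component is an identity, so the square attached to it by $\pr_1^*S$ and $\pr_1^*T$ has identity horizontal morphisms and is trivially Cartesian; thus part~1 applies. Specialising to $I=\cdot$ gives that the constant-diagram functor $\iota_!$ over $J$ commutes with $\{j\}\hookrightarrow J$ for all $j$, i.e.\ is computed pointwise over constant diagrams in $\SSS(J^{\op})$, the elementary derivator statement mentioned before the lemma.

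\emph{Part 3.} Here no Cartesianity is available, so I would resolve $\mathcal E$. Working in the derivator obtained from $\DD$ by fixing the base (the successive restrictions of $S$), use the standard bar resolution to write $\mathcal E\in\DD(I)_S$ as a homotopy colimit of objects of the form $b_!\, S(\beta)^*a^*\mathcal E$, where $\beta\colon a\to b$ ranges over morphisms of $I$ (and longer chains in higher simplicial degree) and $b_!$ is the homotopy left Kan extension along $\{b\}\hookrightarrow I$; the coproducts occurring in this resolution exist because $\DD$ is infinite (and are finite when $I$ has finite $\Hom$-sets). Since $\iota_!$ is a left adjoint over the fixed diagram $I$ it commutes with homotopy colimits, and it commutes with each $b_!$ (the comparison being a mate of the strict compatibility $b^*\iota^*=\iota_b^*b^*$ of diagram-restriction with push-forward, coherently in $b$); hence $\iota_!\mathcal E$ is a homotopy colimit of the $b_!\,\iota_{b,!}S(\beta)^*a^*\mathcal E$. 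Applying $i^*$, which commutes with this colimit as it acts in a different variable, and unwinding the Kan-extension values, each contribution is of the form $T(\mu)^*\iota_{b,!}S(\beta)^*a^*\mathcal E$ for $\mu\colon b\to i$; hypothesis~(\ref{eqcommpointwise2}) (with $\alpha=\beta$) identifies this with $\iota_{i,!}S(\mu)^*S(\beta)^*a^*\mathcal E$. Pulling the left adjoint $\iota_{i,!}$ back out of the colimit leaves $\iota_{i,!}$ applied to $i^*$ of the bar resolution, i.e.\ to $i^*\mathcal E$; so $i^*\iota_!\mathcal E\cong\iota_{i,!}i^*\mathcal E$. For a coCartesian $\mathcal E$ one has $S(\beta)^*a^*\mathcal E\cong b^*\mathcal E$, so the isomorphisms~(\ref{eqcommpointwise1}) already provide all needed instances of~(\ref{eqcommpointwise2}), which is the ``Hence''.

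The main obstacle is the bookkeeping in part~3: checking that the bar resolution is legitimately available under the finiteness hypothesis, and that $\iota_!$, $i^*$ and $\iota_{i,!}$ all commute with it \emph{coherently}, so that precisely the two-term exchange morphisms $T(\mu)^*\iota_{b,!}S(\beta)^*a^*\mathcal E$ occur --- this is what makes the weak hypothesis~(\ref{eqcommpointwise2}) (only chains $a\to b\to i$) suffice rather than forcing isomorphisms for chains of all lengths. In part~1 the comparable care is needed to pin down which weakly Cartesian square base change is applied to along each coCartesian morphism, and to verify that the opfibration hypothesis genuinely confines the computation of $i^*(e_j)_*$, hence of $i^*\iota_!\mathcal E$, to the fibre $I_j$; part~2 and the final assertion are then routine.
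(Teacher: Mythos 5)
Your proposal is correct and follows essentially the same route as the paper: part 1 is proved by passing to the mate $\iota^* e_{j,*} \rightarrow e_{j,*}(e_j^*\iota)^*$, checking it pointwise via a homotopy exact square that factors the relevant comma category through a coCartesian morphism followed by the fibre, and invoking (F1) for the homotopy limit and (F5) together with the Cartesianity hypothesis for the transport past $S(\mu)_*$; part 2 is the special case you describe. For part 3 the paper uses exactly your resolution, realized as $\mathcal{E}\cong \pi_{3,!}\rho_! S(\mu)^*\pi_1^*\mathcal{E}$ via a homotopy exact square involving $\twc{I}$ and $\tw{I}\times I$ (the discrete fibres $\Hom(j,k)$ of $\rho$ being precisely where infiniteness of $\DD$, i.e.\ (Der1) for infinite coproducts, enters), and then commutes $\iota_!$ with $\pi_{3,!}$, $\rho_!$ and the fibre inclusions exactly as you outline, applying (\ref{eqcommpointwise2}) termwise.
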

\begin{proof}
1. The statement is equivalent to the natural exchange morphism
\[  \iota^* e_{j,*}  \mathcal{E}   \rightarrow e_{j, *} (e_j^*\iota)^* \mathcal{E}   \]
being an isomorphism. This can be checked point-wise at an object $i \in I$.
Consider the homotopy exact square:
\[ \xymatrix{
\alpha(i) \times_{/J} j \ar@{}[rd]|{\Nearrow_\mu} \ar[r]^-{\rho} \ar[d] & I_j \ar[d]^{e_j} \\
i \ar[r] & I
} \]
where the $\rho$ maps a morphism $\nu: \alpha(i) \rightarrow j$ to $\nu_\bullet(i)$ and $\mu(\nu)$ is given by the coCartesian morphism
$\widetilde{\nu}: i \rightarrow \nu_\bullet(i)$.
It shows that we have
\begin{eqnarray*}
 (\iota_i)^* i^* e_{j,*}  \mathcal{E} &\cong& (\iota_i)^* \holim_{\alpha(i) \times_{/J} j} S(\mu)_* \rho^* \mathcal{E}    \\
& \cong & \holim_{\alpha(i) \times_{/J} j}  S(\mu)_* \rho^* (e_j^*\iota)^* \mathcal{E}    \\
& \cong& i^* e_{j,*} (e_j^*\iota)^* \mathcal{E}  
\end{eqnarray*}
because $\iota^*$ commutes with homotopy limits by (F1), and with $S(\mu)_*$ by (F5) and the assumption. Note that the latter can be checked point-wise. 

2. A special case of 1.\@

3. Look at the following diagram
\[ \xymatrix{
 \twc I  \ar[r]^{\pi_1}  \ar[d]_\rho & I \ar@{=}[dd] \\
 \tw I \times I \ar[d]_{\pi_3} \ar@{}[rd]|{\Swarrow^\mu} &  \\
I \ar@{=}[r] & I
} \]
in which $\rho = (\pi_{12}, \pi_3)$ forgets the second morphism. The outer square is homotopy exact, hence 
\[  \mathcal{E} \cong \pi_{3,!} \rho_! S(\mu)^* \pi_1^* \mathcal{E}   \]
for any $\mathcal{E} \in \DD(I)_S$. Furthermore $\rho$ and $\pi_3$ are opfibrations, and $\pi_1$ is a fibration. $\rho$ has disrete fibers with fiber over $(\alpha: i \rightarrow j, k)$
equal to $\Hom(j, k)$. 

We will later show that $(\pi_3^*\iota)_!$ is computed point-wise on $(\tw I \times I, \pi_3^*S)$ on objects of the form
\[  \mathcal{F}:=  \rho_! S(\mu)^* \pi_1^* \mathcal{E}   \]
for any object $\mathcal{E}$ satisfying (\ref{eqcommpointwise2}).
We claim that the statement follows from this. 

First, by 2.\@, $(\pi_3^*\iota)_!$ commutes with $e_\alpha^*$ for the inclusions $e_\alpha: \alpha \times I \hookrightarrow \tw I \times I$ on $\pi_3^*S$,
and $(\iota_k)_!$ commutes with $(\alpha)^*$ for $(\alpha): \alpha \hookrightarrow \tw I$ on any constant object $S_k$ for any $k \in I$. 
Denote $e_k: \tw I \times k \rightarrow \tw I \times I$ the inclusion. 
Consider the following exchange morphisms 
\[  (\iota_k)_! (\alpha)^* e_k^* \mathcal{F} \rightarrow   (\alpha)^* (\iota_k)_! e_k^* \mathcal{F}  \rightarrow  (\alpha)^* e_k^*  (\pi_3^*\iota)_! \mathcal{F}.  \]
By assumption, the composition is an isomorphism because $\iota_!$ is computed point-wise on $\mathcal{F}$. Also the
left morphism is an isomorphism because $\iota_{k,!}$ commutes with $(\alpha)^*$. 
Hence by (Der2) $(\pi_3^*\iota)_!$ commutes also with $e_k^*$. 

Then 
\begin{eqnarray*}
 \iota_{k,!} k^* \pi_{3,!} \mathcal{F}   &\cong& \iota_{k,!}  \hocolim_{\tw I} e_k^* \mathcal{F}   \\
 &\cong&  \hocolim_{\tw I}  \iota_{k,!}  e_k^*  \mathcal{F} \\
 &\cong&    \hocolim_{\tw I} e_k^*  (\pi_3^* \iota)_!   \mathcal{F}  \\
 &\cong&  k^* \pi_{3,!}  (\pi_3^* \iota)_!  \mathcal{F}  \\
 &\cong&  k^*  \iota_! \pi_{3,!}  \mathcal{F}  
\end{eqnarray*}
using that $\iota_{k,!}$ commutes with arbitrary homotopy left Kan extensions. 
Hence $\iota_!$ is also computed point-wise on $(I, S)$ for $\mathcal{E} \cong \pi_{3,!} \mathcal{F}$.

Hence we are left to show that $(\pi_3^*\iota)_!$ is computed point-wise on $\mathcal{F}$ on $\tw I \times I$. 
For a morphism $\alpha: i \rightarrow j$ in $I$ we have using (Der1)
\begin{eqnarray*}
 \iota_{k,!} (\alpha, k)^* \mathcal{F}\ =\ \iota_{k,!} (\alpha, k)^* \rho_! S(\mu)^* \pi_1^* \mathcal{E} &\cong&  \iota_{k,!} k^* j_! S(\alpha)^* i^*\mathcal{E} \\
 &\cong&  \iota_{k,!} \bigoplus_{\beta \in \Hom(j, k)} S(\beta)^* S(\alpha)^* i^*\mathcal{E} \\
 &\cong& \bigoplus_{\beta \in \Hom(j, k)} T(\beta)^* \iota_{j,!} S(\alpha)^* i^* \mathcal{E} 
\end{eqnarray*}
because of the assumption (\ref{eqcommpointwise2}) and commutation of $(-)_!$ with homotopy colimits.
Furthermore, because of (Der1), $(-)_!$ is clearly computed point-wise on the discrete diagram $\Hom(j, k)$ over any object in $\SSS(\Hom(j, k))$. If $\Hom(j, k)$ is infinite
we need (Der1${}^\infty$), that is, $\DD$ has to be infinite.
Then this is isomorphic to
\begin{eqnarray*}
 &\cong& k^* j_!  \iota_{j,!} S(\alpha)^* i^*\mathcal{E} \\
 &\cong& k^* \iota_!  j_!   S(\alpha)^* i^*\mathcal{E} 
\end{eqnarray*}
using that $j_!$ commutes with $\iota_!$ because $j^*$ commutes with $\iota^*$. And finally to
\begin{eqnarray*}
 &\cong& k^* \iota_! e_{\alpha}^* \rho_! S(\mu)^* \pi_1^* \mathcal{E} \\
 &\cong& (\alpha,k)^* (\pi_3^* \iota)_! \rho_! S(\mu)^* \pi_1^* \mathcal{E}\ = \ (\alpha, k)^* (\pi_3^*\iota)_!  \mathcal{F}
\end{eqnarray*}
using that $(\pi_3^*\iota)_!$ commutes with $e_\alpha^*$. A tedious check shows that this composition of isomorphisms is the exchange morphism associated with the commutation of $(\pi_3^*\iota)^*$ and $(\alpha, k)^*$. 
\end{proof}

\section{Preliminaries for the construction of the derivator six-functor-formalism (non-multi-case)}\label{SECTPRELIM}

We will neglect the multi-aspect in this section and work with a fibered derivator (not multiderivator) $\DD \rightarrow \SSS^{\op}$ satisfying the  axioms (F1)--(F6).
In the next section the results are generalized to the multi-case. This is straightforward, but a bit more technical, hence it has been moved to the next section for the convenience of the reader. 

\begin{PAR}\label{INTERIORCOMPCOR}
Let $\mathcal{S}$ be a category with compactifications as in \ref{DEFCATCOMP}.
Let $I$ be a locally finite diagram and  $X: \tw I \rightarrow \mathcal{S}$ admissible in the sense of \ref{PARALTSCOR} (i.e.\@ equivalently: a diagram of correspondences $I \rightarrow \mathcal{S}^{\cor}$). 
For any exterior compactification $X \hookrightarrow \overline{X}$ we get an induced interior compactification ${}^{\downarrow\downarrow}(\tw I) \rightarrow \mathcal{S}$ (cf.\@ Proposition~\ref{PROPCOMPDIA}). We are rather interested in its pullback along the following functor
\[ \tww I \rightarrow {}^{\downarrow \downarrow} (\tw I)  \]
mapping $i \rightarrow j \rightarrow k$ to the diagram
\[ \xymatrix{
i \ar[r] \ar[d] & k \ar@{<-}[d]^{\id_k} \\
j \ar[r] & k 
} \]
The reason is that we need a compactification only for the morphism $f$ going to the right in a correspondence (\ref{eqcomp1}). The above functor forgets the interior compactification on the other morphism $g$ going to the left. 
We will therefore always denote by $\widetilde{X}$ the pull-back of the induced interior compactification to $\tww I$ and will call it an
interior compactification of $X$. 
It has the property that a type 1 morphism (\ref{PARTW}) is mapped to a proper morphism, and a type 2 morphism is mapped to a dense embedding. 

We will also need this w.r.t.\@ a morphism $\Fun(\Delta_n, \SSS^{\cor, 0, \lax}(I))$, resp.\@ $\Fun(\Delta_n, \SSS^{\cor, 0, \oplax}(I))$. We get, in each case, a diagram $X: \tw (\Delta_n \times I) \rightarrow \mathcal{S}$ which is, however, only weakly admissible in the sense of \ref{PARALTSCOR}. For  this diagram we can construct in the same way
exterior and interior compactifications.  
\end{PAR}

\begin{DEF}
A morphism of squares in $\mathcal{S}$
\begin{equation}\nonumber
 \xymatrix{
 & W \ar@{}[ddrr]|(.7){} \ar[rr]^{} \ar[dl]_{} \ar@{^{(}->}[dd]^(.7){\iota_W} && Z \ar@{^{(}->}[dd]^{\iota_Z} \ar[ld]_{} \\ 
Y \ar@{}[ddrr]|(.7){} \ar[rr]^(.4){} \ar@{^{(}->}[dd]^{\iota_Y} && X \ar@{^{(}->}[dd]_(.7){\iota_X} & \\
& \overline{W} \ar[dl]_{} \ar[rr]^{} && \overline{Z} \ar[dl]^{} \\
\overline{Y} \ar[rr]^{} && \overline{X} }
\end{equation}
such that all embeddings are dense,  the top, the front and the back are Cartesian is called a {\bf weak compactification} of the top Cartesian square. 
The bottom square does not need to be Cartesian, and neither the objects nor the morphisms in the bottom square are assumed to be proper.
\end{DEF}
This rather ad hoc definition will only be used in this section. Note that the orientation of the top square matters. To draw it in the plane we will always rotate the cube by $90^{\circ}$ in such a way
that it becomes the front face.  

\begin{LEMMA}\label{LEMMACARTDIA2}
Let $X: \tw I \rightarrow \mathcal{S}$ be weakly admissible and let 
$\widetilde{X}$ be any interior compactification. 
Any square of the form
\[ \xymatrix{
& \widetilde{X}(i \rightarrow j \rightarrow k)  \ar@{->>}[r] \ar@{^{(}->}[d] &  \widetilde{X}(i' \rightarrow i \rightarrow k)  \ar@{^{(}->}[d] \\
& \widetilde{X}(i \rightarrow j' \rightarrow k)  \ar@{->>}[r] & \widetilde{X}(i' \rightarrow j' \rightarrow k) 
} \]
is Cartesian. 
\end{LEMMA}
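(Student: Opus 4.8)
The plan is to unwind the definition of $\widetilde{X}$ as the pullback to $\tww I$ of the induced interior compactification on ${}^{\downarrow\downarrow}(\tw I)$ constructed in Proposition~\ref{PROPCOMPDIA}. Recall that for an object $i \to j$ of $\tw I$, writing $A = \overline{X}(i\to k)$ along the left leg and noting how the construction proceeds, one has explicitly
\[ \widetilde{X}(i \to j \to k) = \overline{X}(i \to k) \times_{\overline{X}(j \to k)} X(j \to k), \]
where $\overline{X}$ denotes the chosen exterior compactification of the diagram $\tw I \to \mathcal{S}$, $i\to k$ is the composite correspondence object, and the fiber product is formed using the maps coming from the ambient structure. (More precisely it is $\widetilde{F}$ evaluated at the relevant object of ${}^{\downarrow\downarrow}(\tw I)$, so $\widetilde{F}(a \to b) = \pi_1^*\overline{F}(a\to b)\times_{\pi_2^*\overline{F}(a\to b)} \pi_2^* F(a\to b)$; here after pulling back along $\tww I \to {}^{\downarrow\downarrow}(\tw I)$ the second index $b$ is always $i'\to i\to k \mapsto (i'\to k)$ while the first is $(i\to k)$ etc.) I would first write out both the top-left/top-right entries and the bottom-left/bottom-right entries of the square in this fiber-product form.

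**Identifying the square as a pullback of fiber products.** With this notation the square in question becomes
\[ \xymatrix@C=4em{
\overline{X}(i\to k)\times_{\overline{X}(j\to k)} X(j\to k) \ar[r] \ar[d] & \overline{X}(i\to k)\times_{\overline{X}(i\to k)} X(i\to k) \ar[d] \\
\overline{X}(i\to k)\times_{\overline{X}(j'\to k)} X(j'\to k) \ar[r] & \overline{X}(i\to k)\times_{\overline{X}(i\to k)} X(i\to k)
} \]
(using that $\widetilde{X}(i'\to i\to k)$ has "middle index" equal to $i$, so the relevant comparison object in the fiber product degenerates). The key point is that all four corners are fiber products over $\overline{X}(i\to k)$ — one factor is literally $\overline{X}(i\to k)$ in each corner — and the right-hand column, having $X(i\to k)$ as its variable factor, is \emph{constant} along the horizontal direction. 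So the square is of the form $P \times_B Q \to P \times_B R \;\leftarrow\; P\times_B R$ in which the horizontal maps come from maps $Q\to R$ (induced by $X(j\to k)\to X(i\to k)$ resp.\ $X(j'\to k)\to X(i\to k)$ via the correspondence) over $B$. A square of fiber products $P\times_B Q_1 \to P\times_B Q_2 \leftarrow P\times_B Q_1'$ with a common base and a fixed $P$ is Cartesian as soon as $Q_1 \to Q_2 \leftarrow Q_1'$ is Cartesian — but here I should be careful, because the genuinely relevant cancellation is the elementary one: $P\times_B Q_1 \cong (P\times_B Q_2)\times_{Q_2} Q_1$, and the horizontal map on the right is an isomorphism onto itself.

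**The cancellation argument.** In fact the cleanest route is: the right vertical map is literally an identity (both top-right and bottom-right entries are $\overline{X}(i\to k)\times_{\overline{X}(i\to k)} X(i\to k) \cong X(i\to k)\times_? $, the same object, since the middle index is $i$ in both), so the square is Cartesian iff the left vertical map is the pullback of the identity along the bottom horizontal map, i.e.\ iff the top-left entry equals the pullback of the bottom-left entry along... no: rather, the square
\[ \xymatrix{ A \ar[r] \ar[d] & C \ar[d] \\ B \ar[r] & C } \]
with right vertical an identity is Cartesian iff $A \cong B\times_C C = B$ compatibly — which forces the left vertical to be an isomorphism too. So I must instead check that \emph{the left vertical map is the identity}, not just that the right one is. Re-examining: the left column runs over the "middle index" $j \mapsto j'$ while keeping first index $i$ and last index $k$; both are of the form $\overline{X}(i\to k)\times_{\overline{X}(\bullet\to k)} X(\bullet \to k)$. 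These are \emph{not} obviously the same object. So the correct statement of what must be shown is: $\widetilde{X}(i\to j\to k) \cong \widetilde{X}(i\to j'\to k)\times_{\widetilde{X}(i'\to j'\to k)}\widetilde{X}(i'\to i\to k)$, and since the middle-index-$i$ entries coincide, this collapses to showing $\widetilde{X}(i\to j\to k)\cong \widetilde{X}(i\to j'\to k)$ whenever the square has that degenerate right column. That should follow from the already-established fact (proof of Proposition~\ref{PROPCOMPDIA}) that type~2 morphisms go to \emph{dense} embeddings together with Lemma~\ref{LEMMACART1}: the relevant morphism $\widetilde{X}(i\to j\to k)\to\widetilde{X}(i\to j'\to k)$ sits in a square with a proper morphism on the right and dense embeddings top and bottom, forcing it by Lemma~\ref{LEMMACART1} to be Cartesian, hence (its left leg being a dense embedding over a proper map, cf.\ the computation in Proposition~\ref{PROPCOMPDIA}'s "$i=i'$" case) an isomorphism onto the pullback.

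**Main obstacle.** The real work is bookkeeping: correctly tracking which of the three indices in $\tww I$ is being varied in each leg of the square, translating each $\widetilde{X}(\cdots)$ into its explicit fiber-product form via the functor $\tww I \to {}^{\downarrow\downarrow}(\tw I)$ and Proposition~\ref{PROPCOMPDIA}, and then recognizing the resulting diagram of fiber products as Cartesian by an elementary pullback-cancellation (the "pasting lemma" for Cartesian squares) together with Lemma~\ref{LEMMACART1} to handle the dense-embedding/proper interplay. I expect the cube/pasting diagram to be the delicate part: one must set it up so that the outer rectangle and one of the two constituent squares are visibly Cartesian (the former because fiber products compose, the latter by Lemma~\ref{LEMMACART1} using density of the type~2 embeddings), whence the square in question is Cartesian. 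Throughout, weak admissibility of $X$ is used only to guarantee the interior compactification $\widetilde{X}$ exists with the stated type~1/type~2 behavior (as set up in \ref{INTERIORCOMPCOR}); the Cartesianness claim itself is then purely about the fiber-product structure of $\widetilde{X}$ and does not need $X$ to be strictly admissible.
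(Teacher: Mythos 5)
There is a genuine gap, and it stems from choosing the wrong level of generality to work at. The lemma is stated for \emph{any} interior compactification $\widetilde{X}$ of a weakly admissible $X$, so you are not entitled to the explicit fiber-product formula $\widetilde{X}(i\to j\to k)=\overline{X}(i\to k)\times_{\overline{X}(j\to k)}X(j\to k)$: that formula describes only the \emph{induced} interior compactification of Proposition~\ref{PROPCOMPDIA}, whereas an interior compactification in the sense of \ref{INTERIORCOMPCOR} is, by definition, just a functor on $\tww I$ sending type~1 morphisms to proper morphisms and type~2 morphisms to dense embeddings. Worse, your unwinding of the corners is incorrect: the bottom-right entry is $\overline{X}(i'\to k)\times_{\overline{X}(j'\to k)}X(j'\to k)$, not the same object as the top-right, so the right-hand column is not ``constant,'' the right vertical map is not an identity, and the conclusion you are driven to --- that the left vertical $\widetilde{X}(i\to j\to k)\to\widetilde{X}(i\to j'\to k)$ must be an isomorphism --- is false. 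It is a dense open embedding that is in general proper (in the set-theoretic sense of not being surjective); if it were always an isomorphism the whole compactification machinery would be vacuous.

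The intended argument needs none of this bookkeeping. By the definition of interior compactification, the two horizontal arrows of the square are images of type~1 morphisms and hence proper, and the two vertical arrows are images of type~2 morphisms and hence dense embeddings. The square therefore has exactly the shape of Lemma~\ref{LEMMACART1} (proper horizontals, embedding verticals, with the left vertical dense), and that lemma says directly that such a square is Cartesian. You do gesture at this in your final paragraph (density of type~2 images plus Lemma~\ref{LEMMACART1}), but you apply it after a rotation, inside the incorrect ``degenerate right column'' framework, and you then overshoot by trying to conclude an isomorphism rather than stopping at Cartesianness. Strip out the fiber-product computation entirely and the proof is one sentence.
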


\begin{proof}
Cf.\@ Lemma~\ref{LEMMACART1}.
\end{proof}

\begin{LEMMA}\label{LEMMACARTDIA}
Let $X: \tw I \rightarrow \mathcal{S}$ be weakly admissible and let 
$\widetilde{X}$ be any interior compactification. 
Any square of the form
\[ \xymatrix{
& \widetilde{X}(i = i \rightarrow k)  \ar[r] \ar@{^{(}->}[d] &  \widetilde{X}(i  = i \rightarrow k')  \ar@{^{(}->}[d] \\
& \widetilde{X}(i \rightarrow j \rightarrow k)  \ar[r] & \widetilde{X}(i \rightarrow j \rightarrow k') 
} \]
is Cartesian. 
\end{LEMMA}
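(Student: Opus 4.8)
The plan is to reduce the claim to the definition of the induced interior compactification in Proposition~\ref{PROPCOMPDIA} and then to an elementary pasting argument using the properties (S0)--(S5). Recall from \ref{INTERIORCOMPCOR} that $\widetilde{X}$ on $\tww I$ is obtained by pulling back the interior compactification $\widetilde{X}'$ on ${}^{\downarrow\downarrow}(\tw I)$ along the functor $\tww I \to {}^{\downarrow\downarrow}(\tw I)$ sending $i\to j\to k$ to the square with top row $i\to k$, bottom row $j\to k$, and right column $\id_k$. Writing out this pull-back, $\widetilde{X}(i\to j\to k)$ is, by the defining Cartesian square in the proof of Proposition~\ref{PROPCOMPDIA}, the fiber product $\overline{X}(j\to k)\times_{\overline{X}(k=k)} X(i\to k)$, where $\overline{X}$ denotes the chosen exterior compactification of the diagram of correspondences (the diagram $\tw I\to\mathcal{S}$), and the maps in question are induced by the structure morphisms of $\tw I$. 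The first step is therefore to record these explicit descriptions of all four corners of the square, for the morphisms $i=i\to k$ and $i\to j\to k$ as the first index and $k\to k'$ varying the last.

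Next I would observe that the square in the statement only varies the \emph{last} entry $k$ (a type~3 morphism $k\to k'$, i.e.\ a type~1 morphism in the sense of \ref{PARTW} for $\tww I$), while keeping $i$ (and $i=i$ versus $i\to j$) fixed. Under the explicit description above, the vertical maps are the embeddings $\widetilde{X}(i=i\to k)\hookrightarrow\widetilde{X}(i\to j\to k)$ coming from the type~2 morphism; concretely these are $X(i\to k)\hookrightarrow \overline{X}(j\to k)\times_{\overline{X}(k)}X(i\to k)$, which is a (dense) embedding as shown in the proof of Proposition~\ref{PROPCOMPDIA}. So the square becomes a commuting square of the shape ``fiber product over a varying base'', and the claim is exactly that fiber products behave well under the horizontal base change $k\to k'$. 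The key step is then the pasting lemma for Cartesian squares: the square in question fits as the left square of a diagram whose right square is the Cartesian square defining $\widetilde{X}(i\to j\to k')$ from $\widetilde{X}(i\to j\to k)$ along $\overline{X}(k)\to\overline{X}(k')$, and whose composite square is again such a defining Cartesian square; hence the left square is Cartesian by the usual two-out-of-three for pullback squares.

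The cleanest way to carry this out, as the reference to Lemma~\ref{LEMMACART1} in Lemma~\ref{LEMMACARTDIA2} suggests for the companion statement, is to build an auxiliary cube (or a $3\times 2$ diagram of pullbacks) in which every face that one needs is manifestly Cartesian by construction, and then to invoke the pasting lemma once. I expect the only real subtlety — the ``main obstacle'' — to be bookkeeping: writing the relevant square as a genuine composition of pullback squares requires carefully tracking which of the three indices in $\tww I$ is being moved and in which direction, and identifying the maps $\widetilde{X}(i\to j\to k)\to\widetilde{X}(i\to j\to k')$ with the base-changed projections. Once the diagram is set up correctly, no properties of $\mathcal{S}_0$, $\mathcal{S}_1$, $\mathcal{S}_2$ beyond those already used in Proposition~\ref{PROPCOMPDIA} are needed — in contrast to Lemma~\ref{LEMMACARTDIA2}, where density of embeddings together with (S0) and (S1) is essential — so the proof here really is just the pasting lemma applied to the explicit pull-back description of $\widetilde{X}$, which is why a one-line reference suffices.
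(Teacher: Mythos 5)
There is a genuine gap: your argument never invokes the hypothesis that $X$ is weakly admissible, and that hypothesis is exactly what makes the lemma true. First, the explicit formula you extract from Proposition~\ref{PROPCOMPDIA} is incorrect: the construction gives $\widetilde{X}(i\rightarrow j\rightarrow k)=\overline{X}(i\rightarrow k)\times_{\overline{X}(j\rightarrow k)}X(j\rightarrow k)$ (the pullback of $X(j\rightarrow k)\hookrightarrow\overline{X}(j\rightarrow k)$ along the proper map $\overline{X}(i\rightarrow k)\rightarrow\overline{X}(j\rightarrow k)$), not $\overline{X}(j\rightarrow k)\times_{\overline{X}(k=k)}X(i\rightarrow k)$; your formula is not even consistent with $\Delta^*\widetilde{X}\cong X$. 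More importantly, with the correct formula the two bottom corners of the square are fiber products over the two \emph{different} bases $\overline{X}(j\rightarrow k)$ and $\overline{X}(j\rightarrow k')$, and the square does not factor as a composition of defining pullback squares. If you try to run the two-out-of-three pasting argument, the natural composite square you land on --- obtained by projecting further down to $\widetilde{X}(j=j\rightarrow k)\rightarrow\widetilde{X}(j=j\rightarrow k')$, i.e.\@ to $X(j\rightarrow k)\rightarrow X(j\rightarrow k')$ --- is precisely the square that weak admissibility only asserts to be \emph{weakly} Cartesian, so neither it nor the lower square is an honest pullback and the pasting lemma gives nothing. A proof that uses no hypothesis on $X$ would prove the statement for arbitrary $X:\tw I\rightarrow\mathcal{S}$, which is false.

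The paper's argument is the one your reduction almost stumbles onto: extend the given square downward by the type~1 (hence proper) morphisms to the row $\widetilde{X}(j=j\rightarrow k)\rightarrow\widetilde{X}(j=j\rightarrow k')$. The outer square is then weakly Cartesian because $X$ is weakly admissible, the top-left vertical is a dense embedding, and Lemma~\ref{LEMMACART2} yields that the upper square is Cartesian. Note that Lemma~\ref{LEMMACART2} rests on Lemma~\ref{LEMMACART1} and hence on (S0)--(S3) --- in particular on density of the embedding and on (S1) --- so, contrary to your closing remark, these axioms are just as essential here as in Lemma~\ref{LEMMACARTDIA2}; what distinguishes the present lemma is that weak admissibility of $X$ is needed on top of them.
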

\begin{proof}
We may extend the diagram as follows
\[ \xymatrix{
& \widetilde{X}(i = i \rightarrow k)  \ar[r] \ar@{^{(}->}[d] &  \widetilde{X}(i  = i \rightarrow k')  \ar@{^{(}->}[d] \\
& \widetilde{X}(i \rightarrow j \rightarrow k)  \ar[r]  \ar@{->>}[d]& \widetilde{X}(i \rightarrow j \rightarrow k')  \ar@{->>}[d] \\
& \widetilde{X}(j = j \rightarrow k)  \ar[r] &  \widetilde{X}(j  = j \rightarrow k')  \\
} \]
in which the outer square is weakly Cartesian, because $X$ is weakly admissible. The statement follows therefore from Lemma~\ref{LEMMACART2}
\end{proof}

\begin{LEMMA}\label{LEMMAWEAKCOMP}
Let $X: \tw I \rightarrow \mathcal{S}$ be weakly admissible and let 
$\widetilde{X}$ be any interior compactification. 
The  cube
\begin{equation}\nonumber
 \xymatrix{
 & \widetilde{X}(i = i \rightarrow k) \ar@{}[ddrr]|(.7){} \ar[rr]^{} \ar@{=}[dl]_{} \ar@{^{(}->}[dd]^(.7){} && \widetilde{X}(i = i \rightarrow k') \ar@{^{(}->}[dd]^{} \ar@{=}[ld]_{} \\ 
\widetilde{X}(i = i \rightarrow k) \ar@{}[ddrr]|(.7){} \ar[rr]^(.4){} \ar@{^{(}->}[dd]^{} && \widetilde{X}(i = i \rightarrow k')  \ar@{^{(}->}[dd]_(.7){} & \\
& \widetilde{X}(i \rightarrow j \rightarrow k)  \ar@{^{(}->}[dl]_{} \ar[rr]^(.3){\overline{G}} && \widetilde{X}(i \rightarrow j \rightarrow k')  \ar@{^{(}->}[dl]^{} \\
\widetilde{X}(i \rightarrow j' \rightarrow k)  \ar[rr]_{\overline{g}} && \widetilde{X}(i \rightarrow j' \rightarrow k')  }
\end{equation}
is a weak compactification of the top (trivially Cartesian) square. If $X$ is admissible, the cube
\begin{equation}\nonumber
 \xymatrix{
 & \widetilde{X}(i = i \rightarrow k) \ar@{}[ddrr]|(.7){} \ar[rr]^{} \ar[dl]_{} \ar@{^{(}->}[dd]^(.7){} && \widetilde{X}(i = i \rightarrow k') \ar@{^{(}->}[dd]^{} \ar[ld]_{} \\ 
\widetilde{X}(i' = i' \rightarrow k) \ar@{}[ddrr]|(.7){} \ar[rr]^(.4){} \ar@{^{(}->}[dd]^{} && \widetilde{X}(i' = i' \rightarrow k')  \ar@{^{(}->}[dd]_(.7){} & \\
& \widetilde{X}(i \rightarrow j \rightarrow k)  \ar@{->>}[dl]_{} \ar[rr]^(.3){\overline{G}} && \widetilde{X}(i \rightarrow j \rightarrow k')  \ar@{->>}[dl]^{} \\
\widetilde{X}(i' \rightarrow j \rightarrow k)  \ar[rr]_{\overline{g}} && \widetilde{X}(i' \rightarrow j \rightarrow k')  }
\end{equation}
is a weak compactification of the top Cartesian square. 
\end{LEMMA}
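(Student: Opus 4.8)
The plan is to observe that each of the two cubes is simply the functor $\widetilde{X}$ applied to a commutative cube in $\tww{I}$. Being the image of a commutative diagram under a functor, it is automatically a morphism of squares in $\mathcal{S}$, so everything reduces to the three requirements in the definition of a weak compactification: that the four edges joining the top square to the bottom square are dense embeddings, and that the top, the front, and the back faces are Cartesian. I would verify these face by face, matching each against a statement already proved in the excerpt.

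For the four connecting edges: in both cubes each of them is the image under $\widetilde{X}$ of a morphism of $\tww{I}$ of the form $(i=i\to k)\to(i\to j\to k)$, or the same with $i$, $j$, $k$ replaced by $i'$, $j'$, $k'$. A direct inspection of the commutativity conditions defining a morphism of $\tww{I}$ (cf.\@ \ref{PARTW}) shows that in such a morphism only the middle (i.e.\@ second, ``$j$-'') slot is non-identity, so it is a type~$2$ morphism; by the defining property of the induced interior compactification recalled in \ref{INTERIORCOMPCOR} it is therefore mapped by $\widetilde{X}$ to a dense embedding. Hence all four edges are dense embeddings, as required.

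For the front and back faces: in each of the two cubes these are, up to replacing $i$ by $i'$ or $j$ by $j'$, squares of the form
\[ \xymatrix{ \widetilde{X}(i = i \to k) \ar[r] \ar@{^{(}->}[d] & \widetilde{X}(i = i \to k') \ar@{^{(}->}[d] \\ \widetilde{X}(i \to j \to k) \ar[r] & \widetilde{X}(i \to j \to k') } \]
so they are Cartesian by Lemma~\ref{LEMMACARTDIA}, which only requires $X$ weakly admissible and thus applies in both cubes. As for the top faces: in the first cube the two vertical edges of the top square are identities, so it commutes and is trivially Cartesian, which (since in that case $X$ is only assumed weakly admissible) finishes the first cube. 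In the second cube I would use that $\widetilde{X}$, restricted along the canonical copy $\tw I \hookrightarrow \tww I$, $\alpha\mapsto(\id,\alpha)$, is canonically $X$ itself: this copy is carried by the forgetful functor $\tww I \to {}^{\downarrow\downarrow}(\tw I)$ onto the diagonal of $\tw I$, and $\Delta^*$ of the interior compactification of $X\colon\tw I\to\mathcal{S}$ is canonically $X$ by Proposition~\ref{PROPCOMPDIA}/Definition~\ref{DEFINTCOMP}. Under the resulting natural isomorphisms $\widetilde{X}(i = i\to k)\cong X(i\to k)$ etc.\@, the top square of the second cube becomes the image under $X$ of a square in $\tw I$ whose horizontal edges are of type~$2$ and whose vertical edges are of type~$1$; by the very definition of an admissible diagram of correspondences (cf.\@ \ref{PARALTSCOR}) such a square is sent to a Cartesian square. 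This completes the second cube.

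The only real difficulty is bookkeeping: one must keep track of the three arrow directions of $\tww I$ (which are $\downarrow,\downarrow,\uparrow$) and of the orientation convention for the cube, so that each face is paired with the correct earlier result — the front and back faces with Lemma~\ref{LEMMACARTDIA}, the top face with admissibility of $X$ — rather than with, say, Lemma~\ref{LEMMACARTDIA2}, which governs squares of a different orientation. No geometric input beyond Lemmas~\ref{LEMMACART1}, \ref{LEMMACART2}, \ref{LEMMACARTDIA} and Proposition~\ref{PROPCOMPDIA} enters.
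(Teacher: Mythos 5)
Your proof is correct and follows the same route as the paper: the paper's entire proof consists of the observation that the front and back faces are Cartesian by Lemma~\ref{LEMMACARTDIA}, exactly your middle step. Your additional verifications (that the connecting edges are dense embeddings as images of type~2 morphisms, and that the top square of the second cube is Cartesian via admissibility of $X$ along the diagonal copy of $\tw I$) are correct elaborations of points the paper treats as immediate from the construction of the interior compactification.
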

\begin{proof}
We need to show that in each case the front and back squares are Cartesian. This is a consequence of Lemma~\ref{LEMMACARTDIA}.
\end{proof}

\begin{LEMMA}\label{LEMMACOCARTSQUARES1}
Consider a square in  $\mathcal{S}$
\[ \xymatrix{
W \ar@{->>}[r]^{\overline{F}} \ar@{^{(}->}[d]_I & X \ar@{^{(}->}[d]^{\iota} \\
Z \ar@{->>}[r]_{\overline{f}} & Y
} \]
in which $I$ is dense 
and a square
\[ \xymatrix{
\mathcal{H} \ar@{<-}[r]^\delta \ar@{<-}[d]_\gamma & \mathcal{E} \ar@{<-}[d]^\alpha \\
\mathcal{G} \ar@{<-}[r]_\beta & \mathcal{F}
} \]
above it. If $\gamma$ is strongly coCartesian $(I^*\mathcal{G} \cong \mathcal{H}$ inducing $I_!\mathcal{H} \cong \mathcal{G})$ and $\delta$ is Cartesian $(\mathcal{E} \cong \overline{F}_*\mathcal{H})$ then $\beta$ is Cartesian if and only if $\alpha$ is strongly coCartesian, or in other words the natural exchange
\[ \iota_! \overline{F}_* \rightarrow \overline{f}_* I_!    \]
is an isomorphism. 
\end{LEMMA}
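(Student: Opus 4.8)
The plan is to reduce the statement to the invertibility of two vertical morphisms linked by the base-change isomorphism of (F5), after first checking that the base-change axioms actually apply here.

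First I would note that, because the left vertical embedding $I$ is dense, the given square in $\mathcal{S}$ is Cartesian by Lemma~\ref{LEMMACART1}. Hence (F5) gives a base-change isomorphism $\iota^*\overline{f}_*\iso\overline{F}_* I^*$, and (F6)---which applies since $\overline{F},\overline{f}$ are proper, $I,\iota$ are embeddings and the square is Cartesian---gives that the exchange $\iota_!\overline{F}_*\to\overline{f}_* I_!$ is an isomorphism. The functors $\iota_!$ and $I_!$ exist by (F1) and are fully faithful by (F2), so the units $\id\to\iota^*\iota_!$ and $\id\to I^* I_!$ are isomorphisms; in particular $\iota^*$ restricts to an equivalence on the full subcategory of objects with support in $X$ (the essential image of $\iota_!$), with quasi-inverse $\iota_!$. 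Since $\gamma$ is strongly coCartesian and $\delta$ is Cartesian, I may assume---after replacing the objects by isomorphic ones---that $\mathcal{G}=I_!\mathcal{H}$ and $\mathcal{E}=\overline{F}_*\mathcal{H}$, with $\gamma$ and $\delta$ the canonical morphisms.

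Next I would introduce two auxiliary vertical morphisms. Factor $\alpha$ through the coCartesian lift of $\iota^{\op}$ at $\mathcal{F}$ as $\alpha=a\circ(\mathcal{F}\to\iota^*\mathcal{F})$ with $a\colon\iota^*\mathcal{F}\to\mathcal{E}$ vertical over $X$, and factor $\beta$ through the Cartesian lift of $\overline{f}^{\op}$ at $\mathcal{G}$ as $\beta=(\overline{f}_*\mathcal{G}\to\mathcal{G})\circ b$ with $b\colon\mathcal{F}\to\overline{f}_*\mathcal{G}$ vertical over $Y$. Then $\alpha$ is coCartesian if and only if $a$ is invertible; $\alpha$ is strongly coCartesian if and only if in addition $\mathcal{F}$ has support in $X$; and $\beta$ is Cartesian if and only if $b$ is invertible. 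The crucial point---which I expect to be the main obstacle---is to chase the universal properties of these (co)Cartesian lifts together with the definition of the base-change exchange of (F5), in order to identify the commutativity $\delta\alpha=\gamma\beta$ with the equality
\[ a\;=\;\bigl(\iota^*\mathcal{F}\xrightarrow{\,\iota^*b\,}\iota^*\overline{f}_* I_!\mathcal{H}\iso\overline{F}_* I^* I_!\mathcal{H}\iso\overline{F}_*\mathcal{H}=\mathcal{E}\bigr), \]
where the first isomorphism is (F5) and the second is induced by the inverse of the unit of $I_!\dashv I^*$. In particular $a$ is invertible if and only if $\iota^*b$ is. This is a purely formal, if slightly tedious, $2$-categorical computation in the bifibration $\DD(\cdot)\to\mathcal{S}^{\op}$; the remaining steps are bookkeeping.

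Finally I would conclude. If $\beta$ is Cartesian then $b$, hence $\iota^*b$ and $a$, is invertible, so $\alpha$ is coCartesian; moreover $\mathcal{F}\cong\overline{f}_*\mathcal{G}=\overline{f}_* I_!\mathcal{H}\cong\iota_!\overline{F}_*\mathcal{H}$ by (F6), so $\mathcal{F}$ lies in the essential image of $\iota_!$, i.e.\@ has support in $X$; hence $\alpha$ is strongly coCartesian. Conversely, if $\alpha$ is strongly coCartesian then $a$ is invertible, so $\iota^*b$ is; and both $\mathcal{F}$ (by hypothesis) and $\overline{f}_*\mathcal{G}=\overline{f}_* I_!\mathcal{H}\cong\iota_!\overline{F}_*\mathcal{H}$ (again by (F6)) have support in $X$, so, $\iota^*$ being fully faithful on such objects, the vertical morphism $b$ with $\iota^*b$ invertible is itself invertible, i.e.\@ $\beta$ is Cartesian. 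As the statement indicates, this whole biconditional is just a reformulation of (F6).
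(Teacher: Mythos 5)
Your proof is correct and follows the paper's own (one-line) argument: the square is Cartesian by Lemma~\ref{LEMMACART1} because $I$ is dense, and then (F6) applies directly. The additional material in your write-up merely unwinds the standard equivalence between the exchange morphism $\iota_!\overline{F}_*\rightarrow\overline{f}_*I_!$ being an isomorphism and the stated biconditional about (strongly) (co)Cartesian morphisms, which the paper treats as a tautological reformulation (``or in other words'').
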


\begin{proof}
By Lemma~\ref{LEMMACART1} the square is actually Cartesian and hence by (F6) the statement holds. 
\end{proof}

\begin{BEM}
A posterori the conclusion will hold regardless of $I$ being dense. 
\end{BEM}

\begin{LEMMA}\label{LEMMACOCARTSQUARES2}
Consider a weak compactification of a Cartesian square in  $\mathcal{S}$
\begin{equation}\nonumber
 \xymatrix{
 & W \ar@{}[ddrr]|(.7){} \ar[rr]^{} \ar[dl]_{} \ar@{^{(}->}[dd]^(.7){} && Z \ar@{^{(}->}[dd]^{} \ar[ld]_{} \\ 
Y \ar@{}[ddrr]|(.7){} \ar[rr]^(.4){} \ar@{^{(}->}[dd]^{} && X \ar@{^{(}->}[dd]_(.7){} & \\
& \overline{W} \ar@{->>}[dl]_{\overline{F}} \ar[rr]^(.3){\overline{G}} && \overline{Z} \ar@{->>}[dl]^{\overline{f}} \\
\overline{Y} \ar[rr]_{\overline{g}} && \overline{X} }
\end{equation}
in which $\overline{f}$ and $\overline{F}$ are proper. 
Then for a square
\[ \xymatrix{
\mathcal{H} \ar@{<-}[r]^\delta \ar@{<-}[d]_\gamma & \mathcal{E} \ar@{<-}[d]^\alpha \\
\mathcal{G} \ar@{<-}[r]_\beta & \mathcal{F}
} \]
in $\DD(\cdot)$ above the bottom square the following holds: If \underline{$\mathcal{E}$ has support in $Z$} and $\alpha$ is Cartesian $(\mathcal{F} \cong \overline{f}_*\mathcal{E})$ and $\delta$ is coCartesian $(\overline{G}^*\mathcal{E} \cong \mathcal{H})$ then $\beta$ is coCartesian if and only if $\gamma$ is Cartesian or, in other words, the natural exchange 
\[ \overline{g}^* \overline{f}_*  \rightarrow \overline{F}_* \overline{G}^*\]
is an isomorphism on objects with support in $Z$.
\end{LEMMA}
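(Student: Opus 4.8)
The plan is to reduce to objects $\mathcal{E}=(\iota_Z)_!\mathcal{E}_Z$ of support in $Z$ (so $\mathcal{E}_Z\cong\iota_Z^*\mathcal{E}$ by (F2)) and to factor the exchange morphism through the \emph{genuine} fibre product $V:=\overline{Y}\times_{\overline{X}}\overline{Z}$, the bottom square of the cube being only a commuting square. Write $p\colon V\to\overline{Y}$, $q\colon V\to\overline{Z}$ for the projections and $c\colon\overline{W}\to V$ for the canonical comparison map, so $\overline{F}=pc$ and $\overline{G}=qc$.

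First I would collect the relevant geometry. The map $p$ is proper, being a base change of $\overline{f}$ (S3); hence $c$ is proper, since $pc=\overline{F}$ with $p$ and $\overline{F}$ proper (S2). Using the Cartesianness of the front face, $Y\cong\overline{Y}\times_{\overline{X}}X$, together with the Cartesianness of the top face, $W\cong Y\times_X Z$, one obtains a canonical identification $q^{-1}(Z)=V\times_{\overline{Z}}Z\cong W$; let $j\colon W\hookrightarrow V$ denote the resulting inclusion, an embedding by (S3), and note $c\circ\iota_W=j$ and that the leg $W\to Z$ of this Cartesian square is the corresponding leg $G_0$ of the top square. Finally, using the Cartesianness of the back face, $W\cong\overline{W}\times_{\overline{Z}}Z$, one checks that the square with top edge $\id_W$, bottom edge $c$, left edge $\iota_W$ and right edge $j$ is Cartesian.

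Now the exchange morphism of the commuting square $\overline{g}\,\overline{F}=\overline{f}\,\overline{G}$ factors, via $\overline{F}=pc$ and $\overline{G}=qc$, as the pasting
\[ \overline{g}^*\overline{f}_* \ \xrightarrow{\ \sim\ }\ p_*q^* \ \xrightarrow{\ p_*(\eta)q^*\ }\ p_*c_*c^*q^* \ =\ \overline{F}_*\overline{G}^*, \]
where the first arrow is the base-change isomorphism of the Cartesian square defining $V$ (an isomorphism by (F4), as $p$ and $\overline{f}$ are proper) and $\eta\colon\id\to c_*c^*$ is the unit. So it suffices to show $\eta$ becomes an isomorphism on objects $q^*\mathcal{E}$ with $\mathcal{E}$ of support in $Z$. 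By Remark~\ref{BEMPROJFORMULAIOTA} applied to the Cartesian square $W\hookrightarrow V\xrightarrow{q}\overline{Z}$ (legitimate since $\iota_Z$ is an embedding and (F1), (F5) hold) one has $q^*(\iota_Z)_!\cong j_!\,G_0^*$, so $q^*\mathcal{E}$ has support in $W\subseteq V$. For such objects, Remark~\ref{BEMPROJFORMULAIOTA} on the Cartesian difference square from the previous paragraph gives $c^*j_!\cong(\iota_W)_!$, and (F6) on that same square (all four of whose maps are proper or embeddings) gives $c_*(\iota_W)_!\cong j_!$; composing, $c_*c^*j_!\cong j_!$, and a check with the triangle identities — using the essentially trivial (F5)-base change for that square, whose upper edge is $\id$, together with full faithfulness of $j_!$ by (F2) — shows this composite is realised by $\eta$. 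Hence $\eta$ is an isomorphism on $Z$-supported objects, so is the displayed exchange, and the reformulation in terms of the square $\mathcal{H}\leftarrow\mathcal{E}$, $\mathcal{G}\leftarrow\mathcal{F}$ follows from the standard dictionary between (co)Cartesianness of edges and invertibility of exchanges.

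The ideas here are all forced; the main obstacle is bookkeeping. The delicate point is the last one: verifying that the composite isomorphism $c_*c^*j_!\cong j_!$ assembled from (F6) and Remark~\ref{BEMPROJFORMULAIOTA} is indeed the one induced by the unit $\eta$ — equivalently, that all the base-change and exchange $2$-cells are glued consistently (this is essentially how the (F6)-isomorphism is defined in the first place, but it needs care). A secondary, lighter point is the verification of the fibre-product identification $q^{-1}(Z)\cong W$ and of the Cartesianness of the difference square, which is a short manipulation using only the Cartesianness of the top, front and back faces of the cube.
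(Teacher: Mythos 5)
Your proof is correct and is essentially the paper's own argument: your $V$, $p$, $q$, $c$, $j$ are the paper's $\Box$, $\overline{f}'$, $G'$, $\overline{f}''$, $\iota$, and your three base-change steps ((F4) on the defining square of $V$, (F5) on the two Cartesian squares over $Z$ and $W$, and (F6) on the difference square with top edge $\id_W$, the content of Lemma~\ref{LEMMACOCARTSQUARES1}) are exactly the paper's chain of exchange isomorphisms. The only difference is presentational — you package the last three steps as the statement that the unit $\id\to c_*c^*$ is invertible on $W$-supported objects, rather than writing out the five-term composite — so this is the same proof.
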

\begin{proof}
We look at the following diagram
\[ \xymatrix{
W \ar@{=}[r] \ar@{^{(}->}[d]_{\iota_W} \ar@{}[rd]|{\numcirc{1}} & W \ar[r]^{G} \ar@{^{(}->}[d]^\iota \ar@{}[rd]|{\numcirc{2}} & Z \ar@{^{(}->}[d]^{\iota_Z}  \\
\overline{W} \ar@{->>}[r]_{\overline{f}''}  \ar@{->>}[d]_{\overline{F}} 
& \Box \ar@{}[rd]|{\numcirc{3}}  \ar[r]^{G'}  \ar@{->>}[d]^{\overline{f}'} & \overline{Z} \ar@{->>}[d]^{\overline{f}}  \\
\overline{Y} \ar@{=}[r]   & \overline{Y}  \ar[r]_{\overline{g}}   & \overline{X} \\
} \]
in which squares $\numcirc{2}$ and $\numcirc{3}$ are Cartesian. 
The middle left horizontal morphism is proper because of (S2). Because of the support condition, we have to show that the natural exchange
\[ \overline{g}^*\overline{f}_* \iota_{Z,!} \rightarrow \overline{F}_* \overline{G}^* \iota_{Z,!} \]
is an isomorphism. Elementary properties of exchange morphisms imply that the morphism is the composition of the following (exchange) morphisms which are all isomorphisms because of the indicated reason:
\[
\begin{array}{rcll}
\overline{g}^* \overline{f}_* \iota_{Z,!} &\iso & \overline{f}'_*  (G')^* \iota_{Z,!} & \text{because $\numcirc{3}$ is Cartesian and (F4)} \\
&\isor & \overline{f}'_*   \iota_{!} G^*    & \text{because $\numcirc{2}$ is Cartesian and (F5)} \\
&\iso & \overline{f}'_*  \overline{f}''_* \iota_{W,!} G^* & \text{applying Lemma~\ref{LEMMACOCARTSQUARES1} for $\numcirc{1}$} \\
&\iso & \overline{F}_*  \iota_{W,!} G^* \\
&\iso & \overline{F}_* \overline{G}^*  \iota_{Z,!} & \text{because the composite of $\numcirc{1}$ and $\numcirc{2}$ is Cartesian and (F5).}
\end{array}
\]
\end{proof}

\begin{LEMMA}\label{LEMMACOCARTSQUARES3}
Consider a weak compactification of a Cartesian square in $\mathcal{S}$
\begin{equation}\nonumber
 \xymatrix{
 & W \ar@{}[ddrr]|(.7){} \ar[rr]^{} \ar@{^{(}->}[dl]_{I} \ar@{^{(}->}[dd]^(.7){} && Z \ar@{^{(}->}[dd]^{} \ar@{^{(}->}[ld]_{\iota} \\ 
Y \ar@{}[ddrr]|(.7){} \ar[rr]^(.4){} \ar@{^{(}->}[dd]^{} && X \ar@{^{(}->}[dd]_(.7){} & \\
& \overline{W} \ar@{^{(}->}[dl]_{\overline{I}} \ar[rr]^(.3){\overline{G}} && \overline{Z} \ar@{^{(}->}[dl]^{\overline{\iota}} \\
\overline{Y} \ar[rr]_{\overline{g}} && \overline{X} }
\end{equation}
in which $\overline{\iota}$, $\iota$, $I$ and $\overline{I}$ are embeddings. 
Then for a square
\[ \xymatrix{
\mathcal{H} \ar@{<-}[r]^\delta \ar@{<-}[d]_\gamma & \mathcal{E} \ar@{<-}[d]^\alpha \\
\mathcal{G} \ar@{<-}[r]_\beta & \mathcal{F}
} \]
in $\DD(\cdot)$ above the bottom square the following holds: If \underline{$\mathcal{E}$ has support in $Z$} and $\alpha$ is strongly coCartesian $(\overline{\iota}^*\mathcal{F} \cong \mathcal{E}$ inducing $\overline{\iota}_!\mathcal{E} \cong \mathcal{F})$ and $\delta$ is coCartesian $(\overline{G}^*\mathcal{E} \cong \mathcal{H})$ then $\beta$ is coCartesian if and only if $\gamma$ is strongly coCartesian or, in other words, the natural exchange 
\[ \overline{I}_!  \overline{G}^*  \rightarrow \overline{g}^* \overline{\iota}_!  \] 
is an isomorphism on objects with support in $Z$. 
\end{LEMMA}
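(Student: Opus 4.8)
The plan is to follow the strategy of Lemma~\ref{LEMMACOCARTSQUARES2}, but the argument will be considerably shorter: the two ``vertical'' morphisms along which we push forward, $\overline{\iota}$ and $\overline{I}$, are \emph{embeddings}, so extension by zero along them is pseudo-functorial and one never has to push forward along a non-embedding --- hence there will be no need to factor the (non-Cartesian) bottom face through a fibre product and invoke (F4) or (F6); two applications of (F5) will suffice. Concretely, I would first reduce, exactly as in Lemma~\ref{LEMMACOCARTSQUARES2}, to showing that the natural exchange $\overline{I}_!\, \overline{G}^* \to \overline{g}^*\, \overline{\iota}_!$ becomes an isomorphism on objects $\mathcal{E}$ over $\overline{Z}$ with support in $Z$; writing $\iota_Z\colon Z \hookrightarrow \overline{Z}$, $\iota_W\colon W \hookrightarrow \overline{W}$, $\iota_Y\colon Y\hookrightarrow\overline{Y}$, $\iota_X\colon X\hookrightarrow\overline{X}$ for the (dense) embeddings of the weak compactification, such an object is of the form $\mathcal{E}\cong \iota_{Z,!}\mathcal{E}_0$.

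Next I would record the diagram extracted from the cube
\[ \xymatrix{
W \ar[r]^{\gamma} \ar@{^{(}->}[d]_{\iota_W} \ar@{}[rd]|{\numcirc{1}} & Z \ar@{^{(}->}[d]^{\iota_Z}  \\
\overline{W} \ar[r]^{\overline{G}} \ar@{^{(}->}[d]_{\overline{I}} \ar@{}[rd]|{\numcirc{2}} & \overline{Z} \ar@{^{(}->}[d]^{\overline{\iota}}  \\
\overline{Y} \ar[r]_{\overline{g}} & \overline{X}
} \]
in which $\gamma\colon W\to Z$ is the top horizontal edge and the columns are the remaining edges of the cube. Here $\numcirc{1}$ is the back face, hence Cartesian by hypothesis, whereas $\numcirc{2}$ is the bottom face and is \emph{not} assumed Cartesian. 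The point I would be most careful about is that the \emph{outer} rectangle $\numcirc{1}\cup\numcirc{2}$ is nevertheless Cartesian: by commutativity of the left and right faces of the cube its two vertical morphisms factor as $\overline{I}\iota_W = \iota_Y I$ and $\overline{\iota}\iota_Z = \iota_X\iota$; since the latter factors through $\iota_X\colon X\hookrightarrow\overline{X}$, the fibre product $\overline{Y}\times_{\overline{X}} Z$ equals $(\overline{Y}\times_{\overline{X}} X)\times_X Z$, and $\overline{Y}\times_{\overline{X}} X\cong Y$ because the front face is Cartesian while $Y\times_X Z\cong W$ because the top face is Cartesian; one then checks that the canonical comparison morphism from $W$ is this isomorphism. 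Thus the bottom face is used only as part of this rectangle, never Cartesian on its own.

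Finally, with $\numcirc{1}$ and $\numcirc{1}\cup\numcirc{2}$ both Cartesian, Remark~\ref{BEMPROJFORMULAIOTA} (applicable since $\iota_!$ exists by (F1)) together with (F5) supplies isomorphisms $\overline{G}^*\iota_{Z,!}\cong \iota_{W,!}\gamma^*$ and $(\overline{I}\iota_W)_!\gamma^*\cong \overline{g}^*(\overline{\iota}\iota_Z)_!$, and I would conclude with the chain
\[ \overline{I}_!\,\overline{G}^*\mathcal{E} = \overline{I}_!\,\overline{G}^*\iota_{Z,!}\mathcal{E}_0 \cong \overline{I}_!\,\iota_{W,!}\gamma^*\mathcal{E}_0 = (\overline{I}\iota_W)_!\,\gamma^*\mathcal{E}_0 \cong \overline{g}^*(\overline{\iota}\iota_Z)_!\mathcal{E}_0 = \overline{g}^*\,\overline{\iota}_!\mathcal{E}, \]
the unlabelled equalities being pseudo-functoriality of $(-)_!$. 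The only remaining task --- formal but a little tedious, of the kind deferred elsewhere in the paper --- is to verify that this composite of isomorphisms is precisely the exchange morphism $\overline{I}_!\,\overline{G}^* \to \overline{g}^*\,\overline{\iota}_!$, which I would dispatch by the usual manipulation of the relevant units, counits and base-change squares; the reformulation in terms of (strongly) coCartesian morphisms then follows at once.
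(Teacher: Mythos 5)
Your proof is correct and follows essentially the same route as the paper's: reduce via the support hypothesis to objects of the form $\iota_{Z,!}\mathcal{E}_0$, then chase the exchange morphism around the cube using (F5) in the adjoint form of Remark~\ref{BEMPROJFORMULAIOTA}, with pseudofunctoriality of $(-)_!$ supplying the unlabelled identifications. The only difference is bookkeeping: the paper applies (F5) to the back, top and front faces separately, whereas you apply it to the back face and to the pasted outer rectangle, whose Cartesianity you establish from the top and front faces --- the same inputs, composed in a different order.
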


\begin{proof}
Because of the support condition it suffices to see that the natural exchange
\[ \overline{I}_!  \overline{G}^* \iota_{Z,!}   \rightarrow \overline{g}^* \overline{\iota}_! \iota_{Z,!}  \] 
is an isomorphism. 
Elementary properties of exchange morphisms imply that the morphism is the composition of the following (exchange) morphisms which are all isomorphisms because of pseudofunctoriality and because
of the indicated reason:

\[ \begin{array}{rcll}
   \overline{I}_!  \overline{G}^*  \iota_{Z,!} 
   &\isor&     \overline{I}_!  \iota_{W,!}  G^* & \text{because the back square in Cartesian and (F5)} \\
&\iso&     \iota_{Y,!} I_! G^* & \text{} \\
&\iso&   \iota_{Y,!} g^* \iota_! & \text{because the top square is Cartesian and (F5)} \\
&\iso&   \overline{g}^*  \iota_{X,!} \iota_! & \text{because the front square is Cartesian and (F5)}  \\
&\iso&   \overline{g}^* \overline{\iota}_! \iota_{Z,!}  & \text{}
\end{array} \]
\end{proof}

We summarize the discussion in the following 
\begin{PROP}\label{PROPPROPERTIESCORCOMP}
Let $X: \tw I \rightarrow \mathcal{S}$ be weakly admissible and let 
$\widetilde{X}: \tww I \rightarrow \mathcal{S}$ be any interior compactification of it. 
Consider a diagram
\[ \xymatrix{
w \ar[r]^-d \ar[d]_c & z \ar[d]^a \\
y \ar[r]_-b & x
} \]
in $\tww I$.
Let 
\[ \xymatrix{
\mathcal{H} \ar@{<-}[r]^\delta \ar@{<-}[d]_\gamma & \mathcal{E} \ar@{<-}[d]^\alpha \\
\mathcal{G} \ar@{<-}[r]_\beta & \mathcal{F}
} \]
be a diagram in $\DD(\cdot)$ above {\em $\widetilde{X}^{\op}$ applied to the top square}. Then the following holds:

\begin{enumerate}
\item If $X: \tw I \rightarrow \mathcal{S}$ is admissible, let $a$ and $c$ be of type 1 and $b$ and $d$ of type 3.

If \underline{$\mathcal{E}$ has support in $X(\pi_{13}(z))$} and $\alpha$ is Cartesian $(\mathcal{F} \cong \widetilde{X}(a)_*\mathcal{E})$ and $\delta$ is coCartesian $(\widetilde{X}(d)^*\mathcal{E} \cong \mathcal{H})$ then $\beta$ is coCartesian if and only if $\gamma$ is Cartesian or, in other words, the natural exchange 
\[ \widetilde{X}(b)^* \widetilde{X}(a)_*  \rightarrow \widetilde{X}(c)_* \widetilde{X}(d)^*\]
is an isomorphism on objects with support in $X(\pi_{13}(z))$.

\item Let $a$ and $c$ be of type 2 and $b$ and $d$ of type 3.
If \underline{$\mathcal{E}$ has support in $X(\pi_{13}(z))$} and $\alpha$ is strongly coCartesian $(\widetilde{X}(a)^*\mathcal{F} \cong \mathcal{E}$ inducing $\widetilde{X}(a)_!\mathcal{E} \cong \mathcal{F})$ and $\delta$ is coCartesian $(\widetilde{X}(d)^*\mathcal{E} \cong \mathcal{H})$ then $\beta$ is coCartesian if and only if $\gamma$ is strongly coCartesian or, in other words, the natural exchange 
\[ \widetilde{X}(c)_!  \widetilde{X}(d)^*  \rightarrow \widetilde{X}(b)^* \widetilde{X}(a)_!  \] 
is an isomorphism on objects with support in $X(\pi_{13}(z))$. 

 \item 
 Let $a$ and $c$ be of type 2 and $b$ and $d$ of type 1.
 If $\gamma$ is strongly coCartesian $(\widetilde{X}(c)^*\mathcal{G} \cong \mathcal{H}$ inducing $\widetilde{X}(c)_!\mathcal{H} \cong \mathcal{G})$ and $\delta$ is Cartesian $(\mathcal{E} \cong \widetilde{X}(d)_*\mathcal{H})$ then $\beta$ is Cartesian if and only if $\alpha$ is strongly coCartesian, or in other words, the natural exchange
\[ \widetilde{X}(a)_! \widetilde{X}(d)_* \rightarrow \widetilde{X}(b)_* \widetilde{X}(c)_!    \]
is an isomorphism. 
\end{enumerate}
\end{PROP}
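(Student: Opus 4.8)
The plan is to read the three ``in other words'' reformulations as instances of the three basic statements already proved in this section: Part~1 is the proper base change $\overline{g}^{*}\overline{f}_{*}\cong\overline{F}_{*}\overline{G}^{*}$ of Lemma~\ref{LEMMACOCARTSQUARES2}, Part~2 the ``open'' base change $\overline{I}_{!}\overline{G}^{*}\cong\overline{g}^{*}\overline{\iota}_{!}$ of Lemma~\ref{LEMMACOCARTSQUARES3}, and Part~3 the commutation $\iota_{!}\overline{F}_{*}\cong\overline{f}_{*}I_{!}$ of Lemma~\ref{LEMMACOCARTSQUARES1}. For each part I would: (i) observe that the coordinates of $w,z,y,x$ in $\tww I$ are forced by the prescribed types; (ii) exhibit the image under $\widetilde{X}$ of the given square as the bottom face of one of the weak-compactification cubes of Lemma~\ref{LEMMAWEAKCOMP}, or — in Part~3 — directly as a Cartesian square via Lemma~\ref{LEMMACART1}; (iii) note that the hypotheses of the cited lemma hold because type-$1$ morphisms of $\tww I$ go to proper morphisms and type-$2$ morphisms to dense embeddings; and (iv) match the support object and the (co)Cartesianness data on the $\DD(\cdot)$-square with those of the lemma, where the $\DD(\cdot)$-square of the statement is exactly a square of the type considered there.

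Concretely, in Part~1 write $z=(i\to j\to k')$; the types then force $w=(i\to j\to k)$, $y=(i'\to j\to k)$, $x=(i'\to j\to k')$, and $\widetilde{X}$ applied to the given square is the bottom face of the second cube of Lemma~\ref{LEMMAWEAKCOMP}. Its top face, $\widetilde{X}$ of the square with vertices $(i=i\to k)$, $(i=i\to k')$, $(i'=i'\to k)$, $(i'=i'\to k')$, is Cartesian precisely because $X$ is admissible: these are diagonal objects, so via $\Delta^{*}\widetilde{X}\cong X$ (Proposition~\ref{PROPCOMPDIA}) the face is the image under $X$ of a square in $\tw I$ with horizontals of type $2$ and verticals of type $1$, hence Cartesian by the criterion in~\ref{PARALTSCOR}. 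The two ``vertical'' maps $\widetilde{X}(c)$, $\widetilde{X}(a)$ are proper since $a,c$ are of type $1$, so Lemma~\ref{LEMMACOCARTSQUARES2} applies. Its support object $Z$ is the vertex $\widetilde{X}(i=i\to k')$, which equals $X(i\to k')=X(\pi_{13}(z))$, and the dense embedding $X(i\to k')\hookrightarrow\widetilde{X}(i\to j\to k')$ implicit in the phrase ``support in $Z$'' is $\widetilde{X}$ of the type-$2$ morphism $(i=i\to k')\to(i\to j\to k')$; thus ``support in $X(\pi_{13}(z))$'' is literally ``support in $Z$''. Under the identifications $\overline{f}=\widetilde{X}(a)$, $\overline{G}=\widetilde{X}(d)$ the conditions on $\alpha,\delta,\beta,\gamma$ then transfer word for word.

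Part~2 is handled the same way with $z=(i\to j\to k')$ again but $y=(i\to j'\to k)$, $x=(i\to j'\to k')$: now $\widetilde{X}$ of the square is the bottom face of the first cube of Lemma~\ref{LEMMAWEAKCOMP}, whose top face is trivially Cartesian — so no admissibility hypothesis is needed, in agreement with the statement — and whose front and back faces are Cartesian by Lemma~\ref{LEMMACARTDIA}; since $a,c$ are of type $2$ the maps $\widetilde{X}(c),\widetilde{X}(a)$ are (dense) embeddings, so Lemma~\ref{LEMMACOCARTSQUARES3} applies, once more with support object $X(\pi_{13}(z))$. Part~3 is simpler still: $b,d$ are of type $1$, hence $\widetilde{X}(b),\widetilde{X}(d)$ are proper, and $a,c$ are of type $2$, hence $\widetilde{X}(a),\widetilde{X}(c)$ are embeddings with $\widetilde{X}(c)$ dense; therefore $\widetilde{X}$ of the given square is already Cartesian by Lemma~\ref{LEMMACART1}, and Lemma~\ref{LEMMACOCARTSQUARES1} applies directly — which is why Part~3 carries no support restriction.

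The expected main obstacle is organizational rather than conceptual: one must fix coordinates and orientations so that the bottom face of the chosen cube coincides with $\widetilde{X}$ of the given square, and then check that the (co)Cartesianness and support conditions on the $\DD(\cdot)$-square transfer verbatim to those of Lemmas~\ref{LEMMACOCARTSQUARES1}--\ref{LEMMACOCARTSQUARES3}. The only genuinely content-bearing point is the asymmetry among the three parts — Part~1 really uses that $X$ is admissible (to make the top face of the second cube Cartesian), Part~2 only weak admissibility, and Part~3 nothing beyond $\widetilde{X}$ being an interior compactification — which is exactly what the presence or absence of the support hypothesis records.
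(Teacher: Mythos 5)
Your proposal is correct and takes exactly the route of the paper's (very terse) proof, which simply derives part 1 from Lemma~\ref{LEMMACOCARTSQUARES2}, part 2 from Lemma~\ref{LEMMACOCARTSQUARES3} (each via the weak compactification cubes of Lemma~\ref{LEMMAWEAKCOMP}), and part 3 from Lemma~\ref{LEMMACOCARTSQUARES1} via Lemma~\ref{LEMMACARTDIA2}. You have merely made explicit the bookkeeping the paper leaves implicit --- the coordinates of $w,z,y,x$, the identification of the support vertex $Z=\widetilde{X}(i=i\to k')\cong X(\pi_{13}(z))$, and the fact that admissibility of $X$ is needed precisely to make the top face of the second cube Cartesian --- all of which is accurate.
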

\begin{proof}
1.\@ follows from Lemma~\ref{LEMMACOCARTSQUARES2}, and 2.\@ from Lemma~\ref{LEMMACOCARTSQUARES3}, using Lemma~\ref{LEMMAWEAKCOMP} in each case.
3.\@ follows from Lemma~\ref{LEMMACOCARTSQUARES1}, using Lemma~\ref{LEMMACARTDIA2}. 
\end{proof}

\section{Preliminaries for the construction of the derivator six-functor-formalism (multi-case)}\label{SECTPRELIMM}

In this section, the discussion in the previous section will be repeated, making the necessary modifications to include the multi-case, needed later to include the
monoidal structure into the derivator six-functor-formalism. It should be skipped on a first reading. 

Let $\mathcal{S}$ be a category with compactifications, and $\SSS^{\op}$ the symmetric pre-multiderivator represented by $\mathcal{S}^{\op}$
with the symmetric multicategory structure \ref{PAROPMULTCAT}. 
Let $\DD \rightarrow \SSS^{\op}$ be a fibered multiderivator with domain $\Invlf$, satisfying axioms (F1)--(F6) and also (F4m)--(F5m).

\begin{PAR}\label{TWGENREL}
Consider a tree $\tau$. The category ${}^{\Xi} \tau$ can be generated by multimorphisms of type $l$
\[ \xymatrix{ [S_{1,1}^{(1)}, \dots, S_{1,n_1}^{(1)}, \dots] \ar[r] \ar@{=}[d] & \cdots \ar[r] &  [S_{i,1}^{(1)}, \dots, S_{i,n_i}^{(1)}, \dots] \ar[r]\ar@{=}[d] & \cdots \ar[r] &   [S_{l}^{(1)}, \dots, S^{(n)}_{l}] \ar@{->}[d] &   \\
 [S_{i,1}^{(1)}, \dots, S_{i,n_i}^{(1)}, \dots] \ar[r] & \cdots \ar[r] &  [S_{i,1}^{(1)}, \dots, S_{i,n_i}^{(1)}, \dots] \ar[r] & \cdots \ar[r] &   [T_{l}] &   } \]
where the morphism $S_{l}^{(1)}, \dots, S^{(n)}_{l} \rightarrow T_l$ is a generating morphism of $\tau$
and morphisms of type $i$
\[ \xymatrix{ [S_{1,1}, \dots, S_{1,n_1}] \ar[r] \ar@{=}[d] & \cdots \ar[r] &  [S_{i,1}, \dots, S_{i,n_i}] \ar[r]\ar@{<->}[d] & \cdots \ar[r] &   [S_{l}] \ar@{=}[d] &   \\
 [S_{1,1}, \dots, S_{1,n_1}] \ar[r] & \cdots \ar[r] &  [T_{i,1}, \dots, T_{i,n_i'}] \ar[r] & \cdots \ar[r] &   [S_{l}] &   } \]
in which the morphism of lists $[S_{i,1}, \dots, S_{i,n_i}] \leftrightarrow [T_{i,1}, \dots, T_{i,n_i'}]$ consists of {\em one} generating morphism of $\tau$ and identities otherwise. 
These generators are subject to the relations requiring that squares
\begin{equation} \label{squaretype} \vcenter{ \xymatrix{
w_1, \dots, w_n \ar[r] \ar[d] & z \ar[d] \\
x_1, \dots, x_n \ar[r] & y,
} } \end{equation}
in which the vertical and horizontal morphisms are generators as above, are commutative. Necessarily also only one of the left vertical morpisms is not an identity. In the non-multi-case $\tau = \Delta_n$ we do not have any non-trivial relation-squares in which the horizontal and vertical morphisms are of the same type. Otherwise this may happen for type $<l$. 
\end{PAR}

\begin{PAR} \label{WEAKLYADM}
Let $\mathcal{S}$ be a category equipped with the symmetric opmulticategory structure \ref{PAROPMULTCAT} and $M$ a multidiagram. A pseudo-functor of multicategories
\[ M \rightarrow \mathcal{S}^{\cor} \]
can be seen (cf.\@ Section~\ref{SECTIONDIACORCOMP}) as a functor of opmulticategories
\[ \tw M \rightarrow \mathcal{S} \]
which is {\bf admissible} in the sense that every square 
\begin{equation}\label{eqsquareinm}\vcenter{ \xymatrix{
i \ar[r] \ar[d] & j_1,\dots, j_n \ar[d] \\
i' \ar[r] & j_1',\dots,j_n'
} } \end{equation}
in which the horizontal morphisms are of type 2 and the vertical ones of type 1 is mapped to a Cartesian square. 
As is the non-multi-case we say that the functor $\tw M \rightarrow \mathcal{S}$ is {\bf weakly admissible} if the squares above
are instead mapped to weakly Cartesian squares. 

 Similarly: 
Consider a diagram $I$ (not multidiagram) and the 2-multicategory $\Fun(I, \mathcal{S}^{\cor})$. 
Let $M$ be a multicategory. 
A pseudo-functor 
\[ M \rightarrow \Fun(I, \mathcal{S}^{\cor}) \]
may be seen as a functor of usual 1-opmulticategories
\[ {}^{\downarrow \uparrow} M  \rightarrow  \Fun(\tw{I}, \mathcal{S})^{\mathrm{adm}}. \]
This functor has the property that each morphism of type 1 is mapped to a type-2-admissible morphism
and every (multi)morphism of type 2 is mapped to a type-1-admissible  (multi)morphism and
each diagram (\ref{eqsquareinm})
in which the horizontal morphisms are of type 2 and the vertical morphisms are of type 1 (necessarily 1-ary) is mapped
to a Cartesian square. 

Similarly, pseudo-functors of 2-multicategories
\[ M \rightarrow \Fun^{\mathrm{(op)lax}}(I, \mathcal{S}^{\cor}) \]
are the same as functors between 1-opmulticategories
\[ \tw{M}  \rightarrow \Fun(\tw{I}, \mathcal{S})^{\mathrm{adm}} \]
in which every morphism of type 1 is mapped to a (weakly in the oplax case) type-2-admissible morphism and every multimorphism
of type 2 is mapped to a (weakly in the lax case) type-1-admissible multimorphism and in which every diagram (\ref{eqsquareinm}) as above is mapped to a 
Cartesian square. It has obviously still the property, that the resulting functor
\[ \tw (I \times M)  \rightarrow \mathcal{S} \]
is {\em weakly} admissible. 
\end{PAR}

\begin{PAR}\label{PARMULTICOMP}
Let $\mathcal{S}$ be a category equipped with the symmetric opmulticategory structure \ref{PAROPMULTCAT}. 
Let $M$ be a locally finite multidiagram and let
 $X: \tw M \rightarrow \mathcal{S}$ be a functor of opmulticategories. Because of the particular opmulticategory structure on $\mathcal{S}$, cf.\@ \ref{PARCIRC}, $X$ can be compactified 
  yielding a point-wise dense embedding
 \[ X \hookrightarrow \overline{X}. \]
As for the plain case, $\overline{X}$ does not need to be admissible if $X$ is. 
This also shows that we get an interior compactification 
\[ \widetilde{X}: \tww M \rightarrow \mathcal{S} \]
for any (weakly) admissible 
\[ X: \tw M \rightarrow \mathcal{S} \]
(We leave it to the reader to construct a functor $(\tww M)^\circ \rightarrow {}^{\downarrow\downarrow} ((\tw M)^\circ)$ analogously to \ref{INTERIORCOMPCOR}.)
\end{PAR}

For an interior compactification \[ \widetilde{X}: \tww M \rightarrow \mathcal{S} \]
we denote by $\widetilde{X}(i \rightarrow j \rightarrow k)$
where $i, j$ and $k$ are {\em lists of objects} in $M$ the following. Let $k = [k_1, \dots, k_n]$.
Then $i$ and $j$ break up into sublists $i_1, \dots, i_n$ and $j_1, \dots, j_n$ with morphisms $i_1 \rightarrow j_1 \rightarrow [k_1]$, etc. 
Then define 
\[ \widetilde{X}(i \rightarrow j \rightarrow k) := \prod_\nu \widetilde{X}(i_\nu \rightarrow j_\nu \rightarrow [k_\nu]). \]
For any (multi)morphism $(i \rightarrow j \rightarrow k) \rightarrow (i' \rightarrow j' \rightarrow k')$ in the obvious sense, we get a 
corresponding morphism $\widetilde{X}(i \rightarrow j \rightarrow k) \rightarrow \widetilde{X}(i' \rightarrow j' \rightarrow k')$.

With this definition the same Lemmas as in the previous section hold mutatis mutandis, namely:

\begin{LEMMA}\label{LEMMACARTDIA2M}
Let $M$ be a multidiagram and $X: \tw M \rightarrow \mathcal{S}$ be weakly admissible and let 
$\widetilde{X}$ be any interior compactification of it. 
Any square of the form
\[ \xymatrix{
& \widetilde{X}(i \rightarrow j \rightarrow [k])  \ar@{->>}[r] \ar@{^{(}->}[d] &  \widetilde{X}(i' \rightarrow i \rightarrow [k])  \ar@{^{(}->}[d] \\
& \widetilde{X}(i \rightarrow j' \rightarrow [k])  \ar@{->>}[r] & \widetilde{X}(i' \rightarrow j' \rightarrow [k]) 
} \]
is Cartesian for all $i, i', j, j'$ lists of objects of $M$, and all objects $k \in M$.
\end{LEMMA}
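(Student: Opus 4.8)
The plan is to prove this by reducing it, essentially verbatim, to the non-multi statement Lemma~\ref{LEMMACARTDIA2}, and ultimately to Lemma~\ref{LEMMACART1}. The first thing I would note is that here the last slot $[k]$ is a \emph{single} object of $M$, so the product $\prod_\nu$ occurring in the definition of $\widetilde{X}(i\rightarrow j\rightarrow k)$ for list-valued arguments has exactly one factor; equivalently, each of the four corners of the square is an honest value of the functor of opmulticategories $\widetilde{X}\colon \tww M\rightarrow\mathcal{S}$ on an object of $(\tww M)^\circ$, and each of the four morphisms is $1$-ary. Thus the square is of precisely the shape treated in Lemma~\ref{LEMMACARTDIA2}, the only difference being that the first two entries $i,i',j,j'$ are now lists of objects of $M$ rather than single objects.

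Next I would invoke the reduction-to-usual-categories device of \ref{PARMULTICOMP}: a functor of opmulticategories $\tww M\rightarrow\mathcal{S}$ is the same as a functor $(\tww M)^\circ\rightarrow\mathcal{S}$, and the interior compactification $\widetilde{X}$ is obtained by pulling back the induced interior compactification along the functor $(\tww M)^\circ\rightarrow {}^{\downarrow\downarrow}((\tw M)^\circ)$ built exactly as in \ref{INTERIORCOMPCOR}. Under this identification the construction of $\widetilde{X}$ on the objects and morphisms occurring in our square is the same pullback recipe as in Proposition~\ref{PROPCOMPDIA}, and the analysis carried out there goes through unchanged with lists in place of single objects in the first two slots. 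In particular: the two vertical morphisms alter only the middle slot, hence are images of type~$2$ morphisms and so are point-wise dense embeddings (so the left vertical one is a dense embedding); and the two horizontal morphisms are, by that recipe, the maps on the relevant fibre products induced by transition maps of the exterior compactification $\overline{X}$ together with their pullbacks. Since every transition map of $\overline{X}$ is proper by construction (cf.\ the discussion after Definition~\ref{DEFEXTCOMP}) and proper maps are stable under pullback $($S3$)$ and fibre products (Lemma~\ref{LEMMAPROPERTIESCOMP}), the horizontal morphisms are proper.

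With the four morphisms placed in the right classes — proper horizontal maps, embedding vertical maps, and a \emph{dense} left vertical embedding — Lemma~\ref{LEMMACART1} applies directly and yields that the square is Cartesian, which is the claim.

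I do not expect a genuine obstacle here: the argument is a transcription of Lemma~\ref{LEMMACARTDIA2}, and the only point that really needs care is the bookkeeping of \ref{PARMULTICOMP} — checking that passing from a diagram to a multidiagram, and allowing the first two slots to be lists of objects, does not affect whether a given transition morphism of $\widetilde{X}$ is proper or is a dense embedding. This is exactly what the passage $M\rightsquigarrow M^\circ$ guarantees, since under it $\widetilde{X}$ becomes an ordinary functor subject to precisely the analysis of Proposition~\ref{PROPCOMPDIA}.
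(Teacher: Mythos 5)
Your proposal is correct and follows the same route as the paper: the paper's proof of this lemma is literally a reference to Lemma~\ref{LEMMACART1}, applied after observing that the vertical morphisms are (dense) embeddings coming from type~2 morphisms and the horizontal ones are proper coming from type~1 morphisms, with the single-object last slot $[k]$ making the multi-aspect harmless. Your extra bookkeeping via $M\rightsquigarrow M^\circ$ and Proposition~\ref{PROPCOMPDIA} just makes explicit what the paper leaves implicit.
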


\begin{proof}
Cf.\@ Lemma~\ref{LEMMACART1}.
\end{proof}

\begin{LEMMA}\label{LEMMACARTDIAM}
Let $M$ be a multidiagram and $X: \tw M \rightarrow \mathcal{S}$ be weakly admissible and let 
$\widetilde{X}$ be any interior compactification of it. 
Any square of the form
\[ \xymatrix{
& \widetilde{X}(i = i \rightarrow [k])  \ar[r] \ar@{^{(}->}[d] &  \widetilde{X}(i  = i \rightarrow [k_1]), \dots, \widetilde{X}(i  = i \rightarrow [k_n])  \ar@{^{(}->}[d] \\
& \widetilde{X}(i \rightarrow j \rightarrow [k])  \ar[r] & \widetilde{X}(i \rightarrow j \rightarrow [k_1]), \dots,  \widetilde{X}(i  \rightarrow j \rightarrow [k_n])
} \]
is Cartesian for all $i,j$ lists of objects of $M$, and all objects $k$ and $k_1, \dots, k_n$ of $M$. 
\end{LEMMA}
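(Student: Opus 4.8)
The plan is to run the proof of Lemma~\ref{LEMMACARTDIA} essentially unchanged, absorbing the multi-aspect into the product-encoding opmulticategory structure on $\mathcal{S}$. Since that structure encodes the cartesian product, a multi-square with target list $(Z_1,\dots,Z_n)$ over $(X_1,\dots,X_n)$ is Cartesian exactly when the ordinary square obtained by replacing the targets with $\prod_\nu Z_\nu$ and $\prod_\nu X_\nu$ is Cartesian; and by Lemma~\ref{LEMMAPROPERTIESCOMP},~1 a product over $\nu$ of embeddings (resp.\ of proper morphisms) is again an embedding (resp.\ proper). So it suffices to prove Cartesianness of the ordinary square with upper row $\widetilde X(i=i\to[k])\to\prod_\nu\widetilde X(i=i\to[k_\nu])$ and lower row $\widetilde X(i\to j\to[k])\to\prod_\nu\widetilde X(i\to j\to[k_\nu])$, the vertical maps being (products of) the dense embeddings that $\widetilde X$ attaches to type~$2$ morphisms of $\tww M$.

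For this ordinary square I would proceed exactly as in Lemma~\ref{LEMMACARTDIA}: extend it by a third row, with bottom entries $\widetilde X(j=j\to[k])$ and $\prod_\nu\widetilde X(j=j\to[k_\nu])$ and vertical arrows the images under $\widetilde X$ of the type~$1$ morphisms of $\tww M$ (the ones changing the first list), which $\widetilde X$ sends to proper morphisms, their products over $\nu$ being again proper. In the resulting three-row diagram the top verticals are embeddings and the bottom verticals are proper, and the top-left vertical $\widetilde X(i=i\to[k])\hookrightarrow\widetilde X(i\to j\to[k])$, being a genuine (not a product of) type~$2$ morphism, is \emph{dense}. The outer square has corners $\widetilde X(a=a\to[b])$ and $\prod_\nu\widetilde X(a=a\to[b_\nu])$; since on objects of the form $(a=a\to[b])$ the interior compactification $\widetilde X$ reproduces the value of $X$ — exactly as in the proof of Proposition~\ref{PROPCOMPDIA}, where $\widetilde F$ on a degenerate object gives back $F$ — this outer square is the weak-admissibility square of $X\colon\tw M\to\mathcal{S}$ attached to the relevant type~$1$ morphism, hence weakly Cartesian by hypothesis. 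Lemma~\ref{LEMMACART2} then gives that the upper square, which is the square in the statement, is Cartesian.

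The work is entirely organizational: unwinding the multidiagram conventions of \ref{PARMULTICOMP} and \ref{TWGENREL} far enough to read off which objects of $\tww M$ occupy the corners of the extended diagram, to check that the newly introduced edges really are of type~$1$ and type~$2$ as claimed, and to recognize the outer square as a weak-admissibility square of $X$. The one substantive point — and the reason the statement holds — is that Lemma~\ref{LEMMACART2} only requires density of the single left-hand vertical morphism, not of the products $\prod_\nu\widetilde X(i=i\to[k_\nu])\hookrightarrow\prod_\nu\widetilde X(i\to j\to[k_\nu])$, which one should not expect to be dense in general since $\mathcal{S}_1$ is not assumed stable under products. With that caveat, the argument is word for word the non-multi one, and could reasonably be recorded simply as ``Cf.\ Lemma~\ref{LEMMACARTDIA}.''
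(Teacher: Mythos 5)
Your proof is correct and is essentially the paper's own argument: the paper likewise writes $k'=[k_1,\dots,k_n]$, extends the square downward to $\widetilde X(j=j\to[k])\to\widetilde X(j=j\to k')$ so that the outer square is a weak-admissibility square of $X$, and concludes by Lemma~\ref{LEMMACART2}. The organizational points you spell out (products of embeddings/proper maps via Lemma~\ref{LEMMAPROPERTIESCOMP}, density needed only for the single left-hand vertical) are exactly what the paper leaves implicit.
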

\begin{proof}
Writing $k':= [k_1, \dots, k_n]$ we have to check that the top square in the following diagram is Cartesian:
\[ \xymatrix{
& \widetilde{X}(i = i \rightarrow [k])  \ar[r] \ar@{^{(}->}[d] &  \widetilde{X}(i  = i \rightarrow k')  \ar@{^{(}->}[d] \\
& \widetilde{X}(i \rightarrow j \rightarrow [k])  \ar[r]  \ar@{->>}[d]& \widetilde{X}(i \rightarrow j \rightarrow k')  \ar@{->>}[d] \\
& \widetilde{X}(j = j \rightarrow [k])  \ar[r] &  \widetilde{X}(j  = j \rightarrow k')  \\
} \]
in which the outer square is weakly Cartesian, because $X$ is weakly admissible. The statement follows therefore from Lemma~\ref{LEMMACART2}.
\end{proof}

\begin{LEMMA}\label{LEMMAWEAKCOMPM}
Let $M$ be a multidiagram and $X: \tw M \rightarrow \mathcal{S}$ be weakly admissible and let 
$\widetilde{X}$ be any interior compactification. 
Then the cube
\begin{equation}\nonumber
\resizebox{\displaywidth}{!}{
 \xymatrix{
 & \widetilde{X}(i = i \rightarrow [k]) \ar@{}[ddrr]|(.7){} \ar[rr]^{} \ar@{=}[dl]_{} \ar@{^{(}->}[dd]^(.7){} && \widetilde{X}(i = i \rightarrow [k_1]), \dots,  \widetilde{X}(i = i \rightarrow [k_n]) \ar@{^{(}->}[dd]^{} \ar@{=}[ld]_{} \\ 
\widetilde{X}(i = i \rightarrow [k]) \ar@{}[ddrr]|(.7){} \ar[rr]^(.4){} \ar@{^{(}->}[dd]^{} && \widetilde{X}(i = i \rightarrow [k_1]), \dots, \widetilde{X}(i = i \rightarrow [k_n])  \ar@{^{(}->}[dd]_(.7){} & \\
& \widetilde{X}(i \rightarrow j \rightarrow [k])  \ar@{^{(}->}[dl]_{} \ar[rr]^(.3){\overline{G}} && \widetilde{X}(i \rightarrow j \rightarrow [k_1]), \dots, \widetilde{X}(i \rightarrow j \rightarrow [k_n])  \ar@{^{(}->}[dl]^{} \\
\widetilde{X}(i \rightarrow j' \rightarrow [k])  \ar[rr]_{\overline{g}} && \widetilde{X}(i \rightarrow j' \rightarrow [k_1]), \dots, \widetilde{X}(i \rightarrow j' \rightarrow [k_n])  }
}
\end{equation}
is a weak compactification of the top (trivially) Cartesian square, If $X$ is admissible, then the cube
\begin{equation}\nonumber
\resizebox{\displaywidth}{!}{
 \xymatrix{
 & \widetilde{X}(i = i \rightarrow [k]) \ar@{}[ddrr]|(.7){} \ar[rr]^{} \ar[dl]_{} \ar@{^{(}->}[dd]^(.7){} && \widetilde{X}(i = i \rightarrow [k_1]) , \dots,  \widetilde{X}(i = i \rightarrow [k_n]) \ar@{^{(}->}[dd]^{} \ar[ld]_{} \\ 
\widetilde{X}(i' = i' \rightarrow [k]) \ar@{}[ddrr]|(.7){} \ar[rr]^(.4){} \ar@{^{(}->}[dd]^{} && \widetilde{X}(i' = i' \rightarrow [k_1]), \dots,  \widetilde{X}(i' = i' \rightarrow [k_n])  \ar@{^{(}->}[dd]_(.7){} & \\
& \widetilde{X}(i \rightarrow j \rightarrow [k])  \ar@{->>}[dl]_{} \ar[rr]^(.3){\overline{G}} && \widetilde{X}(i \rightarrow j \rightarrow [k_1]) , \dots, \widetilde{X}(i \rightarrow j \rightarrow [k_n])  \ar@{->>}[dl]^{} \\
\widetilde{X}(i' \rightarrow j \rightarrow [k])  \ar[rr]_{\overline{g}} && \widetilde{X}(i' \rightarrow j \rightarrow [k_1]) , \dots, \widetilde{X}(i' \rightarrow j \rightarrow [k_n])  }
}
\end{equation}
is a weak compactification of the top Cartesian square. 
\end{LEMMA}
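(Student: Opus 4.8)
The plan is to mimic, essentially verbatim, the proof of Lemma~\ref{LEMMAWEAKCOMP}. By the definition of a weak compactification of the top Cartesian square, one must verify, for each of the two cubes, three things: that all the vertical arrows are \emph{dense} embeddings, that the top face is (multi\nobreakdash-)Cartesian, and that the front and back faces are Cartesian. There is no condition on the bottom face, nor any properness requirement on the objects or morphisms appearing there, so nothing needs to be checked for it.

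First I would settle the density of the vertical embeddings. In both cubes these arrows are the images under $\widetilde{X}$ of type-$2$ morphisms of $\tww M$, i.e.\ of morphisms of a chain $i \rightarrow j \rightarrow k$ altering only the middle entry; by the construction of an interior compactification recalled in \ref{PARMULTICOMP} (following Definition~\ref{DEFINTCOMP} and the construction in Proposition~\ref{PROPCOMPDIA}) such morphisms go to dense embeddings, and for the composite legs one propagates density using (S0), exactly as in the proof of Proposition~\ref{PROPCOMPDIA}. For the top faces: in the first cube the two horizontal edges of the top are literally the same multimorphism and its side edges are identities, so it is trivially Cartesian; in the second cube the top face is Cartesian because $X$ is admissible (cf.\ \ref{WEAKLYADM}), after unwinding the convention $\widetilde{X}(i \rightarrow j \rightarrow k) = \prod_\nu \widetilde{X}(i_\nu \rightarrow j_\nu \rightarrow [k_\nu])$, under which $\widetilde{X}(i = i \rightarrow k)$ is just $X$ evaluated on the relevant object.

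For the front and back faces I would, in both cubes, again unfold the product convention, so that each such face becomes a finite product of squares of the exact shape
\[ \xymatrix{ \widetilde{X}(i=i\rightarrow[k]) \ar[r]\ar@{^{(}->}[d] & \widetilde{X}(i=i\rightarrow[k_1]),\dots,\widetilde{X}(i=i\rightarrow[k_n]) \ar@{^{(}->}[d] \\ \widetilde{X}(i\rightarrow j\rightarrow[k]) \ar[r] & \widetilde{X}(i\rightarrow j\rightarrow[k_1]),\dots,\widetilde{X}(i\rightarrow j\rightarrow[k_n]) } \]
(with $j$ replaced by $j'$ for the other of the two faces, and with $i$ replaced by $i'$ in the corresponding faces of the second cube); these are precisely the squares proved to be multi-Cartesian in Lemma~\ref{LEMMACARTDIAM}. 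Since a finite product of multi-pullback squares is again a multi-pullback square, this finishes both cubes. The only genuinely delicate point is bookkeeping rather than mathematics: one must read ``Cartesian'' for these cubes in the multi-pullback sense so that the reduction over $\prod_\nu$ to individual factors — and thence to the non-multi Lemma~\ref{LEMMACARTDIA} — is legitimate, and one must correctly match the top, front, and back faces of each multi-cube to the right instance of Lemma~\ref{LEMMACARTDIAM}. No geometric input beyond Lemma~\ref{LEMMACARTDIAM} and axioms (S0)--(S3) is needed.
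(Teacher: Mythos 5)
Your proposal is correct and follows the same route as the paper: the only substantive point is that the front and back faces are Cartesian, which both you and the paper obtain from Lemma~\ref{LEMMACARTDIAM} (the density of the vertical embeddings and the Cartesianity of the top face, which you spell out via the construction of interior compactifications and admissibility, are treated as immediate in the paper). The extra step of unfolding the product convention $\prod_\nu$ before invoking Lemma~\ref{LEMMACARTDIAM} is harmless but unnecessary, since that lemma is already stated for lists $i$, $j$ and so applies to the faces directly.
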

\begin{proof}
We need to show that in each case the front and back squares are Cartesian squares. This is a consequence of Lemma~\ref{LEMMACARTDIAM}.
\end{proof}

\begin{LEMMA}\label{LEMMACOCARTSQUARES2M}
Consider a weak compactification of a Cartesian square. 
\begin{equation}\nonumber
 \xymatrix{
 & W \ar@{}[ddrr]|(.7){} \ar[rr]^{} \ar[dl]_{} \ar@{^{(}->}[dd]^(.7){} && Z_1, \dots, Z_n \ar@{^{(}->}[dd]^{} \ar[ld]_{} \\ 
Y \ar@{}[ddrr]|(.7){} \ar[rr]^(.4){} \ar@{^{(}->}[dd]^{} && X_1, \dots, X_n \ar@{^{(}->}[dd]_(.7){} & \\
& \overline{W} \ar@{->>}[dl]_{\overline{F}} \ar[rr]^(.3){\overline{G}} && \overline{Z}_1, \dots, \overline{Z}_n \ar@{->>}[dl]^{\overline{f}_1, \dots, \overline{f}_n} \\
\overline{Y} \ar[rr]_{\overline{g}} && \overline{X}_1, \dots, \overline{X}_n }
\end{equation}
in which the $\overline{f}_i$ and $\overline{F}$ are proper. 
Then for a square
\[ \xymatrix{
\mathcal{H} \ar@{<-}[r]^-\delta \ar@{<-}[d]_\gamma & \mathcal{E}_1, \dots, \mathcal{E}_n  \ar@{<-}[d]^{\alpha_1, \dots, \alpha_n} \\
\mathcal{G} \ar@{<-}[r]_-\beta & \mathcal{F}_1, \dots, \mathcal{F}_n 
} \]
in $\DD(\cdot)$ above the bottom square the following holds: If \underline{$\mathcal{E}_i$ has support in $Z_i$} for all $i$, and the $\alpha_i$ are Cartesian ($\mathcal{F}_i \cong \overline{f}_{i,*}\mathcal{E}_i$) and $\delta$ is coCartesian ($\overline{G}^*(\mathcal{E}_1, \dots, \mathcal{E}_n) \cong \mathcal{H}$) then $\beta$ is coCartesian if and only if $\gamma$ is Cartesian or, in other words, the natural exchange 
\[ \overline{g}^*( \overline{f}_{1,*} -, \dots, \overline{f}_{n,*} -)  \rightarrow \overline{F}_* \overline{G}^*(-, \dots, -)\]
is an isomorphism on tupels of objects with support in $Z_1, \dots, Z_n$.
\end{LEMMA}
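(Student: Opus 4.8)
The plan is to imitate the proof of Lemma~\ref{LEMMACOCARTSQUARES2} almost verbatim, replacing the appeals to (F4) and (F5) by their multi-variable reformulations Lemma~\ref{LEMMAF4} and Lemma~\ref{LEMMAF5}, and keeping the one-variable appeal to Lemma~\ref{LEMMACOCARTSQUARES1}. First one reduces to the exchange statement: since each $\mathcal{E}_i$ has support in $Z_i$, write $\mathcal{E}_i \cong \iota_{Z_i,!}\mathcal{E}_i'$ for the (dense) embeddings $\iota_{Z_i}: Z_i \hookrightarrow \overline{Z}_i$ of the cube, and it is enough to show that
\[ \overline{g}^*(\overline{f}_{1,*}\iota_{Z_1,!}-, \dots, \overline{f}_{n,*}\iota_{Z_n,!}-) \longrightarrow \overline{F}_* \overline{G}^*(\iota_{Z_1,!}-, \dots, \iota_{Z_n,!}-) \]
is an isomorphism.

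Next one factors the bottom face of the cube exactly as in the non-multi case. Form the multi-pullback $\Box$ of $\overline{g}$ along the proper multimorphism $(\overline{f}_1, \dots, \overline{f}_n)$; the projection $\overline{f}': \Box \twoheadrightarrow \overline{Y}$ is proper, being a finite fibre product over $\overline{Y}$ of the pullbacks $\overline{Y}\times_{\overline{X}_i}\overline{Z}_i \to \overline{Y}$ of the proper $\overline{f}_i$ (Lemma~\ref{LEMMAPROPERTIESCOMP} together with (S3)). Since $\overline{F}$ is proper and factors through the induced map $\overline{f}'': \overline{W} \to \Box$ as $\overline{F} = \overline{f}' \circ \overline{f}''$, axiom (S2) gives that $\overline{f}''$ is proper. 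Pulling the embeddings $\iota_{Z_i}$ back along $G': \Box \to \overline{Z}_1, \dots, \overline{Z}_n$ and using that the top and front squares of the cube are Cartesian, the fibre is identified with $W$, so one obtains a grid
\[ \xymatrix{
W \ar@{=}[r] \ar@{^{(}->}[d]_{\iota_W} \ar@{}[rd]|{\numcirc{1}} & W \ar[r]^-{G} \ar@{^{(}->}[d]^\iota \ar@{}[rd]|{\numcirc{2}} & Z_1,\dots,Z_n \ar@{^{(}->}[d]^{\iota_{Z_1},\dots,\iota_{Z_n}}  \\
\overline{W} \ar@{->>}[r]_-{\overline{f}''}  \ar@{->>}[d]_{\overline{F}} & \Box \ar@{}[rd]|{\numcirc{3}}  \ar[r]^-{G'}  \ar@{->>}[d]^{\overline{f}'} & \overline{Z}_1,\dots,\overline{Z}_n \ar@{->>}[d]^{\overline{f}_1,\dots,\overline{f}_n}  \\
\overline{Y} \ar@{=}[r]   & \overline{Y}  \ar[r]_-{\overline{g}}   & \overline{X}_1,\dots,\overline{X}_n
} \]
in which $\numcirc{2}$ and $\numcirc{3}$ are Cartesian (multi-pullbacks) and $\iota_W$ is dense.

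Finally one writes the exchange morphism as a composite of isomorphisms, slot by slot, in complete parallel to Lemma~\ref{LEMMACOCARTSQUARES2}: multi base change along $\numcirc{3}$ (Lemma~\ref{LEMMAF4}, for the proper $\overline{f}_i$) rewrites $\overline{g}^*(\overline{f}_{1,*}-,\dots) \iso \overline{f}'_*(G')^*(-,\dots)$; the multi open base change along $\numcirc{2}$ (Lemma~\ref{LEMMAF5}) rewrites $(G')^*(\iota_{Z_1,!}-,\dots) \isor \iota_! G^*(-,\dots)$; Lemma~\ref{LEMMACOCARTSQUARES1} applied to the one-variable square $\numcirc{1}$ (with $\iota_W$ dense and $\overline{f}''$ proper) gives $\iota_! \iso \overline{f}''_* \iota_{W,!}$; pseudo-functoriality gives $\overline{f}'_* \overline{f}''_* \cong \overline{F}_*$; and multi open base change along the Cartesian composite of $\numcirc{1}$ and $\numcirc{2}$ (Lemma~\ref{LEMMAF5} again) identifies $\iota_{W,!} G^*(-,\dots) \iso \overline{G}^*(\iota_{Z_1,!}-,\dots)$. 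I expect the only point requiring care --- as already in the non-multi case --- to be the verification that this composite of canonical isomorphisms really is the canonical exchange morphism of the statement; this is a formal, if tedious, check in the calculus of exchange morphisms, carried out exactly as for Lemma~\ref{LEMMACOCARTSQUARES2}.
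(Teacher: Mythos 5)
Your proof is correct and follows essentially the same route as the paper's: the same three-by-three grid with squares $\numcirc{1}$--$\numcirc{3}$, the same reduction via the support condition, and the same chain of exchange isomorphisms justified by Lemma~\ref{LEMMAF4} for $\numcirc{3}$, Lemma~\ref{LEMMAF5} for $\numcirc{2}$ and for the composite of $\numcirc{1}$ and $\numcirc{2}$, and Lemma~\ref{LEMMACOCARTSQUARES1} for $\numcirc{1}$. The paper likewise leaves the identification of this composite with the canonical exchange morphism as a formal check.
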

\begin{proof} 
We look at the following diagram
\[ \xymatrix{
W \ar@{=}[r] \ar@{^{(}->}[d]_{\iota_W} \ar@{}[rd]|{\numcirc{1}} & W \ar[r]^-{G} \ar@{^{(}->}[d]^\iota \ar@{}[rd]|{\numcirc{2}} & Z_1, \dots, Z_n \ar@{^{(}->}[d]^{\iota_Z}  \\
\overline{W} \ar@{->>}[r]_{\overline{f}''}  \ar@{->>}[d]_{\overline{F}} 
& \Box \ar@{}[rd]|{\numcirc{3}}  \ar[r]^-{G'}  \ar@{->>}[d]^{\overline{f}'} & \overline{Z}_1, \dots, \overline{Z}_n  \ar@{->>}[d]^{\overline{f}}  \\
\overline{Y} \ar@{=}[r]   & \overline{Y}  \ar[r]_-{\overline{g}}   & \overline{X}_1, \dots, \overline{X}_n \\
} \]
in which squares $\numcirc{2}$ and $\numcirc{3}$ are Cartesian. 
The middle left horizontal morphism is proper because of (S2). Because of the support condition, we have to show that the natural exchange
\[ \overline{g}^*(\overline{f}_{1,*} \iota_{Z_1,!}-, \dots, \overline{f}_{n,*} \iota_{Z_n,!}-) \rightarrow \overline{F}_* \overline{G}^* (\iota_{Z_1,!}-, \dots, \iota_{Z_n,!}-)  \]
is an isomorphism. Elementary properties of exchange morphisms imply that the morphism is the composition of the following exchange morphisms which are all isomorphisms because of the indicated reason.
\[
\begin{array}{rcll}
\overline{g}^* (\overline{f}_{1,*} \iota_{Z_1,!}-,\dots, \overline{f}_{n,*} \iota_{Z_n,!}-)  &\iso & \overline{f}'_*  (G')^* (\iota_{Z_1,!}-, \dots, \iota_{Z_n,!}-)  & \parbox{15em}{because $\numcirc{3}$ is Cartesian and (F4) in the form of Lemma~\ref{LEMMAF4}} \vspace{.5em} \\
&\isor & \overline{f}'_*   \iota_{!} G^*(-, \dots, -)    & \parbox{15em}{because $\numcirc{2}$ is Cartesian and (F5m) in the form of Lemma~\ref{LEMMAF5}}  \vspace{.5em}  \\
&\iso & \overline{f}'_*  \overline{f}''_* \iota_{W,!} G^*(-,\dots,-) & \parbox{15em}{applying Lemma~\ref{LEMMACOCARTSQUARES1} for $\numcirc{1}$}  \vspace{.5em}  \\
&\iso & \overline{F}_*  \iota_{W,!} G^*(-,\dots,-) \\
&\iso & \overline{F}_* \overline{G}^* ( \iota_{Z_1,!}-, \dots,  \iota_{Z_n,!}-)  & \parbox{15em}{because the composite of $\numcirc{1}$ and $\numcirc{2}$ is Cartesian and (F5m) in the form of Lemma~\ref{LEMMAF5}.}
\end{array}
\]
\end{proof}

\begin{LEMMA}\label{LEMMACOCARTSQUARES3M}
Consider a weak compactification of a Cartesian square. 
\begin{equation}\nonumber
 \xymatrix{
 & W \ar@{}[ddrr]|(.7){} \ar[rr]^{} \ar@{^{(}->}[dl]_{I} \ar@{^{(}->}[dd]^(.7){} && Z_1, \dots, Z_n \ar@{^{(}->}[dd]^{} \ar@{^{(}->}[ld]_{\iota_1, \dots, \iota_n} \\ 
Y \ar@{}[ddrr]|(.7){} \ar[rr]^(.4){} \ar@{^{(}->}[dd]^{} && X_1, \dots, X_n \ar@{^{(}->}[dd]_(.7){} & \\
& \overline{W} \ar@{^{(}->}[dl]_{\overline{I}} \ar[rr]^(.3){\overline{G}} && \overline{Z}_1, \dots, \overline{Z}_n \ar@{^{(}->}[dl]^{\overline{\iota}_1, \dots, \overline{\iota}_n} \\
\overline{Y} \ar[rr]_{\overline{g}} && \overline{X}_1, \dots,  \overline{X}_n }
\end{equation}
in which all $\overline{\iota}_i$, all $\iota_i$, $I$, and $\overline{I}$ are embeddings. 
Then for a commutative square
\[ \xymatrix{
\mathcal{H} \ar@{<-}[r]^-\delta \ar@{<-}[d]_-\gamma & \mathcal{E}_1, \dots, \mathcal{E}_n \ar@{<-}[d]^{\alpha_1, \dots, \alpha_n} \\
\mathcal{G} \ar@{<-}[r]_-\beta & \mathcal{F}_1, \dots, \mathcal{F}_n
} \]
in $\DD(\cdot)$ above the bottom square the following holds: If \underline{$\mathcal{E}_i$ has support in $Z_i$} for all $I$, and the $\alpha_i$ are strongly coCartesian $(\overline{\iota}_i^*\mathcal{F}_i \cong \mathcal{E}_i$ inducing $\overline{\iota}_{i,!}\mathcal{E}_i \cong \mathcal{F}_i)$ and $\delta$ is coCartesian $(\overline{G}^*(\mathcal{E}_1, \dots, \mathcal{E}_n) \cong \mathcal{H})$ then $\beta$ is coCartesian if and only if $\gamma$ is strongly coCartesian or, in other words, the natural exchange 
\[ \overline{I}_!  \overline{G}^* (-, \dots, -)  \rightarrow \overline{g}^* ( \overline{\iota}_{1,!}-, \dots, \overline{\iota}_{n,!}-) \] 
is an isomorphism on tupels of objects with support in $Z_1, \dots, Z_n$. 
\end{LEMMA}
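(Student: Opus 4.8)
The plan is to follow the proof of Lemma~\ref{LEMMACOCARTSQUARES3} almost verbatim, replacing the one-variable pull-back functors by their multi-variable analogues and replacing each invocation of (F5) by an invocation of Lemma~\ref{LEMMAF5} (which bundles (F5) and (F5m) into a single statement about Cartesian squares of the relevant ``one embedding on the left leg, a tuple of embeddings on the right legs'' shape). First I would reduce to the essential point. Write $\iota_W,\iota_Y,\iota_{Z_i},\iota_{X_i}$ for the dense embeddings of the weak compactification. The support hypothesis says precisely that each $\mathcal{E}_i$ lies in the essential image of $\iota_{Z_i,!}$, so it suffices to check that the natural transformation
\[ \overline{I}_! \overline{G}^*(\iota_{Z_1,!}-,\dots,\iota_{Z_n,!}-) \longrightarrow \overline{g}^*(\overline{\iota}_{1,!}\iota_{Z_1,!}-,\dots,\overline{\iota}_{n,!}\iota_{Z_n,!}-) \]
is an isomorphism.

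I would then exhibit this transformation as the composite of the following chain of isomorphisms, exactly mirroring the five steps in loc.\@ cit.:
\[
\begin{array}{rcll}
\overline{I}_! \overline{G}^*(\iota_{Z_1,!}-,\dots,\iota_{Z_n,!}-) &\isor& \overline{I}_! \iota_{W,!} G^*(-,\dots,-) & \text{back face Cartesian; Lemma~\ref{LEMMAF5}} \\
&\iso& \iota_{Y,!} I_! G^*(-,\dots,-) & \text{pseudofunctoriality} \\
&\iso& \iota_{Y,!} g^*(\iota_{1,!}-,\dots,\iota_{n,!}-) & \text{top face Cartesian; Lemma~\ref{LEMMAF5}} \\
&\iso& \overline{g}^*(\iota_{X_1,!}\iota_{1,!}-,\dots,\iota_{X_n,!}\iota_{n,!}-) & \text{front face Cartesian; Lemma~\ref{LEMMAF5}} \\
&\iso& \overline{g}^*(\overline{\iota}_{1,!}\iota_{Z_1,!}-,\dots,\overline{\iota}_{n,!}\iota_{Z_n,!}-) & \text{pseudofunctoriality}
\end{array}
\]
Here the second and last steps use that the left and right faces of the cube commute, i.e.\@ $\overline{I}\circ\iota_W = \iota_Y\circ I$ and $\iota_{X_i}\circ\iota_i = \overline{\iota}_i\circ\iota_{Z_i}$. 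The three Cartesian faces are exactly the ones guaranteed by the definition of weak compactification, and each has the shape to which Lemma~\ref{LEMMAF5} applies once one reads off which leg carries the embeddings and which legs carry the correspondence. As in the non-multi case I would close with the remark that an elementary (but tedious) unwinding of the definitions of the exchange morphisms identifies this composite with the map displayed above, and that the reformulation in terms of $\beta$ being coCartesian if and only if $\gamma$ is strongly coCartesian is then immediate from the definitions of $\beta,\gamma$ together with the support hypotheses on the $\mathcal{E}_i$.

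I do not expect a genuine obstacle here: no stability, perfect generation, or derivator-specific input is needed, and the argument is pure formal manipulation of exchange morphisms, just as for Lemma~\ref{LEMMACOCARTSQUARES3}. The only thing demanding care is the index bookkeeping --- keeping track of which composites of embeddings equal which, and of the direction of each arrow in the chain --- which is routine.
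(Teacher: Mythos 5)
Your proposal is correct and coincides essentially verbatim with the paper's own proof: the same reduction via the support condition to objects in the image of the $\iota_{Z_i,!}$, followed by the same five-step chain of exchange isomorphisms through the back, top, and front Cartesian faces of the cube, each justified by Lemma~\ref{LEMMAF5} (i.e.\@ (F5m)) and pseudofunctoriality.
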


\begin{proof}
Because of the support condition it suffices to see that the natural exchange
\[ \overline{I}_!  \overline{G}^* (\iota_{Z_1,!}-,\dots,\iota_{Z_n,!}-)   \rightarrow \overline{g}^* (\overline{\iota}_{1,!} \iota_{Z_1,!}-,\dots,\overline{\iota}_{n,!} \iota_{Z_n,!}-)   \] 
is an isomorphism. 
Elementary properties of exchange morphisms imply that the morphism is the composition of the following exchange morphisms which are all isomorphisms because of pseudofunctoriality and because
of the indicated reason:

\[ \begin{array}{rcll}
   \overline{I}_!  \overline{G}^*  (\iota_{Z_1,!}-, \dots, \iota_{Z_n,!}-) 
   &\isor&     \overline{I}_!  \iota_{W,!}  G^*(-, \dots, -)  & \parbox{16em}{because the back square in Cartesian and (F5m) in the form of Lemma~\ref{LEMMAF5}} \\
&\iso&     \iota_{Y,!} I_! G^*(-, \dots, -) & \text{} \\
&\iso&   \iota_{Y,!} g^*( \iota_{1,!}-,\dots,\iota_{n,!}-)  & \parbox{16em}{because the top square is Cartesian and and (F5m) in the form of Lemma~\ref{LEMMAF5}} \vspace{.6em} \\
&\iso&   \overline{g}^*  (\iota_{X_1,!} \iota_{1,!}-, \dots, \iota_{X_n,!} \iota_{n,!}-) & \parbox{16em}{because the front square is Cartesian and (F5m) in the form of Lemma~\ref{LEMMAF5}}  \\
&\iso&   \overline{g}^*( \overline{\iota}_{1,!} \iota_{Z_1,{!}}-, \dots,  \overline{\iota}_{n,!} \iota_{Z_n,{!}}-)  &
\end{array} \]
\end{proof}

We summarize the discussion in the following Proposition:
\begin{PROP}\label{PROPPROPERTIESCORCOMPM}
Let $M$ be a multidiagram, $X: \tw M \rightarrow \mathcal{S}$ be weakly admissible, and let 
$\widetilde{X}: \tww M \rightarrow \mathcal{S}$ be any interior compactification of it. 
Consider a diagram of the form

\[ \xymatrix{
w \ar[r]^-d \ar[d]_c & z_1, \dots, z_n \ar[d]^{a_1, \dots, a_n} \\
y \ar[r]_-b & x_1, \dots, x_n
} \]
in $\tww M$ where the $b$ and $d$ are multimorphisms of type 3. 
Let 
\[ \xymatrix{
\mathcal{H} \ar@{<-}[r]^-\delta \ar@{<-}[d]_\gamma & \mathcal{E}_1, \dots, \mathcal{E}_n \ar@{<-}[d]^{\alpha_1, \dots, \alpha_n} \\
\mathcal{G} \ar@{<-}[r]_-\beta & \mathcal{F}_1, \dots, \mathcal{F}_n
} \]
be a diagram in $\DD(\cdot)$ above {\em $\widetilde{X}^{\op}$ applied to the top square}. Then the following holds:

\begin{enumerate}
\item If $X$ is admissible, let the $a_i$ and $c$ be of type 1.
If \underline{$\mathcal{E}_i$ has support in $X(\pi_{13}(z_i))$} for all $i$, and all $\alpha_i$ are Cartesian $(\mathcal{F}_i \cong \widetilde{X}(a_i)_*\mathcal{E}_i)$ and $\delta$ is coCartesian $(\widetilde{X}(d)^*(\mathcal{E}_1, \dots, \mathcal{E}_n) \cong \mathcal{H})$ then $\beta$ is coCartesian if and only if $\gamma$ is Cartesian or, in other words, the natural exchange 
\[ \widetilde{X}(b)^* (\widetilde{X}(a_1)_{*}-, \dots, \widetilde{X}(a_n)_{*}-)  \rightarrow \widetilde{X}(c)_* \widetilde{X}(d)^*(-, \dots, -) \]
is an isomorphism on $n$-tupels of objects with support in $X(\pi_{13}(z_1)), \dots, X(\pi_{13}(z_n))$.

\item Let the $a_i$ and $c$ be of type 2.
If \underline{$\mathcal{E}_i$ has support in $X(\pi_{13}(z_i))$} for all $i$, and all $\alpha_i$ are strongly coCartesian $(\widetilde{X}(a_i)^*\mathcal{F}_i \cong \mathcal{E}_i$ inducing $\widetilde{X}(a_i)_{!}\mathcal{E}_i \cong \mathcal{F}_i)$ and $\delta$ is coCartesian $(\widetilde{X}(d)^*(\mathcal{E}_1, \dots, \mathcal{E}_n) \cong \mathcal{H})$ then $\beta$ is coCartesian if and only if $\gamma$ is strongly coCartesian or, in other words, the natural exchange 
\[ \widetilde{X}(c)_!  \widetilde{X}(d)^*(-, \dots, -)  \rightarrow \widetilde{X}(b)^* (\widetilde{X}(a_1)_{!}-, \dots,  \widetilde{X}(a_n)_{!}-)  \] 
is an isomorphism on $n$-tupels of objects with support in $X(\pi_{13}(z_1)), \dots, X(\pi_{13}(z_n))$. 

 \item 
 Let $a$ and $c$ be of type 2 and $b$ and $d$ of type 1.
 If $\gamma$ is strongly coCartesian $(\widetilde{X}(c)^*\mathcal{G} \cong \mathcal{H}$ inducing $\widetilde{X}(c)_!\mathcal{H} \cong \mathcal{G})$ and $\delta$ is Cartesian $(\mathcal{E} \cong \widetilde{X}(d)_*\mathcal{H})$ then $\beta$ is Cartesian if and only if $\alpha$ is strongly coCartesian, or in other words, the natural exchange
\[ \widetilde{X}(a)_! \widetilde{X}(d)_* \rightarrow \widetilde{X}(b)_* \widetilde{X}(c)_!    \]
is an isomorphism.

\end{enumerate}
\end{PROP}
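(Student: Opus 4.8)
The plan is to repeat, essentially verbatim, the argument that established the non-multi version, Proposition~\ref{PROPPROPERTIESCORCOMP}, replacing the three non-multi ``cube lemmas'' by the multi-analogues proved above in this section. Each of the three assertions has the shape: given a commutative square in $\tww M$ whose four edges are of prescribed types, and a square in $\DD(\cdot)$ lying over the image of that square under $\widetilde{X}^{\op}$ in which two of the four morphisms have prescribed (co)Cartesianness, then the third morphism is (co)Cartesian if and only if the fourth is; equivalently, a named exchange morphism is invertible (on objects with prescribed support). The first step in each case is therefore to recognise the square of $\widetilde{X}$-images as the relevant face of a cube (or as a Cartesian square) to which one of Lemmas~\ref{LEMMACOCARTSQUARES2M}, \ref{LEMMACOCARTSQUARES3M}, \ref{LEMMACOCARTSQUARES1} applies.

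For part 1 the morphisms $a_i$ and $c$ are of type $1$ and $b,d$ of type $3$; by construction of the interior compactification (cf.\@ \ref{PARMULTICOMP} and \ref{INTERIORCOMPCOR}), $\widetilde{X}$ sends type-$1$ morphisms to proper morphisms and type-$2$ morphisms to dense embeddings, while type-$3$ morphisms (these are exactly the $n$-ary ones, since $\tww M = {}^{\downarrow\downarrow\uparrow}M$ has last direction $\uparrow$ in position $l=3$) are unconstrained. I would fit the given square into the second (admissible) cube of Lemma~\ref{LEMMAWEAKCOMPM}: its bottom face is $\widetilde{X}$ applied to the square $w, z_\bullet, y, x_\bullet$, with the $\overline{f}_i$ and $\overline{F}$ there equal to $\widetilde{X}$ of the type-$1$ morphisms $a_i$ and $c$, hence proper; and its top face has corners $\widetilde{X}(i=i\to[k_\nu])\cong X(\pi_{13}(z_\nu))$, which is precisely the ``interior'' object in which the support of $\mathcal{E}_i$ is required to lie. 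Lemma~\ref{LEMMAWEAKCOMPM} certifies this cube as a weak compactification of a Cartesian square, so Lemma~\ref{LEMMACOCARTSQUARES2M} applies directly and yields both the stated equivalence and the exchange isomorphism $\widetilde{X}(b)^*(\widetilde{X}(a_1)_*-,\dots)\to\widetilde{X}(c)_*\widetilde{X}(d)^*(-,\dots)$. Part 2 is identical with the roles of types $1$ and $2$ interchanged: now $\widetilde{X}$ of the type-$2$ morphisms $a_i, c$ are (dense) embeddings, the first cube of Lemma~\ref{LEMMAWEAKCOMPM} again exhibits a weak compactification of a Cartesian square, and Lemma~\ref{LEMMACOCARTSQUARES3M} gives the conclusion.

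For part 3, $a$ and $c$ are of type $2$ and $b,d$ of type $1$, and it is a $1$-ary statement, so no cube is needed: the square of $\widetilde{X}$-images is itself Cartesian by Lemma~\ref{LEMMACARTDIA2M} (the multi-analogue of Lemma~\ref{LEMMACARTDIA2}), with the left vertical $\widetilde{X}(c)$ a \emph{dense} embedding because it is of type $2$, the top and bottom $\widetilde{X}(d),\widetilde{X}(b)$ proper, and the right vertical $\widetilde{X}(a)$ an embedding. Lemma~\ref{LEMMACOCARTSQUARES1} (which is $1$-ary, hence needs no multi-modification, but relies on axiom (F6)) then yields precisely the exchange isomorphism $\widetilde{X}(a)_!\widetilde{X}(d)_*\to\widetilde{X}(b)_*\widetilde{X}(c)_!$.

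The only genuinely delicate points — where I expect the bulk of the actual write-up to go — are the bookkeeping identifications: (i) matching the edge-types in $\tww M$ with ``proper / dense embedding / arbitrary'' under $\widetilde{X}$, using that $b,d$ are exactly the type-$l$ (here type-$3$) morphisms; (ii) checking $\widetilde{X}(i=i\to[k_\nu])\cong X(\pi_{13}(z_\nu))$ so that the support hypotheses of the cube lemmas coincide with ``support in $X(\pi_{13}(z_i))$''; and (iii) the convention of \ref{PARMULTICOMP} that $\widetilde{X}(i\to j\to k)$ for $k$ a list is the product of the $\widetilde{X}(i_\nu\to j_\nu\to[k_\nu])$, which is exactly what makes Lemmas~\ref{LEMMACARTDIAM}, \ref{LEMMAWEAKCOMPM}, \ref{LEMMACARTDIA2M} the correct multi-versions of their non-multi counterparts and lets the reductions above go through slot by slot. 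None of this is conceptually new over the non-multi case; it is precisely the extra technical layer announced at the beginning of this section.
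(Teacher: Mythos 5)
Your proposal is correct and follows exactly the paper's own route: part 1 via Lemma~\ref{LEMMAWEAKCOMPM} (second cube) and Lemma~\ref{LEMMACOCARTSQUARES2M}, part 2 via Lemma~\ref{LEMMAWEAKCOMPM} (first cube) and Lemma~\ref{LEMMACOCARTSQUARES3M}, and part 3 via Lemma~\ref{LEMMACARTDIA2M} and Lemma~\ref{LEMMACOCARTSQUARES1}. The bookkeeping identifications you flag (edge types versus proper/dense embedding, and $\widetilde{X}(i=i\to[k_\nu])\cong X(\pi_{13}(z_\nu))$) are exactly what the paper leaves implicit.
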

\begin{proof}
1.\@ follows from Lemma~\ref{LEMMACOCARTSQUARES2M}, and 2.\@ from Lemma~\ref{LEMMACOCARTSQUARES3M}, using Lemma~\ref{LEMMAWEAKCOMPM} in each case.
3.\@ is proven exactly as in Proposition~\ref{PROPPROPERTIESCORCOMP}.
\end{proof}

\section{The construction of derivator six-functor-formalisms}\label{SECTCONST}

\begin{PAR}\label{BEGINSECTIONDER6FUPRE}
Let $\mathcal{S}$ be a category with compactifications, and $\SSS^{\op}$ the symmetric pre-multiderivator represented by $\mathcal{S}^{\op}$ with domain $\Cat$, where $\mathcal{S}^{\op}$ is equipped with the symmetric multicategory structure \ref{PAROPMULTCAT}. Recall from Definition~\ref{DEFSCOR2} the definition of the symmetric 
2-pre-multiderivator  $\SSS^{\cor}$, $\SSS^{\cor, 0, \lax}$, and \@ $\SSS^{\cor, 0, \oplax}$ (the latter formed w.r.t.\@ the given class of proper morphisms), respectively, with domain $\Cat$.
\end{PAR}

\begin{DEF}[{\cite[Definition~6.1]{Hor16}}]\label{DEF6FUDER}
\begin{enumerate}

\item A {\bf (symmetric) derivator six-functor-formalism} is a left and right fibered (symmetric) multiderivator with domain $\Cat$
\[ \DD \rightarrow \SSS^{\cor}. \] 

\item A {\bf (symmetric) proper derivator six-functor-formalism} is as before with an extension as oplax left fibered (symmetric) multiderivator with domain $\Cat$
\[ \DD' \rightarrow \SSS^{\cor, 0, \oplax},  \] 
and an extension as lax right fibered (symmetric) multiderivator with domain $\Cat$
\[ \DD'' \rightarrow \SSS^{\cor, 0, \lax}.  \] 
\end{enumerate}
\end{DEF}
There is a dual notion of etale derivator six-functor-formalism which will not play any role in this article.
The word ``symmetric'' in brackets indicates that there is a symmetric and a non-symmetric variant of the definition. 
In the symmetric variant all functors of multicategories occurring the various definitions 
have to be compatible with the actions of the symmetric groups.

\begin{PAR}\label{BEGINSECTIONDER6FU}
Let $\SSS^{\op}$ be the symmetric pre-multiderivator as in \ref{BEGINSECTIONDER6FUPRE}. 
Let $\DD \rightarrow \SSS^{\op}$ be a (symmetric) fibered multiderivator with domain $\Dirlf$ satisfying axioms (F1)--(F6) and (F4m)--(F5m) of  \ref{PARAXIOMS}. Assume that $\DD$ is infinite (i.e.\@ satisfies (Der1${}^{\infty}$)). 
The goal is to construct a natural (symmetric) derivator six-functor-formalism $\EE \rightarrow \SSS^{\cor}$ (and finally a  (symmetric) {\em proper} derivator six-functor-formalism) whose restriction to $\SSS^{\op}$ is equivalent to $\DD$.
We will first construct a left fibered multiderivator $\EE \rightarrow \SSS^{\cor,\comp}$ with domain $\Dirlf$ such that
\[ \EE(I)_{X \hookrightarrow \overline{X}} = \DD({}^{\downarrow \uparrow \uparrow \downarrow} I)^{4-\cocart, 3-\cocart^*, 2-{\cart}}_{\pi_{234}^* \widetilde{X}^{\op} } \]
for all compactified correspondences $X\hookrightarrow \overline{X}$ in $\SSS^{\cor,\comp}(I)$, and where $\widetilde{X}$ denotes the corresponding interior compactification. 
The superscripts mean that we consider the full subcategory in which the underlying morphism in $\DD(\cdot)$ is coCartesian for any morphism of type 4 in $\twwc I$, is strongly coCartesian for any morphism of type 3, and 
is Cartesian for any morphism of type 2. We will also say that the objects are 4-coCartesian, strongly 3-coCartesian, and 2-Cartesian, respectively. 
\end{PAR}

\begin{BEISPIEL}
If $I = \Delta_1$, and $X: I \rightarrow \mathcal{S}^{\cor}$ is a correspondence
\begin{equation}\nonumber
 \xymatrix{
 & A \ar[rd]^f \ar[ld]_{g}  \\
S &  & T
} 
\end{equation}
with compactification $X \hookrightarrow \overline{X}$ in $\mathcal{S}^{\cor,\comp}$, inducing the interior compactification $\widetilde{X}$
\begin{equation}\nonumber
 \xymatrix{
 & & A \ar@{^{(}->}[rd]^\iota \ar[ldld]_{g}  \\
 & & & \overline{A} \ar@{->>}[rd]^{\overline{f}}  \\
S & & & & T
} 
\end{equation}
then the {\em underlying diagram} of an object in $\mathcal{E} \in \EE(I)_{X\hookrightarrow \overline{X}}$ is determined (up to isomorphism) by an object $\mathcal{E}_0$ in $\DD(\cdot)_S$ and  $\mathcal{E}_1$ in $\DD(\cdot)_T$
together with a morphism
\[ \overline{f}_* \iota_! g^* \mathcal{E}_0  \rightarrow \mathcal{E}_1. \]
This is, of course, what is intended. It is therefore reasonable to believe that the above definition gives the correct derivator enhancement of this situation. 
\end{BEISPIEL}

We begin with a couple of Lemmas that are used to construct a 1-opfibration and 2-fibration $\EE(I) \rightarrow \SSS^{\cor, \comp}(I)$. The first crucial step is to understand
how the given pull-back functors, (even multivalued)  push-forward functors, and the additional $\iota_!$, for $\DD({}^{\downarrow \uparrow \uparrow \downarrow} I) \rightarrow \SSS^{\op}({}^{\downarrow \uparrow \uparrow \downarrow} I)$ behave w.r.t.\@ to the conditions of being 4-coCartesian, strongly 3-coCartesian, and 2-Cartesian, respectively.

\begin{PAR}\label{COMPONENTS}
Let $I$ be in $\Dirlf$, let $\tau$ be a tree, and let
\[ X \hookrightarrow \overline{X} \]
be an exterior compactification in $\mathcal{S}^{\tw(\tau \times I)}$ with $X$ admissible, i.e.\@ an object in $\Cor^{\comp}_I(\tau)$ (cf.\@ Definition~\ref{DEFCORCOMP}).
By the construction in \ref{PARMULTICOMP} it has an interior compactification
\[ \widetilde{X}: \tww (\tau \times I) \rightarrow \mathcal{S} \]
to which we may apply the results of the previous section.

Let $o$ be a multimorphism in $\tww \tau$. If $o$ is of type 3 (resp.\@ type 2, resp.\@ type 1) denote by
\[ \begin{array}{rrcl} 
 \widetilde{g}: &\widetilde{A} &\rightarrow& \widetilde{S}_1, \dots, \widetilde{S}_n \\
 \widetilde{\iota}: &\widetilde{A} &\rightarrow& \widetilde{A}' \\
 \widetilde{f}:  &\widetilde{A}' &\rightarrow& \widetilde{T} 
\end{array} \]
their images in $\mathcal{S}^{\tww I}$.
The notation is borrowed from the following example. Keep in mind, however, that not every multimorphism $o$ occurs as the ones considered there. In particular, the
$\widetilde{A}, \widetilde{S}_i, \widetilde{T}$ do not need to be --- unlike in the example --- compactifications of admissible diagrams themselves. 

For each object $x$ in $\tww \tau$ there is a canonical object $y$ in the image of $\tw \tau$ with a morphism
\[ o: y \rightarrow x \]
of type 2. Let $\widetilde{\iota}: \widetilde{S}' \rightarrow \widetilde{S}$ be the corresponding morphism.
We say that an object in $\DD(\twwc I)_{\pi_{234}^* \widetilde{S}^{\op}}$ is {\bf well-supported}, if it is strongly 3-coCartesian and the underlying diagram lies {\em point-wise} at all $\mu \in \twwc I$ in the essential image of
$\widetilde{\iota}(\pi_{234}(\mu))_!$. The corresponding full subcategory will be denoted by $\DD(\twwc I)_{\pi_{234}^* \widetilde{S}^{\op}}^{\ws}$.
\end{PAR}

\begin{BEISPIEL}[$\tau = \Delta_{1,n}$] \label{EXCOMPONENTS}
In this case $X$  induces the diagram in $\Fun(\tw  I, \mathcal{S})$
\[ \xymatrix{
& A \ar[ld]^{g_1} \ar[rd]_{g_n}  \ar[rrrd]^{f} \\
S_1  & \dots &   S_n &  ; &   T
} \]
with $g=(g_1, \dots, g_n)$ type-1 admissible as multimorphism, and $f$ type-2 admissible, 
with exterior compactification
\[ \xymatrix{
& \overline{A} \ar[ld]^{\overline{g}_1} \ar[rd]_{\overline{g}_n}  \ar[rrrd]^{\overline{f}} \\
\overline{S}_1  & \dots &   \overline{S}_n &  ; &   \overline{T}
} \]
(Note that the diagrams $\overline{S}_1, \dots, \overline{S}_n, \overline{A}, \overline{T}$ have no admissibility properties and neither do the morphisms.)
 
It induces an interior compactification, i.e.\@ a diagram of shape $(\tww \Delta_{1,n})^\circ$ in $\Fun(\tww  I  
\rightarrow \mathcal{S})$:
\[ \xymatrix{
& & \widetilde{A} \ar[ldld]^{\widetilde{g}_1} \ar[rdrd]_{\widetilde{g}_n}  \ar@{^{(}->}[rrrd]^{\widetilde{\iota}} \\
& & & & & \widetilde{A}'  \ar@{->>}[rrrd]^{\widetilde{f}}\\
\widetilde{S}_1 & & \dots & &  \widetilde{S}_n & & ; & &  \widetilde{T}
} \]
\end{BEISPIEL}

\begin{LEMMA}\label{LEMMAEXISTENCE3FUNCTORS}
Consider an object
$X \hookrightarrow \overline{X}$
 in $\Cor^{\comp}_I(\tau)$. With the notation as in \ref{BEGINSECTIONDER6FU} and \ref{COMPONENTS} we have
\begin{enumerate}
\item For any $o \in \tww \tau$ (numbered with indices 2--4) of type 4,
the multivalued functor
\[ \widetilde{X}(o)_{\#} := (\pi_{234}^*\widetilde{g})^*: \DD(\twwc I)_{\pi_{234}^* \widetilde{S}_1^{\op}} \times \cdots \times  \DD(\twwc I)_{\pi_{234}^* \widetilde{S}_n^{\op}} \rightarrow  \DD(\twwc I)_{\pi_{234}^* \widetilde{A}^{\op}}.    \]
is computed point-wise (in $\twwc I$) and on well-supported objects preserves the condition of being 4-coCartesian, well-supported, and 2-Cartesian. 

\item For any $o \in \tww \tau$ of type 3, the functor 
\[  \widetilde{X}(o)_{\#} :=  (\pi_{234}^*\widetilde{\iota})_!: \DD(\twwc I)_{\pi_{234}^* \widetilde{A}^{\op}}  \rightarrow  \DD(\twwc I)_{\pi_{234}^* (\widetilde{A}')^{\op}}    \]
i.e.\@ the left adjoint of $(\pi_{234}^*\widetilde{\iota})^*$, which exists by (F1), 
is computed point-wise (in $\twwc I$)   on 4-coCartesian and well-supported objects, 
and on such it preserves the conditions of being 4-coCartesian, well-supported, and 2-Cartesian. 

\item For any $o \in \tww \tau$ of type 2, 
the functor
\[  \widetilde{X}(o)_{\#} :=  (\pi_{234}^*\widetilde{f})_*: \DD(\twwc I)_{\pi_{234}^* (\widetilde{A}')^{\op}} \rightarrow  \DD(\twwc I)_{\pi_{234}^* \widetilde{T}^{\op}}.    \]
is computed point-wise (in $\twwc I$)  (cf.\@ Definition~\ref{DEFCOMPUTEDPOINTWISE})  and on well-supported objects it preserves the condition of being 4-coCartesian, well-supported, and 2-Cartesian. 
\end{enumerate} 
\end{LEMMA}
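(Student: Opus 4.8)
The three statements all have the same structure: a functor between fiber categories (a multivalued pull-back, the left adjoint $\iota_!$, or a push-forward) is (a) computed point-wise on $\twwc I$ and (b) preserves the three Cartesian/coCartesian conditions (4-coCartesian, well-supported = strongly 3-coCartesian + support condition, 2-Cartesian). The natural strategy is to reduce everything to the point-wise statement first, and then read off the preservation of the conditions from Propositions~\ref{PROPPROPERTIESCORCOMP}/\ref{PROPPROPERTIESCORCOMPM} applied to the relevant squares in $\tww (\Delta_T \times I)$, together with the elementary behaviour of pull-back and push-forward under (co)Cartesian morphisms in a fibered multiderivator (the second part of (FDer0) and (FDer5)). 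So the plan is:

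First I would settle part (1), the type-3 multivalued pull-back $(\pi_{234}^*\widetilde g)^*$. Since $\widetilde g$ is point-wise in $\mathcal{S}$ (it is a diagram of \emph{morphisms} of $\mathcal{S}$, with no properness or embedding constraint after passing to $\widetilde X$), the push-forward $(\widetilde g^{\op})_\bullet$ along it is computed point-wise on $\twwc I$ over \emph{any} diagram, because pull-back functors in $\SSS^{\op}$-direction are always computed point-wise over constant-in-the-extra-variable diagrams — more precisely, $\pi_{234}^*\widetilde g$ is constant along the "first arrow" coordinate that $\twwc I = {}^{\downarrow\uparrow\uparrow\downarrow}I$ adds, so Lemma~\ref{LEMMAPOINTWISEEXBYZERO}, 2.\@ (and 1.\@) applies. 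Point-wise computation immediately gives that the three conditions are checked on underlying diagrams; for a type-4, type-3, or type-2 morphism in $\twwc I$ one gets a square in $\tww(\Delta_T \times I)$ to which one applies the appropriate item of Proposition~\ref{PROPPROPERTIESCORCOMPM} (using that $g$ is type-1 admissible as a multimorphism, so the relevant $b,d$ are of type 3 and the $a_i,c$ of the right type), plus the fact that $g^*$ as a multivalued functor sends coCartesian morphisms in each slot to coCartesian morphisms (FDer0 right). The support condition is preserved because $\widetilde g$ pulls the given $\widetilde\iota$-image back to the corresponding $\widetilde\iota$-image on the source, which is exactly Lemma~\ref{LEMMACARTDIA2M}/\ref{LEMMACARTDIAM} describing how supports transport.

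For part (2), the left adjoint $(\pi_{234}^*\widetilde\iota)_!$, the point-wise computation is the real content and I expect it to be the main obstacle. Here one cannot argue as in (1) because $\iota_!$ is a \emph{left adjoint}, not a pull-back, and Lemma~\ref{LEMMAPOINTWISEEXBYZERO}, 3.\@ tells us precisely that $\iota_!$ is computed point-wise on an object only if the exchange (\ref{eqcommpointwise2}) — or on coCartesian objects (\ref{eqcommpointwise1}) — holds for every composable pair of morphisms in $\twwc I$. So the plan is: restrict to objects that are 4-coCartesian and well-supported, note that along morphisms of type 4 such an object is coCartesian, and verify (\ref{eqcommpointwise1}) for the morphisms $\mu$ of $\twwc I$ by a case analysis on the type of $\mu$. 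Type-4 morphisms: use that the object is 4-coCartesian and that $\widetilde\iota$ is (essentially) constant along the type-4 direction, so the exchange is trivially an iso. Type-1, type-2, type-3 morphisms of $\twwc I$: these give, via $\widetilde X$, squares of the form appearing in Lemma~\ref{LEMMACOCARTSQUARES3M} (a weak compactification of a Cartesian square with all vertical maps embeddings), and on \emph{well-supported} objects (objects with the support condition built in) the exchange $\overline I_! \overline G^* \to \overline g^* \overline\iota_!$ is an isomorphism by that Lemma. This is exactly why the class $\DD(\twwc I)^{\ws}$ was carved out: the support hypothesis is what makes Lemma~\ref{LEMMACOCARTSQUARES3M} (hence (F5m)) applicable. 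Having established point-wise computation, preservation of the three conditions again follows square-by-square from Proposition~\ref{PROPPROPERTIESCORCOMPM}, 2.\@ and 3.\@ (type-2 and type-1 morphisms) and from $\iota_!$ commuting with the coCartesian structure in the type-4 direction.

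Finally part (3), the push-forward $(\pi_{234}^*\widetilde f)_*$ along the proper-direction map $\widetilde f$: point-wise computation is Lemma~\ref{LEMMAPOINTWISEEXBYZERO}, 3.\@ again, now using (F4)/(F4m) in the form of Lemma~\ref{LEMMAF4} together with (F5) to verify the exchange (\ref{eqcommpointwise2}) — $f$ being type-2 admissible means the squares arising from type-$1$ and type-$3$ morphisms of $\twwc I$ are (weakly) Cartesian with proper verticals, so base change applies; and $f_*$ commutes with homotopy colimits by (F3), which is what makes the hocolim/left-Kan-extension manipulation in the proof of Lemma~\ref{LEMMAPOINTWISEEXBYZERO}, 3.\@ go through. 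Preservation of 4-coCartesian, well-supported, and 2-Cartesian then follows from Proposition~\ref{PROPPROPERTIESCORCOMPM}, 1.\@ (for type-1 morphisms, giving the proper base change square) and 3.\@ (for the interaction of $f_*$ with the type-2 $\iota$, i.e.\@ the isomorphism $\widetilde X(a)_!\widetilde X(d)_* \to \widetilde X(b)_*\widetilde X(c)_!$, which is Lemma~\ref{LEMMACOCARTSQUARES1}/(F6)), plus the usual fact that a right adjoint commutes with Cartesian morphisms. Throughout, the genuinely new input beyond the formal derivator axioms is the geometry of $\widetilde X$ packaged in Lemmas~\ref{LEMMACARTDIAM}--\ref{LEMMACOCARTSQUARES3M}; the rest is bookkeeping of exchange morphisms and an induction on degree in $\twwc I$ (using (\ref{TWWCDEGREE}), so that latching objects and the point-wise description make sense).
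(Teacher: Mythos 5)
Your overall architecture (point-wise computation first, then Proposition~\ref{PROPPROPERTIESCORCOMPM} square-by-square for the preservation of the three conditions) matches the paper, and parts 1 and 3 are essentially right, though your justifications for point-wise computation there are misattributed: $(\pi_{234}^*\widetilde g)^*$ and $(\pi_{234}^*\widetilde f)_*$ are push-forward, resp.\@ 1-ary pull-back, functors of the fibered multiderivator $\DD \rightarrow \SSS^{\op}$ and are computed point-wise directly by (FDer0 left), resp.\@ (FDer0 right); Lemma~\ref{LEMMAPOINTWISEEXBYZERO} and axiom (F3) play no role in those two items.

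The genuine gap is in part 2, which you correctly identify as the heart of the matter. Your plan is to verify the single-morphism criterion (\ref{eqcommpointwise1}) for each type of morphism of $\twwc I$ separately. This reduction is not available: Lemma~\ref{LEMMAPOINTWISEEXBYZERO}, 3.\@ only permits the single-morphism check (\ref{eqcommpointwise1}) for objects that are \emph{absolutely coCartesian}, and a $4$-coCartesian, well-supported object is \emph{Cartesian}, not coCartesian, over the type-2 morphisms of $\twwc I$ (the proper direction). Without absolute coCartesianity you would have to verify the two-morphism condition (\ref{eqcommpointwise2}) for all composable pairs, which your case analysis does not do. The paper circumvents this by a two-step reduction: first one shows, via Lemma~\ref{LEMMAPOINTWISEEXBYZERO}, 1.\@ applied to the opfibration $\pi_{12}: \twwc I \rightarrow \tw I$ (the relevant squares in $\mathcal{S}$ being Cartesian by Lemma~\ref{LEMMACARTDIA2}, since their horizontals are proper and their verticals dense embeddings), that $(\pi_{234}^*\widetilde\iota)_!$ commutes with restriction to the fibers of $\pi_{12}$; this disposes of the type-1 and type-2 directions. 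Only then, on a fiber — where every morphism is a composite of type-3 and type-4 morphisms, so the objects in question \emph{are} absolutely coCartesian — does one apply Lemma~\ref{LEMMAPOINTWISEEXBYZERO}, 3. Moreover your assignment of lemmas to cases in that fiber is inverted: the type-4 exchange is \emph{not} trivial (a type-4 morphism of $\twwc I$ maps under $\pi_{234}$ to a nontrivial morphism of $\tww I$, the $g$-leg of the correspondence); it is precisely the case handled by Proposition~\ref{PROPPROPERTIESCORCOMPM}, 2.\@ (i.e.\@ Lemma~\ref{LEMMACOCARTSQUARES3M}), and it is there that the support hypothesis enters via (F5m). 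The type-3 exchange, by contrast, is the one that follows directly from the support condition (the restriction $\mathcal{E}_y$ lies in the essential image of the relevant $\iota_{Z,!}$, so $\widetilde\iota_!$ commutes with the type-3 transport). As written, your argument for part 2 would not close.
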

\begin{proof}
1.\@ $(\pi_{234}^*\widetilde{g})^*$ is computed point-wise by axiom (FDer0 left) even for $n$-ary $\widetilde{g}$. 
Therefore the preservation of the coCartesianity is clear, and the preservation of the conditions of being well-supported and 2-Cartesian follows from Proposition~\ref{PROPPROPERTIESCORCOMPM}, 1--2.
The fact that the inputs are well-supported is needed for the support condition of the Proposition. 

2.\@ The functor $(\pi_{234}^*\widetilde{\iota})_!$, which exists by Axiom (F1), it is not automatically computed point-wise. We will show below in two steps that it is actually computed point-wise {\em on the specified full subcategory}. Therefore again, the preservation of good support is clear, and the preservation of Cartesianity and coCartesianity conditions follows from Proposition~\ref{PROPPROPERTIESCORCOMPM}, 2--3.

STEP 1: Note that $\pi_{12}: \twwc I \rightarrow \tw I$ is an opfibration. For $\alpha \in \tw I$ denote $e_\alpha: (\twwc I)_\alpha \hookrightarrow \twwc I$ the inclusion 
of the fiber. 
We claim that the functor $\widetilde{\iota}_!$ commutes with $e_{\alpha}^*$ for $e_\alpha: (\twwc I)_\alpha \hookrightarrow \twwc I$ denoting the 
inclusion of the fiber. By Lemma~\ref{LEMMAPOINTWISEEXBYZERO}, 1.\@ we only have to show that for each coCartesian $\rho: \mu \rightarrow \tau_\bullet \mu$ the square
\[ \xymatrix{
(\widetilde{A}')^{\op}(\pi_{234}(\tau_\bullet\mu)) \ar@{->>}[rr]^-{\widetilde{S}^{\op}(\pi_{234}\rho)} \ar@{^{(}->}[d] && (\widetilde{A}')^{\op}(\pi_{234}( \mu)) \ar@{^{(}->}[d] \\
\widetilde{T}^{\op}(\pi_{234}(\tau_\bullet\mu)) \ar@{->>}[rr]_-{\widetilde{T}^{\op}(\pi_{234}\rho)} && \widetilde{T}^{\op}(\pi_{234}(\mu))
} \]
is Cartesian. However, $\pi_{234}(\rho)$ is of type 1 and hence the horizontal morphisms are proper and the vertical morphisms come from morphisms of type 2 and are hence dense embeddings. The square is therefore Cartesian by Lemma~\ref{LEMMACARTDIA2}. 

STEP 2: We are thus reduced to show that on $\DD((\twwc I)_\alpha)_{e_\alpha^* \pi_{234}^* \widetilde{S}^{\op}}$ 
the functor $\iota_!$ is computed point-wise when restricted to 4-coCartesian, and well-supported objects. Note however, that all morphisms in the fiber are compositions of ones of type 3 and type 4 and hence the objects are thus (absolutely) coCartesian. 
Using Lemma~\ref{LEMMAPOINTWISEEXBYZERO}, 3.\@, we are reduced to show that 
for each morphism $\mu: x \rightarrow z$ in $(\twwc I)_\alpha$ denoting $H:=e_\alpha^* \pi_{234}^* (\widetilde{A}')^{\op}(\mu)$ and $h:=e_\alpha^* \pi_{234}^* \widetilde{T}^{\op}(\mu)$
the morphism
\[  \widetilde{\iota}(y)_!  H^* \mathcal{E}_x \rightarrow h^* \widetilde{\iota}(x)_! \mathcal{E}_x   \]
is an isomorphism for 4-coCartesian, and strongly 3-coCartesian objects $\mathcal{E}$.
Here, we wrote $\mathcal{E}_x$ for $x^* \mathcal{E}$. 
Writing $\mu = \mu_3 \circ \mu_4: x \rightarrow y \rightarrow z$ where $\mu_3$ is of type 3, and $\mu_4$ is of type 4, and denoting $h = h_4 \circ h_3$, resp.\@ $H= H_4 \circ H_3$ the corresponding factorization of $h$, resp.\@ $H$, 
we have thus
\begin{equation*}
\begin{array}{rcll}
 \widetilde{\iota}(z)_!  H^*  \mathcal{E}_x &\cong &  \widetilde{\iota}(z)_! H_3^* H_4^* \mathcal{E}_x   \\
 & \cong & \widetilde{\iota}(z)_! H_3^*  \mathcal{E}_y & \text{coCartesianity} \\
 & \cong & h_3^*  \widetilde{\iota}(y)_!  \mathcal{E}_y & \text{because $\mathcal{E}_y$ has support in $(\widetilde{A}')^{\op}(\pi_{234} z)$} \\
 & \cong &  h_3^* \widetilde{\iota}(y)_!  H_4^* \mathcal{E}_x & \text{coCartesianity} \\
 & \cong &  h_3^* h_4^*  \widetilde{\iota}(x)_! \mathcal{E}_x & \text{Proposition~\ref{PROPPROPERTIESCORCOMPM}, 2.} \\
 & \cong & h^* \widetilde{\iota}(x)_! \mathcal{E}_x
 \end{array}
\end{equation*}
It is not true that, in general, $\widetilde{\iota}_!$ commutes with $h^*$ here! The support conditions are essential.

3.\@ The functors $(\pi_{234}^*\widetilde{f})_*$, which exist by (FDer0 right) are also computed point-wise {\em for 1-ary $\widetilde{f}$}. Therefore the preservation of Cartesianity is clear, and the preservation of good support and coCartesianity conditions follows from from Proposition~\ref{PROPPROPERTIESCORCOMPM}, 1.\@ and 3.
The fact that the input is well-supported is needed for the support condition in 1.\@ of the Proposition. 
\end{proof}

\begin{PROP}\label{PROPPROPERTIESCORCOMPMDIA}
Let $\tau$ be a tree, let $I$ be in $\Dirlf$, let
$X \hookrightarrow \overline{X}$ be an object
 in $\Cor^{\comp}_I(\tau)$, and let 
$\widetilde{X}: \tww (\tau \times I) \rightarrow \mathcal{S}$ be the associated interior compactification. 
Then the functors constructed in \ref{LEMMAEXISTENCE3FUNCTORS} commute. More precisely: 
Consider a diagram of the form
\[ \xymatrix{
w_1, \dots, w_n \ar[r]^-d \ar[d]_c & z_1,\dots,z_n \ar[d]^{a} \\
y \ar[r]_-b & x
} \]
in $^{\downarrow\downarrow\downarrow} \tau$  {\em[sic!]} (numbered with indices 2--4) 
where $a$ and $c$ are of some type 2,3 or 4 (1-ary in case of type $\not= 4$) and $b$ and $d$ are of some type 2,3, or 4, respectively.  

Then we have a canonical isomorphism\footnote{In each case an exchange (or its inverse) of a natural commutation given by functoriality of $\DD$ we we leave to the reader to construct}
\begin{equation} \label{eqexchange} \widetilde{X}(a')_{\#} \widetilde{X}(d')_{\#} \rightarrow \widetilde{X}(b')_{\#} \widetilde{X}(c')_{\#}   \end{equation}
where $a' \in \tww \tau$ denotes the morphism corresponding to $a \in {}^{\downarrow\downarrow\downarrow} \tau$, etc.
In case $a$ and $c$ are of type 4, and $n$-ary, $\widetilde{X}(a')_{\#}$ and $\widetilde{X}(c')_{\#}$ are multivalued functors and $\widetilde{X}(d')_{\#}$ denotes the corresponding $n$-tupel of functors (of which only one is not an identity). 
\end{PROP}

\begin{proof}
This follows from Proposition~\ref{PROPPROPERTIESCORCOMPM} taking into account (cf.\@ Lemma~\ref{LEMMAEXISTENCE3FUNCTORS})
that strongly (co)Cartesian can be checked point-wise.
\end{proof}

Note that any multimorphism $\alpha$ in  ${}^{\downarrow\downarrow\downarrow} \tau$ can be decomposed {\em canonically} into a sequence 
of functors $\alpha = abc$ where $a$ is of type 2, $b$ of type 3 etc., and $a$ and $b$ are 1-ary. Taking the cubical structure of ${}^{\downarrow\downarrow\downarrow} \tau$ and the  discussion in \ref{TWGENREL}
into account Proposition~\ref{PROPPROPERTIESCORCOMPMDIA} may be restated as follows
\begin{PROP}\label{PROPPF}
The association 
\[ \alpha = abc \mapsto \widetilde{X}(\alpha)_{\#} := \widetilde{X}(a')_{\#} \widetilde{X}(b')_{\#} \widetilde{X}(c')_{\#}, \] 
where $a'$ is the (multi)morphism of $\tww \tau$ corresponding to $a$ in ${}^{\downarrow\downarrow\downarrow} \tau$, etc.\@,  
 defines a well-defined pseudo-functor on ${}^{\downarrow\downarrow\downarrow} \tau$ such that the isomorphism (\ref{eqexchange}) becomes the one
induced by the pseudo-functoriality.
\end{PROP}

The following works in the non-multi-case, multi-case, and symmetric multi-case alike. 
If the reader is interested in a non-multi-situation, consider $\SSS^{\op}$ as a usual pre-derivator, i.e.\@ forget its multistructure, and
let all trees $\tau$ be of the form $\Delta_n$. 
\begin{DEF}\label{DEFDER6FU1}
Assume that $\DD \rightarrow \SSS^{\op}$ is a (symmetric) fibered multiderivator with domain $\Dirlf$. 
Let $\tau$ be a tree, and let $I \in \Dirlf$. We define a category 
\[ E_I(\tau) \quad (\text{resp. } E_I(\tau^S))   \]
with objects pairs $(X \hookrightarrow \overline{X}, \mathcal{F})$ of an object $(X \hookrightarrow \overline{X}) \in \Cor^{\comp}_I(\tau)$ (cf.\@ Definition~\ref{DEFCORCOMP}), and an object
\[  \mathcal{F} \in  \Fun(\twwc \tau, \DD(\twwc I))^{4-\cocart, 3-\cocart^*, 2-{\cart}}_{\pi_{234}^* \widetilde{X}^{\op}} \]
(functors of multicategories) where $\widetilde{X}$ is the interior compactification associated with  $(X \rightarrow \overline{X})$ (cf.\@ \ref{DEFEXTCOMP}). Note that $\twwc I \in \Dirlf$ by Lemma~\ref{LEMMACATLF}. 

Morphisms $\mathcal{F} \rightarrow \mathcal{F}'$ are the morphisms $\xi: (X_1 \hookrightarrow \overline{X}_1) \rightarrow (X_2 \hookrightarrow \overline{X}_2)$ in $\Cor^{\comp}_I(\tau)$ together with an isomorphism
\[ ( \pi_{234}^*\widetilde{f})_{*} \mathcal{F} \Rightarrow \mathcal{F}'  \]
where $\widetilde{f}: \widetilde{X}_1 \rightarrow \widetilde{X}_2$ is the corresponding morphism between interior compactifications. 

Note that, in the symmetric case, {\em all} functors $\tau^S \rightarrow (\tau')^S$ induce functors $E_I(\tau^S) \rightarrow E_I((\tau')^S)$ using the symmetry of $\DD$.

Finally, we say that a morphism is a {\em weak equivalence}, if the underlying morphism in $\Cor^{\comp}_I(\tau)$ is a weak equivalence, i.e.\@ if the morphism $X_1 \rightarrow X_2$ is  an isomorphism. 
\end{DEF}

\begin{FUNDLEMMA}
\label{LEMMACOMPSIXFU}With the notation as in Definition~\ref{DEFDER6FU1}. 
\begin{enumerate}
\item Let $\tau$ be a tree, and consider an object $(X \hookrightarrow \overline{X})$ in $\Cor_I^{\comp}(\tau^S)$.
Let $\widetilde{X}: \tww ( \tau \times I) \rightarrow \mathcal{S}$ be the associated interior compactification (\ref{PARMULTICOMP}). 

Let $(\mathcal{E}_o)_{o \in \tau}$ be a collection of objects with $\mathcal{E}_o \in \DD(\twwc I)^{4-\cocart, 3-\cocart^*, 2-{\cart}}_{\pi_{234}^* \widetilde{X}_o^{\op}}$, where
$\widetilde{X}_o$ is the value of $\widetilde{X}$ at $\tww o$. 
Then the fiber\footnote{the fiber over $(X \hookrightarrow \overline{X}, \{\mathcal{E}_o\}_{o \in \tau})$ for the functor $E_I(\tau) \rightarrow \Cor^{\comp}_I(\tau) \times \prod_o E_I(\cdot)$  }
\[ E_I(\tau)_{(X \hookrightarrow \overline{X})}(\{\mathcal{E}_o\}_{o \in \tau})  \] 
 is equivalent to a set.

\item For $\tau = \tau_1 \circ_i \tau_2$, where $i$ is a source object of $\tau_1$ (which we identify with the
final object of $\tau_2$),
the square
\[ \xymatrix{
E_I(\tau)_{(X \hookrightarrow \overline{X})}((\mathcal{E}_o)_{o \in \tau}) \ar[r] \ar[d] & E_I(\tau_1)_{(X_1 \hookrightarrow \overline{X}_1)}((\mathcal{E}_o)_{o \in \tau_1}) \ar[d] \\
E_I(\tau_2)_{(X_2 \hookrightarrow \overline{X}_2)}((\mathcal{E}_o)_{o \in \tau_2}) \ar[r] & \cdot 
} \]
is 2-Cartesian. Hence if we consider the $E_I(\tau)_{(\cdots)}(\cdots)$ as sets, we have
\[ E_I(\tau)_{(X \hookrightarrow \overline{X})}((\mathcal{E}_o)_{o \in \tau}) \cong E_I(\tau_1)_{(X_1 \hookrightarrow \overline{X}_1)}((\mathcal{E}_o)_{o \in \tau_1}) \times E_I(\tau_2)_{(X_2 \hookrightarrow \overline{X}_2)}((\mathcal{E}_o)_{o \in \tau_2}). \]

\item For $\tau = \Delta_{1,n}$, we have canonically an isomorphism of sets
\[ E_I(\Delta_{1,n})_{(X \hookrightarrow \overline{X})}(\mathcal{E}_1, \dots, \mathcal{E}_n, \mathcal{E}_{n+1}) \cong \Hom_{\DD(\twwc I)_{\pi_{234}^* \widetilde{T}^{\op}}}( \widetilde{f}_* \widetilde{\iota}_! \widetilde{g}^*(\mathcal{E}_1, \dots, \mathcal{E}_n); \mathcal{E}_{n+1})  \]
for any choice of pull-back $\widetilde{g}^*$, push-forward $\widetilde{f}_*$, and adjoint $\widetilde{\iota}_!$ to the pull-back $\widetilde{\iota}^*$.
Here $\widetilde{T}$ is the restriction of $\widetilde{X}$ to $\tww (n+1)$, and $\widetilde{g}$, $\widetilde{\iota}$, and $\widetilde{f}$ are the components of $\widetilde{X}$ as in \ref{EXCOMPONENTS}. 

An object $\mathcal{F}$ on the left hand side is mapped to an isomorphism if and only if it is also Cartesian (or equivalently coCartesian) w.r.t.\@ the projection 
\[ \pi_{234} \times \id: \twwc (\Delta_{1,n} \times I) \rightarrow {}^{\uparrow \uparrow \downarrow}{\Delta_{1,n}} \times \twwc I. \]

\item Let $f: (X_1 \hookrightarrow \overline{X}_1) \rightarrow (X_2 \hookrightarrow \overline{X}_2)$ be a morphism in $\Cor^{\comp}_I(\tau)((X_o \hookrightarrow \overline{X}_o)_{o \in \tau})$. We obtain an associated refinement $\widetilde{f}: \widetilde{X}_1 \rightarrow \widetilde{X}_2$ of interior compactifications. 
Then the functors
$(\pi_{234}^*\widetilde{f})^*, (\pi_{234}^*\widetilde{f})_*$ are mutually inverse bijections 
\[ \xymatrix{ E_I(\tau)_{(X_1 \hookrightarrow \overline{X}_1)}((\mathcal{E}_o)_{o \in \tau}) \ar@/^10pt/[r]^-{(\pi_{234}\widetilde{f})_*} & \ar@/^10pt/[l]^-{(\pi_{234}^*\widetilde{f})^*} E_I(\tau)_{(X_2 \hookrightarrow \overline{X}_2)}((\mathcal{E}_o)_{o \in \tau}) }  \]
\end{enumerate}
\end{FUNDLEMMA}

\begin{proof}
Given an object $i = (i_1 \to i_2 \to i_3 \to i_4)$ in $\twwc \tau$
we have a sequence of morphisms in $\twwc \tau$ writing $i_1 = [i_1^{(1)}, \dots, i_1^{(n)}]$: 

\[\xymatrix{
(i_1^{(1)} \to i_1^{(1)} \to i_1^{(1)} \to i_1^{(1)}) \ar@{-}[rd]& \cdots & (i_1^{(n)} \to i_1^{(n)} \to i_1^{(n)} \to i_1^{(n)}) \ar@{-}[ld]  \\
& \ar[d]^{\alpha_{i,4}} \\
& (i_1 \to i_1 \to i_1 \to i_4) \ar@{<-}[d]^{\alpha_{i,3}} \\
& (i_1 \to i_1 \to i_3 \to i_4) \ar@{<-}[d]^{\alpha_{i,2}} \\
& (i_1 \to i_2 \to i_3 \to i_4) 
}   \]
in which $\alpha_{i,m}$ is of type $m$.  
Note that any of the objects $\twwc i_1^{(k)}$ lies either in $\twwc \tau_1$ or $\twwc \tau_2$.

1.\@ 
In view of the observation above, the statement follows from the uniqueness (up to unique isomorphism) of
the source (resp.\@ target) of a (strongly) Cartesian (resp.\@ coCartesian) morphism. 

2.\@ Given restrictions of $\mathcal{F}$ to the union of $\twwc \tau_1$ and $\twwc \tau_2$, we construct
an extension to $\twwc (\tau_2 \circ  \tau_1)$ as follows: 
Every morphism $\alpha_{i,m}$ as above has a corresponding morphism $\alpha_{i,m}' \in  {}^{\downarrow\downarrow\downarrow\downarrow} \tau$ and
we define an extension on objects by setting for $i \not\in \twwc \tau_1 \cup \twwc \tau_2$
\[ \mathcal{F}(i) := \widetilde{X}(\pi_{234}(\alpha_{i,2}'\alpha_{i,3}'\alpha_{i,4}'))_\#(\mathcal{F}(\twwc i_1^{(1)}), \cdots, \mathcal{F}(\twwc i_1^{(n)}))  \]

Observe that for $i \in \twwc \tau_1$ and $i \in \twwc \tau_2$ there is already a {\em canonical} isomorphism
\begin{equation} \label{eqrewrite} \mathcal{F}(i) \cong \widetilde{X}(\pi_{234}(\alpha_{i,2}'\alpha_{i,3}'\alpha_{i,4}'))_\#(\mathcal{F}(\twwc i_1^{(1)}), \cdots, \mathcal{F}(\twwc i_1^{(n)}))  \end{equation}
because of the (strong) (co)Cartesianity conditions on $\mathcal{F}|_{\twwc \tau_1}$ and $\mathcal{F}|_{\twwc \tau_2}$. 

For a generating morphism $\beta: i \rightarrow j$ of type 2, 3, or 4 there is an isomorphism coming from the pseudo-functoriality of $\widetilde{X}(\cdot)_\#$ (cf.\@ Proposition~\ref{PROPPF})
\[ \widetilde{X}(\pi_{234}(\beta'))_\# \widetilde{X}(\alpha_{i,2}'\alpha_{i,3}'\alpha_{i,4}')_\# \cong \widetilde{X}(\alpha_{j,2}'\alpha_{j,3}'\alpha_{j,4}')_\# \]
and thus  a coCartesian morphism (for type 4),  resp.\@ strongly coCartesian morphism (for type 3), resp.\@ a Cartesian morphism (for type 2)
\[ \widetilde{X}(\pi_{234}(\alpha_{i,2}'\alpha_{i,3}'\alpha_{i,4}'))_\# (\mathcal{F}(\twwc i_{1}^{(1)}), \dots, \mathcal{F}(\twwc i_{1}^{(n)})) \rightarrow \widetilde{X}(\pi_{234}(\alpha_{j,2}'\alpha_{j,3}'\alpha_{j,4}'))_\# (\mathcal{F}(\twwc i_{1}^{(1)}), \dots, \mathcal{F}(\twwc i_{1}^{(n)}))   \]
over $\widetilde{X}(\pi_{234}(\beta))$ which we define to be $\mathcal{F}(\beta)$. For a {\em generating} morphism $\beta: i \rightarrow j$ of type 1 we get the following diagram writing $j_1 = [(j_1)^{(1)}, \dots, (j_1)^{(n)}]$ and in which $i_1^{(1)}, \dots, i_1^{(n')}$ are possibly still  lists of objects: 
\[  \xymatrix{
{ \substack{ (i_1^{(1)} \to j_1^{(1)} \to j_1^{(1)} \to j_1^{(1)})  \\ \cdots \\  (i_1^{(n')} \to j_1^{(n')} \to j_1^{(n')} \to j_1^{(n')}) } } \ar[d]^{\gamma_{4}}  \ar[r]^{\substack{ \beta_1^{(1)}\\  \dots\\  \beta_1^{(n')}}}  & 
{ \substack{ (j_1^{(1)} \to j_1^{(1)} \to j_1^{(1)} \to j_1^{(1)}) \\ \cdots \\ (j_1^{(n')} \to j_1^{(n')} \to j_1^{(n')} \to j_1^{(n')})  }}  \ar[d]^{\alpha_{j,4}}  \\
 (i_1 \to j_1 \to j_1 \to i_4) \ar@{<-}[d]^{\gamma_3}&  (j_1 \to j_1 \to j_1 \to i_4) \ar@{<-}[d]^{\alpha_{j,3}} \\
 (i_1 \to j_1 \to i_3 \to i_4) \ar@{<-}[d]^{\gamma_2}&  (j_1 \to j_1 \to i_3 \to i_4) \ar@{<-}[d]^{\alpha_{j,2}} \\
 (i_1 \to i_2 \to i_3 \to i_4)  \ar[r]^\beta & (j_1 \to i_2 \to i_3 \to i_4)  
}   \]
Because $\beta$ is generating, only one of the $\beta_1^{(k)}$ is not an identity and lies entirely in $\twwc \tau_1$ or $\twwc \tau_2$. 
Consider the morphism 
\[ \mathcal{F}(\beta_1^{(k)}): \mathcal{F} (i_1^{(k)} \to j_1^{(k)} \to j_1^{(k)} \to j_1^{(k)})  \rightarrow  \mathcal{F}( \twwc j_1^{(k)} ) . \]
Applying $\widetilde{X}(\pi_{234}(\alpha_{j,2}'\alpha_{j,3}'\alpha_{j,4}'))_{\#}$ to it (and the various identities), we get
\begin{gather*}
 \widetilde{X}(\pi_{234}(\alpha_{j,2}'\alpha_{j,3}'\alpha_{j,4}'))_{\#}  \mathcal{F} (i_1^{(k)} \to j_1^{(k)} \to j_1^{(k)} \to j_1^{(k)})    \\ \rightarrow
  \widetilde{X}(\pi_{234}(\alpha_{j,2}'\alpha_{j,3}'\alpha_{j,4}'))_{\#} \mathcal{F}( \twwc j_1^{(k)}).
\end{gather*}
Using  (\ref{eqrewrite}), and the pseudo-functoriality of $\widetilde{X}(\cdot)_\#$ (cf.\@ Proposition~\ref{PROPPF}), the left hand side might be rewritten as 
\[ \widetilde{X}(\alpha_{i,3}'\alpha_{i,2}'\alpha_{i,1}')_{\#}  \mathcal{F} (\twwc i_1^{(k)})  \rightarrow  \widetilde{X}(\alpha_{j,2}'\alpha_{j,3}'\alpha_{j,4}')_{\#} \mathcal{F}(\twwc j_1^{(k)}) . \]
This yields a morphism
\[ \mathcal{F}(i) \rightarrow \mathcal{F}(j) \]
 which we define to be $\mathcal{F}(\beta)$. 
To get a valid diagram $\mathcal{F}$ one has to check that the relation squares are mapped to commutative diagrams. This follows from the pseudo-functoriality of $\widetilde{X}(\cdot)_\#$ (cf.\@ Proposition~\ref{PROPPF}). 

3.\@ is clear. 

4. Note that the functors $(\pi_{234}^*\widetilde{f})^*$ and $(\pi_{234}^*\widetilde{f})_*$ are both computed point-wise. By Lemma~\ref{LEMMAEXISTENCE3FUNCTORS}, 3.\@ the
functor $(\pi_{234}^*\widetilde{f})_*$ preserves the conditions of being $4$-coCartesian, strongly $3$-coCartesian, and $2$-Cartesian. 
For each element $i_1 \rightarrow i_2 \rightarrow i_3 \rightarrow i_4 \in \twwc \tau$ where $i_1, \dots, i_4$ are lists of objects in $\tau$, with $i_4$ containing one object, 
consider
\[ \xymatrix{
  \widetilde{X}_1( i_2 = i_2 \rightarrow i_4) \ar@{=}[d] \ar@{^{(}->}[r] & \widetilde{X}_1( i_2 \rightarrow i_3 \rightarrow i_4) \ar@{->>}[d]^{\widetilde{f}(i_2 \rightarrow i_3 \rightarrow i_4)} \ar@{->>}[r] & \widetilde{X}_1( i_3 \rightarrow i_3 \rightarrow i_4) \ar@{=}[d] \\
  \widetilde{X}_2( i_2 = i_2 \rightarrow i_4)  \ar@{^{(}->}[r] & \widetilde{X}_2( i_2 \rightarrow i_3 \rightarrow i_4)  \ar@{->>}[r] & \widetilde{X}_2( i_3 \rightarrow i_3 \rightarrow i_4) 
} \]

By Lemma~\ref{LEMMACART1} the left square is (point-wise) Cartesian. Therefore by (F5) and (F6) $\widetilde{f}_*$ and $\widetilde{f}^*$ are inverse (up to isomorphism) to each other on strongly $3$-coCartesian objects. 
By Lemma~\ref{LEMMAEXISTENCE3FUNCTORS}, $\widetilde{f}_*$ preserves the conditions of being 2-Cartesian and 4-coCartesian, and $\widetilde{f}^*$ preserves the condition of being 4-coCartesian. We claim that the latter also preserves the condition of being 2-Cartesian. Indeed for strongly 3-coCartesian objects it suffices to check 2-Cartesianity over morphisms of the form
\[ \xymatrix{ (i_1 \rightarrow i_2 \rightarrow i_3 \rightarrow i_4) \ar@{->>}[r] & (i_1 \rightarrow i_3 = i_3 \rightarrow i_4)  } \]
where, however, $\widetilde{f}_*$ reflects 2-Cartesianity by the diagram above.

Therefore both functors preserve all conditions of being (strongly) (co)Cartesian. 
Since they are adjoints (between groupoids) they become mutually inverse when we pass to the sets of isomorphism classes.  
\end{proof}

\begin{LEMMA}\label{LEMMADER6FU3}
\begin{enumerate}
\item The forgetful functors
\[ E_I(\tau) \rightarrow \Cor_I^{\comp}(\tau) \qquad   E_I(\tau)_{(\mathcal{E}_o)}[\mathcal{W}_{(\mathcal{E}_o)}^{-1}] \rightarrow \Cor_I^{\comp}(\tau)_{(X_o)}[\mathcal{W}^{-1}_{(X_o)}] \]
(cf.\@ Definition~\ref{DEFDER6FU1}) are bifibrations with discrete fibers. 
\item The strict 2-functor 
\[ \Delta_n \mapsto E_I(\Delta_n) \quad (\text{resp. } \tau \mapsto E_I(\tau), \quad \text{resp. } \tau^S \mapsto E_I(\tau^S))   \]
satisfies the properties of Proposition~\ref{PROPCONSTR2CAT}, resp.\@ of Proposition~\ref{PROPCONSTRSYMMULTI}.
\end{enumerate}
\end{LEMMA}
\begin{proof}
1.\@ The statement regarding the first functor follows directly from Lemma~\ref{LEMMACOMPSIXFU}, 1.\@ and 4. In a bifibration with discrete fibers all
push-forward and pull-back functors are isomorphisms (regarding the fibers as sets), hence also the statement for the second functor \footnote{Up to equivalence one can also see the opfibration $E_I(\tau)_{(\mathcal{E}_o)} \rightarrow \Cor_I^{\comp}(\tau)_{(X_o)}$ as the Grothendieck construction of a functor $\Cor_I^{\comp}(\tau)_{(X_o)} \rightarrow \mathcal{SET}$ mapping all morphisms to isomorphisms. Thus it induces a functor $\Cor_I^{\comp}(\tau)_{(X_o)}[\mathcal{W}^{-1}_{(X_o)}] \rightarrow \mathcal{SET}$. Its associated Grothendieck construction is precisely  $E_I(\tau)_{(\mathcal{E}_o)}[\mathcal{W}^{-1}_{(\mathcal{E}_o)}]$. }. 

2.\@ Axiom 1 (the surjectivity on objects) is clear and the equivalence in Axiom 2 follows from 1., Lemma~\ref{LEMMACOMPSIXFU}, 2., and the validity of the axioms for $\Cor_I^{\comp}$ (Lemma~\ref{LEMMAPROPCONSTRSYMMULTICOMP}, 2.). Indeed, we get  a strictly commutative diagram (notation for the non-symmetric case)
\begin{equation*} \xymatrix{
  E_I(\tau)_{(\mathcal{E}_o)}[\mathcal{W}^{-1}_{(\mathcal{E}_o)}]  \ar[r] \ar[d] &  \prod_{m} E_I(\Delta_{1,k_m})_{(\mathcal{E}_o)}[\mathcal{W}^{-1}_{(\mathcal{E}_o)}]  \ar[d] \\
  \Cor_I^{\comp}(\tau)_{(X_o)}[\mathcal{W}^{-1}_{(X_o)}]  \ar[r] &  \prod_{m} \Cor_I^{\comp}(\Delta_{1,k_m})_{(X_o)}[\mathcal{W}^{-1}_{(X_o)}] 
 }
\end{equation*}
in which the vertical functors are bifibrations with discrete fibers and the lower vertical functor is an equivalence. 
Induction on Lemma~\ref{LEMMACOMPSIXFU}, 2. shows that the upper horizontal functor induces an equivalence on fibers. The statement follows. 
\end{proof}

\begin{DEF}\label{DEFDER6FU2}With the notation as in \ref{BEGINSECTIONDER6FU},
we construct a morphism of (symmetric) pre-2-(multi)derivators with domain $\Dirlf$
\[
 \EE \rightarrow \SSS^{\cor, \comp} .
\]
For a diagram $I \in \Dirlf$, let $\EE(I)$ be the (symmetric) 2-(multi)category obtained by applying Proposition~\ref{PROPCONSTR2CAT} (resp.\@ Proposition~\ref{PROPCONSTRSYMMULTI}) to the strict functor of Definition~\ref{DEFDER6FU1}
\[ \Delta_n \mapsto E_I(\Delta_n) \quad (\text{resp. } \tau \mapsto E_I(\tau), \quad \text{resp. } \tau^S \mapsto E_I(\tau^S)).   \]
It comes equipped with an obvious morphism to $\SSS^{\cor, \comp}(I)$ which was constructed applying the same Proposition to 
$\tau^S \mapsto \Cor_I^{\comp}(\tau^S)$. 
For a functor $\alpha: I \rightarrow J$ in $\Catlf$ we define the pullback $\alpha^*=\EE(\alpha)$
to be $\DD(\twwc \alpha \times \id \times \id)$. Note that $\alpha$ induces a functor $\twwc \alpha: \twwc I \rightarrow \twwc J$ and that $\DD(\twwc \alpha \times \id \times \id)$ preserves the relevant conditions of being (co)Cartesian. 
The pre-2-multiderivator $\EE$ is defined on natural transformations as follows. A natural transformation $\mu: \alpha \Rightarrow \beta$ can be seen as a functor $\mu: \Delta_1 \times I \rightarrow J$.
The pullback of a diagram in $\mathcal{E} \in \EE(\twwc J \times \Delta^{\op} \times \Delta)$ and taking partial underlying diagram gives a functor in   $\Fun(\twwc \Delta_1, \EE(\twwc I \times \Delta^{\op} \times \Delta))$ which has the correct
(strong) (co)Cartesianity conditions. It is, by definition, a morphism \[ \alpha^* \mathcal{E} = e_0^* \mu^* \mathcal{E} \rightarrow  \beta^* \mathcal{E} = e_1^*  \mu^* \mathcal{E} \]  in $\EE(I)$ which we 
define to be the pseudo-natural transformation $\EE(\mu)$ at $\mathcal{E}$. 
We proceed to describe the pseudo-naturality constraint of $\EE(\mu)$ and restrict to 1-ary morphisms for simplicity. 
 A 1-ary 1-morphism given by $\xi \in E_I(\Delta_1)$ (note that a general 1-ary 1-morphism in $\EE(I)$ is freely generated by those) consists of a compactification $X \hookrightarrow \overline{X}$ in $\mathcal{S}^{\tw (\Delta_1 \times I)}$ plus an object
  \[ \mathcal{F} \in \Fun(\twwc \Delta_1,  \DD( \twwc   I))^{4-\cocart, 3-\cocart^*, 2-{\cart}}_{\pi_{234}^*\widetilde{X}^{\op}}.  \]
  It is a morphism from $\mathcal{E}_1 := e_0^*\mathcal{F}$ to $\mathcal{E}_2 := e_1^*\mathcal{F}$.
We get an object $\mu^*\mathcal{F}$ and after taking partial underlying diagrams an object 
 \[ (\mu^* \mathcal{F})' \in  \Fun(\twwc \Delta_1^2,  \DD( \twwc   I))^{4-\cocart, 3-\cocart^*, 2-{\cart}}_{ \pi_{234}^*(\mu^* \widetilde{X}^{\op})}.  \]

The two (non-degenerate) embeddings $\Delta_2 \rightarrow \Delta_1^2$ yield two 2-isomorphisms
\[ (\mu^* \mathcal{E}_1) \circ  (\alpha^* \mathcal{F}) \Rightarrow ( \twwc \delta )^* (\mu^* \mathcal{F})' \Leftarrow  (\beta^* \mathcal{F})  \circ (\mu^* \mathcal{E}_2).      \]
where $\delta: \Delta_1 \rightarrow \Delta_1^2$ denotes the diagonal. This endows $\EE(\mu): \alpha^* \Rightarrow \beta^*$ with the structure of pseudo-natural transformation and one checks that this construction yields a pseudo-functor:
\[ \Fun(I, J) \rightarrow \Fun^{\mathrm{strict}}(\EE(J), \EE(I)). \]
\end{DEF}

The next goal is to establish that the 
morphism of (symmetric) pre-2-(multi)derivators
\[ \EE \rightarrow \SSS^{\cor, \comp}  \]
is a left fibered (symmetric) (multi)derivator with domain $\Dirlf$.
If the input fibered multiderivator has stable, well-generated fibers, it will be right fibered as well (using Brown representability).

Using the equivalence of 2-pre-multiderivators $\SSS^{\cor, \comp}  \cong \SSS^{\cor}$, this will finally allow to construct
the desired derivator six-functor-formalism, i.e.\@ fibered multiderivator
\[ \EE' \rightarrow \SSS^{\cor}  \]
using the techniques of \cite{Hor17b} even with domain $\Cat$. 

Axiom (FDer0 left) will follow from the following Lemma whose proof is an immediate consequence of the discussion above.  

\begin{LEMMA}\label{LEMMA6FUOPFIB}
The functor of (symmetric) 2-(multi)categories
\begin{eqnarray*}
 \EE(I) &\rightarrow& \SSS^{\cor, \comp}(I) 
 \end{eqnarray*}
constructed in Definition~\ref{DEFDER6FU2} is a 1-opfibration and 2-bifibration with 1-categorical fibers. 
\end{LEMMA}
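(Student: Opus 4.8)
The plan is to verify the defining properties of a 1-opfibration and 2-bifibration directly from the combinatorial description of $\EE(I)$ in Definition~\ref{DEFDER6FU1}, leaning on the Fundamental Lemma~\ref{LEMMACOMPSIXFU} throughout. First I would recall what has to be shown: for the 1-opfibration part, that for each tuple of objects $(X_i \hookrightarrow \overline{X}_i, \mathcal{E}_i)$ and each 1-morphism $(\Delta_T, (\overline{S}_o)_o, (\overline{\xi}_m)_m)$ in $\SSS^{\cor,\comp}(I)$ with those sources there is a coCartesian lift; for the 2-bifibration part, that over each 1-morphism in $\EE(I)$ the action on 2-morphisms is both 1-fibered and 1-opfibered with 1-categorical fibers, which here means the fibers of $\EE(I) \to \SSS^{\cor,\comp}(I)$ over a fixed 1-morphism are equivalent to (discrete) sets — and indeed by Fundamental Lemma~\ref{LEMMACOMPSIXFU}, 1.\@, the relevant mapping categories $M_{\widetilde{\Xi}}$ are equivalent to sets, so the fibers are 1-categorical (in fact discrete).

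The key step for the 1-opfibration is the following. Given $(\Delta_{1,n}, (\overline{S}_o)_o, \overline{\xi})$ — reducing to the basic tree $\Delta_{1,n}$ suffices, since a general tree is a composite and composition was already shown to be associative in Definition~\ref{DEFDER6FU1}, 4.\@ — and given $\mathcal{E}_1, \dots, \mathcal{E}_n$ over the sources, I would produce the coCartesian lift by \emph{defining} its target object to be $\widetilde{f}_* \widetilde{\iota}_! \widetilde{g}^*(\mathcal{E}_1, \dots, \mathcal{E}_n)$, where $\widetilde{g}, \widetilde{\iota}, \widetilde{f}$ are the components of the interior compactification $\widetilde{\Xi}$ as in \ref{EXCOMPONENTS}. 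Lemma~\ref{LEMMAEXISTENCE3FUNCTORS} guarantees that each of the three functors $\widetilde{g}^*$, $\widetilde{\iota}_!$, $\widetilde{f}_*$ preserves the conditions of being 4-coCartesian, well-supported, and 2-Cartesian (the well-supportedness being exactly the support hypothesis needed to chain them), so the resulting object indeed lies in $\DD(\twwc I)^{4-\cocart, 3-\cocart^*, 2-\cart}_{\pi_{234}^* \widetilde{T}^{\op}}$, i.e.\@ defines an object of $\EE(I)$ over $\overline{T}$. The universal property of this lift is then precisely Fundamental Lemma~\ref{LEMMACOMPSIXFU}, 3.\@, which identifies $M_{\widetilde{\Xi}}(\mathcal{E}_1, \dots, \mathcal{E}_n; \mathcal{E}_{n+1})$ with $\Hom_{\DD(\twwc I)_{\pi_{234}^*\widetilde{T}^{\op}}}(\widetilde{f}_* \widetilde{\iota}_! \widetilde{g}^*(\mathcal{E}_1, \dots, \mathcal{E}_n); \mathcal{E}_{n+1})$ naturally. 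Functoriality of this identification in $\mathcal{E}_{n+1}$, together with the compatibility of the chosen compactifications under composition established in Lemma~\ref{LEMMACHOICECOMP}, shows the lift is coCartesian and that coCartesian 1-morphisms compose (so the cleavage is pseudofunctorial, using Fundamental Lemma~\ref{LEMMACOMPSIXFU}, 2.\@).

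For the 2-bifibration part I would argue that, by construction in Definition~\ref{DEFDER6FU1}, 3.\@, every 2-morphism of $\SSS^{\cor,\comp}(I)$ induces a \emph{bijection} between the relevant $M$-sets (isomorphisms on the underlying groupoids, which become bijections after passing to isomorphism classes, using that inverted refinements go to isomorphisms by Fundamental Lemma~\ref{LEMMACOMPSIXFU}, 4.\@). Consequently the action of $\EE(I)$ on 2-morphisms over a given base 2-morphism is a bijection of fiber-sets; a functor which is bijective on such fibers is trivially both 1-fibered and 1-opfibered with discrete — hence 1-categorical — fibers, giving the 2-bifibration. (If $\DD$ is symmetric one also records, via Lemma~\ref{LEMMASYMMETRIC}, that these structures are compatible with the symmetric-group actions; but that symmetric refinement is not needed for the bare statement.) The main obstacle, in my estimation, is not conceptual but bookkeeping: one must check carefully that the coCartesian lift just defined is compatible with the \emph{strictly associative} composition set up in Definition~\ref{DEFDER6FU1}, 4.\@ — i.e.\@ that the three forgetful maps out of the set $M_{\widetilde{\Phi}}(\mathcal{E}_1, \dots; \mathcal{F}_1, \dots; \mathcal{G})$ in (\ref{threemor}) really do interact with the Hom-description of part 3 of the Fundamental Lemma so that horizontal composites of coCartesian 1-morphisms are again coCartesian, and that the induced $2$-functoriality in the base is respected. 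This is exactly where Lemma~\ref{LEMMACHOICECOMP} (compatibility of chosen compactifications of composites) is indispensable, and it is the one place where a genuine, if routine, diagram chase is unavoidable.
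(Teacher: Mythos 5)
Your proposal is correct and follows essentially the same route as the paper: the 1-categorical fibers and 2-bifibration come from Definition~\ref{DEFDER6FU1}, 3, the (weakly) coCartesian lifts are identified via Fundamental Lemma~\ref{LEMMACOMPSIXFU}, 3 (the paper leaves the target $\widetilde{f}_*\widetilde{\iota}_!\widetilde{g}^*(\mathcal{E}_1,\dots,\mathcal{E}_n)$ implicit where you write it out), and closure of these lifts under the composition of Definition~\ref{DEFDER6FU1}, 4 yields the 1-opfibration, the paper invoking the criterion of \cite[Proposition~2.7]{Hor15b} where you argue the universal property directly.
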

\begin{proof}
The functors have 1-categorical fibers and are 2-bifibered by Definition~\ref{DEFDER6FU1}, 3.
By Lemma~\ref{LEMMACOMPSIXFU}, 3.\@, weakly coCartesian morphisms which consist of a single element $\mathcal{F} \in E_I(\Delta_{1,n})_{(X \hookrightarrow \overline{X})}$
(cf.\@ Proposition~\ref{PROPCONSTRSYMMULTI}) are exactly those such that the corresponding object $\mathcal{F}$ is also $\pi_{234} \times \id$-(co)Cartesian. For an object 
\[ \mathcal{F} \in E_I(\Delta_{1,n} \circ^i \Delta_{1,m})_{(X \hookrightarrow \overline{X})}  \] 
One verifies that, if the pull-backs of $\mathcal{F}$ to $E_I(\Delta_{1,n})$ and $E_I(\Delta_{1,m})$ are $\pi_{234} \times \id$-(co)Cartesian (i.e.\@ morphisms of type 1 in $\tau$ are mapped to isomorphisms) then also the whole object is $\pi_{234} \times \id$-(co)Cartesian and hence so is its pullback to 
the composition $\Delta_{1,n+m-1}$. It follows that the composition of weakly coCartesian morphisms are weakly coCartesian and hence the functor is a 1-opfibration by \cite[Proposition~2.7]{Hor15b}. Note that in the statement that the composition of weakly coCartesian morphisms is weakly coCartesian all three involved morphisms may be replaced by an isomorphic one (hence here by one consisting of a single element) because we have a 2-bifibration. 
\end{proof}

\section{Relative Kan extensions}

\begin{LEMMA}\label{LEMMAOPFIB}
Let $\alpha: I \rightarrow J$ be an opfibration in $\Cat$, and consider the sequence of functors:
\[ \xymatrix{ \twwc{I} \ar[rrr]^-{q_1=(\twwc{\alpha},\pi_{123})} &&& \twwc{J} \times_{({}^{\downarrow \uparrow \uparrow}{J})} {}^{\downarrow \uparrow \uparrow}{I} \ar[rr]^-{q_2=\id \times \pi_1} && \twwc{J} \times_J I. } \]
\begin{enumerate}
\item The functor $q_1$ is an opfibration. The fiber of $q_1$ over a pair $j_1 \rightarrow j_2 \rightarrow j_3 \rightarrow j_4$ and $i_1 \rightarrow i_2 \rightarrow i_3$ is
\[ i_4 \times_{/I_{j_4}} I_{j_4}\]
where $i_4$ is the target of a coCartesian arrow over $j_3 \rightarrow j_4$ with source $i_3$. 
\item The functor $q_2$ is a fibration. The fiber of $q_2$ over a pair $j_1 \rightarrow j_2 \rightarrow j_3 \rightarrow j_4$ and $i_1$ (lying over $j_1$) is 
\[ (i_2 \times_{/I_{j_2}} I_{j_2} \times_{/I_{j_3}} I_{j_3})^{\op} \]
where $i_2$ is the target of a coCartesian arrow over $j_1 \rightarrow j_2$ with source $i_1$ and the second comma category is 
constructed via the functor $I_{j_2} \rightarrow I_{j_3}$ being the coCartesian push-forward along $j_2 \rightarrow j_3$.  
\end{enumerate}
\end{LEMMA}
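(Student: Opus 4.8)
The statement is a purely combinatorial assertion about the diagram categories $\twwc{I}$, $\twwc{J}$, $\twwc{I}$, etc., and the functors $q_1$ and $q_2$ between them; it uses only the hypothesis that $\alpha\colon I\to J$ is an opfibration. The plan is to verify items (1) and (2) separately by the standard criterion for a functor to be an (op)fibration, namely by exhibiting the requisite (co)Cartesian lifts explicitly and identifying the fibers. Throughout I will use the description of objects of $\twwc{I}$ as sequences $i_1\to i_2\to i_3\to i_4$ (with $i_1,i_2,i_3$ objects and $i_4$ a one-element list, per \ref{TWWCDEGREE}) and the factorization of morphisms into the four types; the key point will be that $\twwc{\alpha}$ and $\pi_{123}$ act componentwise on such sequences.

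For part (1): an object of the target $\twwc{J} \times_{({}^{\downarrow\uparrow\uparrow}{J})} {}^{\downarrow\uparrow\uparrow}{I}$ is a pair consisting of $j_1\to j_2\to j_3\to j_4$ in $\twwc{J}$ and $i_1\to i_2\to i_3$ in ${}^{\downarrow\uparrow\uparrow}{I}$ lying over $j_1\to j_2\to j_3$. To check $q_1$ is an opfibration I would, given such an object and an object $i_1\to i_2\to i_3\to i_4$ of $\twwc{I}$ lying above it, produce a coCartesian lift of any morphism in the target. The natural candidate: since $\alpha$ is an opfibration, a morphism $j_3\to j_3'$ (respectively $j_4\to j_4'$) can be lifted coCartesianly starting from $i_3$ (respectively $i_4$); one assembles these componentwise, using that the other components $i_1,i_2$ are already pinned down by the target. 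The fiber over a fixed object of the target is then the set of choices of $i_4$ together with a morphism $i_3\to i_4$ (of the appropriate type) over $j_3\to j_4$; taking $i_4^{0}$ to be the target of the \emph{chosen} coCartesian lift of $j_3\to j_4$ with source $i_3$, every such $i_4$ receives a unique morphism from $i_4^{0}$ over $\id_{j_4}$, which identifies the fiber with the comma category $i_4^{0}\times_{/I_{j_4}} I_{j_4}$. I would write $i_4$ for $i_4^0$ to match the statement.

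For part (2): the functor $q_2 = \id\times\pi_1$ forgets the components $i_2,i_3$ from ${}^{\downarrow\uparrow\uparrow}{I}$, remembering only $i_1$ (and the data $j_1\to j_2\to j_3\to j_4$). To check $q_2$ is a fibration I would dually produce Cartesian lifts: given a morphism in the target $\twwc{J}\times_J I$ one must lift it by varying $i_2,i_3$ appropriately; because the middle arrows of ${}^{\downarrow\uparrow\uparrow}$ point \emph{up} (hence backwards in $I$), the relevant universal property is that of a \emph{fibration}, and the coCartesian push-forward functors $I_{j_1}\to I_{j_2}\to I_{j_3}$ along $j_1\to j_2\to j_3$ (which exist since $\alpha$ is an opfibration — note that it is an opfibration that supplies, after taking opposites in the $\uparrow$-slots, the fibration structure here) are what control the lift. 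The fiber over a fixed object of the target is then the set of factorizations: $i_2$ must admit a morphism \emph{from} the coCartesian push-forward $i_2^0$ of $i_1$ along $j_1\to j_2$ (inside $I_{j_2}$), and then $i_3$ must admit a morphism from the coCartesian push-forward of $i_2$ along $j_2\to j_3$; running this through, and taking opposites because of the $\uparrow$'s, one identifies the fiber with $(i_2^0\times_{/I_{j_2}} I_{j_2}\times_{/I_{j_3}} I_{j_3})^{\op}$ exactly as stated, where the second comma category uses the transition functor $I_{j_2}\to I_{j_3}$.

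\textbf{Main obstacle.} The bookkeeping — not the mathematics — is the hard part: one must be meticulous about which slots of $\twwc{(-)}={}^{\downarrow\uparrow\uparrow\downarrow}(-)$ point up versus down, so that one correctly sees $q_1$ as an opfibration and $q_2$ as a fibration, and about the direction of the comma categories (hence the $(-)^{\op}$ in part (2)). A second, more substantive check is that the candidate lifts really are (co)Cartesian: this amounts to verifying that a morphism of sequences is coCartesian for $q_1$ iff its $i_4$-component (and the relevant others) is coCartesian for $\alpha$ in the appropriate fiber, which I would reduce to the componentwise description of morphisms in $\twwc{I}$ together with the fact (\ref{PARTW2}) that the projections between the various ${}^{\Xi}I$ are themselves (op)fibrations. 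I expect no genuine difficulty beyond this once the conventions are fixed.
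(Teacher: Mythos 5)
The paper's own proof is just the word ``Straightforward,'' and your direct verification --- exhibiting the (co)Cartesian lifts slot by slot and using the opfibration property of $\alpha$ to translate morphisms over $j_k\to j_{k+1}$ into morphisms out of coCartesian push-forwards, thereby identifying the fibers as (opposites of) iterated comma categories --- is exactly the intended check, including the correct explanation of where the $(-)^{\op}$ in part (2) comes from. One small correction: for $q_1$ all of $i_1,i_2,i_3$ are already pinned down by the ${}^{\downarrow\uparrow\uparrow}I$-component of the target (not just $i_1,i_2$), so only the fourth slot requires a coCartesian lift of $j_4\to j_4'$; this does not affect your fiber computation or the rest of the argument.
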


\begin{proof}Straightforward. \end{proof}

\begin{LEMMA}\label{LEMMAKAN2}
Under the assumptions of \ref{BEGINSECTIONDER6FU},
if $\alpha: I \rightarrow J$ is an opfibration in $\Dirlf$ then
the functors
\[ \xymatrix{ \DD(\twwc{J} \times_J I)_{\pi_{234}^*(\widetilde{S}^{\op})}^{4-\cocart, 3-\cocart^*, 2-{\cart}} \ar[d]^-{q_2^*} \\
 \DD(\twwc{J} \times_{({}^{\downarrow \uparrow \uparrow}{J})} {}^{\downarrow \uparrow \uparrow}{I})^{4-\cocart, 3-\cocart^*, 2-{\cart}}_{\pi_{234}^* (\widetilde{S}^{\op})}  \ar[d]^-{q_1^*} \\
   \DD(\twwc{I})^{4-\cocart, 3-\cocart^*, 2-{\cart}}_{\pi_{234}^*(\widetilde{S}^{\op})}  } \]
are equivalences.
In particular (applying this to $J=\cdot$ and variable $I$) we have an equivalence of fibers:
\[  \EE_S \cong \DD_S. \]
\end{LEMMA}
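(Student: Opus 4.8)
The plan is to show that both functors $q_2^*$ and $q_1^*$ are equivalences; the composite $q_1^* q_2^*$ is then $(\twwc{\alpha})^*$ followed by the forgetful restriction, and the claim about $\EE_S$ follows by specializing $J = \{\cdot\}$ so that $\twwc J \times_J I = I$, $\twwc I$ appears on the bottom, and the top category is exactly $\DD(I)^{4\text{-}\cocart,\,3\text{-}\cocart^*,\,2\text{-}\cart}_{\pi_{234}^*(\widetilde S^{\op})}$, which by definition is $\EE_S$ for $S$ the (constant, trivially compactified) object over the point. The actual work is proving each of the two displayed functors is an equivalence, and for this I would use the standard derivator/fibered-multiderivator machinery: a pullback functor $\rho^*$ along a functor $\rho$ with contractible (or, more precisely, homotopy-cofinal in an appropriate sense) fibers is fully faithful on the relevant subcategories of (co)Cartesian objects, and one checks essential surjectivity by explicitly reconstructing an object from its image.

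First I would treat $q_1^*$. By Lemma~\ref{LEMMAOPFIB}(1), $q_1$ is an opfibration whose fiber over a point $(j_1 \to j_2 \to j_3 \to j_4,\ i_1 \to i_2 \to i_3)$ is the comma category $i_4 \times_{/I_{j_4}} I_{j_4}$, which has an initial object (namely $\id_{i_4}$), hence is contractible. The key point is that on an object $\mathcal F$ of $\DD(\twwc J \times_{({}^{\downarrow\uparrow\uparrow} J)} {}^{\downarrow\uparrow\uparrow} I)$ satisfying the $4$-coCartesian, strongly $3$-coCartesian and $2$-Cartesian conditions, the pullback $q_1^*\mathcal F$ is determined, because the extra morphisms introduced in $\twwc I$ relative to the base are precisely morphisms of type $4$ (the added fourth slot) along which $\mathcal F$, and hence $q_1^*\mathcal F$, is coCartesian; so $q_1^*$ is fully faithful by (Der2) applied fiberwise, and essentially surjective because given an object of $\DD(\twwc I)$ with the right (co)Cartesianity we can push forward along the initial objects of the $q_1$-fibers (coCartesian pushforward, which exists by (FDer0 left)) to produce a preimage, using Lemma~\ref{LEMMAEXISTENCE3FUNCTORS} to see that the support/Cartesianity conditions are preserved. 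This is the same pattern as in the Fundamental Lemma~\ref{LEMMACOMPSIXFU}.

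Next, for $q_2^*$: by Lemma~\ref{LEMMAOPFIB}(2), $q_2$ is a fibration whose fiber over $(j_1 \to j_2 \to j_3 \to j_4,\ i_1)$ is $(i_2 \times_{/I_{j_2}} I_{j_2} \times_{/I_{j_3}} I_{j_3})^{\op}$, which has a terminal object once opposited (built from the coCartesian pushforwards of $i_1$), hence is again contractible. The new morphisms in ${}^{\downarrow\uparrow\uparrow} I$ over ${}^{\uparrow\uparrow} J \times$-stuff correspond, after opposing, to morphisms of type $2$ and $3$ in the first two slots, along which the objects in question are respectively $2$-Cartesian and strongly $3$-coCartesian; so again the pullback along $q_2$ of such an object is rigidly determined, giving full faithfulness, and essential surjectivity follows by the dual reconstruction (taking fiberwise (co)limits along the terminal objects of the opposited fibers, i.e.\@ the appropriate pullback / pushforward functors guaranteed by (FDer0 left/right)), with Proposition~\ref{PROPPROPERTIESCORCOMPMDIA} ensuring all the (strong) (co)Cartesianity and well-supportedness conditions propagate.

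The main obstacle I anticipate is not the contractibility of the fibers — that is immediate from Lemma~\ref{LEMMAOPFIB} — but the bookkeeping needed to verify that pullback along $q_1$, resp.\@ $q_2$, genuinely lands in the prescribed full subcategory and that the reconstruction is inverse to it: one must track, morphism-type by morphism-type in $\twwc I$, which of the four conditions (4-coCartesian, 3-coCartesian$^*$, 2-Cartesian, well-supported) is being used where, exactly as in the case analysis $(4,3),(4,2),\dots,(1,1)$ carried out in the proof of Fundamental Lemma~\ref{LEMMACOMPSIXFU}(2). In particular one needs that $\iota_!$-type functors are computed pointwise on the relevant subcategory (Lemma~\ref{LEMMAEXISTENCE3FUNCTORS}(2), which relies essentially on (F1)--(F6) and the support conditions) so that the reconstructed object really is strongly $3$-coCartesian. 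Once that is in place, both $q_1^*$ and $q_2^*$ are equivalences, hence so is their composite, and setting $J = \cdot$ yields $\EE_S \cong \DD_S$ as claimed.
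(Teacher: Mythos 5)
Your proposal is correct and follows essentially the same route as the paper: both reduce, via Lemma~\ref{LEMMAOPFIB}, to a fiberwise statement over the (op)fibrations $q_1$ and $q_2$, observe that all morphisms in the fibers are of type 4 (resp.\@ compositions of types 2 and 3) so that the (co)Cartesianity conditions become absolute there, and conclude from the existence of an initial (resp.\@ final) object in the fibers. The paper packages your ``reconstruction'' step more cleanly as the adjoints $q_{1,!}$ and $q_{2,*}$ together with the verification that unit and counit become isomorphisms on the relevant full subcategories, citing \cite[Lemma 7.21 and Corollary~7.22]{Hor16} for the fiberwise input.
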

Here $\EE$ is the pre-2-multiderivator constructed in Definition~\ref{DEFDER6FU2}. Note that  $\EE_S$ is a usual pre-derivator, though.
\begin{proof}
We first treat the case of $q_1^*$.
We know by Lemma~\ref{LEMMAOPFIB} that $q_1$ is an opfibration with fibers of the form $i_4 \times_{/I_{j_4}} I_{j_4}$.
Neglecting the conditions of being (co)Cartesian, we know that $q_1^*$ has a left adjoint:
\[ q_{1,!}: \DD(\twwc{I})_{\pi_{234}^*(\widetilde{S}^{\op})}
\rightarrow \DD(\twwc{J} \times_{( {}^{\uparrow \uparrow \downarrow}{J})} {}^{\uparrow \uparrow \downarrow}{I})_{\pi_{234}^* (\widetilde{S}^{\op})} \]
We will show that the unit and counit
\[ \id \Rightarrow q_1^* q_{1,!}  \qquad  q_{1,!} q_1^*  \Rightarrow \id  \]
are isomorphisms {\em when restricted to the subcategory of $4$-coCartesian objects}. Since the conditions of being $2$-Cartesian and strongly $3$-coCartesian objects match under $q_1^*$ this shows the first assertion. 
Since $q_1$ is an opfibration this is the same as to show that for any object in $\twwc{J} \times_{({}^{\uparrow \uparrow \downarrow}{J})} {}^{\uparrow \uparrow \downarrow}{I}$ with fiber $F = i_4 \times_{/I_{j_4}} I_{j_4}$ the unit and counit
\begin{equation}\label{eq7} \id \Rightarrow p_F^* p_{F,!}  \qquad  p_{F,!} p_F^*  \Rightarrow \id  \end{equation}
are isomorphisms when restricted to the subcategory of $4$-coCartesian objects. Since all morphisms in the fiber $F$ are of type 4, we have to show that the morphisms in (\ref{eq7}) are isomorphisms when restricted to (absolutely) (co)Cartesian objects. This follows from the fact that  $F$ has an initial object \cite[Lemma 6.21 and Corollary~6.22]{Hor16}.

We now treat the case of $q_2^*$.
We know by Lemma~\ref{LEMMAOPFIB} that $q_2$ is a fibration with fibers of the form $(i_2 \times_{/I_{j_2}} I_{j_2} \times_{/I_{j_3}} I_{j_3})^{\op}$.
Neglecting the conditions of being (co)Cartesian, we know that $q_1^*$ has a right adjoint:
\[ q_{2,*}: \DD(\twwc{J} \times_{({}^{\uparrow \uparrow \downarrow}J)} {}^{\uparrow \uparrow \downarrow}{I})_{\pi_{234}^* (\widetilde{S}^{\op})}
\rightarrow \DD(\twwc{J} \times_{J} I)_{\pi_{234}^*(\widetilde{S}^{\op})} \]
We will show that the unit and counit
\[ \id \Rightarrow q_{2,*} q_2^*   \qquad  q_2^* q_{2,*}  \Rightarrow \id  \]
are isomorphisms {\em when restricted to the subcategory of $2$-Cartesian and strongly $3$-coCartesian objects}. Since the conditions of being $4$-coCartesian match under $q_2^*$ this shows the second assertion. 
Since $q_2$ is a fibration this is the same as to show that for any object in $\twwc{J} \times_J I$ with fiber $F=(i_2 \times_{/I_{j_2}} I_{j_2} \times_{/I_{j_3}} I_{j_3})^{\op}$ the the unit and counit
\begin{equation}\label{eq8} \id \Rightarrow  p_{F,*} p_F^*  \qquad  p_F^* p_{F,*}  \Rightarrow \id  \end{equation}
are isomorphisms when restricted to the subcategory of $2$-Cartesian and strongly $3$-coCartesian objects. 

Since every morphism in the fiber $(i_2 \times_{/I_{j_2}} I_{j_2} \times_{/I_{j_3}} I_{j_3})^{\op}$ is a composition of morphisms of type 2 and 3, this means that it suffices to show that (\ref{eq8}) are isomorphisms when restricted to (absolutely) (co)Cartesian objects. This follows from the fact that  $(i_2 \times_{/I_{j_2}} I_{j_2} \times_{/I_{j_3}} I_{j_3})^{\op}$ has a final object \cite[Lemma 7.21 and Corollary~7.22]{Hor16}. 
\end{proof}

\begin{LEMMA}~\label{LEMMAKAN4}
Let the situation be as in \ref{BEGINSECTIONDER6FU} and let $p': \EE \rightarrow \SSS^{\cor, \comp}$ be the morphism of 2-pre-multiderivators defined in Definition~\ref{DEFDER6FU2}. 
Let $\alpha: I \rightarrow J$ be an opfibration in $\Dirlf$ and $X \hookrightarrow \overline{X}$ an element of $\SSS^{\cor, \comp}(J)$. Then $\alpha^*: \EE(J)_{X \hookrightarrow \overline{X}} \rightarrow \EE(I)_{\alpha^*X \hookrightarrow  \alpha^*\overline{X}} $ has a left adjoint $\alpha_!^{(X\hookrightarrow \overline{X})}$.
\end{LEMMA}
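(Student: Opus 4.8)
The plan is to construct $\alpha_!$ fibrewise over $\SSS^{\cor,\comp}$, exploiting that $\EE \to \SSS^{\cor,\comp}$ has been defined on objects by the formula
$\EE(I)_{X\hookrightarrow\overline X} = \DD(\twwc I)^{4\text{-}\cocart,\,3\text{-}\cocart^*,\,2\text{-}\cart}_{\pi_{234}^*\widetilde X^{\op}}$, and that $\alpha^* = \DD(\twwc\alpha)$. So the desired left adjoint of $\alpha^*\colon \EE(J)_{X\hookrightarrow\overline X}\to\EE(I)_{\alpha^*X\hookrightarrow\alpha^*\overline X}$ should be built from the corresponding homotopy left Kan extension $(\twwc\alpha)_!$ of the underlying fibered multiderivator $\DD$, which exists by (FDer3 left). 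The one thing to check is that this Kan extension preserves the three (co)Cartesianity conditions, i.e. that it restricts to a functor between the relevant full subcategories; then adjointness is automatic by restriction of an adjunction to full subcategories that are stable under the functors involved.

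First I would reduce, via Proposition~\ref{PROPSCORCOMP} and the equivalence $\SSS^{\cor,\comp}\cong\SSS^{\cor}$, to working with a fixed compactified correspondence $X\hookrightarrow\overline X$ over $J$ and its interior compactification $\widetilde X\colon\tww(\Delta_{1,0}\times J)\to\mathcal S$ (equivalently just $\widetilde X\colon \tww J\to\mathcal S$, taking $\Delta_T$ trivial). Next I would factor $\twwc\alpha$ through the two functors of Lemma~\ref{LEMMAOPFIB},
\[ \twwc I \xrightarrow{q_1} \twwc J \times_{({}^{\downarrow\uparrow\uparrow}J)} {}^{\downarrow\uparrow\uparrow}I \xrightarrow{q_2} \twwc J\times_J I, \]
and compose with the projection $r\colon \twwc J\times_J I \to \twwc J$. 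By Lemma~\ref{LEMMAKAN2}, $q_1^*$ and $q_2^*$ are equivalences on the subcategories cut out by the three conditions, with quasi-inverses computed by $q_{1,!}$ (over fibers with an initial object) and $q_{2,*}$ (over fibers with a final object) respectively — and these equivalences are compatible with pullback of the base diagram $\widetilde S^{\op}$. So it suffices to produce a left adjoint to
\[ r^*\colon \DD(\twwc J)^{4\text{-}\cocart,3\text{-}\cocart^*,2\text{-}\cart}_{\pi_{234}^*\widetilde X^{\op}} \longrightarrow \DD(\twwc J\times_J I)^{4\text{-}\cocart,3\text{-}\cocart^*,2\text{-}\cart}_{\pi_{234}^*\widetilde X^{\op}}. \]
Here the fibers of $r$ are ordinary diagrams (fibers of $\alpha$, an opfibration), and over each element of $\twwc J$ the base diagram is \emph{constant}; so $r_!$ is the homotopy left Kan extension from $\DD$ along an opfibration with constant coefficients in each fiber. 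The key point is then that, because the coefficients are constant along the fibers of $r$, the functor $r_!$ (which exists by (FDer3 left)) automatically preserves $4$-coCartesianity, strong $3$-coCartesianity, and $2$-Cartesianity: these are conditions on morphisms of $\twwc J$ (types $4$, $3$, $2$ live in the $\twwc J$-direction, not the $I$-direction), and $r_!$ commutes with the pullback functors $\widetilde X(a)^*$ along such morphisms by (FDer5 left) / base change for $\DD$, while strong $3$-coCartesianity is detected point-wise (Lemma~\ref{LEMMAEXISTENCE3FUNCTORS}). This gives the adjoint on fibers; one then checks it is natural in the base, so it assembles into the adjoint $\alpha_!^{(X\hookrightarrow\overline X)}$ of $p'$-fibered type.

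The main obstacle I expect is precisely verifying that $r_!$ (equivalently $(\twwc\alpha)_!$, after transporting back through $q_{1,!}$ and $q_{2,*}$) preserves the three conditions — in particular the $2$-Cartesian condition, since Cartesianity is not obviously preserved by left Kan extensions. The way around this is to use that along morphisms of type $2$ and $3$ the relevant functors are $\widetilde X(a)_!$ (a \emph{left} adjoint) and that $r_!$, being a left adjoint computed point-wise in the $I$-direction, commutes with these by the base-change/exchange isomorphisms packaged in Lemmas~\ref{LEMMAEXISTENCE3FUNCTORS}, \ref{LEMMAPOINTWISEEXBYZERO} and Proposition~\ref{PROPPROPERTIESCORCOMPMDIA} — here the hypothesis that $\DD$ is infinite and that $I$ is locally finite enters, ensuring the point-wise formulas for $\iota_!$ hold. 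A secondary bookkeeping issue is checking the compatibility of $\alpha_!$ with the pseudo-functoriality of $\EE$ in $\SSS^{\cor,\comp}$, i.e. that the constructed adjoints on fibers are themselves compatible with the pull-back $2$-functors; this is routine given that everything downstairs is induced by $\twwc(-)$ applied to functors and natural transformations of diagrams.
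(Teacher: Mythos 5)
Your reduction is exactly the paper's: via Lemma~\ref{LEMMAKAN2} one replaces $(\twwc\alpha)^*$ by $\pr_1^*\colon \DD(\twwc J)\to\DD(\twwc J\times_J I)$, observes that $\pr_{1,!}$ exists unconditionally by (FDer3 left), and then the entire content of the lemma is that $\pr_{1,!}$ (a fibrewise homotopy colimit over the fibers $I_{j_1}$ of the opfibration $\alpha$, which are constant in the $\twwc J$-direction) preserves the three conditions. Your treatment of the type-$4$ and type-$3$ cases is correct: $g^*$ and $\iota^*$ commute with homotopy colimits as left adjoints, and for the ``strong'' part of $3$-coCartesianity one additionally needs that $\iota_!$ commutes with homotopy colimits and is computed point-wise over constant diagrams, which is Lemma~\ref{LEMMAPOINTWISEEXBYZERO}, 2., as you cite.

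There is, however, a genuine error in your handling of the $2$-Cartesian condition. A type-$2$ morphism of $\twwc J$ is sent by $\pi_{234}^*\widetilde X$ to a \emph{proper} morphism $\overline f$, and $2$-Cartesianity is the condition $\mathcal E_j\cong \overline f_*\mathcal E_{j'}$, governed by the \emph{right} adjoint $\overline f_*$ --- there is no left adjoint $\overline f_!$ available here, since (F1) provides $\iota_!$ only for embeddings. Your claim that ``along morphisms of type $2$ and $3$ the relevant functors are $\widetilde X(a)_!$ (a left adjoint)'' is therefore false for type $2$, and the exchange you invoke does not exist. What is actually needed is that $\overline f_*$ commutes with the fibrewise homotopy colimits computing $\pr_{1,!}$, i.e.
\[ \hocolim_{I_{j_1}} e_j^* \;\longrightarrow\; \hocolim_{I_{j_1}} \overline f_* e_{j'}^* \;\cong\; \overline f_*\hocolim_{I_{j_1}} e_{j'}^*, \]
and the second isomorphism is precisely axiom (F3). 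This is in fact the one place in the proof where (F3) is used at all, so omitting it leaves the $2$-Cartesian case unproved; with (F3) inserted, your argument closes and coincides with the paper's.
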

\begin{proof}Let $\widetilde{X} \in \Fun(\tww J, \mathcal{S})$ be the corresponding interior compactification. 
We have to show that
\[ (\twwc{\alpha})^*: \DD(\twwc{J})^{4-\cocart, 3-\cocart^*, 2-{\cart}}_{\pi_{234}^*(\widetilde{X}^{\op}) } \rightarrow \DD(\twwc{I})^{4-\cocart, 3-\cocart^*, 2-{\cart}}_{\pi_{234}^* (\tww\alpha^*\widetilde{X})^{\op}} \]
has a left adjoint. The right hand side category is by Lemma~\ref{LEMMAKAN2} equivalent to  
\[ \DD((\twwc{J}) \times_J I)^{4-\cocart, 3-\cocart^*, 2-{\cart}}_{\pi_{234}^* (\widetilde{X}^{\op})}, \]
hence we have to show that
\[ \pr_1^*: \DD(\twwc{J})^{4-\cocart, 3-\cocart^*, 2-{\cart}}_{\pi_{234}^* (\widetilde{X}^{\op})} \rightarrow \DD((\twwc{J}) \times_J I)^{4-\cocart, 3-\cocart^*, 2-{\cart}}_{\pi_{234}^* (\widetilde{X}^{\op})} \]
has a left adjoint. By assumption, the functor has a left adjoint $\pr_{1,!}$ forgetting the coCartesianity conditions. It suffices to show that $\pr_{1,!}$ preserves the conditions of being $4$-coCartesian, $2$-Cartesian, and strongly $3$-coCartesian, respectively. 

{\em Strongly $3$-coCartesian:} Let $\kappa: j \rightarrow j'$
\[ \xymatrix{
j=(j_1 \ar[r]^{} \ar@<10pt>@{=}[d] &j_2 \ar[r]^{} \ar@{=}[d] & j_3 \ar[r]^{} & \ar@{=}[d]  j_4)  \\
j'=(j_1 \ar[r]_{} &j_2 \ar[r]_{} & j_3' \ar[u]_{} \ar[r]_{} & j_4)
} \]
be a morphism of type 3 in $\twwc{J}$. Denote \[ \iota:=\widetilde{X}(\pi_{234}(\kappa)): \widetilde{X}(\pi_{234}(j)) \rightarrow \widetilde{X}(\pi_{234}(j')) \] the corresponding morphism in $\mathcal{S}$. Note that $\iota$ is an open embedding by the properties of induced interior compactifications. 
 
We have to show that the induced map
\[ \iota^* j^* \pr_{1,!} \rightarrow (j')^* \pr_{1,!} \]
is an isomorphism on strongly $3$-coCartesian objects and its inverse induces an isomorphism 
\[ \iota_!  (j')^* \pr_{1,!}  \rightarrow j^* \pr_{1,!}. \]
 Since $\pr_1$ is an opfibration, the first step is the same as to show that the natural morphism
\[  \iota^* \hocolim_{I_{j_1}} e_{j}^* \rightarrow \hocolim_{I_{j_1}} e_{j'}^*  \]
is a  isomorphism on $2$-Cartesian objects where $e_j$, resp.\@ $e_{j'}$ denotes the inclusion of the respective fiber. Since $\iota^*$ commutes with homotopy colimits, this is to say that
\[ \hocolim_{I_{j_1}}  \iota^* e_{j}^* \rightarrow \hocolim_{I_{j_1}} e_{j'}^*  \]
is an isomorphism. However the fibers over $j$ and $j'$ in $(\twwc{J}) \times_J I$ are both isomorphic to $I_{j_1}$ and the natural morphism
\[ \iota^*  e_{j}^* \rightarrow e_{j'}^*  \]
is already an isomorphism on $3$-coCartesian objects by definition. Similarly its inverse induces an isomorphism
\[ \iota_! e_{j'}^*  \rightarrow e_{j}^* \]
and the same reasoning using 1.\@ that $\iota_!$, being a left adjoint, commutes with homotopy colimits, and 2.\@ that it is computed point-wise on constant diagrams (cf.\@ Lemma~\ref{LEMMAPOINTWISEEXBYZERO}, 2.)  allows to conclude.  

{\em $2$-Cartesian:} Let $\kappa: j \rightarrow j'$
\[ \xymatrix{
j=(j_1 \ar[r]^{} \ar@<10pt>@{=}[d] &j_2 \ar[r]^{}  & j_3 \ar@{=}[d] \ar[r]^{} & \ar@{=}[d]  j_4)  \\
j'=(j_1 \ar[r]_{} &j_2' \ar[r]_{}  \ar[u]_{}  & j_3 \ar[r]_{} & j_4)
} \]
be a morphism of type 2 in $\twwc{J}$. Denote \[ \overline{f}:=\widetilde{X}(\pi_{234}(\kappa)): \widetilde{X}(\pi_{234}(j)) \rightarrow \widetilde{X}(\pi_{234}(j')) \] the corresponding morphism in $\mathcal{S}$. Note that $\overline{f}$ is proper by the properties of induced interior compactifications. 
We have to show that the induced map
\[ j^* \pr_{1,!} \rightarrow \overline{f}_* (j')^* \pr_{1,!} \]
is an isomorphism on $2$-Cartesian objects. 
Since $\pr_1$ is an opfibration, this is the same as to show that the natural morphism
\[  \hocolim_{I_{j_1}} e_{j}^* \rightarrow \overline{f}_* \hocolim_{I_{j_1}} e_{j'}^*  \]
is a  isomorphism on $2$-Cartesian objects. Since $\overline{f}_*$ commutes with homotopy colimits by (F3), this is to say that
\[  \hocolim_{I_{j_1}} e_{j}^* \rightarrow  \hocolim_{I_{j_1}} \overline{f}_* e_{j'}^*  \]
is an isomorphism. However the fibers over $j$ and $j'$ in $(\twwc{J}) \times_J I$ are both isomorphic to $I_{j_1}$ and the natural morphism
\[ e_{j}^* \rightarrow \overline{f}_* e_{j'}^*  \]
is already an isomorphism on $2$-Cartesian objects by definition. 

{\em $4$-coCartesian:} Let $\kappa: j \rightarrow j'$
\[ \xymatrix{
	j=(j_1 \ar[r]^{} \ar@<10pt>@{=}[d] &j_2 \ar[r]^{} \ar@{=}[d] & j_3  \ar[r]^{} & j_4) \ar[d] \\
j'=(j_1 \ar[r]_{} &j_2 \ar[r]_{} & j_3 \ar@{=}[u] \ar[r]_{} & j_4')
} \]

be a morphism of type 4 in $\twwc{J}$. Denote 
\[ g:=\widetilde{X}(\pi_{234}(\kappa)): \widetilde{X}(\pi_{234}(j)) \rightarrow \widetilde{X}(\pi_{234}(j'))\]
 the corresponding morphism in $\mathcal{S}$.

We have to show that the induced map
\[ g^* j^* \pr_{1,!} \rightarrow  (j')^* \pr_{1,!} \]
is an isomorphism on $4$-coCartesian objects. This is the same as to show that the natural morphism
\[ g^* \hocolim_{I_{j_1}} e_{j}^* \rightarrow  \hocolim_{I_{j_1}} e_{j'}^*  \]
is an isomorphism on $4$-coCartesian objects. Since $g^*$ commutes with homotopy colimits, this is to say that
\[ \hocolim_{I_{j_1}} g^* e_{j}^* \rightarrow  \hocolim_{I_{j_1}}   e_{j'}^*  \]
is an isomorphism. However, the fibers over $j$ and $j'$ in $(\twwc{J}) \times_J I$ are both isomorphic to $I_{j_1}$ and the natural morphism
\[ g^*  e_{j}^* \rightarrow e_{j'}^*  \]
is already an isomorphism on 4-coCartesian objects by definition. 
\end{proof}

\begin{BEM}\label{REMKANEXT}
For an opfibration $\alpha$ the proof shows that 
we have actually just
\[ \alpha_!^{(X \hookrightarrow \overline{X})} = (\twwc \alpha)_!^{(\pi_{234}^*(\widetilde{X}^{\op}))}\]
where the right hand side is the relative left Kan extension in $\DD$. 
Indeed by construction (we omit the bases of the rel.\@ Kan extensions for $\DD$)
\[ \alpha_!^{(X \hookrightarrow \overline{X})} = \pr_{1,!} q_{2,*} q_{1,!}. \]
However, applying this to an object in the category
\[  \DD(\twwc{I})^{4-\cocart, 3-\cocart^*, 2-{\cart}}_{\pi_{234}^*((\tww\alpha)^* \widetilde{X})^{\op}}  \]
 $q_{2,*}$ receives an object which lies in the essential image of $q_{2}^*$ and we have proven
\[ \id \cong q_{2,*} q_{2}^*. \]
Hence, taking adjoints, we also have
\[ q_{2,!} q_{2}^* \cong \id . \]
Hence on the essential image of $q_{2}^*$, we have an isomorphism $q_{2,!} \cong q_{2,*}$.
\end{BEM}

\begin{BEISPIEL}\label{EXRELKAN}
We need to understand precisely how relative left Kan extensions along an inclusion of an object $i \hookrightarrow I$ looks like.
Note that this is not an obfibration, but a relative left Kan extension exists by the arguments in \cite[Theorem 4.2]{Hor16}.
It can be computed using the homotopy exact square
\[ \xymatrix{
i \times_{/I} I \ar[r]^-p \ar[d]_\pi  \ar@{}[dr]|{\Swarrow^\mu} & i \ar@{^{(}->}[d] \\
I \ar@{=}[r] & I
} \]
(in which $\pi$ is an opfibration) as
\[ \pi_!^{(S)} \SSS(\mu)(S)_\bullet p^*\]
Here $\pi_!^{(S)}$ and $(-)_\bullet$ are the relative left Kan extension and push-forward associated with the left fibered multiderivator $\EE \rightarrow \SSS^{\cor, \comp}$ of Main Theorem~\ref{HAUPTSATZ} below.

In this case the functors
\[ \xymatrix{ \twwc(i \times_{/I} I) \ar[rrr]^-{q_1=(\twwc{\alpha},\pi_{123})} &&& \twwc{I} \times_{({}^{\downarrow \uparrow \uparrow}{I})} {}^{\downarrow \uparrow \uparrow}(i \times_{/I} I) \ar[rr]^-{q_2=\id \times \pi_1} && \twwc{I} \times_I (i \times_{/I} I). } \]
are both isomorphisms of diagrams. In fact all diagrams are isomorphic to $i \times_{/I,\pi_1} \twwc{I}$.
Let $\widetilde{X}$ be an interior compactification of $X: \tw I \rightarrow \mathcal{S}$ on $I$. 
Consider the obvious morphisms (where a zero in the index of $\pi$ signifies that the object $i$ appers at the corresponding position): 
\[ \xymatrix{ X_{i} = \pi_{000}^*\widetilde{X}  & \ar[l]_-g \pi_{004} \widetilde{X} \ar@{^{(}->}[r]^\iota & \pi_{034}^*\widetilde{X} \ar@{->>}[r]^f & \pi_{234}^*\widetilde{X}.  } \]
We have then by construction
\[ i_!^{(X \hookrightarrow \overline{X})}  \cong (\twwc \pi)_!^{(\pi_{234}^*(\widetilde{X}^{\op}))} (\pi_{234}^*f)_* (\pi_{234}^*\iota)_! (\pi_{234}^*g)^* (\twwc p)^*. \]
\end{BEISPIEL}

\section{Conclusion}

\begin{HAUPTSATZ}\label{HAUPTSATZ}Let $\mathcal{S}$ be a category with compactifications, and let $\SSS^{\op}$ be the symmetric pre-multiderivator represented by $\mathcal{S}^{\op}$ with the symmetric multicategory structure \ref{PAROPMULTCAT}.
Let $\DD \rightarrow \SSS^{\op}$ be a  (symmetric) fibered (multi)derivator with domain $\Dirlf$ satisfying axioms (F1--F6) and (F4m--F5m) of \ref{PARAXIOMS}. Assume that $\DD$ is infinite (i.e.\@ satisfies (Der1${}^\infty$)). 

The morphism of (symmetric) pre-2-(multi)derivators
\[ \EE \rightarrow \SSS^{\cor, \comp}  \]
constructed in Definition~\ref{DEFDER6FU2} is a (symmetric) left fibered (multi)derivator with domain $\Dirlf$.
\end{HAUPTSATZ}
\begin{proof}The 2-pre-multiderivator $\EE$, as defined in Definition~\ref{DEFDER6FU2}, satisfies axioms (Der1) and (Der2) because $\DD$ satisfies them. The first part of axiom (FDer0 left) was shown in Lemma~\ref{LEMMA6FUOPFIB} and the second part follows from Lemma~\ref{LEMMAEXISTENCE3FUNCTORS}.
Instead of Axioms (FDer3--4 left) it is sufficient to show Axioms (FDer3--4 left') (cf.\@ \cite[Theorem 4.2]{Hor16}). (FDer3 left') is Lemma~\ref{LEMMAKAN4}, and
axiom (FDer4 left') follows from the proof of Lemma~\ref{LEMMAKAN4}.

(FDer5 left):  Since every push-forward functor along a 1-multimorphism in $\SSS^{\cor,\comp}$ is of the form $\overline{f}_* \iota_! g^*(- , \dots, -)$ this
follows from (FDer5 left) for $\DD \rightarrow \SSS$, the fact that $\iota_!$ commutes with homotopy colimits (because it is the left adjoint of a morphism of pre-derivators), and that $\overline{f}_*$ commutes with homotopy colimits (F3).
\end{proof}

\begin{BEM}\label{REMSCOR}
From the (symmetric) left fibered (multi)derivator of Main Theorem~\ref{HAUPTSATZ}, we may construct an {\em equivalent} (symmetric) left fibered multiderivator with domain $\Dirlf$
\[ \EE' \rightarrow \SSS^{\cor}. \]
This uses that $\SSS^{\cor, \comp}$ is equivalent as a symmetric pre-2-multiderivator to $\SSS^{\cor}$ by Proposition~\ref{PROPEQUIVCOMP}.
The construction is best seen via the alternative description of a (symmetric) left fibered (multi)derivator
as a pseudo-functor of (symmetric) 2-(multi)categories (cf.\@ \cite[Theorem~4.2]{Hor16})
\[ \Dirlf^{\cor}(\SSS^{\cor, \comp}) \rightarrow \mathcal{CAT}. \]
The equivalence of symmetric pre-2-multiderivators induces an equivalence of symmetric 2-multicategories 
$\Dirlf^{\cor}(\SSS^{\cor, \comp}) \cong \Dirlf^{\cor}(\SSS^{\cor})$ and by composing with a quasi-inverse functor we get a pseudo-functor
\[ \Dirlf^{\cor}(\SSS^{\cor}) \rightarrow \mathcal{CAT} \]
which may be strictified (replacing its values by equivalent categories) to get a strict 2-functor. From that one, a strict morphism (i.e.\@ a morphism of (symmetric) 2-pre-(multi)derivators)
\[ \EE' \rightarrow \SSS^{\cor}.  \]
may be reconstructed. We will keep the notation $\EE \rightarrow \SSS^{\cor}$ for this equivalent left fibered (multi)derivator. 
\end{BEM}

Recall the notions of perfectly generated, well-generated, and compactly generated fibers for a fibered (multi)derivator \cite[Definition 4.8]{Hor15}.

\begin{LEMMA}\label{LEMMAGENERATION}
If $\DD \rightarrow \SSS^{\op}$ has stable (perfectly generated, resp.\@ well-generated, resp.\@ compactly generated) fibers then 
the fibers of $\EE \rightarrow \SSS^{\cor}$ are right derivators with domain (at least) $\Posf$, stable (and perfectly generated, resp.\@ well-generated, resp.\@ compactly generated). 
\end{LEMMA}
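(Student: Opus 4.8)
The statement asserts that the fibers $\EE_S$ inherit stability and the various generation properties from the fibers $\DD_S$. The starting point is Lemma~\ref{LEMMAKAN2}, which already gives an equivalence of categories $\EE_S \cong \DD_S$ (applied with $J = \cdot$ and variable $I$). What must be checked is that this equivalence is compatible with the derivator structure, i.e.\@ that $\EE_S$ as a pre-derivator is equivalent to $\DD_S$ as a pre-derivator (and then the stability and generation statements transport automatically, since they are properties of the underlying derivator). So first I would make precise that the equivalence of Lemma~\ref{LEMMAKAN2} is natural in $I$: for a functor $\alpha \colon I \to I'$ the square relating $q_1^*, q_2^*$ over $I$ and over $I'$ commutes (up to canonical isomorphism) with the pullback functors $\alpha^*$. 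This is a routine diagram chase using that $\twwc\alpha$, $\pi_1$, and the various comma-category projections are all functorial in $\alpha$, together with the fact (already used in the proof of Lemma~\ref{LEMMAKAN2}) that $q_{1,!}$ and $q_{2,*}$ restrict to equivalences on the relevant full subcategories because the fibers have initial/final objects.

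Granting that $\EE_S \cong \DD_S$ as pre-derivators with domain $\Posf$ (the domain being $\Posf$ rather than all of $\Catlf$ because the fibers of a fibered multiderivator with domain $\Catlf$ are derivators with domain $\Posf$; cf.\@ the general theory recalled in Section~\ref{SECTFIBDER}), the remaining assertions are immediate: stability, perfect generation, well-generation, and compact generation are all intrinsic properties of a (right) derivator in the sense of \cite[Definition 4.8]{Hor15}, hence preserved under equivalence. By hypothesis $\DD_S$ is a right derivator which is stable and (perfectly / well / compactly) generated, so the same holds for $\EE_S$. The only point that requires a word is that $\EE_S$ is a \emph{right} derivator at all: this is part of the general package for the fibers of a left fibered multiderivator, but one should note that here we in fact have more — by Main Theorem~\ref{HAUPTSATZ} the morphism $\EE \to \SSS^{\cor,\comp}$ is a left fibered multiderivator, so its fibers are left derivators, and by the equivalence with $\DD_S$ (which is a right derivator by assumption) they are also right derivators.

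The main obstacle is entirely bookkeeping: verifying naturality of the equivalence $\EE_S \cong \DD_S$ in the diagram variable, including compatibility with the $2$-cells (natural transformations) of $\Posf$, and checking that this holds coherently rather than just object-wise. Concretely, one must trace through the chain $q_1^*$ then $q_{2}^* / q_{2,*}$ appearing in Lemma~\ref{LEMMAKAN2} and observe that each step commutes with restriction along $\alpha \colon I \to I'$; for the $q_{2,*}$ step one invokes the point-wise computation of the relevant right adjoints (as in Lemma~\ref{LEMMAEXISTENCE3FUNCTORS} and the proof of Lemma~\ref{LEMMAKAN2}) so that base-change along $\alpha$ is unobstructed. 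Once naturality is in hand, no further computation is needed: the lemma follows formally from transport of structure along an equivalence of pre-derivators. I would therefore keep the written proof short — essentially "$\EE_S \cong \DD_S$ naturally in $I$ by (the proof of) Lemma~\ref{LEMMAKAN2}, and the listed properties are invariant under equivalence of derivators" — and relegate the naturality check to a remark or leave it to the reader, as the paper does with comparable verifications.
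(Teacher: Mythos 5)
There is a genuine gap: you have identified the wrong notion of ``fiber''. The equivalence $\EE_S \cong \DD_S$ of Lemma~\ref{LEMMAKAN2} only concerns the fibers over actual objects $S$ of $\mathcal{S}^{\cor}(\cdot)=\mathcal{S}^{\cor}$, whereas the fibers of $\EE \rightarrow \SSS^{\cor}$ in the sense of Definition~\ref{DEFSTABLE} are the derivators $J \mapsto \EE(I\times J)_{\pr^* X}$ for an \emph{arbitrary} diagram of correspondences $X \in \SSS^{\cor}(I)$ --- and these are the ones needed, since the lemma feeds into Brown representability to produce (FDer0 right) over every $I$. For non-trivial $(I,X)$ the fiber is \emph{not} equivalent to a fiber of $\DD$; by Lemma~\ref{LEMMAKAN2} it is equivalent to the full subcategory $\DD((\twwc I)\times J)^{4-\cocart,\,3-\cocart^*,\,2-\cart}_{\pi_{234}^*\widetilde{X}^{\op}}$ of a fiber of $\DD$, cut out by the (co)Cartesianity conditions. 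The ambient fiber of $\DD$ is a stable, perfectly generated right derivator by hypothesis, but the substantive point --- entirely absent from your proposal --- is that this full subcategory is closed under the right Kan extensions $(\id\times\alpha)_*$ for $\alpha: J_1\rightarrow J_2$ in $\Posf$, so that it is itself a right derivator with domain $\Posf$. This is where stability is actually used: the functors $g^*$, $f_*$, $\iota_!$ defining the conditions commute with homotopy colimits (by (F1), (F3) and adjunction), hence are exact, hence commute with homotopy limits of homotopy finite shape by \cite[Theorem 7.1]{PS14}; this, and not a general fact about fibers of fibered multiderivators, is the reason the domain is restricted to $\Posf$. Your stated explanation for the $\Posf$ restriction is therefore also incorrect.

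Your transport-of-structure argument is fine for the part of the statement it actually reaches: the generation properties can indeed be checked over fibers above objects of $\mathcal{S}^{\cor}$ (the paper invokes \cite[Lemma 4.7]{Hor15} for this reduction), where $\EE_S\cong\DD_S$ applies and the properties are invariant under equivalence. But without the closure argument for the (co)Cartesianity conditions under $\Posf$-shaped right Kan extensions, you have not shown that the general fibers are right derivators at all, which is the core of the lemma.
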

\begin{proof}
By Lemma~\ref{LEMMAKAN2} there is an equivalence (compatible with pull-backs in $J$):
\[ \EE(I \times J)_{\pr_1^* X \hookrightarrow \pr_1^*\overline{X}} \cong \DD((\twwc I) \times J)_{\pr_1^*\pr_{234}^*\widetilde{X}}^{4-\cocart, 3-\cocart^*, 2-{\cart}}.  \]
Hence the statement follows if we can show that for $\alpha: J_1 \rightarrow J_2$ in $\Posf$, the right Kan extension functor
\[ (\id \times \alpha)_*: \DD((\twwc I) \times J_1) \rightarrow \DD((\twwc I) \times J_2) \]
respects the conditions of being 4-coCartesian, strongly 3-coCartesian, and 2-Cartesian.
This follows because the commutation with homotopy colimits implies that all functors $g^*$, $f_*$ and $\iota_!$ involved in the 
definitions of (strongly) (co)Cartesian are {\em exact} and hence commute also with {\em homotopy limits of shape $\Posf$} (actually {\em homotopy finite} is sufficient, cf.\@ \cite[Theorem 7.1]{PS14}).
By \cite[Lemma 4.7]{Hor15}, the properties of being perfectly, compactly or well-generated can be checked over fibers above actual objects of $\SSS^{\cor}(\cdot) = \mathcal{S}^{\cor}$ where
the fibers are actually equivalent to those of $\DD \rightarrow \SSS^{\op}$.
\end{proof}

\begin{KOR}\label{KORDER6FU}
With the assumptions of Main Theorem~\ref{HAUPTSATZ}, if $\DD \rightarrow \SSS^{\op}$
 is infinite (i.e.\@ satisfies (Der1${}^\infty$)) and has stable and perfectly generated fibers then there is
 a (unique up to equivalence) left and right fibered (symmetric) (multi)derivator $\EE \rightarrow \SSS^{\cor}$ with domain $\Cat$, hence --- in the multi-case --- a (symmetric) {\em derivator six-functor-formalism}, whose restriction to $\Dirlf$ is equivalent to the (symmetric) left fibered (multi)derivator of Main~Theorem~\ref{HAUPTSATZ} (cf.\@ also \ref{REMSCOR}).
\end{KOR}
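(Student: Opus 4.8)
The plan is to deduce Corollary~\ref{KORDER6FU} from Main Theorem~\ref{HAUPTSATZ} by passing from the domain $\Catlf$ to the full domain $\Cat$ and by upgrading the left fibered multiderivator to a left \emph{and right} fibered one, using Brown representability for the right-fiberedness and an enlargement argument for the domain extension. Concretely, I would proceed as follows.

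First, I would record that by Remark~\ref{REMSCOR} the Main Theorem gives a left fibered multiderivator $\EE \rightarrow \SSS^{\cor}$ with domain $\Catlf$, symmetric if $\DD$ is, and that by Lemma~\ref{LEMMAKAN2} its fibers $\EE_S$ are equivalent (compatibly with pullbacks) to the fibers $\DD_S$, hence are derivators with the stated generation properties by Lemma~\ref{LEMMAGENERATION}. Next, to obtain right-fiberedness, I would verify (FDer0 right) and (FDer3--5 right). For (FDer0 right), the $1$-fiberedness and $2$-(op)fiberedness of $\EE(I) \to \SSS^{\cor,\comp}(I)$ follows from Lemma~\ref{LEMMA6FUOPFIB} together with the description of pull-back functors (in $\SSS^{\cor}$ the pull-backs along a correspondence, in the $i$-th slot, are again of the form $f_!\iota^* g_*$-type composites) which preserve the (strongly) (co)Cartesian conditions by Lemma~\ref{LEMMAEXISTENCE3FUNCTORS}; the functoriality square for opfibrations $\alpha$ is handled as in Lemma~\ref{LEMMAKAN2}/\ref{LEMMAKAN4}. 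For (FDer3 right), the existence of right adjoints $\alpha_*$ between the fibers $\EE(J)_{X} \to \EE(I)_{\alpha^*X}$ is where \emph{stability and perfect generation} enter: since $\EE(I)_X$ is a presentable (perfectly generated) stable $\infty$-enhanced category and $\alpha^*$ commutes with homotopy colimits (it is computed by $\DD(\twwc\alpha)$, which has a left adjoint and commutes with homotopy colimits because $g^*$, $f_*$, $\iota_!$ all do, cf.\@ the proof of Lemma~\ref{LEMMAGENERATION}), Brown representability (\cite[Theorem 4.2.2]{Hor15}, as already invoked in the excerpt for the existence of $\iota_!$) produces the right adjoint. Then (FDer4 right) and (FDer5 right) follow by the usual adjunction/base-change manipulations, dualizing the arguments for the left case, using that the relevant exchange morphisms are isomorphisms after restricting to the (strongly) (co)Cartesian subcategories (Proposition~\ref{PROPPROPERTIESCORCOMPMDIA}).

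Second, I would extend the domain from $\Catlf$ to $\Cat$ using the theory of enlargement from \cite{Hor17b} (already cited in the introduction for exactly this purpose): a left and right fibered multiderivator with domain $\Dirlf$ (a fortiori $\Catlf$) with suitably nice fibers extends uniquely up to equivalence to one with domain $\Cat$, and this enlargement is compatible with the construction, so that the restriction of the enlarged $\EE$ to $\Catlf$ recovers the one from the Main Theorem, and the restriction along $\SSS^{\op} \to \SSS^{\cor}$ stays equivalent to (the enlargement of) $\DD$. Symmetry is preserved because the enlargement is functorial. Uniqueness up to equivalence is part of the enlargement statement together with the fact that a fibered multiderivator with domain $\Cat$ is determined by its restriction to $\Dirlf$.

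I expect the main obstacle to be the verification of (FDer3--5 right), i.e.\@ checking that the right adjoints $\alpha_*$ exist \emph{and} interact correctly with the three (strongly) (co)Cartesian conditions and with base change. The subtlety is the same one that already made Lemma~\ref{LEMMAEXISTENCE3FUNCTORS} delicate: $\iota_!$ is only computed point-wise on the well-supported subcategory, so one must be careful that the Brown-representability right adjoint, a priori only characterized abstractly, does preserve these conditions; the cleanest route is to argue, as in Remark~\ref{REMKANEXT} and Lemma~\ref{LEMMAKAN2}, that on the relevant subcategories the abstract right adjoint agrees with an explicit composite of $q_{i}^*$, $q_{i,*}$, $\pr_{1,*}$ (the right-Kan analogues), whose preservation properties can be checked fibrewise exactly as in the proof of Lemma~\ref{LEMMAKAN4}. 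The enlargement step, by contrast, I expect to be essentially a citation of \cite{Hor17b} once the $\Cat$-vs-$\Dirlf$ compatibility is spelled out.
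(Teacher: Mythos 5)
Your proposal is correct and follows essentially the same route as the paper: Brown representability (via the perfectly generated stable fibers, Lemma~\ref{LEMMAGENERATION} and \cite[Theorem 4.10]{Hor15}) supplies the right-adjoint structure, and the enlargement theory of \cite{Hor17b} extends the domain from $\Catlf$ to $\Cat$. The only cosmetic difference is that the paper first establishes (FDer0 right) on $\Invlf$, then enlarges, and finally outsources the verification of (FDer3--5 right) wholesale to the argument of \cite[Theorem 7.2]{Hor16}, whereas you propose to check those axioms directly by dualizing the left-handed arguments --- same mathematical content, different bookkeeping.
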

\begin{proof}
By \cite[Corollary 1.3]{Hor17b}, $\EE \rightarrow \SSS^{\cor}$ extends to a (symmetric) left fibered multiderivator on all of $\Cat$. Note that 
$\SSS^{\cor}$ has domain $\Cat$ already. (The reader may check that the techniques of \cite{Hor17b} go through for the case of pre-2-multiderivators instead of pre-multiderivators, because the formal properties of $\Dia^{\cor}(\SSS)$ used in \cite{Hor17b} for a pre-2-multiderivator $\SSS$ remain exactly the same by \cite[Section~3]{Hor16}.) 
The same proof as the one of \cite[Theorem 6.3]{Hor16} thus shows that $\EE \rightarrow \SSS^{\cor}$ is also a right fibered (symmetric) multiderivator. 
\end{proof}

\section{The construction of proper derivator six-functor-formalisms}\label{SECTCONSTPROPER}

\begin{PAR}
This section is concerned with the construction of a {\em proper} derivator six-functor-formalism which, in addition, encodes a natural morphism
$f_! \rightarrow f_*$ (which is an isomorphism for proper morphisms $f$). In the classical (i.e.\@ non derivator enhanced) case it might be included (as explained in \cite[Section~8]{Hor15b}) 
by enlarging the 2-multicategory $\mathcal{S}^{\cor}$ to $\mathcal{S}^{\cor, 0}$. The latter 2-multicategory contains in addition non-invertible 2-morphisms defined by arbitrary proper morphisms between (multi)correspondences. In this 2-category the correspondences 
\[ \xymatrix{ & S \ar@{=}[ld] \ar[rd]^f \\ S & & T } \qquad  \xymatrix{ & S \ar[ld]_f \ar@{=}[rd] \\ T & & S }  \]
 become formally adjoint in the 2-category $\mathcal{S}^{\cor, 0}$ for a proper morphism $f: S \rightarrow T$. Hence $f_!$ becomes right adjoint to $f^*$, whence a canonical isomorphism $f_! \cong f_*$. On the derivator side this opens the possibility to include lax, resp.\@ oplax, morphisms of diagrams of correspondences which are important, for instance, to encode the classical exact triangles related to a pair of complementary open and closed embeddings, cf.\@ \cite[Section~8]{Hor16}. 

We will need some rather technical facts about the existence of Cartesian and coCartesian projectors whose discussion we postpone to Appendix~\ref{COCARTPROJ}.
\end{PAR}

\begin{PAR}\label{ILLUSTRATIONOPLAX}We illustrate the push-forward and pull-back along (op)lax morphisms in the simplest case based on the underlying diagrams. 
A 2-commutative square in $\mathcal{S}^{\cor, 0}$
\[ \vcenter{ \xymatrix{
S \ar[r] \ar[d] \ar@{}[rd]|{\Downarrow^\mu} & T \ar[d] \\
S' \ar[r] & T'
} } \quad \text{resp.} \quad \vcenter{ \xymatrix{
S \ar[r] \ar[d] \ar@{}[rd]|{\Uparrow_\mu} & T \ar[d] \\
S' \ar[r] & T'
} } \]
in which $\mu$ is not invertible, that is, an oplax natural transformation (l.h.s.), or a lax natural transformation (r.h.s.), from the top diagram of shape $\Delta_1$ to the bottom diagram of shape $\Delta_1$, can be encoded (as in \ref{PARALTSCOR}) as a commutative diagram in $\mathcal{S}$
\[ \xymatrix{
S \ar@{<-}[r] \ar@{<-}[d] & A \ar[r] \ar@{}[rd]|{\numcirc{1}} \ar@{<-}[d] & T \ar@{<-}[d] \\
B \ar@{<-}[r] \ar[d]  \ar@{}[rd]|{\numcirc{2}} & C \ar[r] \ar[d] & D \ar[d]   \\
S' \ar[r] & A' \ar[r] & T'
} \]
in which, in the oplax case, the square $\numcirc{1}$ is Cartesian and the square $\numcirc{2}$ is weakly Cartesian or,
in the lax case, the square $\numcirc{1}$ is weakly Cartesian and the square $\numcirc{2}$ is Cartesian.

Consider a weakly Cartesian square (with the corresponding Cartesian square inserted): 
\[ \xymatrix{
S' \ar@{->>}[rd]^H \ar@/^10pt/[rrrd]^F \ar@/_10pt/[rddd]_G \\
&\Box \ar[rr]^{F'} \ar[dd]_{G'} && T' \ar[dd]^g \\
\\
&S \ar[rr]^f & & T
} \]
A proper six-functor-formalism allows for the following two operations: 
\begin{enumerate}
\item[(O)] From a morphism
\[ G^* \mathcal{E} \rightarrow \mathcal{F} \]
applying $F_!$ one gets a morphism
\[  (F'H)_!(G'H)^* \mathcal{E} = F_! G^* \mathcal{E}  \rightarrow F_! \mathcal{F}. \]
Composing it with the natural transformation 
\[ (F'H)_!(G'H)^*  \cong F'_! H_! H^* (G')^* \cong F'_! H_* H^* (G')^* \leftarrow F'_! (G')^* \cong g^* f_! \] 
(using $H'_! \cong H_*$ and the unit for the adjunction $H^*$, $H_*$) applied to $\mathcal{E}$, we get 
\[ g^* \mathcal{E}'  \rightarrow \mathcal{F}' \]
for $\mathcal{E}' := f_! \mathcal{E}$ and $\mathcal{F}' := F_! \mathcal{F}$.

\item[(L)] From a morphism 
\[  \mathcal{E} \rightarrow F^! \mathcal{F} \]
applying $G_*$ one gets a morphism
\[ G_* \mathcal{E} \rightarrow (G'H)_*(F'H)^! \mathcal{F}. \]
Composing it with the morphism 
\[ (G'H)_*(F'H)^!  \cong G'_* H_* H^! (F')^! \cong G'_* H_! H^! (F')^! \rightarrow G'_* (F')^! \cong f^! g_* \] (using $H_! \cong H_*$ and the counit for the adjunction $H_!$, $H^!$) applied to $\mathcal{E}$, 
we get 
\[ \mathcal{E}'  \rightarrow f^! \mathcal{F}' \]
for $\mathcal{E}' := G_* \mathcal{E}$ and $\mathcal{F}' := g_* \mathcal{F}$.
\end{enumerate}

In other words, the operation (O) allows for the construction of a push-forward along the oplax morphism, and the operation (L) allows for the construction of a pull-back along the lax morphism, both being computed point-wise.  
In our approach it is essential to construct the left fibered version (with push-forwards) first. Hence the construction of the lax pull-back has to be a bit indirect. 
It turns out that the lax pull-back $\xi^\bullet$ {\em does have} a left adjoint $\xi_\bullet$ which is, however, not computed point-wise anymore (similar to the existence of internal Homs of diagrams which are also not computed point-wise). 
It is this left adjoint that will be constructed first. The right adjoint (a posteriori constructed via Brown representability) is then indeed computed point-wise as expected which, however, has to be proven by establishing the adjoint formula
\[ \alpha_! (\alpha^*\xi)_\bullet \cong \xi_\bullet \alpha_!   \]
involving the left adjoint. In the multi-case, the lax pull-back and oplax push-forward exist for $n$-ary morphisms as well. The oplax push-forward (involving construction (O) above) involves essentially only
a $1$-ary construction, whereas the lax pull-back involves a multi-version of construction (L) above. 

This section is concerned with the derivator analogue of these constructions for arbitrary diagrams in $\Catlf$. They are rather technical and will sometimes only be sketched. 
\end{PAR}

Recall from Definition~\ref{DEFSCOR4} the (op)lax 2-pre-multiderivators $\SSS^{\cor, 0, \comp, \oplax}(I)$, resp.\@ $\SSS^{\cor, 0, \comp, \lax}(I)$.
We proceed as in section \ref{SECTCONST} and begin with a couple of Lemmas that will be used to construct a 1-opfibration and 2-opfibration
$\EE^{\oplax}(I) \rightarrow \SSS^{\cor, 0, \comp, \oplax}(I)$, resp.\@ $\EE^{\lax}(I) \rightarrow \SSS^{\cor, 0, \comp, \lax}(I)$.

\begin{PAR}\label{COMPONENTSOPLAX}
Recall the notation from \ref{COMPONENTS}. In the (op)lax case we have the following modifications.
Let $I$ be in $\Catlf$, let $\tau$ be a tree, and let
\[ X \hookrightarrow \overline{X} \]
be an exterior compactification in $\mathcal{S}^{\tw(\tau \times I)}$ forming an object in $\Cor^{\comp, \lax}_I(\tau)$ (resp.\@ in $\Cor^{\comp, \oplax}_I(\tau)$), i.e.\@ with $X$ satisfying the conditions stated in Definition~\ref{DEFCORCOMP}.
By the construction in \ref{PARMULTICOMP} it has an interior compactification
\[ \widetilde{X}: \tww (\tau \times I) \rightarrow \mathcal{S} \]
to which we may apply the results of sections \ref{SECTPRELIM}--\ref{SECTPRELIMM}.

Let $o$ be a multimorphism in $\tww \tau$. If $o$ is of type 3 (resp.\@ type 2, resp.\@ type 1) again denote by
\[ \begin{array}{rrcl} 
 \widetilde{g}: &\widetilde{A} &\rightarrow& \widetilde{S}_1, \dots, \widetilde{S}_n \\
 \widetilde{\iota}: &\widetilde{A} &\rightarrow& \widetilde{A}' \\
 \widetilde{f}:  &\widetilde{A}' &\rightarrow& \widetilde{T} 
\end{array} \]
their images in
\[ \Fun(\tww I, \mathcal{S}). \]

Example~\ref{EXCOMPONENTS} remains valid, however, with $g=(g_1, \dots, g_n)$ only weakly type-1 admissible as multimorphism, resp.\@ with 
$f$ only weakly type-2 admissible.
\end{PAR}

\begin{LEMMA}\label{LEMMAEXISTENCE3FUNCTORSOPLAX}
With the notation as in \ref{COMPONENTSOPLAX}.
\begin{enumerate}
\item If $X \hookrightarrow \overline{X}$ is an object in $\Cor^{\comp, \oplax}_I(\tau)$, and $o$ is any morphism of type 4 in $\tww \tau$ (numbered with indices 2-4), 
the multivalued functor
\[  (\pi_{234}^*\widetilde{g})^*: \DD(\twwc I)_{\pi_{234}^* \widetilde{S}_1^{\op}} \times \cdots \times  \DD(\twwc I)_{\pi_{234}^* \widetilde{S}_n^{\op}} \rightarrow  \DD(\twwc I)_{\pi_{234}^* \widetilde{A}^{\op}}.    \]
is computed point-wise (in $\twwc I$) and on well-supported objects preserves the condition of being $4$-coCartesian, well-supported, and $2$-Cartesian. 

If $X \hookrightarrow \overline{X}$ is an object in $\Cor^{\comp, \lax}_I(\tau)$, and for any morphism $o$ of type 4 in $\tww \tau$, the functor $(\pi_{234}^*\widetilde{g})^*$,
 on well-supported objects, preserve the condition of being well-supported, and $4$-coCartesian, but not necessarily the condition of being $2$-Cartesian. 

\item For any morphism $o$ of type 3  in $\tww \tau$, in both cases, the functor 
\[ (\pi_{234}^*\widetilde{\iota})_!: \DD(\twwc I)_{\pi_{234}^* \widetilde{A}^{\op}}  \rightarrow  \DD(\twwc I)_{\pi_{234}^* (\widetilde{A}')^{\op}}    \]
i.e.\@ the left adjoint of $(\pi_{234}^*\widetilde{\iota})^*$, which exists by (F1),
is computed point-wise (in $\twwc I$) on $4$-coCartesian and well-supported objects, 
and on such it preserves the conditions of being $4$-coCartesian, well-supported, and $2$-Cartesian. 

\item If $X \hookrightarrow \overline{X}$ is an object in $\Cor^{\comp, \lax}_I(\tau)$, and $o$ is any morphism of type 2 in $\tww  \tau$, 
the functor
\[  (\pi_{234}^*\widetilde{f})_*: \DD(\twwc I)_{\pi_{234}^* (\widetilde{A}')^{\op}} \rightarrow  \DD(\twwc I)_{\pi_{234}^* \widetilde{T}^{\op}}.    \]
is computed point-wise (in $\twwc I$)   and on well-supported objects it preserves the condition of being $4$-coCartesian, well-supported, and $2$-Cartesian. 

 If $X \hookrightarrow \overline{X}$ is an object in $\Cor^{\comp, \oplax}_I(\tau)$, the functor $(\pi_{234}^*\widetilde{f})_*$, on well-supported objects, preserves the condition of being well-supported, and $2$-Cartesian, but not necessarily the condition of $4$-coCartesian. 

\item If $f: (X_1 \hookrightarrow \overline{X}_1)  \rightarrow (X_2 \hookrightarrow \overline{X}_2)$ is a morphism in $\Cor^{\comp, \lax}_I(\tau)$, denote by $\widetilde{f}: \widetilde{X}_1 \rightarrow \widetilde{X}_2$ the corresponding morphism of interior compactifications, and let $\mu$ be any object in $\tww  \tau$.
The functor
\[ (\pi_{234}^*\widetilde{f}_\mu)_*: \DD(\twwc I)_{\pi_{234}^* \widetilde{X}_{1,\mu}^{\op}} \rightarrow  \DD(\twwc I)_{\pi_{234}^* \widetilde{X}_{2, \mu}^{\op}}.    \]
is computed point-wise (in $\twwc I$)   and on well-supported objects it preserves the condition of being $4$-coCartesian, well-supported, and $2$-Cartesian. 

The functor
\[ (\pi_{234}^*\widetilde{f}_\mu)^*: \DD(\twwc I)_{\pi_{234}^* \widetilde{X}_{2,\mu}^{\op}} \rightarrow  \DD(\twwc I)_{\pi_{234}^* \widetilde{X}_{1, \mu}^{\op}}.    \]
is computed point-wise (in $\twwc I$)   and on well-supported objects it preserves the condition of being $4$-coCartesian, well-supported, but not necessarily the condition of being $2$-Cartesian. 

If $f: (X_1 \hookrightarrow \overline{X}_1)  \rightarrow (X_2 \hookrightarrow \overline{X}_2)$ is a morphism in $\Cor^{\comp, \oplax}_I(\tau)$, denote by $\widetilde{f}: \widetilde{X}_1 \rightarrow \widetilde{X}_2$ the corresponding morphism of interior compactifications, and let $\mu$ be any object in $\tww  \tau$.
The functor
\[ (\pi_{234}^*\widetilde{f}_\mu)_*: \DD(\twwc I)_{\pi_{234}^* \widetilde{X}_{1,\mu}^{\op}} \rightarrow  \DD(\twwc I)_{\pi_{234}^* \widetilde{X}_{2, \mu}^{\op}}.    \]
is computed point-wise (in $\twwc I$)  and on well-supported objects it preserves the condition of being $2$-Cartesian, well-supported, but not necessarily the condition of $4$-coCartesian. 

The functor
\[ (\pi_{234}^*\widetilde{f}_\mu)^*: \DD(\twwc I)_{\pi_{234}^* \widetilde{X}_{2,\mu}^{\op}} \rightarrow  \DD(\twwc I)_{\pi_{234}^* \widetilde{X}_{1, \mu}^{\op}}.    \]
is computed point-wise (in $\twwc I$) and on well-supported objects it preserves the condition of being $4$-coCartesian, well-supported, and $2$-Cartesian. 

\end{enumerate} 
\end{LEMMA}
\begin{proof}
1.\@ As in Lemma~\ref{LEMMAEXISTENCE3FUNCTORS}, 1.\@
In the lax case preservation of coCartesianity is still clear, and the preservation of the condition of being well-supported still follows from Proposition~\ref{PROPPROPERTIESCORCOMPM}, 2.
In the oplax case, the relevant top squares in the second part of Lemma~\ref{LEMMAWEAKCOMPM} are still Cartesian. 

2.\@ As in Lemma~\ref{LEMMAEXISTENCE3FUNCTORS}, 2.\@ --- note that Proposition~\ref{PROPPROPERTIESCORCOMPM}, 2. holds true for weakly admissible diagrams. 

3.\@ As in Lemma~\ref{LEMMAEXISTENCE3FUNCTORS}, 3. In the lax case, the relevant top squares in the second part of Lemma~\ref{LEMMAWEAKCOMP} are still Cartesian. 
In the oplax case 
preservation of Cartesianity is clear as well and the preservation of strong coCartesianity follows still from
Proposition~\ref{PROPPROPERTIESCORCOMPM}, 3. 

4.\@ is exactly shown as 1.\@ and 3.\@
\end{proof}

\begin{PAR}\label{CONDPOINTWISE}
Over the morphism 
$(\pi_{234}^*\widetilde{f})^{\op}$, resp.\@ $(\pi_{234}^*\widetilde{g})^{\op}$, coming from a $X \hookrightarrow \overline{X}$ in $\Cor^{\comp, \oplax}_I(\tau)$  (resp.\@ in $\Cor^{\comp, \lax}_I(\tau)$) and a morphism in $\tww \tau$ of type 1 (resp.\@ of type 3), there will be in general no (co)Cartesian morphism\footnote{that is, (co)Cartesian w.r.t.\@ $\DD(\twwc I) \rightarrow \SSS(\twwc I)$.} in 
\[  \DD(\twwc I)^{4-\cocart, \ws, 2-{\cart}}. \] 

However, we can use the (co)Cartesian projectors from Propositions~\ref{PROPCARTPROJ} and \ref{PROPCOCARTPROJ}, and say that $\mathcal{E} \rightarrow \mathcal{F}$ is 
\begin{enumerate}
\item {\bf oplax Cartesian}, if
it induces an isomorphism 
\[ \mathcal{E} \rightarrow  \Box_* (\pi_{234}^*\widetilde{f})_* \mathcal{F} \]
in the fiber,
\item {\bf lax coCartesian}, if
it induces an isomorphism 
\[  \Box_! (\pi_{234}^*\widetilde{g})^* \mathcal{E} \rightarrow  \mathcal{F} \]
in the fiber. 
\end{enumerate}
\end{PAR}

\begin{LEMMA}\label{LEMMACOMPOPLAX}
Compositions of lax coCartesian morphisms are lax coCartesian and of oplax Cartesian morphisms are oplax Cartesian, if the sources (resp.\@ destinations) are 4-coCartesian, well-supported and 2-Cartesian. 
(Of course, we only consider morphisms over (multi)morphisms of the form $\pi_{234}^*\widetilde{g}$, resp.\@ $\pi_{234}^*\widetilde{f}$, considered above.)
\end{LEMMA}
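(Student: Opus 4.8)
The plan is to reduce the statement about composition of lax coCartesian (resp.\@ oplax Cartesian) morphisms to the computation of the relevant functors and projectors via the point-wise results established in Lemma~\ref{LEMMAEXISTENCE3FUNCTORSOPLAX}, together with the support-preservation properties of Proposition~\ref{PROPPROPERTIESCORCOMPM}. By the symmetry between the two cases (dualizing \emph{type 2} versus \emph{type 1}, and $\widetilde{f}_*$ versus $\widetilde{g}^*$), it suffices to treat the lax coCartesian case; I would mention the oplax Cartesian case follows dually. So suppose we are given $\mathcal{E} \to \mathcal{F} \to \mathcal{G}$ lying over composable multimorphisms of the form $\pi_{234}^*\widetilde{g}_1$ and $\pi_{234}^*\widetilde{g}_2$, with $\mathcal{E}$ (and hence, as will follow, all objects) in $\DD(\twwc I)^{4\text{-}\cocart, \ws, 2\text{-}\cart}$, such that $\Box_! (\pi_{234}^*\widetilde{g}_1)^* \mathcal{E} \to \mathcal{F}$ and $\Box_! (\pi_{234}^*\widetilde{g}_2)^* \mathcal{F} \to \mathcal{G}$ are isomorphisms in the fiber.

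First I would recall from Appendix~\ref{COCARTPROJ} (Proposition~\ref{PROPCOCARTPROJ}) that the coCartesian projector $\Box_!$ is, by construction, built from a homotopy colimit over the relevant comma-type diagram; the key point is that on the full subcategory $\DD(\twwc I)^{4\text{-}\cocart, \ws, 2\text{-}\cart}$ the functor $\Box_! (\pi_{234}^*\widetilde{g})^*$ is computed point-wise in $\twwc I$. This is because, by Lemma~\ref{LEMMAEXISTENCE3FUNCTORSOPLAX}, 1., the pull-back $(\pi_{234}^*\widetilde{g})^*$ is computed point-wise and preserves well-supportedness and $4$-coCartesianity, so the projector only has to correct the failure of $2$-Cartesianity, and this correction — being a homotopy colimit of push-forwards and $\iota_!$'s that themselves commute with homotopy colimits by (F3) and the left-adjoint property — commutes with the relevant $j^*$. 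Having this, the composite $\Box_! (\pi_{234}^*(\widetilde{g}_2 \circ \widetilde{g}_1))^* = \Box_! (\pi_{234}^*\widetilde{g}_1)^* (\pi_{234}^*\widetilde{g}_2)^*$ can be analyzed point-wise, where the statement reduces to the corresponding fact about composition of weakly coCartesian morphisms in the underlying bifibration $\DD(\cdot) \to \mathcal{S}^{\op}$ (in the multi-variable form), combined with the base-change isomorphisms of Proposition~\ref{PROPPROPERTIESCORCOMPM}, 1.--2., which identify the composite projector with the single projector $\Box_!(\pi_{234}^*(\widetilde g_2\circ\widetilde g_1))^*$.

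Concretely, I would argue: since $\mathcal{E} \to \mathcal{F}$ is lax coCartesian, $\mathcal{F} \cong \Box_! (\pi_{234}^*\widetilde{g}_1)^* \mathcal{E}$, and by the point-wise computation together with Lemma~\ref{LEMMAEXISTENCE3FUNCTORSOPLAX}, 1.--3.\@ (applied slot-wise), $\mathcal{F}$ is again $4$-coCartesian, well-supported, and $2$-Cartesian — this uses that the projector lands in the good subcategory, which is exactly the content of the support-propagation clauses ``if this is the case then all other objects are in $\DD(\twwc I)^{4\text{-}\cocart,\ws,2\text{-}\cart}$'' in Proposition~\ref{PROPPROPERTIESCORCOMPMDIA}. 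Then $\mathcal{F} \to \mathcal{G}$ lax coCartesian gives $\mathcal{G} \cong \Box_! (\pi_{234}^*\widetilde{g}_2)^* \mathcal{F} \cong \Box_! (\pi_{234}^*\widetilde{g}_2)^* \Box_! (\pi_{234}^*\widetilde{g}_1)^* \mathcal{E}$. It remains to identify the right-hand side with $\Box_! (\pi_{234}^*(\widetilde{g}_2 \circ \widetilde{g}_1))^* \mathcal{E}$; I would do this by checking the natural comparison morphism is an isomorphism point-wise in $\twwc I$, where it becomes the statement that the two ways of correcting $2$-Cartesianity agree — which in turn follows from the pseudo-functoriality of pull-back, the fact that $2$-Cartesian and strongly $3$-coCartesian can be checked point-wise (Lemma~\ref{LEMMAEXISTENCE3FUNCTORS}), and the interaction of $\Box_!$ with $g^*$ encoded in Proposition~\ref{PROPPROPERTIESCORCOMPM}.

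The main obstacle I anticipate is the bookkeeping around the projector $\Box_!$ in the multi-variable setting: one must be careful that the homotopy colimit defining $\Box_!$ is taken over the correct diagram and that, when one of the inputs to the multimorphism $\widetilde{g} = (\widetilde{g}_1,\dots,\widetilde{g}_n)$ is itself a composite, the projector ``distributes'' correctly over the slots. This is exactly where one needs that $(\pi_{234}^*\widetilde{g})^*$ is computed point-wise \emph{as a multivalued functor} (which is (FDer0 left) applied to the $n$-ary morphism, cf.\@ the proof of Lemma~\ref{LEMMAEXISTENCE3FUNCTORS}, 1.) so that the colimit only interacts with the single slot where the non-trivial composition occurs. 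Once that is in place, the argument is a routine, if somewhat tedious, chase of isomorphisms of the type already carried out in Lemmas~\ref{LEMMACOCARTSQUARES2M}--\ref{LEMMACOCARTSQUARES3M}, and I would refer to those for the elementary properties of exchange morphisms rather than re-deriving them.
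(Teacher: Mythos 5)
There is a genuine gap at the heart of your argument. Your entire reduction rests on the claim that, on the subcategory $\DD(\twwc I)^{4-\cocart,\ws,2-\cart}$, the functor $\Box_!(\pi_{234}^*\widetilde g)^*$ is computed point-wise in $\twwc I$. This is false. The left Cartesian projector is $\Box_!=\pi_{4567,!}\overline f_*\pi_{1267}^*$ (Proposition~\ref{PROPCARTPROJ}): its value at an object $i_1\rightarrow i_2\rightarrow i_3\rightarrow i_4$ of $\twwc I$ is a homotopy colimit over the fiber ${}^{\downarrow\uparrow\uparrow}(I\times_{/I}i_1)$, so it depends on the input at many objects, not just at the given one. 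Indeed, the discussion in \ref{ILLUSTRATIONOPLAX} stresses precisely that the lax push-forward $\widetilde f_*\widetilde\iota_!\Box_!\widetilde g^*$ is \emph{not} computed point-wise (only its eventual right adjoint is), and Lemma~\ref{LEMMAEXISTENCE3FUNCTORSOPLAX}, 1.\@ records that in the lax case $(\pi_{234}^*\widetilde g)^*$ fails to preserve $2$-Cartesianity — which is exactly why $\Box_!$ has non-trivial work to do and cannot be absorbed into a point-wise check. The paper's proof of the lax coCartesian case avoids this entirely by an adjunction argument: denoting by $g_*^{\cart}$ the restriction of $g_*$ to the full subcategory of $2$-Cartesian objects, the functor $\Box_! g^*$ restricted to $2$-Cartesian objects is its left adjoint, and the composition formula $\Box_!(\pi_{234}^*\widetilde g_1)^*\Box_!(\pi_{234}^*\widetilde g_2)^*\cong\Box_!(\pi_{234}^*(\widetilde g_2\circ\widetilde g_1))^*$ is then simply the adjoint of the evident identity $(\pi_{234}^*\widetilde g_1)_*^{\cart}\circ(\pi_{234}^*\widetilde g_2)_*^{\cart}\cong(\pi_{234}^*(\widetilde g_1\circ\widetilde g_2))_*^{\cart}$.

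Your appeal to duality for the oplax Cartesian case is also unsubstantiated: $\Box_!$ projects onto $2$-Cartesian objects while $\Box_*$ projects onto $4$-coCartesian ones, these are different conditions with differently constructed projectors, and the paper handles the two cases by genuinely different arguments. For oplax Cartesian morphisms one must show $\Box_*(\pi_{234}^*\widetilde f_1)_*\Box_*(\pi_{234}^*\widetilde f_2)_*\cong\Box_*(\pi_{234}^*(\widetilde f_1\circ\widetilde f_2))_*$; on the subcategory of $4$-coCartesian, well-supported and $2$-Cartesian objects this may be checked only on objects of the form $\twwc i$, and there $\Box_*$ acts as the identity by the extra property (\ref{pointwisecocartproj}) of Proposition~\ref{PROPCOCARTPROJ} — a special feature of $\Box_*$ with no analogue for $\Box_!$, which is why the two cases cannot be exchanged by a formal dualization.
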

\begin{proof}
For lax coCartesian, we have to show:
\[ \Box_! (\pi_{234}^*\widetilde{g}_1)^* \Box_! (\pi_{234}^*\widetilde{g}_2)^* \cong \Box_! (\pi_{234}^*(\widetilde{g}_2 \circ \widetilde{g}_1))^*.  \]
For a morphism $g$ denote by $g_*^{\cart}$ be the restriction of $g_*$ to the full subcategory of $2$-Cartesian objects (note that any $g_*$ preserves the condition of being $2$-Cartesian). 
The left adjoint of $g_*^{\cart}$ is the functor $\Box_! g^*$ restricted to the full subcategory of $2$-Cartesian objects.
The morphism is therefore the adjoint of the isomorphism $(\pi_{234}^*\widetilde{g}_1)_*^{\cart} \circ (\pi_{234}^*\widetilde{g}_2)_*^{\cart} \cong (\pi_{234}^*\widetilde{g}_1\circ\widetilde{g}_2)_*^{\cart}$ and thus an isomorphism as well.

For oplax Cartesian we have to show:
\[ \Box_* (\pi_{234}^*\widetilde{f}_1)_* \Box_* (\pi_{234}^*\widetilde{f}_2)_* \cong \Box_* (\pi_{234}^*\widetilde{f}_1\circ \widetilde{f}_2)_*.  \]
On the full subcategories of 2-Cartesian, well-supported and 4-coCartesian objects, this formula can be shown point-wise on objects of the form $\twwc i$. However, by Proposition~\ref{PROPCOCARTPROJ}, $\Box_*$ does nothing over objects of the form $\twwc i$.
\end{proof}
\begin{PAR}\label{PAROPLAXFUNCTORS}Analogously to the plain case (cf.\@ Lemma~\ref{LEMMAEXISTENCE3FUNCTORS}) we denote  in the oplax case
\[ \begin{array}{rcl|l}
\widetilde{X}(o)_{\#} &:=& (\pi_{234}^*\widetilde{g})^* & \text{for $o$ of type 4} \\
\widetilde{X}(o)_{\#} &:=& (\pi_{234}^*\widetilde{\iota})_! & \text{for $o$ of type 3} \\
\widetilde{X}(o)_{\#} &:=& \Box_* (\pi_{234}^*\widetilde{f})_* & \text{for $o$ of type 2}
\end{array} \]
and in the lax case
\[ \begin{array}{rcl|l}
\widetilde{X}(o)_{\#} &:=& \Box_!  (\pi_{234}^*\widetilde{g})^* & \text{for $o$ of type 4} \\
\widetilde{X}(o)_{\#} &:=& (\pi_{234}^*\widetilde{\iota})_! & \text{for $o$ of type 3} \\
\widetilde{X}(o)_{\#} &:=& (\pi_{234}^*\widetilde{f})_* & \text{for $o$ of type 2}
\end{array} \]
\end{PAR}

\begin{PROP}[(Op)lax version of Proposition~\ref{PROPPROPERTIESCORCOMPMDIA}]\label{PROPPROPERTIESCORCOMPMDIAOPLAX}
Let $\tau$ be a tree, $X \hookrightarrow \overline{X}$ be an object in $\Cor^{\comp, \oplax}_I(\tau)$ (resp.\@ in $\Cor^{\comp, \lax}_I(\tau)$)  be a functor, and let 
$\widetilde{X}: \tww (\tau \times I) \rightarrow \mathcal{S}$ be the associated interior compactification. 

Then the functors constructed in \ref{PAROPLAXFUNCTORS} commute. More precisely: 

Consider a diagram of the form
\[ \xymatrix{
w_1, \dots, w_n \ar[r]^-d \ar[d]_c & z_1,\dots,z_n \ar[d]^{a} \\
y \ar[r]_-b & x
} \]
in $^{\downarrow\downarrow\downarrow} \tau$  [sic!] (numbered with indices 2--4) 
where $a$ and $c$ are of some type 2,3 or 4 (1-ary in case of type $\not= 4$) and $b$ and $d$ are of some type 2,3, or 4, respectively.  

Then we have that the natural morphism\footnote{In each case an exchange of a natural commutation given by functoriality of $\DD$ we we leave to the reader to construct}
\begin{equation} \label{eqexchangeoplax} \widetilde{X}(a')_{\#} \widetilde{X}(d')_{\#} \rightarrow \widetilde{X}(b')_{\#} \widetilde{X}(c')_{\#}   \end{equation}
is an isomorphism, where $a'$, etc.\@, denote the corresponding morphism in $\tww \tau$.
In case $a$ and $c$ are of type 4 and $n$-ary $\widetilde{X}(a')_{\#}$ and $\widetilde{X}(c')_{\#}$ are multivalued functors and $\widetilde{X}(d')_{\#}$ denotes the corresponding $n$-tupel of functors (of which only one is not an identity). 
\end{PROP}
\begin{proof}
Oplax cases: It suffices to see that
\begin{equation}
 \widetilde{X}(b)^* (\Box_*\widetilde{X}(a_1)_{*}-, \dots, \Box_*\widetilde{X}(a_n)_{*}-)  \rightarrow \Box_*\widetilde{X}(c)_* \widetilde{X}(d)^*(-, \dots, -) 
 \end{equation}
 and
\begin{equation}
 \widetilde{X}(a)_!  \Box_* \widetilde{X}(d)_* \rightarrow  \Box_* \widetilde{X}(b)_* \widetilde{X}(c)_!    
 \end{equation}
 are isomorphisms on objects of the form $\twwc i$, where $i$ is an object of $I$, because of the (strong) (co)Cartesianity conditions. However, by Proposition~\ref{PROPCOCARTPROJ}, $\Box_*$ does nothing over objects of the form $\twwc i$.

Lax cases: By Lemmas~\ref{COMMCARTPROJIOTA}--\ref{COMMCARTPROJF}, $\Box_!$ commutes with $\widetilde{X}(c)_*$ and $\widetilde{X}(c)_!$. Therefore
\begin{equation}
 \Box_! \widetilde{X}(b)^* (\widetilde{X}(a_1)_{*}-, \dots, \widetilde{X}(a_n)_{*}-)  \rightarrow \widetilde{X}(c)_* \Box_!  \widetilde{X}(d)^*(-, \dots, -) 
 \end{equation}
and
\begin{equation}
 \widetilde{X}(c)_! \Box_! \widetilde{X}(d)^*(-, \dots, -)  \rightarrow  \Box_! \widetilde{X}(b)^* (\widetilde{X}(a_1)_{!}-, \dots,  \widetilde{X}(a_n)_{!}-)  \
 \end{equation}
are isomorphisms --- 
note that $\widetilde{X}(c)_!$ is also computed point-wise, if the argument is not assumed to be $2$-Cartesian, cf.\@ Lemma~\ref{LEMMAEXISTENCE3FUNCTORS}, 2. 
\end{proof}

Analogously to the plain case, we have the following reformulation: 

\begin{PROP}[(Op)lax version of Proposition~\ref{PROPPF}]\label{PROPPFOPLAX}
The association 
\[ \alpha = abc \mapsto \widetilde{X}(\alpha)_{\#} := \widetilde{X}(a')_{\#} \widetilde{X}(b')_{\#} \widetilde{X}(c')_{\#}, \] 
where $a'$ is the morphism of $\tww \tau$ corresponding to $a$ in ${}^{\downarrow\downarrow\downarrow} \tau$, etc.\@,  
 defines a well-defined pseudo-functor on ${}^{\downarrow\downarrow\downarrow} \tau$ such that the isomorphism (\ref{eqexchangeoplax}) becomes the one
induced by the pseudo-functoriality.
\end{PROP}

\begin{DEF}\label{DEFDER6FU1OPLAX}
Assume that $\DD \rightarrow \SSS^{\op}$ is a (symmetric) fibered multiderivator. 
Let $\tau$ be a tree. We define a category 
\[ (E_I')^{\oplax}(\tau) \quad (\text{resp. } (E_I')^{\oplax}(\tau^S))   \]
with objects pairs $(X \hookrightarrow \overline{X}, \mathcal{F})$ of an object $(X \hookrightarrow \overline{X}) \in \Cor^{\comp, \oplax}_I(\tau^S)$ (cf.\@ Definition~\ref{DEFCORCOMP}), and an object
\[  \mathcal{F} \in  \Fun(\twwc \tau, \DD(\twwc I))^{4-\cocart, 3-\cocart^*, 2-\oplax-{\cart}}_{\pi_{234}^* \widetilde{X}^{\op}} \]
(functors of multicategories) where $\widetilde{X}$ is the interior compactification associated with  $(X \rightarrow \overline{X})$ (cf.\@ \ref{DEFEXTCOMP}).
Similarly define 
\[ (E_I')^{\lax}(\tau) \quad (\text{resp. } (E_I')^{\lax}(\tau^S))   \]
with objects pairs $(X \hookrightarrow \overline{X}, \mathcal{F})$ of an object $(X \hookrightarrow \overline{X}) \in \Cor^{\comp, \lax}_I(\tau^S)$, and an object
\[  \mathcal{F} \in  \Fun(\twwc \tau, \DD(\twwc I))^{4-\lax-\cocart, 3-\cocart^*, 2-{\cart}}_{\pi_{234}^* \widetilde{X}^{\op}}. \]
The three superscripts have to be interpreted in the following way. 
a) for each $\mu \in \twwc \tau$ the value lies in $\DD(\twwc I)^{4-\cocart, 3-\cocart^*, 2-\cart}$ and b) for the functor
\[ \twwc \tau \rightarrow \DD(\twwc I) \]
multimorphisms of type 4 are mapped to (lax) coCartesian multimorphisms, morphisms of type 3 are mapped
to strongly coCartesian morphisms, and morphisms of type 2 are mapped to (oplax) Cartesian morphisms.

Morphisms $\mathcal{F}_1 \rightarrow \mathcal{F}_2$ over $\xi: (X_1 \hookrightarrow \overline{X}_1) \rightarrow (X_2 \hookrightarrow \overline{X}_2)$ in $\Cor^{\comp,\mathrm{(op)lax}}_I(\tau)$ are morphisms $\mathcal{F}_2 \rightarrow \mathcal{F}_1$ [sic.] in 
\[  \Fun(\twwc \tau, \DD(\twwc I))^{4-\cocart, 3-\cocart^*, 2-\oplax-{\cart}}  \quad (\text{resp.\@ in}\  \Fun(\twwc \tau, \DD(\twwc I))^{4-\lax-\cocart, 3-\cocart^*, 2-{\cart}}  )\]
over $\pi_{234}^*$ applied to $\widetilde{f}^{\op}: \widetilde{X}_2^{\op} \rightarrow \widetilde{X}_1^{\op}$ which are Cartesian (=coCartesian) when restricted to objects of the form 
$\twwc o \times (0 \to 1)$ for $o \in I$.  Warning: The morphisms will in general not be Cartesian w.r.t.\@ {\em all} morphisms of the form $ \mu \times (0 \to 1)$ for $\mu \in \twwc \tau$.

Note that, in the symmetric case, {\em all} functors $\tau^S \rightarrow (\tau')^S$ induce functors $(E_I')^{\mathrm{(op)lax}}(\tau^S) \rightarrow (E_I')^{\mathrm{(op)lax}}((\tau')^S)$ using the symmetry of $\DD$.

Finally, we say that a morphism is a {\em weak equivalence}, if the underlying morphism in $\Cor^{\comp,\mathrm{(op)lax}}_I(\tau)$ is a weak equivalence, i.e.\@ if the morphism $X_1 \rightarrow X_2$ is  an isomorphism. 
\end{DEF}

\begin{FUNDLEMMA}[(Op)lax version of Fundamental Lemma~\ref{LEMMACOMPSIXFU}]\label{LEMMACOMPSIXFUOPLAX}With the notation as in Definition~\ref{DEFDER6FU1OPLAX}.
\begin{enumerate}
\item Let $\tau$ be a tree, and consider an object $X \hookrightarrow \overline{X}$ in $\Cor^{\comp, \oplax}_I(\tau^S)$ $($resp.\@ in $\Cor^{\comp, \lax}_I(\tau^S))$. Let  $\widetilde{X}: \tww ( \tau \times I) \rightarrow \mathcal{S}$ be the associated interior compactification (\ref{PARMULTICOMP}). 
Let $(\mathcal{E}_o)_{o \in \tau}$ be a collection of objects with $\mathcal{E}_o \in \DD(\twwc I)^{4-\cocart, 3-\cocart^*, 2-{\cart}}_{\pi_{234}^* \widetilde{X}_o^{\op}}$, where
$\widetilde{X}_o$ is the value of $\widetilde{X}$ at $\tww o$. 

 The functor $(E_I')^{\lax, \mathrm{(op)lax}}(\tau)_{(\mathcal{E}_o)} \rightarrow (\Cor^{\comp,\mathrm{(op)lax}}_I(\tau)_{(X_o \hookrightarrow \overline{X}_o)})^{\op}$ is a fibration with discrete fibers. 

\item For $\tau = \tau_1 \circ_i \tau_2$, where $i$ is a source object of $\tau_1$ (which we identify with the
final object of $\tau_2$),
the square
\[ \xymatrix{
 (E_I')^{\mathrm{(op)lax}}(\tau)_{(X \hookrightarrow \overline{X})}(\{ \mathcal{E}_o\}_{o \in \tau}) \ar[r] \ar[d] &  (E_I')^{\mathrm{(op)lax}}(\tau)_{(X_1 \hookrightarrow \overline{X}_1)}(\{ \mathcal{E}_o\}_{o \in \tau_1}) \ar[d] \\
 (E_I')^{\mathrm{(op)lax}}(\tau)_{(X_2 \hookrightarrow \overline{X}_2)}(\{ \mathcal{E}_o\}_{o \in \tau_2}) \ar[r] & \cdot
} \]
is 2-Cartesian. Hence if we consider the $E_I^{\mathrm{(op)lax}}(\cdots)_{(\cdots)}$ as sets, we have
\[ (E_I')^{\mathrm{(op)lax}}(\tau)_{(X \hookrightarrow \overline{X})}(\{ \mathcal{E}_o\}_{o \in \tau}) \cong  (E_I')^{\mathrm{(op)lax}}(\tau)_{(X_1 \hookrightarrow \overline{X}_1)}(\{ \mathcal{E}_o\}_{o \in \tau_1}) \times  (E_I')^{\mathrm{(op)lax}}(\tau)_{(X_2 \hookrightarrow \overline{X}_2)}(\{ \mathcal{E}_o\}_{o \in \tau_2}). \]

\item[3.] (oplax case) For $\tau = \Delta_{1,n}$, we have canonically an isomorphism of sets
\begin{equation}\label{eqoplax} (E_I')^{\oplax}(\Delta_{1,n})_{(X \hookrightarrow \overline{X})}( \mathcal{E}_1, \dots, \mathcal{E}_n; \mathcal{E}_{n+1}) \cong \Hom_{\DD(\twwc I)_{\pi_{234}^* \widetilde{T}^{\op}}}(\Box_* \widetilde{f}_* \widetilde{\iota}_! \widetilde{g}^*(\mathcal{E}_1, \dots, \mathcal{E}_n); \mathcal{E}_{n+1})  \end{equation}
for any choice of pull-back $\widetilde{g}^*$, push-forward $\widetilde{f}_*$, and adjoint $\widetilde{\iota}_!$ to the pull-back $\widetilde{\iota}^*$.
Here $\widetilde{T}$ is the restriction of $\widetilde{X}$ to $\tww (n+1)$, and $\widetilde{g}$, $\widetilde{\iota}$, and $\widetilde{f}$ are the components of $\widetilde{X}$ as in \ref{EXCOMPONENTS}, and $\Box_*$ is the right coCartesian projector of Proposition~\ref{PROPCOCARTPROJ}. 

An object $\mathcal{F}$ on the left hand side is mapped to an isomorphism if and only if it is also Cartesian (or equivalently coCartesian) w.r.t.\@ the projection 
\[ \pi_{234} \times \id: \twwc (\Delta_{1,n} \times I) \rightarrow {}^{\uparrow \uparrow \downarrow}{\Delta_{1,n}} \times \twwc I. \] 

\item[3.] (lax case) For $\tau = \Delta_{1,n}$, we have canonically an isomorphism of sets
\begin{equation}\label{eqlax} (E_I')^{\lax}(\Delta_{1,n})_{(X \hookrightarrow \overline{X})}( \mathcal{E}_1, \dots, \mathcal{E}_n; \mathcal{E}_{n+1}) \cong \Hom_{\DD(\twwc I)_{\pi_{234}^* \widetilde{T}^{\op}}}(\widetilde{f}_* \widetilde{\iota}_! \Box_! \widetilde{g}^*(\mathcal{E}_1, \dots, \mathcal{E}_n); \mathcal{E}_{n+1})  \end{equation}
for any choice of pull-back $\widetilde{g}^*$, push-forward $\widetilde{f}_*$, and adjoint $\widetilde{\iota}_!$ to the pull-back $\widetilde{\iota}^*$.
Here $\widetilde{T}$ is the restriction of $\widetilde{X}$ to $\tww (n+1)$, and $\widetilde{g}$, $\widetilde{\iota}$, and $\widetilde{f}$ are the components of $\widetilde{X}$ as in \ref{EXCOMPONENTS}, and $\Box_!$ is the left Cartesian projector of Proposition~\ref{PROPCARTPROJ}. 

An object $\mathcal{F}$ on the left hand side is mapped to an isomorphism if and only if it is also Cartesian (or equivalently coCartesian) w.r.t.\@ the projection 
\[ \pi_{234} \times \id: \twwc (\Delta_{1,n} \times I) \rightarrow {}^{\uparrow \uparrow \downarrow}{\Delta_{1,n}} \times \twwc I. \] 

\item[4.] (oplax case) If $h: (X_1 \hookrightarrow \overline{X}_1)  \rightarrow (X_2 \hookrightarrow \overline{X}_2)$ is a morphism in $\Cor^{\comp, \oplax}_I(\Delta_{1,n})_{(S_1 \hookrightarrow \overline{S}_1, \dots, S_1 \hookrightarrow \overline{S}_n; T \hookrightarrow \overline{T})}$, denote as follows the components of the morphism between interior compactifications: 
\begin{equation}\label{eqint} \vcenter{ \xymatrix{ 
\widetilde{S}_1, \dots, \widetilde{S}_n  \ar@{=}[d] & \ar[l]_-{\widetilde{g}_1} \widetilde{A}_1 \ar@{^{(}->}[r]^{\widetilde{\iota}_1} \ar[d]^{\widetilde{h}} &   \widetilde{A}'_1 \ar@{->>}[r]^{\widetilde{f}_1} \ar[d]^{\widetilde{h}'} & \widetilde{T} \ar@{=}[d] \\
\widetilde{S}_1, \dots, \widetilde{S}_n & \ar[l]^-{\widetilde{g}_2} \widetilde{A}_2 \ar@{^{(}->}[r]_{\widetilde{\iota}_2} &   \widetilde{A}_2' \ar@{->>}[r]_{\widetilde{f}_2} &  \widetilde{T}   
} } \end{equation} the push-forward along $h$  (i.e.\@ pull-back along $h^{\op}$) is given on the r.h.s.\@ of (\ref{eqoplax}) by the composition
\[ \xymatrix{  \Box_* \widetilde{f}_{2,*} \widetilde{\iota}_{2,!}  \widetilde{g}_2^* \mathcal{E}  \ar[r]^-{\mathrm{unit}} &   \Box_* \widetilde{f}_{2,*} \widetilde{\iota}_{2,!}  \Box_* \widetilde{h}_* \widetilde{h}^* \widetilde{g}_2^* \mathcal{E} \ar[r]^-\sim &  \Box_* \widetilde{f}_{1,*} \widetilde{\iota}_{1,!} \widetilde{g}_1^* \mathcal{E} \ar[r] & \mathcal{F}  }  \]
involving the unit of the adjunction $\widetilde{h}^*, \Box_* \widetilde{h}_*$, 
using the (exchange) isomorphism $\widetilde{\iota}_{2,!} \Box_*  \widetilde{h}_{*} \cong \Box_*  \widetilde{h}_{*}' \widetilde{\iota}_{1,!}$ from analogues of Proposition~\ref{PROPPROPERTIESCORCOMPMDIAOPLAX}, 3.\@ and Lemma~\ref{LEMMACOMPOPLAX}.

\item[4.] (lax case) If $h: (X_1 \hookrightarrow \overline{X}_1)  \rightarrow (X_2 \hookrightarrow \overline{X}_2)$ is a morphism in $\Cor^{\comp, \lax}_I(\Delta_{1,n})_{(S_1 \hookrightarrow \overline{S}_1, \dots, S_1 \hookrightarrow \overline{S}_n; T \hookrightarrow \overline{T})}$, denoting as in (\ref{eqint}) the components of the morphism between interior compactifications
 the push-forward along $h$  (i.e.\@ pull-back along $h^{\op}$) is given on the r.h.s.\@ of (\ref{eqlax}) by the composition
\[ \xymatrix{  \widetilde{f}_{2,*} \widetilde{\iota}_{2,!}   \Box_! \widetilde{g}_2^* \mathcal{E}  \ar[r]^-{\mathrm{unit}} &   \widetilde{f}_{2,*} \widetilde{\iota}_{2,!}  \widetilde{h}_* \Box_!  \widetilde{h}^*  \Box_! \widetilde{g}_2^* \mathcal{E} \ar[r]^-\sim &  \widetilde{f}_{1,*} \widetilde{\iota}_{1,!}  \Box_! \widetilde{g}_1^* \mathcal{E} \ar[r] & \mathcal{F}  }  \]
involving the unit of the adjunction $\Box_! \widetilde{h}^*, \widetilde{h}_*$
using the (exchange) isomorphism $\widetilde{\iota}_{2,!}  \widetilde{h}_{*} \cong  \widetilde{h}_{*}' \widetilde{\iota}_{1,!}$ and an analogue of  Lemma~\ref{LEMMACOMPOPLAX}.
\end{enumerate}
\end{FUNDLEMMA}

\begin{proof}
1. As in the proof of Lemma~\ref{LEMMACOMPSIXFU}, 1., using Proposition~\ref{PROPPROPERTIESCORCOMPMDIAOPLAX} instead of Proposition~\ref{PROPPROPERTIESCORCOMPMDIA}.  The proof shows that one can actually construct a diagram, unique up to isomorphism in
\[ \Fun((\twwc \tau) \times \Delta_1, \DD(\twwc I)_{(\pi_{234} \times \id)^* \widetilde{F}^{\op} }) \]
where $\widetilde{F} \in \mathcal{S}^{(\tww I) \times \Delta_1}$ is the interior compactification of a morphism $\xi: (X_1 \hookrightarrow \overline{X}_1) \rightarrow (X_2 \hookrightarrow \overline{X}_2)$ 
with given restriction 
\[ \mathcal{E}_1 \in \Fun(\twwc \tau, \DD(\twwc I)_{(\pi_{234})^* \widetilde{X}_1^{\op} }) \]
and with the additional property that morphisms of the form $(\twwc o) \times (0 \to 1)$ go to the identity of $\mathcal{E}_o$. 
This shows that 
\[ (E_I')^{\mathrm{(op)lax}}(\tau)_{(\mathcal{E}_o)} \rightarrow (\Cor^{\comp, \mathrm{(op)lax}}_I(\tau)_{(X_o \hookrightarrow \overline{X}_o)}))^{\op} \]
is a fibration with discrete fibers. 

2.\@ is shown as Lemma~\ref{LEMMACOMPSIXFU}, 2.\@

3.\@ and 
4.\@ are omitted. 
\end{proof}

\begin{DEF}\label{DEFDER6FU1OPLAX2}
Let $\tau$ be a tree. We define opfibrations
\[ E_I^{\mathrm{(op)lax}}(\tau) \rightarrow \Cor^{\comp, \mathrm{(op)lax}}_I(\tau)  \]
as the opfibration with the same discrete fibers corresponding to the fibrations
\[ (E_I')^{\mathrm{(op)lax}}(\tau) \rightarrow (\Cor^{\comp, \mathrm{(op)lax}}_I(\tau))^{\op}.   \]
\end{DEF}

\begin{LEMMA}\label{LEMMADER6FUOPLAX3}
The strict 2-functor 
\[ \Delta_n \mapsto E_I^{\mathrm{(op)lax}}(\Delta_n) \quad (\text{resp. } \tau \mapsto E_I^{\mathrm{(op)lax}}(\tau), \quad \text{resp. } \tau^S \mapsto E_I^{\mathrm{(op)lax}}(\tau^S))   \]
satisfies the properties of Proposition~\ref{PROPCONSTR2CAT}, resp.\@ of Proposition~\ref{PROPCONSTRSYMMULTI}. 
\end{LEMMA}
\begin{proof}
Axiom 1 (the surjectivity on objects) is clear and the equivalence in Axiom 2 follows from Lemma~\ref{LEMMACOMPSIXFUOPLAX}, 1--2., and the validity of the axioms for $\Cor_I^{\comp, \mathrm{(op)lax}}$ (Lemma~\ref{LEMMAPROPCONSTRSYMMULTICOMP}, 2.). Indeed, we get  a strictly commutative diagram (notation for the non-symmetric case)
\begin{equation*} \xymatrix{
  E_I^{\mathrm{(op)lax}}(\tau)_{(\mathcal{E}_o)}[\mathcal{W}^{-1}_{(\mathcal{E}_o)}]  \ar[r] \ar[d] &  \prod_{m} E_I^{\mathrm{(op)lax}}(\Delta_{1,k_m})_{(\mathcal{E}_o)}[\mathcal{W}^{-1}_{(\mathcal{E}_o)}]  \ar[d] \\
  \Cor_I^{\comp, \mathrm{(op)lax}}(\tau)_{(X_o)}[\mathcal{W}^{-1}_{(X_o)}]  \ar[r] &  \prod_{m} \Cor_I^{\comp, \mathrm{(op)lax}}(\Delta_{1,k_m})_{(X_o)}[\mathcal{W}^{-1}_{(X_o)}] 
 }
\end{equation*}
in which the vertical functors are opfibrations with discrete fibers and the lower vertical functor is an equivalence. 
Induction on Lemma~\ref{LEMMACOMPSIXFUOPLAX}, 2.\@ shows that the upper horizontal functor induces an equivalence on fibers. The statement follows. 
\end{proof}

\begin{DEF}\label{DEFDER6FUOPLAX1}
For a diagram $I \in \Catlf$, let $\EE^{\mathrm{(op)lax}}(I)$ be the (symmetric) 2-(multi)category obtained by applying Proposition~\ref{PROPCONSTR2CAT} (resp.\@ Proposition~\ref{PROPCONSTRSYMMULTI}) to the strict functor of Definition~\ref{DEFDER6FU1OPLAX2}
\[ \Delta_n \mapsto E_I^{\mathrm{(op)lax}}(\Delta_n) \quad (\text{resp. } \tau \mapsto E_I^{\mathrm{(op)lax}}(\tau), \quad \text{resp. } \tau^S \mapsto E_I^{\mathrm{(op)lax}}(\tau^S)).   \]
It comes equipped with an obvious morphism to $\SSS^{\cor, \comp,0,\mathrm{(op)lax}}(I)$ which was constructed applying the same Proposition to 
$\tau^S \mapsto \Cor_I^{\comp, \mathrm{(op)lax}}(\tau^S)$. 
\end{DEF}

\begin{PROP}\label{PROPFIBRATIONLAX}
The functors
\begin{eqnarray*}
 \EE^{\mathrm{(op)lax}}(I) &\rightarrow& \SSS^{\cor, 0, \comp, \mathrm{(op)lax}}(I)   
 \end{eqnarray*}
constructed in Definition~\ref{DEFDER6FUOPLAX1} are 1-opfibrations and 2-opfibrations with 1-categorical fibers. 
If $\DD \rightarrow \SSS^{\op}$ is infinite, and has stable, perfectly generated fibers, then the functor in the lax case is  also a 1-fibration. 
\end{PROP}
\begin{proof}
The first statement follows as in the plain case from Lemma~\ref{LEMMACOMPSIXFUOPLAX}, 3.\@
For the additional statement, we have to show that all push-forward functor have right adjoints w.r.t.\@ all slots. 
According to Lemma~\ref{LEMMACOMPSIXFUOPLAX}, 3.\@ the push-forward functors are given by $ \widetilde{f}_* \widetilde{\iota}_! \Box_! \widetilde{g}^*$ (Notation as in the Lemma).
We claim that all functors commute with homotopy colimits (as morphisms of derivators). For this it is irrelevant whether they are considered on the subderivator of (co)Cartesian objects since the latter are closed under homotopy colimits. 
The functors $\widetilde{\iota}_!$ and $\widetilde{g}^*$ have a right adjoint as morphism between the full derivators and thus commutes with homotopy colimits. 
The functor $\widetilde{f}_*$ commutes with homotopy colimits by (F3). 
The left Cartesian projector $\Box_!$ is defined (cf.\@ Proposition~\ref{PROPCARTPROJ}) as a composition of functors which commute with homotopy colimits.  
Thus the push-forward has a right adjoint by Brown representability. Note that $\EE^{\mathrm{(op)lax}}(I)_{X \hookrightarrow \overline{X}}$ are the same as $\EE(I)_{X \hookrightarrow \overline{X}}$ (plain case) and thus perfectly generated by Lemma~\ref{LEMMAGENERATION}.
\end{proof}

\begin{PROP}\label{PROPCARTPROJCOLIMITS}
Let $\tau = \Delta_{1,n}$ (cf.\@ also Example~\ref{EXCOMPONENTS}). 
The functor $\Box_! (\pi_{234}^*\widetilde{g})^*$ from \ref{CONDPOINTWISE} is a functor $\EE^{\lax}(J)_{S_1 \hookrightarrow \overline{S}_1} \times \dots \times \EE^{\lax}(J)_{S_n \hookrightarrow \overline{S}_n} \rightarrow \EE^{\lax}(J)_{A \hookrightarrow \overline{A}}$. 
It commutes with relative left Kan extensions for $\alpha: I \rightarrow J$ an opfibration in the following sense:
For all $i$, there are natural isomorphisms
\[ \alpha_!^{(A \hookrightarrow \overline{A})} \Box_! (\alpha^*\pi_{234}^*\widetilde{g})^* (\alpha^*-, \dots, -, \dots, \alpha^*-)  \rightarrow \Box_! (\pi_{234}\widetilde{g})^* (-, \dots, \alpha_!^{(S_i \hookrightarrow \overline{S}_i)}, \dots, -).  \]
If $\widetilde{g}$ is 1-ary then the above holds for $\alpha: i \hookrightarrow J$ the inclusion of an object.
\end{PROP}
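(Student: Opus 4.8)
The plan is to reduce the statement to two facts that are already available: first, the explicit description (Remark~\ref{REMKANEXT} and Example~\ref{EXRELKAN}) of the relative left Kan extension $\alpha_!$ for $\EE$ in terms of the relative left Kan extension $(\twwc\alpha)_!$ for $\DD$ together with the point-wise functors $f_*, \iota_!, g^*$; and second, the commutation statements (FDer5 left) for $\DD \to \SSS^{\op}$ in the form of Lemma~\ref{LEMMAEXISTENCE3FUNCTORS}, Lemma~\ref{LEMMAPOINTWISEEXBYZERO}, and the commutation of $\Box_!$ with the point-wise functors established in Appendix~\ref{COCARTPROJ} (Lemmas~\ref{COMMCARTPROJIOTA}--\ref{COMMCARTPROJF}). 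Throughout we work with the description $\EE(J)_{A \hookrightarrow \overline{A}} \cong \DD(\twwc J)^{4-\cocart, 3-\cocart^*, 2-\cart}_{\pi_{234}^*\widetilde{A}^{\op}}$ and recall that $\Box_!(\pi_{234}^*\widetilde{g})^*$ is (on this subcategory) the left adjoint of $(\pi_{234}^*\widetilde{g})_*^{\cart}$, so that it is indeed a well-defined functor $\EE(J)_{S_1\hookrightarrow\overline{S}_1} \times \cdots \times \EE(J)_{S_n\hookrightarrow\overline{S}_n} \to \EE(J)_{A\hookrightarrow\overline{A}}$ as claimed, by Lemma~\ref{LEMMAEXISTENCE3FUNCTORSOPLAX} and Lemma~\ref{LEMMACOMPOPLAX}.

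First I would fix $i$ and reduce to a purely $\DD$-level statement. By Remark~\ref{REMKANEXT}, for an opfibration $\alpha$ one has $\alpha_!^{(X\hookrightarrow\overline{X})} = (\twwc\alpha)_!^{(\pi_{234}^*\widetilde{X}^{\op})}$ as functors on the relevant subcategories, and $(\twwc\alpha)_!$ for $\DD$ satisfies (FDer5 left) — i.e.\@ it commutes with pull-backs $h^*$ in the other slots and, in the $i$-th slot, $(\twwc\alpha)_!$ of a pull-back absorbs into the argument — because $\twwc\alpha: \twwc I \to \twwc J$ is again an opfibration (Lemma~\ref{LEMMACATLF}, compatibility of $\twwc(-)$ with opfibrations). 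The only thing that is not immediate is that $(\twwc\alpha)_!$ commutes with the additional functor $\iota_!$ appearing inside $\Box_!(\pi_{234}^*\widetilde{g})^*$; but $\iota_!$ is a left adjoint hence commutes with all homotopy left Kan extensions, in particular with $(\twwc\alpha)_!$, and similarly $\Box_!$ commutes with left Kan extensions in the sense needed because $\Box_! = \colim$ of a diagram built from $\iota_!$ and $g^*$ (Lemma~\ref{LEMMACOMPOPLAX} and its proof, writing $\Box_!g^*$ as the left adjoint of $g_*^{\cart}$). Concretely I would compose the chain of natural isomorphisms: $\alpha_! \Box_! (\alpha^*\pi_{234}^*\widetilde{g})^*(\alpha^*-,\dots,-,\dots,\alpha^*-) \cong \Box_! \alpha_! (\alpha^*\pi_{234}^*\widetilde{g})^*(\alpha^*-,\dots,-,\dots,\alpha^*-)$ (commutation of $\Box_!$ with the left Kan extension, using that $\Box_!$ is a composite of a colimit with point-wise functors, all of which commute with $(\twwc\alpha)_!$ up to the base change given by (FDer5 left)) $\cong \Box_! (\pi_{234}^*\widetilde{g})^*(-,\dots,\alpha_!-,\dots,-)$ (this last step is exactly (FDer5 left) for the $n$-ary pull-back functor $\widetilde{g}^*$ in $\DD$, applied slot by slot). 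One must check these isomorphisms are compatible, i.e.\@ that the composite is the canonical exchange morphism; this is a diagram chase with exchange morphisms of the standard kind.

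The second, easier case is $\alpha: i \hookrightarrow J$ the inclusion of an object, which is \emph{not} an opfibration, so the explicit formula of Example~\ref{EXRELKAN} must be used instead: $i_!^{(A\hookrightarrow\overline{A})} \cong (\twwc\pi)_!^{(\pi_{234}^*\widetilde{A}^{\op})}(\pi_{234}^*f)_*(\pi_{234}^*\iota)_!(\pi_{234}^*g)^*(\twwc p)^*$ via the homotopy-exact square $i\times_{/J}J \to J$. Here $(\twwc\pi)$ \emph{is} an opfibration, so the previous argument applies to it, and the extra point-wise functors $(\pi_{234}^*g)^*$, $(\twwc p)^*$ in the formula simply commute past everything (they are pull-backs along maps that are compatible with the $\widetilde{g}$-structure, by Lemma~\ref{LEMMAEXISTENCE3FUNCTORS} and the functoriality of interior compactifications in Proposition~\ref{PROPCOMPDIA}). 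So in this case one assembles the isomorphism as: feed $\alpha_!$ in $\widetilde g$-slot $i$, rewrite $i_!$ via the Example~\ref{EXRELKAN} formula, push the formula past $\Box_!(\pi_{234}^*\widetilde{g})^*$ using that $\Box_!(\pi_{234}^*\widetilde g)^*$ commutes with $(\twwc\pi)_!$ (opfibration case, just proven), with $(\pi_{234}^*f)_*$ and $(\pi_{234}^*\iota)_!$ (Lemmas~\ref{COMMCARTPROJIOTA}--\ref{COMMCARTPROJF} and the fact that these are computed point-wise on the relevant subcategory by Lemma~\ref{LEMMAEXISTENCE3FUNCTORS}), and with the remaining pull-backs trivially.

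The main obstacle I anticipate is purely bookkeeping rather than conceptual: verifying that the composite of the above chain of natural isomorphisms really is the \emph{canonical} exchange transformation (the one induced by the units/counits of the relevant adjunctions) rather than merely \emph{an} isomorphism, and keeping track of which subcategory of $\DD(\twwc I)$ each functor is being restricted to (the $4$-coCartesian, well-supported, $2$-Cartesian subcategory), since several of the constituent functors — notably $\iota_!$ and $\Box_!$ — are only computed point-wise after such a restriction (Lemma~\ref{LEMMAEXISTENCE3FUNCTORS}, the emphasized warning ``It is not true that, in general, $\widetilde\iota_!$ commutes with $h^*$ here!''). So the real content of the proof is to observe that all objects occurring in sight lie in that subcategory (which is exactly what Proposition~\ref{PROPPROPERTIESCORCOMPMDIA} and Lemma~\ref{LEMMAEXISTENCE3FUNCTORS} guarantee for the functors $g^*, \iota_!, f_*$, and Lemma~\ref{LEMMACOMPOPLAX} for $\Box_!$), so that each commutation step is legitimate, and then the exchange morphisms compose as expected by general nonsense. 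I would present the opfibration case in full and remark that the object-inclusion case follows by the same method using Example~\ref{EXRELKAN}.
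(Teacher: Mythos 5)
Your skeleton for the opfibration case is the paper's: reduce to the identity $\alpha_!^{(A\hookrightarrow\overline{A})}\Box_! \cong \Box_!(\twwc\alpha)_!$ and then apply (FDer5 left) for $\DD\rightarrow\SSS^{\op}$ to move $(\twwc\alpha)_!$ into the $i$-th slot of $\widetilde{g}^*$. But your justification of that identity --- commuting $(\twwc\alpha)_!$ directly past the constituents $\pi_{4567;!}$, $\overline{f}_*$, $\pi_{1267}^*$ of $\Box_!$ ``up to the base change given by (FDer5 left)'' --- is a gap: (FDer5 left) concerns push-forwards along multimorphisms in the base, not pull-backs along diagram functors ($\pi_{1267}^*$), nor fiberwise right adjoints ($\overline{f}_*$), nor nested Kan extensions ($\pi_{4567;!}$); each of those commutations would need a separate homotopy-exactness or (F3)-type argument that you do not supply and that the paper never proves. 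The paper sidesteps all of this with an adjunction trick you almost touch when you mention the left adjoint of $g_*^{\cart}$: $\Box_!(\twwc\alpha)_!$ is the left adjoint of the restriction of $(\twwc\alpha)^*$ to the $2$-Cartesian subcategory, whence formally $\Box_!(\twwc\alpha)_!\Box_!\cong\Box_!(\twwc\alpha)_!$; since $\Box_!$ preserves the conditions of being $4$-coCartesian and well-supported (Proposition~\ref{PROPCARTPROJ}), the inner $(\twwc\alpha)_!$ is then applied to an object of the full subcategory where it coincides with $\alpha_!^{(A\hookrightarrow\overline{A})}$ by Remark~\ref{REMKANEXT}, making the outer $\Box_!$ redundant.

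The more serious gap is the case $\alpha: i\hookrightarrow J$ with $\widetilde{g}$ $1$-ary. You propose to insert the formula of Example~\ref{EXRELKAN} and commute $\Box_!(\pi_{234}^*\widetilde{g})^*$ past $(\twwc\pi)_!$ by the opfibration case, past $(\pi_{234}^*f)_*$ and $(\pi_{234}^*\iota)_!$ by Lemmas~\ref{COMMCARTPROJIOTA}--\ref{COMMCARTPROJF}, and past ``the remaining pull-backs trivially''. Those lemmas assert that $\Box_!$ commutes with the $\iota_!$ and $f_*$ attached to a single (op)lax morphism over the same diagram; they say nothing about exchanging the pull-back along the correspondence $\widetilde{g}$ with the functors in the Kan-extension formula for $i_!$, which live over $i\times_{/J,\pi_1}\twwc{J}$. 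That exchange is where the actual content lies: the paper verifies the claim point-wise at each $j\in J$, unwinds both sides into homotopy colimits over comma categories, uses $\pi_{2,!}\pi_2^*\cong\id$ to collapse the colimit to a coproduct over $\Hom_J(i,j)$, and --- crucially --- invokes the Cartesianity of a specific square (Lemma~\ref{LEMMACARTDIA}, i.e.\@ admissibility of the diagram of correspondences) to obtain $g_2^*\iota_{1,!}\cong\iota_{3,!}g_3^*$. None of this follows formally from the opfibration case or the appendix lemmas, and your sketch gives no route to it.
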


This Proposition implies that, if $\DD \rightarrow \SSS^{\op}$ is infinite, and has stable, perfectly generated fibers then the (1-ary) pull-back functors in the fibration $\EE^{\lax}(I) \rightarrow \SSS^{\cor, 0, \comp, \lax}(I)$ are computed point-wise. 

If $\widetilde{g}$ is 1-ary and the functors $\Box_! (\pi_{234}^*\widetilde{g})^*$ have right adjoints, then Axiom (Der2) implies that the functor commutes with arbitrary $\alpha_!$ in the sense above.
This can probably be shown directly, but we will not need to do this, because we are interested in the right adjoints only, anyway.

\begin{proof}
Using the exchange morphism
\begin{equation*} \Box_! ((\twwc{\alpha})^* \pi_{234}^*\widetilde{g})^*((\twwc\alpha)^*-, \dots, (\twwc\alpha)^*-)  \rightarrow (\twwc\alpha)^* \Box_! (\pi_{234}^*\widetilde{g})^*(-, \dots, -)   \end{equation*}
define a natural transformation
\begin{eqnarray*} \alpha_!^{(A \hookrightarrow \overline{A})} \Box_! ((\twwc{\alpha})^*\pi_{234}^*\widetilde{g})^*(\alpha^*-, \dots, -, \dots, \alpha^*-) &\rightarrow& \alpha_!^{(T \hookrightarrow \overline{T})} \Box_! ((\twwc{\alpha})^*\pi_{234}^*\widetilde{g})^* (\alpha^*-, \dots, \alpha^* \alpha_!^{(S_i \hookrightarrow \overline{S}_i)} -, \dots, \alpha^* -)   \\
&\rightarrow& \alpha_!^{(A \hookrightarrow \overline{A})} \alpha^* \Box_! (\pi_{234}^*\widetilde{g})^*(-, \dots, \alpha_!^{(S_i \hookrightarrow \overline{S}_i)}-, \dots, -) \\
 &\rightarrow& \Box_! (\pi_{234}^*\widetilde{g})^* (-, \dots, \alpha_!^{(S_i \hookrightarrow \overline{S}_i)}-, \dots, -).
 \end{eqnarray*}
 Hence we have natural transformations even for any $\alpha$.  We first show that it is an isomorphism if $\alpha$ is an opfibration.
We have the following commutative diagram
 \[ \xymatrix{
\DD({}^{\downarrow \uparrow \uparrow \downarrow} J)^{2-{\cart}}_{\pi_{234}^* \widetilde{S}^{\op}} \ar@{^{(}->}[r] \ar[d]^{(\twwc \alpha)^*|_{2-{\cart}}} & \DD({}^{\downarrow \uparrow \uparrow \downarrow} J)^{}_{\pi_{234}^* \widetilde{S}^{\op}} \ar[d]^{(\twwc \alpha)^*} \\
\DD({}^{\downarrow \uparrow \uparrow \downarrow} I)^{2-{\cart}}_{\pi_{234}^* \alpha^*\widetilde{S}^{\op}} \ar@{^{(}->}[r] & \DD({}^{\downarrow \uparrow \uparrow \downarrow} J)^{}_{\pi_{234}^* \widetilde{S}^{\op}}
} \]
Obviously a left adjoint of the restriction $(\twwc \alpha)^*|_{2-{\cart}}$ of $(\twwc \alpha)^*$ to the 2-Cartesian subcategory is given by $\Box_! (\twwc \alpha)_!$. 
We have therefore as adjoint to the above commutative diagram:
\[ \Box_! (\twwc \alpha)_! \Box_! \cong \Box_! (\twwc \alpha)_!. \]
If we evaluate this natural transformation on the category 
\begin{equation}\label{eqsub1}
 \DD({}^{\downarrow \uparrow \uparrow \downarrow} J)^{4-{\cocart},3-{\cocart^*}}_{\pi_{234}^* \widetilde{S}^{\op}}  
 \end{equation}
then, since $\Box_!$ preserves the conditions of being strongly 3-coCartesian and 4-coCartesian (cf.\@ Proposition~\ref{PROPCARTPROJ}), the $(\twwc \alpha)_!$ on the left hand side receives an object in 
\begin{equation}\label{eqsub2}
 \DD({}^{\downarrow \uparrow \uparrow \downarrow} J)^{4-{\cocart},3-{\cocart^*},2-{\cart}}_{\pi_{234}^* \widetilde{S}^{\op}}  
 \end{equation}
where it is simply $\alpha_!^{(A \hookrightarrow \overline{A})}$ (cf.\@ Remark~\ref{REMKANEXT}), hence the leftmost $\Box_!$ on the left hand side is superfluous. We arrive at
\[ \alpha_!^{(A \hookrightarrow \overline{A})} \Box_! \cong \Box_! (\twwc \alpha)_! \]
(still on the subcategory (\ref{eqsub1})) and therefore
\[ \alpha_!^{(A \hookrightarrow \overline{A})} \Box_! ((\twwc{\alpha})^* \pi_{234}^*\widetilde{g})^*(\alpha^*-, \dots, -, \dots, \alpha^*-) \cong \Box_! (\twwc \alpha)_!  (\pi_{234}^*\widetilde{g})^*(\alpha^*-, \dots, -, \dots, \alpha^*-) \]
on the subcategory (\ref{eqsub2}), where the $\alpha^*$ denotes $\EE(\alpha)$, i.e.\@ $(\twwc \alpha)^*$, hence by (FDer5 left) for $\DD$: 
\[ \alpha_!^{(A \hookrightarrow \overline{A})} \Box_! ((\twwc{\alpha})^* \pi_{234}^* \widetilde{g})^*(\alpha^*-, \dots, -, \dots, \alpha^*-) \cong \Box_!  (\pi_{234}^*\widetilde{g})^*(-, \dots, \alpha_!^{(S_i \hookrightarrow \overline{S}_i)}  -, \dots, -). \]
using that $(\twwc \alpha)_! \cong \alpha_!^{(S_i \hookrightarrow \overline{S}_i)}$ on the subcategory (\ref{eqsub2}).

Now let $\widetilde{g}$ be 1-ary and $i \in J$ be an object. 
We have to show that 
\begin{equation}\label{toshowkanext}
 i_!^{(A \hookrightarrow \overline{A})} \widetilde{g}_i^* \rightarrow \Box_! (\pi_{234}^*\widetilde{g})^* i_!^{(S \hookrightarrow \overline{S})}  \end{equation}
is an isomorphism. Here  $\widetilde{g}_i$ is $\widetilde{g}$ evaluated at $\tww i$. 
This can be checked point-wise at $j \in J$:
\[ j^* i_!^{(A \hookrightarrow \overline{A})} \widetilde{g}_i^* \rightarrow (\twwc j)^* \pi_{4567,!} f_* \pi_{1267}^*  (\pi_{234}^*\widetilde{g})^* i_!^{(S \hookrightarrow \overline{S})}  \] 
Using that $\pi_{4567}$ is an opfibration with fiber (over $\twwc j$) equal to ${}^{\downarrow \uparrow \uparrow} J \times_{/J} j \hookrightarrow {}^{\downarrow \uparrow \uparrow \downarrow \uparrow \uparrow \downarrow} J$
(subcategory of elements of the form $j_1\rightarrow j_2 \rightarrow j_3 \rightarrow j=j=j=j$), we get
\[ j^* i_!^{(A \hookrightarrow \overline{A})} \widetilde{g}_i^* \rightarrow \hocolim_{{}^{\downarrow \uparrow \uparrow} J \times_{/J} j} f_* \pi_{1267}^*  (\pi_{234}^*\widetilde{g})^* i_!^{(S \hookrightarrow \overline{S})}  \] 
Then, inserting the precise calculation from Example~\ref{EXRELKAN}, we get for the right hand side
\[  \hocolim_{{}^{\downarrow \uparrow \uparrow} J \times_{/J} j} f_* \pi_{1267}^* (\pi_{234}^*\widetilde{g})^* (\twwc \pi)_! f_{1,*} \iota_{1,!} g_1^* p^*  \] 
for 
\[ \xymatrix{
i \times_{/J,\pi_1}  {}^{\downarrow \uparrow \uparrow}  (J \times_{/J} j )\ar[r] \ar[d]  & i \times_{/J,\pi_1} \twwc J \ar[d]^{\twwc \pi} \\
 {}^{\downarrow \uparrow \uparrow} (J \times_{/J} j)  \ar[r]^{\pi_{1267}} & \twwc J
} \]
which is homotopy exact. 

Using that $(\pi_{234}\widetilde{g})^*$ and $f_*$ commute with homotopy colimits, we can write this as
\[ \hocolim_{i \times_{/J,\pi_1}  {}^{\downarrow \uparrow \uparrow}  (J \times_{/J} j )} f_{2,*}  g_2^* f_{1,*} \iota_{1,!} g_1^* P^* \]
for $P: i \times_{/J,\pi_1}  {}^{\downarrow \uparrow \uparrow}  (J \times_{/J} j ) \rightarrow \cdot$,
where, denoting an object in the fiber by $i \rightarrow j_1 \rightarrow j_2 \rightarrow j_3 \rightarrow j = j = j = j$ the various morphisms are given point-wise by
\begin{eqnarray*}
g_1: \widetilde{S}(i = i \rightarrow j) &\rightarrow&  \widetilde{S}(i=i=i)  \\
\iota_1:  \widetilde{S}(i = i \rightarrow j) &\rightarrow&   \widetilde{S}(i \rightarrow j = j)   \\
f_1: \widetilde{S}(i \rightarrow j = j)& \rightarrow&  \widetilde{S}( j_2 \rightarrow j = j)   \\
g_2:  \widetilde{A}( j_2 \rightarrow j = j) &\rightarrow &\widetilde{S}( j_2 \rightarrow j = j)   \\
f_2:  \widetilde{A}( j_2 \rightarrow j = j) &\rightarrow& \widetilde{A}( j = j = j)  
\end{eqnarray*}
The argument of $\hocolim_{i \times_{/J,\pi_1}  {}^{\downarrow \uparrow \uparrow}  (J \times_{/J} j )}$ does not depend on $j_1$ and $j_3$. We therefore factor
\[ \xymatrix{ i \times_{/J,\pi_1}  {}^{\downarrow \uparrow \uparrow}  (J \times_{/J} j ) \ar[r]^-{\pi_2} & (i \times_{/J} J \times_{/J} j )^{\op} \ar[r]^-{P_1} &  \cdot } \]
Like in \ref{REMKANEXT} one shows that for $\pi_2$ we have $\pi_{2,!} \pi_2^* \cong \id$ (it is the composition of an opfibration and a fibration each with contractible fibers). 
Hence we are left with a homotopy colimit over $(i \times_{/J} J \times_{/J} j )^{\op}$,
which splits up into a union over $\Hom_J(i,j)$ and on each component is evaluation at the final object $i=i  =i \rightarrow  j = j = j = j = j$. Hence we can set $j_2:=i$ in the formulas above and replace $P$ with 
$P_2: i \times_{/J} j \rightarrow \cdot$ (Note:  $i \times_{/J} j$ is the discrete category with objects $\Hom_J(i,j)$). 
Now we have a commutative diagram:
\[ \xymatrix{
\widetilde{A}(i = i =i ) \ar[d]^{\widetilde{g}_i} & \ar[l]_{G} \widetilde{A}(i = i \rightarrow j) \ar[r]^{\iota_3} \ar[d]^{g_3} & \widetilde{A}( i \rightarrow j = j) \ar[d]^{g_2} \ar@{=}[r]  & \widetilde{A}(i \rightarrow j = j) \ar[d]^{g_2} \ar[r]^{f_2}  & \widetilde{A}(j = j = j) \ar[d]^{\widetilde{g}_j} \\
\widetilde{S}(i = i =i ) &  \ar[l]^{g_1} \widetilde{S}(i = i \rightarrow j)  \ar[r]_{\iota_1} & \widetilde{S}( i \rightarrow j = j) \ar@{=}[r]_{f_1} &  \widetilde{S}(i \rightarrow j = j) \ar[r]_{f_2} &  \widetilde{S}(j = j = j)
} \]
The left hand square is Cartesian by Lemma~\ref{LEMMACARTDIA}, hence we have
\[ g_2^* \iota_{1,!} \cong \iota_{3,!} g_3^* \]
Hence we arrive at
\[ \hocolim_{i \times_{/J} j} f_{2,*}  \iota_{3,!} G^* P_2^* \widetilde{g}_i^*.  \]
This is the same as the left hand side of (\ref{toshowkanext}) (use Kan's formula (FDer4 left)). A tedious check shows that the natural transformations match. 
\end{proof}

\begin{DEF}\label{DEFDER6FU2OPLAX}With the notation as in \ref{BEGINSECTIONDER6FU},
we construct morphisms of (symmetric) pre-2-(multi)derivators with domain $\Catlf$
\[
 \EE^{\mathrm{(op)lax}} \rightarrow \SSS^{\cor,\comp,0,\mathrm{(op)lax}} 
\]
The values have been constructed in \ref{DEFDER6FU1OPLAX}.  For a functor $\alpha: I \rightarrow J$ in $\Catlf$ we define the pullback $\alpha^*=\EE(\alpha)$
to be $\DD(\twwc \alpha \times \id \times \id)$. Note that $\alpha$ induces a functor $\twwc \alpha: \twwc I \rightarrow \twwc J$ and that $\DD(\twwc \alpha \times \id \times \id)$ preserves the relevant conditions of being (co)Cartesian. 
The pre-2-multiderivator $\EE$ is defined on natural transformations as follows. A natural transformation $\mu: \alpha \Rightarrow \beta$ can be seen as a functor $\mu: \Delta_1 \times I \rightarrow J$.
The pullback of a diagram in $\mathcal{E} \in \EE(\twwc J \times \Delta^{\op} \times \Delta)$ and taking partial underlying diagram gives a functor in   $\Fun(\twwc \Delta_1, \EE(\twwc I \times \Delta^{\op} \times \Delta))$ which has the right
strong (co)Cartesianity conditions. It is, by definition, a morphism \[ \alpha^* \mathcal{E} = e_0^* \mu^* \mathcal{E} \rightarrow  \beta^* \mathcal{E} = e_1^*  \mu^* \mathcal{E} \]  in $\EE(I)$ which we 
define to be the pseudo-natural transformation $\EE(\mu)$ at $\mathcal{E}$. 
We have to see that the contraints of the analogous pseudo-naturality for $\SSS^{\cor,\comp,0,\mathrm{(op)lax}}$ lift (in fact, they lift uniquely, because we have 1-categorical fibers). 
We will sketch the oplax case for a 1-ary morphism. 

 A 1-morphism given by an element in $E_I^{\oplax}(\Delta_1)$ (note that a general morphism in $\EE(I)$ is freely generated by those) consists of a compactification $X \hookrightarrow \overline{X}$ in $\mathcal{S}^{\tw (\Delta_1 \times I)}$ plus an object
  \[ \mathcal{F} \in \Fun(\twwc \Delta_1,  \DD( \twwc   I))_{\pi_{234}^*\widetilde{X}^{\op}}^{4-\cocart, 3-\cocart^*, 2-\oplax-{\cart}}.  \]
  It is a morphism from $\mathcal{E}_1 := e_0^*\mathcal{F}$ to $\mathcal{E}_2 := e_1^*\mathcal{F}$.
  In $\mathcal{S}^{\cor,0,\oplax}$ we have the following oplax square  
  We have a 2-morphism
    \begin{equation} \label{eqoplaxmor} \vcenter{ \xymatrix{ \alpha^* X_1  \ar[r] \ar[d] \ar@{}[rd]|{\Downarrow} &  \beta^* X_1  \ar[d]  \\
  \beta^* X_2 \ar[r]  & \beta^* X_2 
    } } \end{equation}
    obtained as follows: The pullback $\mu^* X$ and taking the partial underlying diagram  gives rise to a diagram  in $\mathcal{S}^{\tw (\Delta_1^2 \times I)}$: 
 \begin{equation}\label{eqsquare} \vcenter{ \xymatrix{ \alpha^*X_1   & \ar[l]_-{G_1} \ar[r]^-{F_1}   \ar@{}[rd]|\Box &  \beta^*X_1  \\
 \ar[u]^{\alpha^*f } \ar[d]_{\alpha^* g} \ar@{}[rd]|\blacksquare A_1  & \ar[r]^-{F_2} \ar[d]^{g_\mu} W \ar[u]_{f_\mu} \ar[l]_-{G_2} & \ar[u]_{\beta^*f} \ar[d]^{\beta^* g} \\
   \alpha^*X_2   & \ar[l]^-{G_3} \ar[r]_-{F_3  } A_2  & \beta^*X_2   }  
 } \end{equation}
 in which the square denoted $\Box$ is Cartesian and the square denoted $\blacksquare$ is weakly Cartesian. 
 Denoting
 \[ \xymatrix{  W \ar[rrd]^{G_2} \ar[ddr]_{f_\mu}  \ar[rd]^h \\ 
 &  C \ar[r]^G \ar[d]^F \ar@{}[rd]|\Box & \ar[d] A_1 \\
 &  A_2  \ar[r] &   \alpha^*X_2 } \]
 it is the proper morphism $h$ that induces the 2-morphism in (\ref{eqoplaxmor}). We have to show that the square lifts to a lax commutative square:
    \begin{equation*}  \vcenter{ \xymatrix{ \alpha^* \mathcal{E}_1  \ar[r] \ar[d] \ar@{}[rd]|{\Downarrow} &  \beta^* \mathcal{E}_1  \ar[d]  \\
  \beta^* \mathcal{E}_2 \ar[r]  & \beta^* \mathcal{E}_2 
    } } \end{equation*}
    
    We have already seen that $\EE(I) \rightarrow \SSS^{\cor,\comp,0,\oplax}(I)$ can, as obfibration, be transported to an obfibration $\EE'(I) \rightarrow \SSS^{\cor,0,\oplax}(I)$ and hence we can forget about the compactifications to construct
    the 2-isomorphism. 
    
    The morphism given by $\mathcal{F}$ is determined by  
 \[ f_! g^* \mathcal{E}_1 \rightarrow \mathcal{E}_2  \]
 where $f_!$ is the oplax push-forward (defined as $\Box_* \overline{f}_* \iota_!$ w.r.t.\@ any compactification of $f$).

In view of Lemma~\ref{LEMMACOMPSIXFUOPLAX}, 4.\@ we have to show that the existence of $\mu^* \mathcal{F}$ gives rise to the commutativity of the diagram:  
\[ \footnotesize \xymatrix{ (\beta^* f)_! F_{2,!} g^*_\mu G_1^* \alpha^* \mathcal{E} \ar[d] \ar@{=}[r] & F_{3,!} f_{\mu,!} G_2^* (\alpha^*g)^* \alpha^* \mathcal{E}  \ar@{=}[r] & F_{3,!} F_! \underbrace{h_*}_{=h_!} h^* G^* (\alpha^*g)^* \alpha^* \mathcal{E} & \ar[l]^-{\mathrm{unit}} F_{3,!} F_!  G^* (\alpha^*g)^* \alpha^* \mathcal{E}  \ar[d]  \\
  (\beta^* f)_! (\beta^* g)^* F_{1,!} G_1^* \alpha^* \mathcal{E} \ar[d]  & & & F_{3,!} G_3^*  (\alpha^* f)_!  (\alpha^*g)^* \alpha^* \mathcal{E}  \ar[d]\\
 (\beta^* f)_! (\beta^* g)^*  \beta^* \mathcal{E} = \beta^* (f_! g^* \mathcal{E}) \ar[d] & & & F_{3,!} G_3^*  \alpha^* \mathcal{F}  \ar[d]  \\
\beta^*  \mathcal{F} \ar@{=}[rrr]& & & \beta^*  \mathcal{F} 
      } \]
 This is omitted. 
 
This endows $\EE(\mu): \alpha^* \Rightarrow \beta^*$ with the structure of pseudo-natural transformation and one checks that this construction yields a pseudo-functor:
\[ \Fun(I, J) \rightarrow \Fun^{\mathrm{strict}}(\EE^{\oplax}(J), \EE^{\oplax}(I)). \]

The lax case is done using the adjoint functors which are computed point-wise (i.e.\@ commute with $\alpha^*$ and $\beta^*$). For the right adjoint of $\Box_! \widetilde{g}^*$ this follows from Proposition~\ref{PROPCARTPROJCOLIMITS}.
\end{DEF}

\begin{HAUPTSATZ}\label{HAUPTSATZOPLAX}Let $\mathcal{S}$ be a category with compactifications, and let $\SSS^{\op}$ be the symmetric pre-multiderivator represented by $\mathcal{S}^{\op}$ with the symmetric multicategory structure \ref{PAROPMULTCAT}. Let $\DD \rightarrow \SSS^{\op}$ be a left and right (symmetric) fibered (multi)derivator with domain $\Invlf$ satisfying axioms (F1--F6) and (F4m--F5m) of \ref{PARAXIOMS}. 
Assume that $\DD$ is infinite (i.e.\@ satisfies (Der1${}^\infty$)). 
\begin{enumerate}
\item

The morphism of oplax pre-2-multiderivators
\[ \EE^{\oplax} \rightarrow \SSS^{\cor, 0, \comp, \oplax}  \]
constructed in Definition~\ref{DEFDER6FUOPLAX1} is an oplax left (symmetric) fibered (multi)derivator with domain $\Catlf$.
\item 
If $\DD \rightarrow \SSS^{\op}$
 is infinite and has stable and perfectly generated fibers then the morphism of lax pre-2-multiderivators 
 \[ \EE^{\lax} \rightarrow \SSS^{\cor,0,\comp,\lax} \] 
constructed in Definition~\ref{DEFDER6FUOPLAX1} is a lax right (symmetric) fibered (multi)derivator with domain $\Catlf$. 
\end{enumerate}
\end{HAUPTSATZ}
\begin{proof}
1.\@ As in the plain case, with the following modifications:

The second statement of (FDer0 left) follows from Proposition~\ref{PROPCOCARTPROJ} because on objects of the form $\twwc i$ the functor $\Box_*$ does not do anything.

(FDer5 left)  Because by the strong form of (FDer0 left) the push-forward along oplax morphisms is computed point-wise it suffices to see this for projections $p: I \rightarrow \cdot$, i.e.\@ for homotopy colimits. Over a point though the condition ``oplax'' is vacuous.

2.\@
We have to show that the resulting morphism of 2-pre-multiderivators
\[ \EE^{\lax} \rightarrow \SSS^{\cor, 0,\comp, \lax} \]
is a lax right fibered multiderivator: The 1-fiberedness of $\EE^{\lax}(I) \rightarrow \SSS^{\cor,0,\comp,\lax}(I)$ was obtained in Proposition~\ref{PROPFIBRATIONLAX}. 
The second statement of (FDer0 right) --- which involves only opfibrations --- follows from Proposition~\ref{PROPCARTPROJCOLIMITS}. For 1-ary morphisms 
the pull-back is even computed point-wise which follows from the additional statement of Proposition~\ref{PROPCARTPROJCOLIMITS}. 
(FDer5 right) --- which (in the lax case) involves only relative right Kan extensions along fibrations --- follows from Lemma~\ref{COMMCARTPROJFIB}.
\end{proof}

As in the plain case, one can construct equivalent oplax left (resp.\@ lax right) (symmetric) fibered (multi)derivators
\[ \EE^{\mathrm{(op)lax}} \rightarrow \SSS^{\cor,0,\mathrm{(op)lax}}.   \]
In other words (in the multi-case), we actually get a (symmetric) proper derivator six-functor-formalism as in Definition~\ref{DEF6FUDER}. Furthermore, as in the plain case, 
these fibered (multi)derivators extend to $\Cat$. Indeed, the fibers $\EE^{\mathrm{(op)lax}}(I)_{X}$ are the same as $\EE(I)_X$ (plain case) so it is just a matter of extending
the oplax push-forward, resp.\@ lax pull-back which is straight-forward using the definition of the extension in \cite{Hor17b}, cf.\@ also the construction in Proposition~\ref{PROPIOTA}.

\section{Example I: The classical six-functor-formalism on topological spaces}\label{SECTIONTOP}

In this section, a derivator enhancement of the classical six-functor-formalism on topological spaces will be constructed. 
The input symmetric fibered multiderivator exists in very great generality on arbitrary ringed sites (using work of Hovey, Gillespie, and Recktenwald).
To dispose of a category with compactifications for which the axioms of \ref{PARAXIOMS} hold one has to restrict the situation appropriately. 

\begin{DEF}
Let $(X, \OO_X)$ be a ringed site. Denote by
\[ \mathrm{Ch}(\mathrm{Mod}(X, \OO_X)) \]
the category of {\em unbounded} complexes of $\OO_X$-module sheaves on $X$. Denote by
$\mathcal{W}$ the class of quasi-isomorphisms. For a morphisms of ringed sites 
\[f: (X, \OO_X) \rightarrow (Y, \OO_Y) \]
denote by $f_*$ and $f^*$ the pull-back and push-forward functors and by $\otimes, \mathcal{HOM}$ the tensor product and internal Hom 
extended in the usual way to complexes. Actually, those are better encoded (together with their coherence isomorphisms) by saying that there is a 
bifibration of symmetric multicategories
\[ \mathrm{Ch}(\mathrm{Mod}) \rightarrow \mathrm{RSite}^{\op} \]
whose fibers are the above categories of complexes and where $\mathrm{RSite}^{\op}$ carries the natural symmetric multicategory structure \ref{PAROPMULTCAT}.
\end{DEF}

\begin{SATZ}[Hovey, Gillespie, Recktenwald]\label{SATZRECKTENWALD}
There are model category structures \[ (\mathrm{Ch}(\mathrm{Mod}(X, \OO_X)), \Cof, \Fib, \mathcal{W}) \] (for varying $(X, \OO_X)$ and with $\mathcal{W}$ being the quasi-isomorphisms) such that 
\[ \mathrm{Ch}(\mathrm{Mod}) \rightarrow \mathrm{RSite}^{\op} \]
is a bifibration of multi-model categories in the sense of \cite[Definition~5.1.3]{Hor15}. In this case this statement boils down to 
$\otimes, \mathcal{HOM}$ being a Quillen adjunction of two variables (plus a condition on units, i.e.\@ the model categories have to be monoidal) and for a
morphism $f: (X, \OO_X) \rightarrow (Y, \OO_Y)$ of ringed sites 
to $f^*, f_*$ being a Quillen adjunction.
\end{SATZ}
\begin{proof}
The model category structures in question are the ones obtained from the (flat, cotorsion) {\em cotorsion pair} on the Abelian category of sheaves of $\OO_X$-modules on $X$. 
See \cite[Corollary~3.3.5]{Rec19} which builds on ideas of Hovey \cite{Hov07} and Gillespie \cite{Gil06}. 
\end{proof}

\begin{KOR}\label{KORRECKTENWALD}
Let $\mathcal{S} = \mathrm{RSite}$ be the category of ringed sites. There is a symmetric fibered multiderivator with domain $\Cat$
\[ \mathbb{M} \rightarrow \mathbb{S}^{\op} \]
such that for a ringed space $(X, \OO_X)$ the fiber $I \mapsto \mathbb{M}(I)_{p^*(X, \OO_X)}$ is the usual derivator associated with the category $\mathrm{Ch}(\mathrm{Mod}(X, \OO_X))$ of unbounded complexes of sheaves of $\OO_X$-modules considered above. The fibered multiderivator is infinite and has stable, perfectly generated fibers. 
\end{KOR}
\begin{proof}\cite[Theorem 6.2]{Hor17b} using Theorem~\ref{SATZRECKTENWALD}. The fact that the fibers are perfectly generated is \cite[Theorem 4.1.12]{Rec19}.
\end{proof}

We proceed to discuss the verification of the axioms (F1--F6), (F4m--F5m) for certain ringed topological spaces.

\begin{PAR}\label{PARCOMPTOP}
Let $\mathcal{S}$ be the category of locally compact Hausdorff spaces.
Consider the following subclasses of morphisms in $\mathcal{S}$:
\begin{eqnarray*} 
\mathcal{S}_0 &:=& \{ \text{ open embeddings } \} \\
 \mathcal{S}_1 &:=& \{ \text{ dense open embeddings } \} \\
 \mathcal{S}_2 &:=& \{ \text{ proper morphisms } \}  
\end{eqnarray*} 
\end{PAR}

\begin{PROP}
The structure in \ref{PARCOMPTOP} is a category with compactificatiions in the sense of \ref{DEFCATCOMP}.
\end{PROP}
\begin{proof}
\cite[Example~4.1.3]{Rec19}.
\end{proof}

\begin{PAR}
Consider the following axioms for a topological space $X$ (cf.\@ \cite[4.5.1--2]{Rec19}):
\begin{itemize}
\item[(\dag)]  For every exact complex $\mathcal{E}_\bullet$ of soft sheaves $\Gamma(X;S_\bullet)$ is exact as well.
\item[(c\dag)]  For every exact complex $\mathcal{E}_\bullet$ of $c$-soft\footnote{i.e.\@ sheaves $\mathcal{E}$ such that $\Gamma(X; \mathcal{E}) \rightarrow \Gamma(K; \mathcal{E})$ is surjective for any compact $K \subseteq X$} sheaves $\Gamma_c(X;S_\bullet)$ is exact as well.
\end{itemize}
For compact spaces (\dag) and (c\dag) are equivalent. 
The property (c\dag) is stable under restriction to locally closed subspaces and under fiber products. Furthermore, the full subcategory of locally compact Hausdorff spaces satisfying (c\dag) is also a category with compactifications (with the restriction of the structure in \ref{PARCOMPTOP}), cf.\@ \cite[4.5.3--5]{Rec19}.
\end{PAR}

\begin{PROP}
Every topological space which is locally of finite cohomological dimension has {\em (c\dag)}. 
\end{PROP}
\begin{proof}
\cite[Theorem~4.5.9]{Rec19}.
\end{proof}

\begin{PAR}\label{PARPROPERTIESTOP}
Let $\mathcal{S} \subset \mathrm{RTop}$ be a subcategory (closed under fiber products) of the category of ringed spaces satisfying:
\begin{enumerate}
\item For all proper morphisms $f: X \rightarrow Y$ in $\mathcal{S}$ the fibers have property (\dag). (This is for example the case, if $X$ has  (c\dag) )
\item Every Cartesian square 
\[ \xymatrix{ (W, \OO_W) \ar[r]  \ar[d] & (Z, \OO_Z) \ar[d] &  \\
(Y, \OO_Y) \ar[r] & (X, \OO_X) & }\]
in $\mathcal{S}$ is Tor-independent. 
\item The (dense) open embeddings and proper morphisms as in \ref{PARCOMPTOP} form a compactification structure.
\end{enumerate}
cf.\@ \cite[Corollary~4.6.2]{Rec19}.
\end{PAR}
If 2.\@ is not satisfied it would be, of course, appropriate to work with cosimplicial rings instead...

\begin{BEISPIEL}\label{EXTOP}
Let $\mathcal{S}$ be the category of locally compact Hausdorff topological spaces satisfying {\em (c\dag)} and let $R$ be any commutative ring.
Endow every object in $\mathcal{S}$ with the constant sheaf defined by $R$. Then $\mathcal{S} \subset \mathrm{RTop}$ satisfies the properties of \ref{PARPROPERTIESTOP}.
\end{BEISPIEL}
cf.\@ \cite[Corollary~4.6.3]{Rec19}.

\begin{SATZ}[Spaltenstein, Recktenwald] Let $\mathcal{S}$ be a category of ringed spaces satisfying the axioms of \ref{PARPROPERTIESTOP} (e.g.\@ the category of Example~\ref{EXTOP}).
Then the restriction of the symmetric fibered multiderivator
\[ \mathbb{M} \rightarrow \mathbb{S}^{\op} \]
of Corollary~\ref{KORRECKTENWALD} satisfies the axioms (F1--F6), and (F4m--F5m) of \ref{PARAXIOMS} w.r.t.\@ the natural compactification structure \ref{PARCOMPTOP}.
\end{SATZ}
\begin{proof}
\cite[Corollary~4.6.2]{Rec19} building on ideas of Spaltenstein \cite{Spa88}.
\end{proof}

\begin{KOR}
 Let $\mathcal{S}$ be a category of ringed spaces satisfying the axioms of \ref{PARPROPERTIESTOP} (e.g.\@ the category of Example~\ref{EXTOP}).
 There is a symmetric derivator six-functor-formalism (i.e.\@ symmetric fibered multiderivator)
 \[ \mathbb{M} \rightarrow \SSS^{\cor}   \]
 such that for $(X, \OO_X) \in \mathcal{S}$ the fiber $I \mapsto \mathbb{M}(I)_{p^*(X, \OO_X)}$ is the usual derivator associated with the category $\mathrm{Ch}(\mathrm{Mod}(X, \OO_X))$ of unbounded complexes of sheaves of $\OO_X$-modules, and such that the push-forward along a multicorrespondence  
\[ \xymatrix{ & & & (A, \OO_A)  \ar[llld]_{g_1}\ar[ld]^{g_n}\ar[rd]^f \\
(S_1, \OO_{S_1}) & \cdots & (S_n, \OO_{S_n}) & ; & (T, \OO_T) 
 }\]
 in $\SSS^{\cor}(\cdot)$ is given up to unique isomorphism by
\[ f_! ( L g^*_1 - \overset{L}{\otimes} \cdots \overset{L}{\otimes} Lg_n^* -) \]
with the usual (derived) push-forward $f_!$ with proper support. Furthermore this extends to a proper six-functor-formalism in the sense of Definition~\ref{DEF6FUDER}, i.e.\@ we have extensions as oplax left fibered multiderivator and lax right fibered multiderivator
 \[ \mathbb{M} \rightarrow \SSS^{\cor,0,\oplax}  \quad  \mathbb{M} \rightarrow \SSS^{\cor,0,\lax} \]
\end{KOR}
\begin{proof}Apply Corollary~\ref{KORDER6FU} and Theorem~\ref{HAUPTSATZOPLAX}.
\end{proof}

\section{Example II: The stable homotopy categories and categories of motives}\label{SECTIONAYOUB}

In this section, it will be shown that the algebraic derivators $\mathbb{SH}$ of Ayoub which comprise
the stable homotopy categories of Morel-Voevodsky and various kinds of Voevodsky motives give rise to a symmetric fibered multiderivator satisfying the axioms of \ref{PARAXIOMS} and thus yield a symmetric derivator six-functor-formalism.

\begin{PAR} \label{PARCOMPSCH}
To discuss the examples of Ayoub, we take $\mathcal{S} := \mathcal{SCH}_S$ the category of quasi-projective schemes over a base scheme $S$.
$\mathcal{SCH}_S$ is equipped with a natural compactification structure in which
\begin{eqnarray*} 
\mathcal{S}_0 &=& \{ \text{ open immersions } \} \\
 \mathcal{S}_1 &=& \{ \text{ dense open immersions } \} \\
 \mathcal{S}_2 &=& \{ \text{ projective morphisms } \}  
 \end{eqnarray*}
 We leave it to the reader to check the axioms of \ref{DEFCATCOMP}. For (S5) see \cite[Lemme~1.3.9]{Ayo07I}.

The setting of Ayoub has been generalized to more general schemes over $S$. We will show in a subsequent article \cite{Hor22} that
even its restriction to affine schemes  of finite type over $S$ extends automatically to arbitrary schemes locally of finite type over $S$ (in fact, to certain higher geometric stacks). Hence the 
restriction does not matter so much. 
\end{PAR}

Recall \cite[Definition 4.4.23]{Ayo07I}:
\begin{DEF}\label{DEFCATCOEFF}
A {\bf category of coefficients} is a model category $\mathcal{M}$ with the following properties
\begin{enumerate}
\item $\mathcal{M}$ is left proper, cofibrantly generated, and stable;
\item the weak equivalences are stable under finite coproducts;
\item there is a subset $\mathcal{E} \subset \mathcal{M}$ of homotopically compact\footnote{\cite[Definition 4.4.22]{Ayo07II}} objects which generate $h(\mathcal{M})$ under arbitrary coproducts. 
\end{enumerate}
\end{DEF}

\begin{PAR}\label{PARSETTINGAYOUB}
Consider a triple $(\tau, \mathcal{M}, T)$ as in Ayoub \cite[Section~4.5]{Ayo07II} in which
\begin{itemize}
\item $\tau$ is either the etale or Nisnevich topology on $\mathcal{SCH}_S$.
\item $\mathcal{M}$ is a category of coefficients in the sense of Definition~\ref{DEFCATCOEFF}.
\item $T$ is a projectively cofibrant object of $\mathrm{PreShv}(\mathrm{Sm}/S, \mathcal{M})$ with the condition in \cite[4.5.18]{Ayo07II}.
\end{itemize}
\end{PAR}

\begin{SATZ}[Ayoub]\label{SATZAYOUB1}
Let $\SSS^{\op}$ be the symmetric pre-multiderivator represented by $\mathcal{S}^{\op} = \mathcal{SCH}_S^{\op}$ with the  symmetric multicategory structure \ref{PAROPMULTCAT}.
There is a symmetric fibered multiderivator with domain $\Cat$
\[ \mathbb{SH}^T_{\mathcal{M}} \rightarrow \SSS^{\op} \]
such that for a diagram $F: I \rightarrow \mathcal{S}^{\op}$ of schemes, we have
\[ \mathbb{SH}^T_{\mathcal{M}}(I)_F = \mathbb{SH}^{T}_{\mathcal{M}}(F^{\op}, I^{\op}) \]
where the right hand side is the ``algebraic derivator'' defined by Ayoub \cite[D\'efinition 4.5.21]{Ayo07I} and such that the pull-back along a multimorphism $g=(g_1, \dots, g_n)$ is given up to unique isomorphism by
\[ (L g^*_1 - ) \overset{L}{\otimes} \cdots \overset{L}{\otimes} (L g_n^*-) \]
with the functors $g_i^*$ as in \cite[Th\'eor\`eme~4.5.23]{Ayo07II}.
The fibered multiderivator is infinite (i.e.\@ satisfies (Der1${}^\infty$) and has stable, well-generated fibers. 
\end{SATZ}
\begin{proof}By \cite[Th\'eor\`eme~4.5.24]{Ayo07II}, the association 
\[ (I, F) \mapsto \mathbb{SH}^{T}_{\mathcal{M}}(F^{\op}, I^{\op}) \quad
 (\alpha, f) \mapsto L (f^{\op}, \alpha^{\op})^* \]
defines a pseudo-functor
\[ \Cat(\mathcal{S}^{\op}) \rightarrow \mathcal{CAT} \]
 noting that we have an isomorphism of strict 2-categories $\Cat(\mathcal{S}^{\op}) \cong (\mathrm{DiaSch}/S)^{1-\op}$ given by $(I, F) \mapsto (F^{\op}, I^{\op})$. 
Fixing $I$, the Grothendieck construction (applied to the obvious composition $\SSS^{\op}(I) \rightarrow \Cat(\mathcal{S}^{\op}) \rightarrow \mathcal{CAT})$ yields an opfibration
\begin{equation}\label{eqmpd}  \mathbb{SH}^T_{\mathcal{M}}(I) \rightarrow \SSS^{\op}(I). \end{equation}
The symmetric monoidal structure on $\mathbb{SH}^T_{\mathcal{M}}(F^{\op}, I^{\op})$ turns this into an opfibration of symmetric multicategories because of the monoidality of the $(g^{\op})_\bullet = L g^*$ (cf.\@ \cite[Th\'eor\`eme~4.5.24]{Ayo07II}). This defines the values of the pre-multiderivator $\mathbb{SH}^T_{\mathcal{M}}$.
One checks that the functoriality in $I$ turns $\mathbb{SH}^T_{\mathcal{M}}$ into a pre-multiderivator. We need to show the axioms of a fibered multiderivator:

(FDer0 left) holds by construction.

The first part of (FDer0 right), i.e.\@ the fiberedness of (\ref{eqmpd}), follows from Ayoub's axiom (DerAlg 2d) and the fact that the categories $\mathbb{SH}^{T}_{\mathcal{M}}(F^{\op}, I^{\op})$ are closed monoidal (the closedness is part of Ayoub's definition of monoidal model category). The second part of (FDer0 right) follows from (FDer5 left) by adjunction (cf.\@ \cite[Lemma 2.3.9]{Hor15}) and the validity of the latter axiom  will be shown below. 
 
(Der1) is Ayoub's axiom (DerAlg 0) and is clear in this case. Since the values of $\mathbb{SH}^{T}_{\mathcal{M}}$ are obtained as homotopy categories of model categories also
(Der1${}^{\infty}$) holds, i.e.\@ $\mathbb{SH}^{T}_{\mathcal{M}}$ is infinite. 

(Der2) follows from Ayoub's axiom (DerAlg 1).

(FDer3 left) follows from Ayoub's axiom (DerAlg 2g).

(FDer3 right) follows from Ayoub's axiom (DerAlg 2d).

(FDer4 left) follows from Ayoub's axiom (DerAlg 4'g) \cite[Remark~2.4.16]{Ayo07I}, cf.\@ 
\cite[Lemme~4.5.5]{Ayo07II}.

(FDer4 right) follows from (FDer4 left) and the other axioms\footnote{Indeed, it follows (FDer4 left), (FDer0 left), and (Der2) that for every diagram
\[ \xymatrix{  I \times_{/J} K \ar[r]^-B  \ar[d]_A \ar@{}[rd]|{\Swarrow^\mu}  & I \ar[d]^{\alpha} \\
K \ar[r]_-\beta  & J }  \]
the exchange 
$A_*^{(\beta^*S)} \SSS(\mu)(S)_\bullet B^* \rightarrow \beta^* \alpha_*^{(S)}$
is an isomorphism and hence also its adjoint
$ \alpha^* \beta_!^{(S)}  \rightarrow B_!^{(\alpha^*S)} \SSS(\mu)(S)^\bullet  A^*$.
The axiom (FDer4 right) is the special case $I=\{i\}$.}.

(FDer5 left) In the presence of the other axioms, the statement can be shown point-wise, i.e.\@ for a tuple of morphisms $g = (g_i)$ in $\mathcal{S}^{\op}$ and a diagram $I$ with projection $\pi: I \rightarrow \cdot$ the morphism
\[ \pi_! (\pi^* g)_\bullet(\pi^*-, \dots, \pi^*-, -, \pi^*-, \dots, \pi^*-) \rightarrow f_\bullet(-, \dots, -, \pi_!-, -, \dots, -)  \]
has to be an isomorphism. In other words $g_\bullet$ has to commute with homotopy colimits in all variables. For composites this may be shown for each factor individually. 
For the $g_{i,\bullet} = L (g_i^{\op})^*$ this follows because those have an adjoint which is computed point-wise, or also by the other axioms of a left fibered multiderivator, cf.\@ \cite[Proposition~2.3.26]{Hor15}. Hence it boils down to the statement that $\overset{L}{\otimes}$ commutes with homotopy colimits in both variables. This follows because $\otimes$ is left Quillen by definition of a monoidal model category. Note that $\mathbb{SH}^{T}_{\mathcal{M}}(S, \cdot)$ is, by definition, the homotopy category of such (cf.\@ \cite[Section~4.5.2]{Ayo07I}). 

Ayoub's axioms (DerAlg 0--4) are shown for $\mathbb{SH}$ in \cite[Th\'eor\`eme~4.5.30]{Ayo07II}.
The fibers of $\mathbb{SH}^{T}_{\mathcal{M}}$ are stable because those are the (usual) derivators associated with a stable model category \cite[Corollaire~4.4.21]{Ayo07II}, \cite[Lemme~4.4.35]{Ayo07II} and \cite[Corollaire~4.3.77]{Ayo07II}. In fact, this is the same reasoning as in \cite[Th\'eor\`eme~4.5.24]{Ayo07II} showing that the values $\mathbb{SH}^{T}_{\mathcal{M}}(F^{\op}, I^{\op})$ are triangulated. 
The fact that the triangulated categories $\mathbb{SH}^{T}_{\mathcal{M}}(X, \cdot)$ are well-generated in the sense of Neeman is stated in \cite{CD19}.
\end{proof}

\begin{BEISPIEL}
For example, let $\mathcal{M}$ be (unbounded) complexes of $\Lambda$-modules for a commutative ring $\Lambda$, let $T = (\PP^1_S, \infty_S) \otimes \Lambda$, and let $\tau$ be the etale topology. Then \cite[\S 3]{Ayo14}: 
\[ \mathbb{SH}_\mathcal{M}^T(\cdot)_X \cong \mathbb{DA}^{et}(S, \Lambda)  \]
where the right hand side is the category of etale Voevodsky motives without transfers (which is often equivalent to those with transfers). 
\end{BEISPIEL}

\begin{PAR}
To obtain a derivator six-functor-formalism using the construction in this article the  axioms of \ref{PARAXIOMS} have to be checked.
The constructions in this article comprise the construction of the $!$-functors together with the proofs of the validity of base-change, projection formula etc.\@
which is already done in Ayoub's work \cite{Ayo07I}. Nevertheless, a derivator six-functor-formalism is a substantially richer object than
the combination of crossed functors and algebraic derivators of Ayoub, for it comprises categories of ``coherent diagrams'' over diagrams {\em of correspondences} of schemes. Hence, in any case, it is necessary to redo the constructions. 
\end{PAR}

\begin{SATZ}[Ayoub]\label{SATZAYOUB2}
The symmetric fibered multiderivator
\[ \mathbb{SH}^T_{\mathcal{M}} \rightarrow \SSS^{\op} \]
of Theorem~\ref{SATZAYOUB1} satisfies the axioms (F1--F6), and (F4m--F5m) w.r.t.\@ the compactification structure of \ref{PARCOMPSCH}.
\end{SATZ}
\begin{proof}
Recall that the 
association 
\[ X \mapsto \mathbb{SH}_{\mathcal{M}}^T(\cdot)_X \]
is a stable homotopic pseudo-functor (in the sense of \cite[D\'efinition~1.4.1]{Ayo07I}) with values in triangulated categories, according to the results of \cite[Section~4.5.3]{Ayo07II}.

(F1) The existence of the left adjoint $\iota_!$ of $\iota^*$ for a morphism $\iota: (I, U) \rightarrow (I, S)$ which is a point-wise embedding follows from axiom (DerAlg 2g). 
$\iota^*$ (as functor on underlying categories) is furthermore triangulated. Hence, in an infinite stable derivator, $\iota^*$ (as morphism of derivators) commutes will all homotopy limits.
It follows also that $\iota_!$, as morphism between fibers, i.e.\@ for $\iota$ between constant diagrams, is a morphism of derivators, i.e.\@ is computed point-wise. This is not true for non-constant diagrams in general.

(F3) follows from $f_*=f_!$ for projective morphisms \cite[Scholie 1.4.2, 4]{Ayo07I} because $f_!$ (as functor on underlying categories) is triangulated and has a right adjoint. Hence, in an infinite stable derivator, $f_* = f_!$ (as morphism of derivators) commutes with all homotopy colimits.

All the other axioms only involve the functors between the underlying categories, and this will not be  mentioned explicitly anymore. 

(F2) is part of axiom (2) of a homotopy stable functor because $\iota_!$ is the left adjoint of $\iota^*$ in this case. Hence the unit $1 \rightarrow \iota^* \iota_!$ is the adjoint of the counit $\iota^* \iota_* \rightarrow 1$. Thus $\iota_*$ and $\iota_!$ being fully-faithful are equivalent statements.

(F4) is \cite[Scholie 1.4.2, 5]{Ayo07I}.

(F5) is part of axiom (3) of a homotopy stable functor.

(F6) follows from $f_!=f_*$ for projective morphisms \cite[Scholie 1.4.2, 4]{Ayo07I}. 

(F4m) follow from the projection formula \cite[Th\'eor\`eme~2.3.40]{Ayo07I} together with $f_!=f_*$ for projective morphisms \cite[Scholie 1.4.2, 4]{Ayo07I}. 

(F5m), using \cite[Remark 6.3]{Hor17}, also follows from the projection formula \cite[Th\'eor\`eme~2.3.40]{Ayo07I} together with the fact that $\iota_!$ (the functor appearing in \cite[Th\'eor\`eme~2.3.40]{Ayo07I}) is the left adjoint of $\iota^*$ in this case. 
\end{proof}

\begin{KOR}\label{KORAYOUB}For any choice of objects as in \ref{PARSETTINGAYOUB}
there is a symmetric derivator six-functor-formalism (i.e.\@ symmetric fibered multiderivator) with domain $\Cat$
\[ \mathbb{SH}_{\mathcal{M}}^T \rightarrow \SSS^{\cor} \]
such that 
\begin{enumerate}
\item for a diagram $F: I \rightarrow \mathcal{S}^{\op}$ of quasi-projective schemes over $S$ (embedded via the inclusion $\SSS^{\op} \rightarrow \SSS^{\cor}$), we have 
\[ \mathbb{SH}_{\mathcal{M}}^T(I)_F \cong \mathbb{SH}_{\mathcal{M}}^T(F^{\op}, I^{\op}) \]
(equivalence of monoidal categories)
where the right hand side is the ``algebraic derivator'' defined by Ayoub \cite[D\'efinition 4.2.21]{Ayo07II};
\item the push-forward along a multicorrespondence 
\[ \xymatrix{ & & & A  \ar[llld]_{g_1}\ar[ld]^{g_n}\ar[rd]^f \\
S_1 & \cdots & S_n & ; & T 
 }\]
 in $\SSS^{\cor}(\cdot)$ is given up to unique isomorphism by
\[ f_! ( L g^*_1 - \overset{L}{\otimes} \cdots \overset{L}{\otimes} Lg_n^* -) \]
with the functors $f_!$ of \cite[Proposition~1.6.46]{Ayo07I} and the $g_i^*$ as in \cite[Th\'eor\`eme~4.5.23]{Ayo07II}, cf.\@ also \cite[Scholie~1.4.2]{Ayo07I};
\item $\mathbb{SH}_{\mathcal{M}}^T$ is infinite (i.e.\@ satisfies (Der1${}^\infty$)) and has stable, well-generated fibers. 
\item $\mathbb{SH}_{\mathcal{M}}^T$ extends to a proper symmetric derivator six-functor-formalism, i.e.\@ to an oplax left symmetric fibered multiderivator, resp.\@ to a lax right symmetric fibered multi-derivator 
\[  \mathbb{SH}_{\mathcal{M}}^T \rightarrow \SSS^{\cor,0,\oplax} \qquad \mathbb{SH}_{\mathcal{M}}^T \rightarrow \SSS^{\cor,0,\lax}.  \]
\end{enumerate}
\end{KOR}
\begin{proof}
Apply Corollary~\ref{KORDER6FU}, and Theorem~\ref{HAUPTSATZOPLAX}, respectively, to the fibered multiderivator 
\[ \mathbb{SH}^T_{\mathcal{M}} \rightarrow \SSS^{\op} \]
of Theorem~\ref{SATZAYOUB1} and the compactification structure of \ref{PARCOMPSCH}. 
The fact that the functors $f_!$ constructed in the main construction of this article coincide with Ayoub's up to unique isomorphism can be checked for open embeddings $\iota$ and projective morphisms $\overline{f}$. And $\iota_!$ is canonically isomorphic to a left adjoint of $\iota^*$, and $f_!$ is canonically isomorphic to $f_*$, respectively, in both constructions.
\end{proof}

\appendix

\section{Construction of 2-multicategories}\label{APPENDIX2MULTICAT}

In this article (and the subsequent article \cite{Hor22}) the following construction of (strict) symmetric 2-multicategories will be used at several places. It is similar to the usual strictification of bicategories, however, without 
the pain of explicitly defining a bicategory (or here even bimulticategory). The construction assumes that the (weakly) associative composition is encoded in a strict contravariant functor 
\[ \Delta^{\op} \rightarrow \mathcal{CATW} \quad (\text{resp.} \   \Delta^{\op}_T \rightarrow \mathcal{CATW}, \quad \text{resp.} \  \Delta^{\op}_S \rightarrow \mathcal{CATW})  \]
from the simplex category (resp.\@ category of trees, resp.\@ category of symmetric trees) to categories with weak equivalences satisfying obvious axioms. 
We state and prove a non-multi variant first:

\begin{PROP}\label{PROPCONSTR2CAT}
Let 
\[ C: \Delta^{\op} \rightarrow \mathcal{CATW} \]
be a strict functor with values in categories with weak equivalences such that
\begin{enumerate}
\item The induced functor
\begin{equation}\label{eqproj0} \prod e_{i}^* : C(\Delta_n) \rightarrow \prod_{0 \dots n} C(\Delta_0) \end{equation}
is surjective on objects\footnote{One could weaken this, of course, to being essentially surjective, which would be the correct notion. However, this gadget is used only for constructional
purposes and the definition as stated saves a bit of pain.}.
\item For $X_0, \dots, X_n \in C(\Delta_0)$, the induced functor
\begin{equation}\label{eqproj1} \prod e_{i-1,i}^*: C(\Delta_n)_{(X_i)}[\mathcal{W}^{-1}_{(X_i)}]  \rightarrow \prod_{1 \dots n} C(\Delta_1)_{(X_{i-1}, X_{i})}[\mathcal{W}^{-1}_{(X_{i-1}, X_{i})}]   \end{equation}
is an equivalence
where $C(\Delta_n)_{(X_i)}$ denotes the fiber over $(X_0, \dots, X_n)$ under the functor (\ref{eqproj0})
and $\mathcal{W}_{(X_i)}$ is the restriction of the class of weak equivalences to it. 
\end{enumerate}
Then there exists a 2-category $\mathcal{C}$ defined as follows
\begin{enumerate}
\item Objects are the objects of $C(\Delta_0)$.
\item 1-morphisms in $\Hom(X, Y)$ for objects in $X, Y \in C(\Delta_0)$ are chains $\xi_1, \dots, \xi_n$ (possibly empty) of objects of $C(\Delta_1)$ with matching start and endpoints connecting $X$ and $Y$.
\item 2-morphisms from  $\xi_1, \dots, \xi_n$  to $\nu_1, \dots, \nu_m$ in $\Hom(X, Y)$,  are the morphisms from $\xi_1 \circ \cdots \circ \xi_n$ to $\xi_1 \circ \dots \circ \xi_n$ in the homotopy category
\[ C(\Delta_1)_{(X, Y)}[\mathcal{W}^{-1}_{(X, Y)}].   \]
Here, for $n \ge 1$, $\xi_1 \circ \cdots \circ \xi_n := e_{0n}^* \xi_{1, \dots, n}$ where $\xi_{1, \dots, n}$ is in the essential preimage of $\xi_1, \dots, \xi_n$ under the functor (\ref{eqproj1}) and for $n=0$ it is set to $\delta^* X$ where $\delta: \Delta_1 \rightarrow \Delta_0$ is the degeneracy morphism. 
\end{enumerate}
The composition of 1-morphisms is by concatenation. The vertical composition of 2-morphisms is constructed in the proof. 
\end{PROP}

\begin{BEM}
The objects $\xi_{1, \dots, n}$ together with isomorphisms $\alpha_{i-1,i}: e_{i-1,i}^* \xi_{1, \dots, n} \cong \xi_i$ for all $i$ have to be chosen once and for all for any chain $\xi_1, \dots, \xi_n$ a priori. Obviously the resulting sets of 2-morphisms do not depend on this choice up to unique isomorphism and thus the resulting 2-category is well determined up to unique isomorphism.

Alternatively, defining the categories of morphisms simply as $C(\Delta_1)_{(X, Y)}[\mathcal{W}^{-1}_{(X, Y)}]$ one would obtain a bicategory. However, we decided to work with (strict) 2-categories everywhere in this project, and will not even give the definition of bimulticategory. 
\end{BEM}

\begin{proof}
Using the equivalence of categories (\ref{eqproj1}), the isomorphisms  $\alpha_{i,i-1}$ {\em uniquely} determine an isomorphism
\[ \alpha_e: e^*(\xi_{1,\dots,n}) \cong \xi_{e(1),\dots,e(m)}   \]
for each inclusion $e: \Delta_{m} \hookrightarrow \Delta_{n}$ of the form $i \mapsto i+k$ in such a way that 
\[ \xymatrix{  e_{e(i)-1,e(i)}^*e^*(\xi_{1,\dots,n}) \ar[r]^{\alpha_e}\ar[rd]_{\alpha_{i-1,i}}   &  e_{e(i)-1,e(i)}^*\xi_{e(1),\dots,e(n)}  \ar[d]^{\alpha_{e(i-1),e(i)}}  \\
& \xi_i } \]
commutes for all $i=0, \dots, m$. 

The only unclear point  is the vertical composition of 2-morphisms and its associativity:
 
Consider two composable and parallel pairs of 1-morphisms $(\xi_1,\dots, \xi_n) \in \Hom(X,Y)$ and $(\xi_{n+1}, \dots, \xi_{m}) \in \Hom(Y,Z)$,  as well as, $(\xi_1',\dots, \xi_{n'}') \in \Hom(X,Y)$ and $(\xi_{n'+1}', \dots, \xi'_{m'}) \in \Hom(Y,Z)$ and 2-morphisms induced by
 \[  e_{0,n}(\xi_{1,\dots,n}) \rightarrow e_{0,n}(\xi_{1,\dots,n'}') \text{ and } e_{n,m}(\xi_{n+1,\dots,m}) \rightarrow e_{n',m'}(\xi'_{n'+1,\dots,m'}) \]
 
 For an empty chain we define the `composition' in $C(\Delta_1)_{(X, X)}$ to be the pull-back $\delta^*X$ under the functor $\delta: \Delta_0 \rightarrow \Delta_1$. 
 We leave it to the reader
 to verify the statements involving empty chains and assume $n\ge1$, $m \ge 1$, $n' \ge 1$, and $m' \ge 1$ from now on. 
 
 As mentioned there are uniquely determined isomorphisms 
 \begin{eqnarray*}
  \alpha_{0,\dots,n}: e_{0,\dots,n}(\xi_{1, \dots, m}) &\rightarrow& \xi_{1,\dots,n}   \\
  \alpha_{n,\dots,m}: e_{n,\dots,m}(\xi_{1, \dots, m}) &\rightarrow& \xi_{n+1,\dots,m}   \\
  \alpha_{0,\dots,n'}: e_{0,\dots,n}(\xi'_{1, \dots, m'}) &\rightarrow& \xi_{1,\dots,n'}'   \\
  \alpha_{n',\dots,m'}: e_{n',\dots,m'}(\xi_{1, \dots, m'}') &\rightarrow& \xi_{n'+1,\dots,m'}'
 \end{eqnarray*}
 compatible with the chosen isomorphisms $\alpha_{i-1,i}$ (for the respective composition). 
 We get the compositions
 \[ \xymatrix{ e_{0,n}^*e_{0,\dots,n}^*(\xi_{1, \dots, m}) \ar[r] &  e_{0,n}^*\xi_{1,\dots,n} \ar[r]  &  e_{0,n'}^*\xi_{1,\dots,n'} \ar[r] &  e_{0,n'}^*e_{1,\dots,n'}^*(\xi'_{1, \dots, m'})    } \]
 \[ \xymatrix{ e_{n,m}^*e_{n,\dots,m}^*(\xi_{1, \dots, m}) \ar[r] &  e_{n,m}^*\xi_{n+1,\dots,m} \ar[r]  &  e_{n',m'}^*\xi_{n'+1,\dots,m'} \ar[r] &  e_{n',m'}^*e_{n'+1,\dots,m'}^*(\xi'_{1, \dots, m'})    } \]
 This defines (using the equivalence of categories (\ref{eqproj1})) a unique morphism 
 \[ \xymatrix{ e_{0,n,m}^*(\xi_{1, \dots, m}) \ar[r] &  e_{0,n',m'}^*(\xi'_{1, \dots, m'})    } \]
 and the composition is defined to be its image in 
 \[ \xymatrix{ e_{0,m'}^*(\xi_{1, \dots, m}) \ar[r] &  e_{0,m'}^*(\xi'_{1, \dots, m'})    } \]
 
 Actually, the same procedure defines a well-defined $n$-ary composition for arbitrary $n$. A simple calculation shows that this
 $n$-ary composition is compatible with iterated compositions thus showing that the (vertical) composition of 2-morphisms is associative. 
\end{proof}

We will need also a multicategorical and symmetric multicategorical version of this construction with essentially the same proof:
\begin{PROP}\label{PROPCONSTRSYMMULTI}
Denote $\Delta_T$ the category of trees and $\Delta_S$ the category of symmetric trees. 
Let 
\[ C: \Delta_T^{\op} \rightarrow \mathcal{CATW}  \text{ resp.\@ } C: \Delta_S^{\op} \rightarrow \mathcal{CATW} \]
be a strict functor with values in categories with weak equivalences
such that
\begin{enumerate}
\item For each $\tau \in \Delta_T$ the induced functor
\begin{equation}\label{eqproj0m} \prod_{o \in \tau} e_{o}^*:  C(\tau) \rightarrow \prod_{o \in \tau} C(\Delta_0) \qquad \text{resp.\@} \qquad  \prod_{o \in \tau} e_{o}^*:  C(\tau^S) \rightarrow \prod_{o \in \tau} C(\Delta_0)   \end{equation}
is surjective on objects. 
\item For each $\tau \in \Delta_T$ and collection
$(X_o)_{o \in \tau}$ with $X_o \in C(\Delta_0)$, the induced functor
\begin{equation}\label{eqproj1m} \prod_m e_{m}^*: C(\tau)_{(X_o)}[\mathcal{W}^{-1}_{(X_o)}]  \rightarrow \prod_{m} C(\Delta_{1,k_m})_{(X_o)}[\mathcal{W}^{-1}_{(X_o)}]
\end{equation}
resp.\@
\begin{equation*} \prod_m e_{m}^*: C(\tau^S)_{(X_o)}[\mathcal{W}^{-1}_{(X_o)}]  \rightarrow \prod_{m} C(\Delta_{1,k_m}^S)_{(X_o)}[\mathcal{W}^{-1}_{(X_o)}]
\end{equation*}
is an equivalence where $C(\tau)_{(X_o)}$ denotes the fiber over $(X_o)_{o \in \tau}$ under the functor (\ref{eqproj0m}) 
and $\mathcal{W}_{\{X_i\}}$ is the restriction of the class of weak equivalences to it.
Here $m$ runs over {\em the generating} multimorphisms of $\tau$. By definition of tree each such $m$ defines a unique embedding $\Delta_{1,k_m} \hookrightarrow \tau$ for a uniquely determined $k_m \in \N_0$. 
\end{enumerate}
Then there exists a 2-multicategory (resp.\@ symmetric 2-multicategory) $\mathcal{C}$ defined as follows
\begin{enumerate}
\item Objects are the objects of $C(\Delta_0)$ 
\item 1-morphisms in $\Hom(X_1, \dots, X_k; Y)$ for objects in $X_1, \dots, X_n, Y \in C(\Delta_0)$ are given by a tree $\tau$, possibly empty, an inclusion $e: \Delta_{1,k} \rightarrow \tau$  (resp.\@ $e: \Delta_{1,k}^S \rightarrow \tau^S$) with image of maximal length, collections $\xi_m$ of objects of $C(\Delta_{1,k_m})$ (resp.\@ of $C(\Delta_{1,k_m}^S)$) for each morphism $m$ of length $1$ in $\tau$ with matching start and endpoints connecting $X_1, \dots, X_k$ and $Y$ (matched via $e$). Note that in the non-symmetric case the morphism $e$ is unique. 
\item 2-morphisms from  $(\tau, e, (\xi_m))$  to $(\tau', e', (\xi_m'))$ in $\Hom(X_1, \dots, X_k; Y)$, are the morphisms from $e^*(\circ (\xi_m))$ to $(e')^*(\circ (\xi_m'))$ in the homotopy category
\[ C(\Delta_{1,k})_{(X_1, \dots, X_k; Y)}[\mathcal{W}^{-1}_{(X_1, \dots, X_k; Y)}]  \quad (\text{resp. } C(\Delta_{1,k}^S)_{(X_1, \dots, X_k; Y)}[\mathcal{W}^{-1}_{(X_1, \dots, X_k; Y)}])  \]
Here, for $n \ge 1$, $\circ (\xi_m) := e^* \xi$ where $\xi$ is in the essential preimage of $(\xi_m)$ under the functor (\ref{eqproj1m}) and for $n=0$ it is set to $\delta^* X$ where $\delta: \Delta_1 \rightarrow \Delta_0$ is the degeneracy morphism. 
\end{enumerate}
In the symmetric case, the respective symmetric groups act on the inclusions $e$ yielding a strict action
\[ \sigma: \Hom(X_1, \dots, X_n; Y) \rightarrow \Hom(X_{\sigma(1)}, \dots, X_{\sigma(n)}; Y) \]
compatible with composition. 
The composition of 1-morphisms is by concatenation and the composition of 2-morphisms is done completely analogously to  \ref{PROPCONSTR2CAT}. 
\end{PROP}

\section{(co)Cartesian projectors}\label{COCARTPROJ}

In this appendix, the (co)Cartesian projectors needed in the construction of {\em proper} derivator six-functor-formalisms in Section~\ref{SECTCONSTPROPER} are constructed.
It would be possible, in principle, to construct them using the theory of well-generated triangulated categories (cf.\@ \cite[\S 4.3]{Hor15}). However, in this case, an explicit construction is available with the aid of which many properties become more clearly visible. 
Recall the notation from \ref{COMPONENTS} and \ref{COMPONENTSOPLAX}. Let $\widetilde{S}$ be any object in $\SSS(\tww{I})$ that is the specialization of an interior compactification of a $S \hookrightarrow \overline{S}$ in $\Cor^{\comp,\mathrm{(op)lax}}_I(\tau)$ (cf.\@ Definition~\ref{DEFCORCOMP})
to any object of $\tww{\tau}$. (Note that this does not need to be an interior compactification itself).

\begin{PAR}\label{LEFTCARTPROJ}
We will show that the fully-faithful inclusion
\[ \DD( \twwc I)^{2-{\cart}}_{\pi_{234}^* (\widetilde{S}^{\op})}\ \hookrightarrow \DD(\twwc I)^{}_{\pi_{234}^* 
(\widetilde{S}^{\op})} \]
has a left adjoint $\Box_!$, which we will call a {\bf left Cartesian projector} (cf.\@ also \cite[Section~2.4]{Hor15}).
It induces a left adjoint also of the restriction: 
\[ \DD( \twwc I)^{4-\cocart, \ws, 2-{\cart}}_{\pi_{234}^* (\widetilde{S}^{\op})}\ \hookrightarrow \DD(\twwc I)^{ 4-{\cart}, \ws}_{\pi_{234}^* 
(\widetilde{S}^{\op})} \]
that is to say: $\Box_!$ preserves the conditions of being simultaneously $4$-coCartesian and well-supported. 
Such a left Cartesian projector (or rather its composition with the fully-faithful inclusion) can be specified by an endofunctor of $\DD(\twwc I)^{}_{\pi_{234}^* 
\widetilde{S}^{\op}}$ together with a natural transformation
\[ \nu: \id  \Rightarrow \Box_! \]
such that
\begin{enumerate}
\item $\Box_! \mathcal{E}$ is $2$-Cartesian for all objects $\mathcal{E}$,
\item  $\nu_{\mathcal{E}}$ is an isomorphism on $2$-Cartesian objects $\mathcal{E}$,
\item $ \nu_{\Box_!\mathcal{E}} = \Box_! \nu_{\mathcal{E}}$ holds true. 
\end{enumerate}
\end{PAR}

\begin{PAR}\label{RIGHTCOCARTPROJ}
Furthermore, we will show that the fully-faithful inclusion
\[ \DD( \twwc I)^{4-\cocart, \ws, 2-{\cart}}_{\pi_{234}^* (\widetilde{S}^{\op})}\ \hookrightarrow \DD(\twwc I)^{\ws, 2-{\cart}}_{\pi_{234}^* (\widetilde{S}^{\op})} \]
has a right adjoint $\Box_*$ which we will call a {\bf right coCartesian projector} (cf.\@ also \cite[Section~2.4]{Hor15}).

A right coCartesian projector (or rather its composition with the fully-faithful inclusion) can be specified by an endofunctor $\Box_*$ of $\DD(\twwc I)^{\ws, 2-{\cart}}_{\pi_{234}^* \widetilde{S}^{\op}}$ together with a natural transformation
\[ \nu: \Box_* \Rightarrow \id  \]
such that 
\begin{enumerate}
\item $\Box_* \mathcal{E}$ is $4$-coCartesian for all objects $\mathcal{E}$ (in the source category),
\item  $\nu_{\mathcal{E}}$ is an isomorphism on $4$-coCartesian objects  $\mathcal{E}$,
\item $ \nu_{\Box_*\mathcal{E}} = \Box_* \nu_{\mathcal{E}}$ holds true. 
\end{enumerate}

This, in particular, gives a push-forward functor
\[  \Box_* (\pi_{234}^*\widetilde{f})_*: \DD(\twwc I)^{4-\cocart, \ws, 2-{\cart}}_{\pi_{234}^* (\widetilde{A}')^{\op}}\ \rightarrow \DD(\twwc I)^{4-\cocart, \ws, 2-{\cart}}_{\pi_{234}^* \widetilde{T}^{\op}}\]
for a point-wise proper morphism
\[ \widetilde{f}: \widetilde{A}' \rightarrow \widetilde{T} \]
arising from an oplax morphism as in \ref{COMPONENTSOPLAX}.

Note that $(\pi_{234}^*\widetilde{f})_*$ preserves automatically the condition of being $2$-Cartesian and well-supported by Lemma~\ref{LEMMAEXISTENCE3FUNCTORSOPLAX}, 3. Proposition~\ref{PROPCOCARTPROJ} below shows that this is still computed point-wise (in the sense of $\EE' \rightarrow \SSS^{\cor, 0, \oplax}$), i.e. that we have for any $\alpha: I \rightarrow J$
\[ \alpha^* \Box_* (\pi_{234}^*\widetilde{f})_* \cong  \Box_* (\pi_{234}^*(\tww\alpha)^*\widetilde{f})_* \alpha^*.  \]
\end{PAR}

\begin{PAR}\label{COCARTPROJPREP}
We need some technical preparation. Consider the projections:
\[ \pi_{1234}, \pi_{1237},\pi_{1267}, \pi_{1567}, \pi_{4567}: {}^{\downarrow\uparrow\uparrow\downarrow\uparrow\uparrow\downarrow}I \rightarrow \twwc I.  \]
We have obvious natural transformations
\[ \pi_{1234} \Rightarrow \pi_{1237} \Leftarrow \pi_{1267} \Leftarrow \pi_{1567} \Rightarrow \pi_{4567} \]
and therefore
\[ \pi_{1234}^* \Rightarrow \pi_{1237}^* \Leftarrow \pi_{1267}^* \Leftarrow \pi_{1567}^* \Rightarrow \pi_{4567}^* \]
If we plug in $\pi_{234}^*\widetilde{S}$ for $\widetilde{S}$ an object in $\SSS(\tww{I})$ that is the specialization of an interior compactification of a morphism $\tau \rightarrow \Fun^{\mathrm{(op)lax}}(I, \mathcal{S}^{\cor})$
to an object of $\tww{\tau}$ (cf.\@ \ref{COMPONENTS} and \ref{COMPONENTSOPLAX}), we get morphisms of diagrams in $\mathcal{S}$:
\[ \xymatrix{  \pi_{234}^*\widetilde{S}^{} \ar@{<-}[r]^-{g} & \pi_{237}^*\widetilde{S}^{} \ar@{^{(}->}[r]^-{\iota} & \pi_{267}^*\widetilde{S}^{} \ar@{->>}[r]^-{\overline{f}} & \pi_{567}^*\widetilde{S}^{} \ar@{=}[r] &  \pi_{567}^*\widetilde{S}^{}  } \]
and therefore natural transformations
\begin{eqnarray*}
 g^* \pi_{1234}^* &\Rightarrow&  \pi_{1237}^*  \\
 \pi_{1237}^* &\Leftarrow& \iota^* \pi_{1267}^*  \\
 \overline{f}_* \pi_{1567}^* &\Leftarrow&  \pi_{1267}^*  
\end{eqnarray*}
of functors between fibers.
\end{PAR}

\begin{LEMMA}
The natural transformation
\[ \pi_{4567,!}^{(\pi_{234}^*\widetilde{S}^{\op})} \pi_{1567}^* \Rightarrow \id \]
induced by the natural transformation
\[ \pi_{1567}^* \Rightarrow \pi_{4567}^* \]
of functors 
\[  \pi_{1567}^* ,  \pi_{4567}^*:  \DD({}^{\downarrow\uparrow\uparrow\downarrow\uparrow\uparrow\downarrow}I)_{\pi_{567}^*S^{\op}} \rightarrow \DD(\twwc{I})_{\pi_{234}^*S^{\op}} \]
is an isomorphism.
\end{LEMMA}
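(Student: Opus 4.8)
The statement is a standard ``vanishing of a comma-category correction term'' computation for relative left Kan extensions, and I would attack it via the cofinality criterion, i.e.\@ by exhibiting, for each object of $\twwc I$, an appropriate final (or initial) subcategory of the relevant fiber. First I would unwind the definitions: the functor $\pi_{4567}: {}^{\downarrow\uparrow\uparrow\downarrow\uparrow\uparrow\downarrow}I \to \twwc I$ is an opfibration (this is the kind of statement recorded in \ref{PARTW2}, since the relevant block of arrow-directions $\downarrow\uparrow\uparrow\downarrow$ ends in $\downarrow$ on one side), so the relative left Kan extension $\pi_{4567,!}$ is computed fiberwise as a homotopy colimit over the fibers. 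The fiber of $\pi_{4567}$ over an object $(j_1 \to j_2 \to j_3 \to j_4)$ of $\twwc I$ is the category of tuples $(j_1 \to j_2 \to j_3 \to j_4) $ together with a prolongation to $j_4 = j_4 = j_4 \to \cdot$ — concretely it is isomorphic to the comma category $j_4 \times_{/I} I$, which has \emph{initial} object the identity sequence $j_4 = j_4 = j_4 = j_4$. Dually, the natural transformation $\pi_{1567} \Rightarrow \pi_{4567}$ is precisely the one induced by that initial object.

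\textbf{Key steps.} (1) Establish that $\pi_{4567}$ is an opfibration and identify its fiber over an object of $\twwc I$ with a comma category that has an initial object, namely the constant sequence at $j_4$ in the first coordinates. (2) Observe that $\pi_{567}^* \widetilde S$ restricted to such a fiber is a \emph{constant} diagram (the value at every object of the fiber is $\widetilde S(j_4 = j_4 = j_4)$, because $\pi_{567}$ of any object in this fiber is the sequence $j_4 = j_4 = j_4$), so over each fiber we are merely computing a homotopy colimit in the fixed fiber category $\DD(\cdot)_{\widetilde S(j_4=j_4=j_4)^{\op}}$ of a constant object. (3) Apply the standard fact about usual derivators: the homotopy colimit over a category with an initial object of a constant diagram is the evaluation at that initial object (equivalently: $p_!$ for $p: F \to \cdot$ with $F$ having an initial object $x$ is $x^*$, cf.\@ the cited \cite[Lemma 7.21 and Corollary 7.22]{Hor16} used elsewhere in this article, e.g.\@ in the proof of Lemma~\ref{LEMMAKAN2}), and check that the evaluation-at-initial-object is exactly the transformation $\pi_{1567}^* \Rightarrow \pi_{4567}^*$ in question. (4) Conclude by (Der2)/2-conservativity that the induced map $\pi_{4567,!}\pi_{1567}^* \Rightarrow \id$ is an isomorphism, since it is so after applying every $i^*$ for $i \in \twwc I$.

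\textbf{Main obstacle.} The computation itself is routine once the fiber is correctly identified; the one genuinely careful point is step (2)–(3), namely matching up the \emph{specific} natural transformation $\pi_{1567}^* \Rightarrow \pi_{4567}^*$ (coming from the arrow-direction combinatorics of ${}^{\downarrow\uparrow\uparrow\downarrow\uparrow\uparrow\downarrow}I$) with the canonical ``evaluate at the initial object'' transformation arising from the opfibration structure. This is a compatibility of two a priori different descriptions of the same 2-cell, and verifying it requires tracing through the functor $\pi_{4567}$ and the natural transformation $\pi_{1567} \Rightarrow \pi_{4567}$ at the level of objects and checking that the induced comparison is the counit of the fiberwise ($p_!,p^*$)-adjunction. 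I expect this to be a short but slightly fiddly diagram chase, not a conceptual difficulty; everything else (opfibration, constancy of the base diagram on fibers, initial object of the comma category, behavior of homotopy colimits over categories with initial objects) is either immediate from \ref{PARTW2} or already invoked in the same form earlier in the paper.
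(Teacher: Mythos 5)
Your overall strategy (compute $\pi_{4567,!}$ fiberwise over the opfibration $\pi_{4567}$ and then argue by cofinality) is the right one and matches the paper's, but two of your concrete claims are wrong and the gap between them is exactly where the real work lies. First, the fiber of $\pi_{4567}$ over $i=(i_1\rightarrow i_2\rightarrow i_3\rightarrow i_4)$ is not a comma category attached at the \emph{last} vertex $i_4$; the base tuple occupies positions $4$--$7$ of the $7$-tuple, so the varying part is positions $1$--$3$ mapping into position $4$, and the fiber is ${}^{\downarrow\uparrow\uparrow}(I\times_{/I}i_1)$ --- a twisted-arrow-type category built on the comma category over the \emph{first} vertex $i_1$. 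This category has neither an initial nor a final object in general. Second, the diagram being colimited, $e_i^*\pi_{1567}^*\mathcal{E}$, is \emph{not} constant: $\pi_{1567}$ retains position $1$, which varies over the fiber, so the restriction is $\pi_1^*\rho^*\mathcal{E}$ where $\rho:I\times_{/I}i_1\rightarrow\twwc{I}$ sends $(k\rightarrow i_1)$ to $(k\rightarrow i_2\rightarrow i_3\rightarrow i_4)$. (What \emph{is} constant on the fiber is the base $\pi_{567}^*\widetilde{S}$ --- with value $\widetilde{S}(i_2\rightarrow i_3\rightarrow i_4)$, not $\widetilde{S}(i_4=i_4=i_4)$ --- which is what reduces the computation to an ordinary homotopy colimit in a single fiber of $\DD$; but that does not make the coefficient object constant.)

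Consequently your step (3) as stated does not apply, and the missing argument is a two-step cofinality reduction: one factors $\pi_{1567}\circ e_i=\rho\circ\pi_1$ with $\pi_1:{}^{\downarrow\uparrow\uparrow}(I\times_{/I}i_1)\rightarrow I\times_{/I}i_1$ the projection to the first coordinate; $\pi_1$ is a fibration whose fibers have initial objects, so $\id\rightarrow\pi_{1,*}\pi_1^*$ is an isomorphism and hence, by adjunction, so is the counit $\pi_{1,!}\pi_1^*\rightarrow\id$. This reduces the homotopy colimit to one over $I\times_{/I}i_1$ of the (non-constant) diagram $\rho^*\mathcal{E}$; that comma category has a \emph{final} object $\id_{i_1}$ (final, not initial --- for a left Kan extension of a non-constant diagram it is finality that you need), and evaluation there returns $\rho(\id_{i_1})^*\mathcal{E}=i^*\mathcal{E}$. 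Your identification of the resulting map with the given transformation $\pi_{1567}^*\Rightarrow\pi_{4567}^*$ (your step (4) and the "main obstacle") is fine once the above is in place, but as written the proof would not go through: you would be evaluating a non-constant diagram at an object that is not final in a category you have misidentified.
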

\begin{proof}
$\pi_{4567}$ is an opfibration (cf.\@ \ref{PARTW2}). Denote
by $e_i:  {}^{\downarrow\uparrow\uparrow} (I \times_{/I} i_1) \hookrightarrow {}^{\downarrow\uparrow\uparrow\downarrow\uparrow\uparrow\downarrow} I$ the inclusion of
the fiber over an object 
\[ i = \{i_1 \rightarrow i_2 \rightarrow i_3 \rightarrow i_4 \}. \]
We have
\[ i^* \pi_{4567,!} \pi_{1567}^* = \hocolim_{{}^{\downarrow\uparrow\uparrow} (I \times_{/I} i_1)} e_i^* \pi_{1567}^*  \]
We can factor 
$\pi_{1567} \circ  e_i$ in the following way
\[ \xymatrix{ {}^{\downarrow\uparrow\uparrow} (I \times_{/I} i) \ar[r]^-{\pi_1} & I \times_{/I} i_1 \ar[r]^\rho \ar[r] & \twwc{I} }\]
where $\rho$ maps $i' \rightarrow i_1$ to $i' \rightarrow i_2 \rightarrow i_3 \rightarrow i_4$. 
The functor $\pi_1$ is a fibration with fibers of the form $\beta \times_{/{}^{\uparrow\uparrow} (I \times_{/I} i)} {}^{\uparrow \uparrow} (I \times_{/I} i)$. 
Since these fibers have an initial object, the unit $\id \rightarrow \pi_{1,*}\pi_1^*$ is actually an isomorphism. Therefore also the counit $\pi_{1,!}\pi_1^* \rightarrow \id$ (which is its adjoint) is an isomorphism. Hence we have 
\[ i^* \pi_{4567,!} \pi_{1567}^* \cong \hocolim_{I \times_{/I} i_1} \rho^* \]
Since $I \times_{/I} i_1$ has a final object, the homotopy colimit is actually evaluation at the latter, therefore we get 
\[ i^* \pi_{4567,!} \pi_{1567}^* \cong i^*. \]
\end{proof}

If $\mathcal{E}$ is an object in $\DD(\twwc I)^{\ws, 2-{\cart}}_{\pi_{234}^* \widetilde{S}^{\op}}$ we have that the morphisms
\begin{eqnarray}
 \pi_{1237}^* \mathcal{E} &\leftarrow& \iota^* \pi_{1267}^* \mathcal{E} \label{eqiota} \\
 \overline{f}_* \pi_{1567}^* \mathcal{E} &\leftarrow&  \pi_{1267}^* \mathcal{E}  
\end{eqnarray}
are isomorphisms.

\begin{LEMMA}\label{LEMMAPOINTWISEOPLAX}
Assume $\DD$ is infinite (i.e.\@ satisfies (Der1${}^\infty$)) or that $I$ has finite Hom sets.

If $\mathcal{E}$ is well-supported, then
the inverse of (\ref{eqiota}) induces an isomorphism
\[ \iota_! \pi_{1237}^* \mathcal{E} \cong \pi_{1267}^* \mathcal{E} . \]
\end{LEMMA}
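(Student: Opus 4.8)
The statement to prove is that, for a well-supported object $\mathcal{E} \in \DD(\twwc I)^{\ws, 2\text{-}\cart}_{\pi_{234}^*\widetilde{S}^{\op}}$, the inverse of the isomorphism (\ref{eqiota}) induces an isomorphism $\iota_! \pi_{1237}^*\mathcal{E} \cong \pi_{1267}^*\mathcal{E}$. The natural transformation we must analyze is the unit-type exchange morphism for the left adjoint $\iota_!$ of $\iota^*$: we know $\iota^*\pi_{1267}^*\mathcal{E} \to \pi_{1237}^*\mathcal{E}$ is an isomorphism (that is exactly (\ref{eqiota})), and applying $\iota_!$ to its inverse together with the counit $\iota_!\iota^* \Rightarrow \id$ produces $\iota_!\pi_{1237}^*\mathcal{E} \to \pi_{1267}^*\mathcal{E}$. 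Since $\DD(\cdot) \to \mathcal{S}^{\op}$ is a bifibration and the statement is about an object in a diagram category, the natural approach is to check this pointwise in ${}^{\downarrow\uparrow\uparrow\downarrow\uparrow\uparrow\downarrow}I$ --- but the subtlety (as emphasized repeatedly in the paper, e.g.\@ in Lemma~\ref{LEMMAEXISTENCE3FUNCTORS}, 2.\@) is that $\iota_!$ is \emph{not} automatically computed pointwise, so the first real task is to show that here it is, using the support hypothesis.

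First I would make precise the diagram: $\iota$ is the morphism $\pi_{267}^*\widetilde{S} \to \pi_{237}^*\widetilde{S}$ of diagrams in $\mathcal{S}$ over ${}^{\downarrow\uparrow\uparrow\downarrow\uparrow\uparrow\downarrow}I$ (in the $\op$-picture a pointwise embedding), coming from the type-2-like comparison of the two middle slots; the relevant $\iota_!$ is the left adjoint to $(\iota^{\op})_\bullet$ guaranteed by (F1). I would then invoke Lemma~\ref{LEMMAPOINTWISEEXBYZERO}, 3.\@ (this is why the hypothesis ``$\DD$ infinite or $I$ has finite Hom-sets'' appears verbatim in the statement --- it is exactly the hypothesis of that part of the Lemma). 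Concretely: $\pi_{1237}^*\mathcal{E}$ is, up to the isomorphisms already recorded, pulled back from $\mathcal{E}$ along a functor that forgets slots, so it is ``constant enough'' in the directions that matter --- in particular, I expect it to be coCartesian along the morphisms $\mu$ appearing in (\ref{eqcommpointwise1})/(\ref{eqcommpointwise2}) of Lemma~\ref{LEMMAPOINTWISEEXBYZERO}, because those directions are the $\pi_{1}$-directions along which $\pi_{1237}^*$ (being a pullback) is manifestly constant, hence coCartesian. Then the pointwise criterion reduces to checking that $\iota_{k,!} S(\mu)^* \to T(\mu)^*\iota_{j,!}$ is an isomorphism for the relevant morphisms $\mu$, which is a base-change statement. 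The key input here is that $\mathcal{E}$ is well-supported: by the definition in \ref{COMPONENTS}, its underlying diagram lies pointwise in the essential image of the appropriate $\widetilde{\iota}(\cdot)_!$, and this is precisely the support condition needed to apply the base-change Proposition~\ref{PROPPROPERTIESCORCOMPM}, 2.\@ (or Lemma~\ref{LEMMACOCARTSQUARES3}), which asserts $\overline{I}_!\overline{G}^* \to \overline{g}^*\overline{\iota}_!$ is an isomorphism on objects with the right support.

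So the key steps, in order, are: (1) identify the natural transformation $\iota_!\pi_{1237}^*\mathcal{E} \to \pi_{1267}^*\mathcal{E}$ as the one induced by the inverse of (\ref{eqiota}) via the $\iota_! \dashv \iota^*$ adjunction, and note that since $\iota^*$ of it is an isomorphism, it suffices to prove $\iota_!$ is computed pointwise on $\pi_{1237}^*\mathcal{E}$; (2) observe that $\pi_{1237}^*\mathcal{E}$, being a pullback of $\mathcal{E}$ along a slot-forgetting functor, is coCartesian in the directions relevant to the pointwise criterion, so Lemma~\ref{LEMMAPOINTWISEEXBYZERO}, 3.\@ applies and reduces pointwiseness to the base-change isomorphisms (\ref{eqcommpointwise1}); (3) verify those base-change isomorphisms using the support hypothesis and the squares of the relevant interior compactification, appealing to Lemma~\ref{LEMMACARTDIA} (to see the squares in question are Cartesian) together with (F5)/(F6) or directly Proposition~\ref{PROPPROPERTIESCORCOMPM}, 2.; (4) conclude, once $\iota_!$ is pointwise, that the map $\iota_!\pi_{1237}^*\mathcal{E} \to \pi_{1267}^*\mathcal{E}$ is an isomorphism because pointwise it becomes $\iota_{k,!}(\iota_k^* \text{(something in the image of }\iota_{k,!}))\to (\text{that something})$, which is an isomorphism since $\iota_{k,!}$ is fully faithful on its essential image (this is where well-supportedness of $\mathcal{E}$, i.e.\@ (F2), gives fully-faithfulness of the relevant $\iota_!$).

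\textbf{Main obstacle.} The delicate point is step (2)--(3): checking that $\pi_{1237}^*\mathcal{E}$ really satisfies the coCartesianity hypothesis needed to invoke the pointwise criterion, and then that the required base-change squares in ${}^{\downarrow\uparrow\uparrow\downarrow\uparrow\uparrow\downarrow}I$ are of the right shape (one leg an embedding, one leg proper, Cartesian) so that (F5)/(F6) apply --- this is a bookkeeping argument about the combinatorics of the seven-fold construction ${}^{\downarrow\uparrow\uparrow\downarrow\uparrow\uparrow\downarrow}I$ and the morphisms $g, \iota, \overline{f}$ of \ref{COCARTPROJPREP}, entirely parallel to the argument already carried out in the two-step proof of Lemma~\ref{LEMMAEXISTENCE3FUNCTORS}, 2., and I would present it by reduction to that Lemma rather than redoing it. The support hypothesis on $\mathcal{E}$ is what makes everything go through, exactly as the non-pointwise warning after Lemma~\ref{LEMMAEXISTENCE3FUNCTORS} insists.
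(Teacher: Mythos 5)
Your outline follows the paper's own route: reduce the statement to showing that $\iota_!$ is computed point-wise on $\pi_{1237}^*\mathcal{E}$, establish this by the same two-step opfibration/fiber reduction via Lemma~\ref{LEMMAPOINTWISEEXBYZERO} that is used in Lemma~\ref{LEMMAEXISTENCE3FUNCTORS}, 2., and verify the resulting base-change conditions from the support hypothesis together with Lemma~\ref{LEMMACART1}, (F5)/(F6) and Proposition~\ref{PROPPROPERTIESCORCOMPM}, 2. One imprecision: $\pi_{1237}^*\mathcal{E}$ is \emph{not} absolutely coCartesian on the fibers (a type-7 morphism of ${}^{\downarrow\uparrow\uparrow\downarrow\uparrow\uparrow\downarrow}I$ projects to a type-4 morphism of $\twwc I$, and $\mathcal{E}$ is not assumed $4$-coCartesian), so the criterion (\ref{eqcommpointwise1}) does not apply; one must check the general condition (\ref{eqcommpointwise2}), which the paper does by factoring the morphisms and using strong $3$-coCartesianity together with the support condition --- exactly the diagram chase you correctly identify as the main obstacle and defer to the parallel proof.
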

\begin{proof}
The assertion follows if we can show that $\iota_!$ is computed point-wise on $\pi_{1237}^* \mathcal{E}$.
Consider the projection 
$\pi_{12}:  {}^{\downarrow\uparrow\uparrow\downarrow\uparrow\uparrow\downarrow} I\rightarrow \tw I$.  It is an opfibration.
Every coCartesian morphism in ${}^{\downarrow\uparrow\uparrow\downarrow\uparrow\uparrow\downarrow} I$ w.r.t.\@ this opfibration is mapped by $\pi_{237}^*\widetilde{S}^{\op}$
and $\pi_{267}^*\widetilde{S}^{\op}$ to a proper morphism. Therefore by Lemma~\ref{LEMMACART1} and Lemma~\ref{LEMMAPOINTWISEEXBYZERO}, 1. $\iota_!$ commutes with the inclusion of the fiber. 
On the fiber we will check the condition of Lemma~\ref{LEMMAPOINTWISEEXBYZERO}, 3. Let $\widetilde{\iota}: \widetilde{S}' \rightarrow \widetilde{S}$ be the morphism (point-wise dense embedding) as in the definition of ``well-supported'' (\ref{COMPONENTS}). 
Let $\alpha: i'' \rightarrow i'$ and $\mu: i' \rightarrow i$ be morphisms in the fiber. 
Applying $\pi_{237}$ and $\pi_{267}$ we get the following situation: 

\[ \xymatrix{
  & \widetilde{S}(i_2 \rightarrow i_3 \rightarrow i_7)  \ar@{^{(}->}[d]^{\iota_1} \ar@{^{(}->}[r]^{\iota_2} \ar@/_40pt/[dd]_(.7){\pi_{237}(\mu^{\op})} &  \widetilde{S}(i_2 \rightarrow i_6 \rightarrow i_7) \ar@{^{(}->}[d]^{\iota_3} \ar@/^40pt/[dd]^{\pi_{267}(\mu^{\op})} \\
\widetilde{S}'(i_2 = i_2 \rightarrow i_7) \ar[d]  \ar@{^{(}->}[r]^{}  \ar@{^{(}->}[ru]^{}& \widetilde{S}(i_2 \rightarrow i_3' \rightarrow i_7)  \ar[d]^{g_1} \ar@{^{(}->}[r]^{\iota_4} &  \widetilde{S}(i_2 \rightarrow i_6' \rightarrow i_7)  \ar[d]^{g_2} \\
\widetilde{S}'(i_2 = i_2 \rightarrow i_7') \ar[d] \ar@{^{(}->}[r]  & \widetilde{S}(i_2 \rightarrow i_3' \rightarrow i_7') \ar@/_40pt/[dd]_(.3){\pi_{237}(\alpha^{\op})}  \ar[d]^{g_3} \ar@{^{(}->}[r]^{\iota_5} &  \widetilde{S}(i_2 \rightarrow i_6' \rightarrow i_7')  \ar[d]^{g_4} \ar@/^40pt/[dd]^{\pi_{267}(\alpha^{\op})} \\
\widetilde{S}'(i_2 = i_2 \rightarrow i_7'') \ar@{^{(}->}[r] \ar@{^{(}->}[rd] & \widetilde{S}(i_2 \rightarrow i_3' \rightarrow i_7'')  \ar@{^{(}->}[d]^{\iota_6} \ar@{^{(}->}[r]^{\iota_7} &  \widetilde{S}(i_2 \rightarrow i_6' \rightarrow i_7'')  \ar@{^{(}->}[d]^{\iota_8} \\
  & \widetilde{S}(i_2 \rightarrow i_3'' \rightarrow i_7'') \ar@{^{(}->}[r]^{\iota_9} &  \widetilde{S}(i_2 \rightarrow i_6'' \rightarrow i_7'')  \\
} \]

We have to show
\[   \iota_{2,!} \ \iota_1^*  g_1^* \  g_3^*  \iota_6^* \  \mathcal{E}(i_1 \rightarrow i_2 \rightarrow i_3'' \rightarrow i_7'') \cong    \iota_3^*  g_2^* \ \iota_{5,!}  \ g_3^*  \iota_6^*\  \mathcal{E}(i_1 \rightarrow i_2 \rightarrow i_3'' \rightarrow i_7'').  \]

Because $\mathcal{E}$ is 3-coCartesian this is the same as

\[   \iota_{2,!} \ \iota_1^*  g_1^* \  g_3^* \mathcal{E}(i_1 \rightarrow i_2 \rightarrow i_3' \rightarrow i_7'') \cong   \iota_3^*  g_2^*\  \iota_{5,!}\   g_3^*  \mathcal{E}(i_1 \rightarrow i_2 \rightarrow i_3' \rightarrow i_7'').  \]

By assumption $\mathcal{E}(i_1 \rightarrow i_2 \rightarrow i_3' \rightarrow i_7'') $ has support in  $\widetilde{S}(i_2 = i_2 \rightarrow i_7'')$, therefore
 $g_3^* \mathcal{E}(i_1 \rightarrow i_2 \rightarrow i_3' \rightarrow i_7'')$ has support in $\widetilde{S}(i_2 = i_2 \rightarrow i_7')$, and
 $g_1^* g_3^* \mathcal{E}(i_1 \rightarrow i_2 \rightarrow i_3' \rightarrow i_7'')$  has support in $\widetilde{S}(i_2 = i_2 \rightarrow i_7)$,  by (the adjoint of) axiom (F5). Note that the relevant squares are Cartesian (Lemma~\ref{LEMMACARTDIAM}).
Therefore on such an object, we have $\iota_{2,!}  \iota_1^* \cong   \iota_{3}^* \iota_{3,!} \iota_{2,!}  \iota_1^* \cong  \iota_{3}^* \iota_{4,!} \iota_{1,!} \iota_1^* \cong \iota_{3}^* \iota_{4,!}$.
 Inserting this, we get the morphism:
 \[ \iota_3^*   \iota_{4,!}   g_1^* g_3^* \mathcal{E}(i_1 \rightarrow i_2 \rightarrow i_3' \rightarrow i_7'') \cong   \iota_3^*  g_2^* \iota_{5,!}  g_3^*  \mathcal{E}(i_1 \rightarrow i_2 \rightarrow i_3' \rightarrow i_7'').  \]
That this is an isomorphism follows from Proposition~\ref{PROPPROPERTIESCORCOMPM}, 2.\@ because of the support condition. 
Hence $\iota_!$ is also computed point-wise on the fiber. 
\end{proof}

Warning: Even if $\mathcal{E}$ is $3$-coCartesian (but not well-supported) $\iota_!$ will not be computed point-wise on $\pi_{1237}^* \mathcal{E}$ in general.

\begin{PROP}\label{PROPCARTPROJ}Using the notation of \ref{COCARTPROJPREP}, 
denote $\Box_! :=   \pi_{4567;!} \overline{f}_* \pi_{1267}^*$. This functor, together with 
\[ \xymatrix{  \mathcal{E} & \ar[l]_-\sim  \pi_{4567;!} \pi_{1567}^* \mathcal{E} \ar[r] & \pi_{4567;!} \overline{f}_* \pi_{1267}^* \mathcal{E}    },  \]
defines a left Cartesian projector:
\[ \Box_!: \DD(\twwc I)^{}_{\pi_{234}^* \widetilde{S}^{\op}} \ \rightarrow \DD( \twwc I)^{2-{\cart}}_{\pi_{234}^* \widetilde{S}^{\op}} \]
$\Box_!$ preserves the conditions of being (simultaneously) $4$-coCartesian and well-supported. 
\end{PROP}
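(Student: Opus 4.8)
The statement to prove is Proposition~\ref{PROPCARTPROJ}: the functor $\Box_! := \pi_{4567,!} \overline{f}_* \pi_{1267}^*$, together with the indicated natural transformation $\nu: \id \Rightarrow \Box_!$, is a left Cartesian projector, and moreover preserves the conditions of being $4$-coCartesian and well-supported. The plan is to verify the three defining properties listed in \ref{LEFTCARTPROJ} (that $\Box_! \mathcal{E}$ is always $2$-Cartesian, that $\nu_{\mathcal{E}}$ is an isomorphism on $2$-Cartesian $\mathcal{E}$, and the idempotency $\nu_{\Box_! \mathcal{E}} = \Box_! \nu_{\mathcal{E}}$), and then separately check preservation of the $4$-coCartesian and well-supported conditions. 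The main tool will be that each constituent functor ($\pi_{1267}^*$, $\overline{f}_*$, $\pi_{4567,!}$) is computed point-wise (by (FDer0 right) for $\overline{f}_*$ on $1$-ary morphisms, and because $\pi_{4567}$ is an opfibration with the fibers described in the Lemma just proved before the Proposition), together with the base-change/projection Lemmas of Section~\ref{SECTPRELIM}--\ref{SECTPRELIMM}, in particular Proposition~\ref{PROPPROPERTIESCORCOMPM}.

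First I would establish that $\Box_! \mathcal{E}$ is $2$-Cartesian. Since $\pi_{4567}$ is an opfibration, by Lemma~\ref{LEMMAPOINTWISEEXBYZERO}, 1.\@ (applied to the type~1 coCartesian morphisms, which are mapped by $\pi_{567}^*\widetilde{S}^{\op}$ to proper morphisms) the functor $\pi_{4567,!}$ commutes with restriction to the fibers, and hence $\pi_{4567,!}$ is computed point-wise; since $\pi_{1267}^*$ and $\overline{f}_*$ are computed point-wise, the value of $\Box_! \mathcal{E}$ at an object $\{i_1 \to i_2 \to i_3 \to i_4\}$ of $\twwc I$ is a homotopy colimit over $I \times_{/I} i_1$ (as in the proof of the Lemma) of the values $\overline{f}_* \pi_{1267}^* \mathcal{E}$, which by the explicit formula equals $\widetilde{S}^{\op}$ applied to $(i_2 = i_2 \to i_7)$-shaped objects; but the square controlling $2$-Cartesianity (a type~2 morphism, i.e.\@ $i_3 \rightsquigarrow i_2$) becomes, after this rewriting, a square whose relevant morphisms are of the shape covered by Lemma~\ref{LEMMACARTDIA} (resp.\@ \ref{LEMMACARTDIAM}), hence Cartesian, so the exchange morphism from Proposition~\ref{PROPPROPERTIESCORCOMPM}, 1.\@ is an isomorphism. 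This is essentially a bookkeeping argument once one sees that $\overline{f}_* \pi_{1267}^*$ ``collapses'' the middle entry. Property 2, that $\nu_{\mathcal{E}}$ is an isomorphism on $2$-Cartesian $\mathcal{E}$: the first map in $\nu$ is always an isomorphism by the Lemma preceding the Proposition; the second map $\pi_{4567,!} \pi_{1567}^* \to \pi_{4567,!} \overline{f}_* \pi_{1267}^*$ is obtained from the unit $\pi_{1267}^* \Rightarrow \overline{f}_* \pi_{1567}^*$, which is an isomorphism on $2$-Cartesian objects by display~(\ref{eqiota}) (the second isomorphism there, $\overline{f}_* \pi_{1567}^* \mathcal{E} \leftarrow \pi_{1267}^* \mathcal{E}$), valid because $\mathcal{E}$ is $2$-Cartesian and the relevant squares are Cartesian by Lemma~\ref{LEMMACARTDIA}. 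Property 3 (idempotency) then follows formally from Properties 1 and 2: applying $\nu$ to $\Box_! \mathcal{E}$, which is already $2$-Cartesian, gives an isomorphism equal to $\Box_! \nu_{\mathcal{E}}$ by naturality of $\nu$ together with the fact that $\nu$ is an isomorphism on the $2$-Cartesian object $\Box_! \mathcal{E}$ and on the $2$-Cartesian object $\Box_! \Box_! \mathcal{E}$ --- more precisely, by the standard argument that a natural transformation $\nu: \id \Rightarrow L$ with $L$ landing in a reflective subcategory and $\nu$ invertible there automatically satisfies $L\nu = \nu L$.

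Next I would check that $\Box_!$ preserves the conditions of being $4$-coCartesian and well-supported. For $4$-coCartesian: a type~4 morphism in $\twwc I$ is $1$-ary and $\pi_{4567,!}$ commutes with homotopy colimits, while $\overline{f}_*$ commutes with homotopy colimits by (F3) and $\pi_{1267}^*$ commutes with homotopy colimits automatically, so $\Box_!$ maps $4$-coCartesian objects to $4$-coCartesian objects --- the point being that all three constituent functors, being left adjoints or commuting with hocolim by (F3), preserve $4$-coCartesianity, exactly as in the proof of Lemma~\ref{LEMMAEXISTENCE3FUNCTORS}. For well-supported: one uses that $\overline{f}_*$ and $\pi_{1267}^*$ preserve the well-supportedness condition (this is part of the content of Lemma~\ref{LEMMAEXISTENCE3FUNCTORS}, 1.\@ and 3.\@, and Proposition~\ref{PROPPROPERTIESCORCOMPM}), and that $\pi_{4567,!}$, being computed point-wise on the fibers $I \times_{/I} i_1$ which have an initial object (so the hocolim is evaluation at the initial object, cf.\@ the proof of the Lemma before the Proposition), sends well-supported objects to well-supported objects. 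The simultaneity claim (preservation of $4$-coCartesian \emph{and} well-supported together) then follows by running both arguments at once, since both are closed conditions preserved stepwise.

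\textbf{Main obstacle.} The technically delicate point is verifying $2$-Cartesianity of $\Box_! \mathcal{E}$ for \emph{arbitrary} $\mathcal{E}$, not just $2$-Cartesian ones: here one cannot simply invoke that the constituent functors preserve $2$-Cartesianity (they do not, in general --- that is the whole reason a projector is needed). Instead one must genuinely use that after applying $\overline{f}_* \pi_{1267}^*$ the resulting diagram over $\twwc I$ factors through a diagram on which the type~2 squares become Cartesian by the structural Lemmas~\ref{LEMMACARTDIA}/\ref{LEMMACARTDIAM}, so that Proposition~\ref{PROPPROPERTIESCORCOMPM}, 1.\@ applies; one then has to be careful that the hocolim $\pi_{4567,!}$ preserves this, which it does because $\widetilde{X}(b)^* $ (for $b$ of type~1) commutes with homotopy colimits and the exchange morphism in question is the one controlled by the Proposition. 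A secondary subtlety, flagged in the Warning after Lemma~\ref{LEMMAPOINTWISEOPLAX}, is that point-wise computation of $\iota_!$ (needed to identify the middle layer $\pi_{1267}^* \mathcal{E} \cong \iota_! \pi_{1237}^* \mathcal{E}$) requires well-supportedness --- but for the bare statement of Proposition~\ref{PROPCARTPROJ} (as opposed to the refined well-supported version) this is not needed, since there $\overline{f}_*$ and $\pi_{1267}^*$ are applied directly, and the identification via $\iota_!$ is only invoked in the companion results (Lemmas~\ref{COMMCARTPROJIOTA}--\ref{COMMCARTPROJF}), so I would keep the argument for Proposition~\ref{PROPCARTPROJ} self-contained and defer that subtlety.
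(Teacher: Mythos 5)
Your skeleton (verify the three axioms of a left Cartesian projector from \ref{LEFTCARTPROJ}, then the two preservation claims) matches the paper, and your treatment of properties 2 and 3 is fine. But the justification you give for each of the three substantive steps is off target. For the $2$-Cartesianity of $\Box_!\mathcal{E}$ on \emph{arbitrary} $\mathcal{E}$ you route the argument through Lemma~\ref{LEMMACARTDIA} and Proposition~\ref{PROPPROPERTIESCORCOMPM}, 1.; but that proposition is a base-change statement carrying a support hypothesis which you do not have for arbitrary $\mathcal{E}$, so this route fails. The actual argument needs no base change at all: by (F3) the proper push-forward $\widetilde{S}(\pi_{234}\mu)_*$ commutes with the homotopy colimit computing $\pi_{4567,!}$ fiberwise, and point-wise on the fiber the required isomorphism $(\widetilde{S}(\pi_{234}\mu))_*\, e_i^*\overline{f}_* \cong e_{i'}^*\overline{f}_*$ is pure pseudofunctoriality (the two proper push-forwards compose to the push-forward along the composite). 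Your intuition that $\overline{f}_*\pi_{1267}^*$ ``collapses the middle entry'' is right; the mechanism is composition of push-forwards, not base change.

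The two preservation claims have the opposite problem: there you \emph{do} need the base-change machinery and the support conditions, and your arguments omit them. Saying that $\Box_!$ preserves $4$-coCartesianity because its constituents ``are left adjoints or commute with hocolim by (F3)'' conflates commutation with homotopy colimits with preservation of the coCartesianity condition over type-$4$ arrows; the latter amounts to commuting a pull-back $G^*$ past $\overline{f}_*$ over a square which is only a weak compactification, i.e.\@ exactly Proposition~\ref{PROPPROPERTIESCORCOMPM}, 1.\@ (via Lemma~\ref{LEMMACOCARTSQUARES2M}), and this \emph{requires} well-supportedness of $\mathcal{E}$. So well-supportedness is an input to the $4$-coCartesianity argument, not a parallel conclusion obtained ``by running both arguments at once'' --- which is why the proposition only asserts preservation of the two conditions simultaneously. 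For preservation of well-supportedness your claim that the homotopy colimit over the fiber of $\pi_{4567}$ is evaluation at an initial object is false for general diagrams: the fiber is ${}^{\downarrow\uparrow\uparrow}(I\times_{/I}i_1)$, and the reduction to evaluation at the final object of $I\times_{/I}i_1$ in the preceding Lemma works only for the pulled-back diagram $\pi_{1567}^*\mathcal{E}$. The correct argument commutes $\iota_!$ past the homotopy colimit (it is a left adjoint and is computed point-wise on constant diagrams, Lemma~\ref{LEMMAPOINTWISEEXBYZERO}, 2.), reduces to a point-wise statement on the fiber, and there applies (F6) in the form of Proposition~\ref{PROPPROPERTIESCORCOMPM}, 3.
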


\begin{proof}We have to check the properties 1.--3.\@ of \ref{LEFTCARTPROJ}.

1. Consider a morphism $\mu$ of type 2 in $\twwc I$:
\[ \xymatrix{
i =( i_1 \ar[r] \ar@<10pt>@{=}[d] & i_2  \ar[r] \ar@{<-}[d] & i_3  \ar[r] \ar@{=}[d] & i_4)  \ar@{=}[d] \\
i' = (i_1 \ar[r] & i_2'  \ar[r] & i_3  \ar[r] & i_4)
} \]
We have to see that 
\[ \widetilde{S}(\pi_{234}\mu)_* i^* \pi_{4567;!} \overline{f}_* \pi_{1267}^* \rightarrow  (i')^* \pi_{4567;!} \overline{f}_* \pi_{1267}^*  \] 
is an isomorphism. Using that $\pi_{4567;!}$ is an obfibration, we get
\[ \widetilde{S}(\pi_{234}\mu)_* \hocolim_{{}^{\downarrow\uparrow\uparrow}( I \times_{/I} i_1)} e_i^* \overline{f}_* \pi_{1267}^* \rightarrow \hocolim_{{}^{\downarrow\uparrow\uparrow}( I \times_{/I} i_1)} e_{i'}^* \overline{f}_* \pi_{1267}^*  \] 
However, we already have an isomorphism
\[ (\widetilde{S}(\pi_{234}\mu))_* e_i^* \overline{f}_*  \rightarrow e_{i'}^* \overline{f}_*. \] 

2. follows because for a $2$-Cartesian object $\mathcal{E}$ the morphism
\[ \overline{f}_* \pi_{1567}^* \mathcal{E} \leftarrow  \pi_{1267}^* \mathcal{E}   \]
is already an isomorphism. 

3. Is proven as for \cite[Proposition 8.5]{Hor16}.

We now show that $\Box_!$ preserves the conditions of being (simultaneously) $4$-coCartesian and well-supported.
Let $\mathcal{E}$ be an objects with these properties. 

Consider a morphism $\mu$ of type 4 in $\twwc I$:
\[ \xymatrix{
i = (i_1 \ar[r] \ar@<10pt>@{=}[d] & i_2  \ar[r] \ar@{=}[d] & i_3  \ar[r] \ar@{=}[d] & i_4)  \ar[d] \\
i' = (i_1 \ar[r] & i_2  \ar[r] & i_3  \ar[r] & i_4')
} \]
and let $G:=\widetilde{S}(\pi_{234}(\mu))$. 
We have to see that 
\[ G^* \hocolim_{{}^{\downarrow\uparrow\uparrow}( I \times_{/I} i_1)} e_i^* \overline{f}_* \pi_{1267}^* \mathcal{E}  \rightarrow  \hocolim_{{}^{\downarrow\uparrow\uparrow}( I \times_{/I} i_1)} e_{i'}^* \overline{f}_* \pi_{1267}^* \mathcal{E} \]
is an isomorphism. Since $G^*$ commutes with homotopy colimits (being a left adjoint) it suffices to show that point-wise
\[ G^* e_i^* \overline{f}_* \pi_{1267}^* \mathcal{E}  \rightarrow  e_{i'}^* \overline{f}_* \pi_{1267}^* \mathcal{E} \]
is an isomorphism. 
Pick an object $j_1 \rightarrow j_2 \rightarrow j_3 \rightarrow i_1$ in $({}^{\uparrow \uparrow \downarrow} I \times_{/I} i_1)$
and consider the diagram
\[ \xymatrix{
\widetilde{S}(j_2 \rightarrow i_3 \rightarrow i_4') \ar[r]^{G} \ar@{->>}[d]_{F} & \widetilde{S}(j_2 \rightarrow i_3 \rightarrow i_4) \ar@{->>}[d]^{f} \\
\widetilde{S}(i_2 \rightarrow i_3 \rightarrow i_4') \ar[r]^{g} & \widetilde{S}(i_2 \rightarrow i_3 \rightarrow i_4)
} \]
We are left to show that
\[ g^* f_* \mathcal{E}_{j_1 \rightarrow j_2 \rightarrow i_3 \rightarrow i_4} \rightarrow  F_* \mathcal{E}_{j_1 \rightarrow j_2 \rightarrow i_3 \rightarrow i_4'}   \]
is an isomorphism. However, $\mathcal{E}$ is $4$-coCartesian, hence this is the same as 
\[ g^* f_* \mathcal{E}_{j_1 \rightarrow j_2 \rightarrow i_3 \rightarrow i_4} \rightarrow  F_* G^* \mathcal{E}_{j_1 \rightarrow j_2 \rightarrow i_3 \rightarrow i_4}   \]
which is an isomorphism by Proposition~\ref{PROPPROPERTIESCORCOMPM}, 1.\@ because $\mathcal{E}$ is also well-supported and thus has support in $\widetilde{S}'(j_2 = j_2 \rightarrow i_4)$ (with $\widetilde{S}'$ as in the definition of well-supported, cf.\@ \ref{COMPONENTS}).

Consider now a morphism $\mu$ of type 3
\[ \xymatrix{
i=(i_1 \ar[r] \ar@<10pt>@{=}[d] & i_2  \ar[r] \ar@{=}[d] & i_3  \ar[r] \ar@{<-}[d] & i_4)  \ar@{=}[d] \\
i'=(i_1 \ar[r] & i_2  \ar[r] & i_3'  \ar[r] & i_4)
} \]
and let $\iota:=\widetilde{S}(\pi_{234}(\mu))$. 
To show that $\Box_! \mathcal{E}$ is well supported, we have to see that 
\[ \iota_! \hocolim_{{}^{\downarrow\uparrow\uparrow}( I \times_{/I} i_1)} e_{i}^* \overline{f}_* \pi_{1267}^* \mathcal{E} \rightarrow \hocolim_{{}^{\downarrow\uparrow\uparrow}( I \times_{/I} i_1)} e_{i'}^* \overline{f}_* \pi_{1267}^* \mathcal{E} \]
is an isomorphism and that the right hand side has support in $\widetilde{S}'(i_2 = i_2 \rightarrow i_4)$ (with $\widetilde{S}'$ as in the definition of well-supported, cf.\@ \ref{COMPONENTS}). 
Since $\iota_!$ commutes with homotopy colimits (being a left adjoint) and is computed point-wise on constant diagrams it suffices to show that point-wise
\[ \iota_! e_i^* \overline{f}_* \pi_{1267}^* \mathcal{E}  \rightarrow  e_{i'}^* \overline{f}_* \pi_{1267}^* \mathcal{E} \]
is an isomorphism. 
Pick an object $j_1 \rightarrow j_2 \rightarrow j_3 \rightarrow i_1$ in $({}^{\uparrow \uparrow \downarrow} I \times_{/I} i_1)$
and consider the diagram
\[ \xymatrix{
\widetilde{S}(j_2 \rightarrow i_3 \rightarrow i_4) \ar[r]^{I} \ar@{->>}[d]_{F} & \widetilde{S}(j_2 \rightarrow i_3' \rightarrow i_4) \ar@{->>}[d]^{f} \\
\widetilde{S}(i_2 \rightarrow i_3 \rightarrow i_4) \ar[r]^{\iota} & \widetilde{S}(i_2 \rightarrow i_3' \rightarrow i_4)
} \]
We are left to show that
\[ \iota_! F_* \mathcal{E}_{j_1 \rightarrow j_2 \rightarrow i_3 \rightarrow i_4} \rightarrow  f_* \mathcal{E}_{j_1 \rightarrow j_2 \rightarrow i_3' \rightarrow i_4}   \]
is an isomorphism. However, $\mathcal{E}$ is stongly $3$-coCartesian, hence this is the same as 
\[ \iota_! F_* \mathcal{E}_{j_1 \rightarrow j_2 \rightarrow i_3 \rightarrow i_4} \rightarrow  f_* I_! \mathcal{E}_{j_1 \rightarrow j_2 \rightarrow i_3 \rightarrow i_4}   \]
which is an isomorphism by Proposition~\ref{PROPPROPERTIESCORCOMPM}, 3.\@
For the condition of being well-supported, it suffices to see that $e_{i'}^* \overline{f}_* \pi_{1267}^* \mathcal{E}$ point-wise has support in $\widetilde{S}'(i_2 = i_2 \rightarrow i_4)$. This is shown in the same way. 
\end{proof}

\begin{LEMMA}\label{COMMCARTPROJIOTA}
For a morphism $\widetilde{\iota}: \widetilde{A} \hookrightarrow \widetilde{A}'$ as in \ref{COMPONENTSOPLAX} the following diagram 2-commutes
\[ \xymatrix{
\DD(\twwc I)^{4-\cocart, \mathrm{ws}}_{\pi_{234}^* \widetilde{A}^{\op}} \ar[r]^-{\Box_!} \ar[d]_{(\pi_{234}^*\widetilde{\iota})_!} \ar@{}[rd]|{\Swarrow^\sim} & \DD(\twwc I)^{4-\cocart, \mathrm{ws}, 2-\cart}_{\pi_{234}^* \widetilde{A}^{\op}}  \ar[d]^{(\pi_{234}^*\widetilde{\iota})_!} \\
\DD(\twwc I)^{4-\cocart, \mathrm{ws}}_{\pi_{234}^* (\widetilde{A}')^{\op}}  \ar[r]_-{\Box_!} & \DD(\twwc I)^{4-\cocart, \mathrm{ws}, 2-\cart}_{\pi_{234}^* (\widetilde{A}')^{\op}} 
} \]
(The natural transformation is induced by the fact that $(\pi_{234}^*\widetilde{\iota})^*$ preserves the condition of being $2$-Cartesian, all relevant squares being Cartesian.)
\end{LEMMA}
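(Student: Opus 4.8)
The $2$-cell displayed in the statement is, by construction, the mate of the square of right adjoints: the parenthetical remark says precisely that $(\pi_{234}^*\widetilde{\iota})^*$ preserves the condition of being $2$-Cartesian, so that the square formed by the two inclusions of the $2$-Cartesian subcategories and the two copies of $(\pi_{234}^*\widetilde{\iota})^*$ commutes strictly, and the transformation in question is its mate with respect to the adjunctions $\Box_!\dashv(\text{inclusion})$ (Proposition~\ref{PROPCARTPROJ}) and $(\pi_{234}^*\widetilde{\iota})_!\dashv(\pi_{234}^*\widetilde{\iota})^*$ (axiom (F1), as in Lemma~\ref{LEMMAEXISTENCE3FUNCTORS}). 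The only thing to prove is therefore that this mate is invertible, and I would do this by a short Yoneda computation resting on three facts, all already available: \textbf{(i)} by Proposition~\ref{PROPCARTPROJ}, over both $\pi_{234}^*\widetilde{A}^{\op}$ and $\pi_{234}^*(\widetilde{A}')^{\op}$ the functor $\Box_!$ is left adjoint to the fully faithful inclusion of the $2$-Cartesian subcategory and preserves the conditions of being $4$-coCartesian and well-supported; \textbf{(ii)} by Lemma~\ref{LEMMAEXISTENCE3FUNCTORS}, 2, the functor $(\pi_{234}^*\widetilde{\iota})_!$ sends $4$-coCartesian, well-supported, $2$-Cartesian objects again to such; \textbf{(iii)} $(\pi_{234}^*\widetilde{\iota})^*$ preserves the condition of being $2$-Cartesian, because for a type~$2$ morphism $\kappa$ of $\twwc{I}$ the square in $\mathcal{S}$ relating $\widetilde{\iota}$ over the source and target of $\pi_{234}(\kappa)$ has the point-wise \emph{dense} embedding components of $\widetilde{\iota}$ on one pair of opposite sides and the point-wise proper map $\widetilde{X}(\pi_{234}(\kappa))$ on the other, hence is Cartesian by Lemma~\ref{LEMMACART1}, and then (F5) gives $(\pi_{234}^*\widetilde{\iota})^*\widetilde{X}(\pi_{234}(\kappa))_*\cong\widetilde{X}(\pi_{234}(\kappa))_*(\pi_{234}^*\widetilde{\iota})^*$.

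Granting these, let $\mathcal{E}\in\DD(\twwc{I})^{4-\cocart,\ws}_{\pi_{234}^*\widetilde{A}^{\op}}$ and let $\mathcal{F}$ be any $2$-Cartesian object over $\pi_{234}^*(\widetilde{A}')^{\op}$. Taking each $\Hom$ in whichever of the two nested categories makes sense (the inclusion being fully faithful), there is a natural chain of isomorphisms
\begin{align*}
\Hom(\Box_!(\pi_{234}^*\widetilde{\iota})_!\mathcal{E},\mathcal{F})
&\cong \Hom((\pi_{234}^*\widetilde{\iota})_!\mathcal{E},\mathcal{F}) \\
&\cong \Hom(\mathcal{E},(\pi_{234}^*\widetilde{\iota})^*\mathcal{F}) \\
&\cong \Hom(\Box_!\mathcal{E},(\pi_{234}^*\widetilde{\iota})^*\mathcal{F}) \\
&\cong \Hom((\pi_{234}^*\widetilde{\iota})_!\Box_!\mathcal{E},\mathcal{F}),
\end{align*}
using in turn $\Box_!\dashv(\text{incl.})$, then $(\pi_{234}^*\widetilde{\iota})_!\dashv(\pi_{234}^*\widetilde{\iota})^*$, then (by (iii)) that $(\pi_{234}^*\widetilde{\iota})^*\mathcal{F}$ is $2$-Cartesian together with the universal property of $\Box_!\mathcal{E}$ as a reflection, and finally $(\pi_{234}^*\widetilde{\iota})_!\dashv(\pi_{234}^*\widetilde{\iota})^*$ again, where in the last step one invokes (i) to know $\Box_!\mathcal{E}$ is $4$-coCartesian and well-supported and then (ii) to know $(\pi_{234}^*\widetilde{\iota})_!\Box_!\mathcal{E}$ is $2$-Cartesian, so that the final reflector does nothing. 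By the Yoneda lemma in the $2$-Cartesian subcategory over $\pi_{234}^*(\widetilde{A}')^{\op}$ this yields a natural isomorphism $\Box_!(\pi_{234}^*\widetilde{\iota})_!\mathcal{E}\cong(\pi_{234}^*\widetilde{\iota})_!\Box_!\mathcal{E}$, and unwinding the chain identifies it with the mate $2$-cell of the statement; by (i) and (ii) both composites land in $\DD(\twwc{I})^{4-\cocart,\ws,2-\cart}_{\pi_{234}^*(\widetilde{A}')^{\op}}$, which is the asserted $2$-commutativity.

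The argument is essentially formal; the single ingredient that is not pure category theory is (ii) — that $(\pi_{234}^*\widetilde{\iota})_!$ preserves $2$-Cartesianity on $4$-coCartesian, well-supported objects — and this is exactly where the restriction of the source to that subcategory is needed; it has already been established in Lemma~\ref{LEMMAEXISTENCE3FUNCTORS}, 2 (via the base-change Proposition~\ref{PROPPROPERTIESCORCOMPM}). The only bookkeeping to watch is in (iii), checking that the relevant square in $\mathcal{S}$ is Cartesian, but that is immediate from Lemma~\ref{LEMMACART1} once one recalls that a type~$2$ morphism of $\twwc{I}$ is sent by the interior compactification to a proper map while the components of $\widetilde{\iota}$ are point-wise dense embeddings. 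I do not expect any genuine obstacle beyond this.
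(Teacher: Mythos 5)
Your argument is correct, but it takes a genuinely different route from the paper's. The paper proves the lemma by unwinding the explicit formula $\Box_! = \pi_{4567,!}\,\overline{f}_*\,\pi_{1267}^*$ and commuting $(\pi_{234}^*\widetilde{\iota})_!$ past each factor separately: past $\pi_{4567,!}$ by (FDer0 left), past $\overline{f}_*$ by (F6) together with the fact that the auxiliary functors $(\pi_{267}^*\widetilde{\iota})_!$ and $(\pi_{567}^*\widetilde{\iota})_!$ are computed point-wise on the relevant images, and past $\pi_{1267}^*$ again by point-wise computation. You instead never open up $\Box_!$ at all: you treat it purely as the reflector onto the $2$-Cartesian subcategory and run the standard mates/Yoneda argument, so that the only non-formal inputs are (a) $(\pi_{234}^*\widetilde{\iota})^*$ preserves $2$-Cartesianity (your (iii), correctly reduced to Lemma~\ref{LEMMACART1}/\ref{LEMMACARTDIA2} plus (F5), which is exactly the content of the paper's parenthetical remark), and (b) $(\pi_{234}^*\widetilde{\iota})_!$ preserves the conditions $4$-coCartesian, well-supported and $2$-Cartesian on $4$-coCartesian well-supported objects --- here you should cite Lemma~\ref{LEMMAEXISTENCE3FUNCTORSOPLAX}, 2 rather than Lemma~\ref{LEMMAEXISTENCE3FUNCTORS}, 2, since $\widetilde{\iota}$ arises from the (op)lax setting of \ref{COMPONENTSOPLAX}, though the type-$2$ statement is identical ``in any case''. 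Your approach is shorter and avoids redoing the point-wise-computation analysis for the auxiliary $\iota_!$'s over ${}^{\downarrow\uparrow\uparrow\downarrow\uparrow\uparrow\downarrow}I$; the paper's approach yields, as a by-product, the individual commutation isomorphisms with $\pi_{1267}^*$, $\overline{f}_*$ and $\pi_{4567,!}$, which parallel the computations needed in the companion Lemmas~\ref{COMMCARTPROJF} and \ref{COMMCARTPROJFIB}. The one step you leave implicit --- that the isomorphism produced by the $\Hom$-chain is the mate $2$-cell of the strictly commuting square of right adjoints --- is the standard mates correspondence and is unproblematic.
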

\begin{proof}
We will show that it $(\pi_{234}^*\widetilde{\iota})_!$ ``commutes'' with the three functors in $\pi_{4567;!} \overline{f}_{\widetilde{A},*} \pi_{1267}^*$, resp.\@ $\pi_{4567;!} \overline{f}_{\widetilde{A}',*} \pi_{1267}^*$ in the obvious sense. 
We have 
\[ (\pi_{234}^*\widetilde{\iota})_! \pi_{4567;!} \cong  \pi_{4567;!} (\pi_{567}^*\widetilde{\iota})_!   \]
because the adjoint relation follows from (FDer0 left) for $\DD \rightarrow \SSS^{\op}$. 

As in the proof of Lemma~\ref{LEMMAEXISTENCE3FUNCTORSOPLAX}, one shows that
$(\pi_{267}^*\widetilde{\iota})_!$ and $(\pi_{567}^*\widetilde{\iota})_!$ are both computed point-wise on objects in the image of the subcategory in question under the functors $\pi_{1267}^*$, resp.\@ $\overline{f}_{\widetilde{A},*} \pi_{1267}^*$.
We have therefore
\[ (\pi_{567}^*\widetilde{\iota})_!  \overline{f}_{\widetilde{A},*}  \cong \overline{f}_{\widetilde{A}',*}  (\pi_{267}^*\widetilde{\iota})_!   \]
on the subcategory in question because of axiom (F6).

Finally 
\[ (\pi_{267}^*\widetilde{\iota})_!  \pi_{1267}^*  \cong \pi_{1267}^* (\pi_{234}^*\widetilde{\iota})_!  \]
on the subcategory in question, again because both $(-)_!$ functors are computed point-wise when restricted to the subcategories in question (cf.\@ Lemma~\ref{LEMMAEXISTENCE3FUNCTORSOPLAX}).
\end{proof}

\begin{LEMMA}\label{COMMCARTPROJF}
For a morphism $\widetilde{f}: \widetilde{A}' \hookrightarrow \widetilde{T}$ as in \ref{COMPONENTSOPLAX} the following diagram 2-commutes
\[ \xymatrix{
\DD(\twwc I)^{4-\cocart, \mathrm{ws}}_{\pi_{234}^* (\widetilde{A}')^{\op}} \ar[r]^-{\Box_!} \ar[d]_{(\pi_{234}^*\widetilde{f})_*} \ar@{}[rd]|{\Swarrow^\sim} & \DD(\twwc I)^{4-\cocart, \mathrm{ws}, 2-\cart}_{\pi_{234}^* (\widetilde{A}')^{\op}}  \ar[d]^{(\pi_{234}^*\widetilde{f})_*} \\
\DD(\twwc I)^{4-\cocart, \mathrm{ws}}_{\pi_{234}^* \widetilde{T}^{\op}}  \ar[r]_-{\Box_!} & \DD(\twwc I)^{4-\cocart, \mathrm{ws}, 2-\cart}_{\pi_{234}^* \widetilde{T}^{\op}} 
} \]
(The natural transformation is the exchange induced by the fact that $(\pi_{234}^*\widetilde{f})_*$ preserves the condition of being $2$-Cartesian.)
\end{LEMMA}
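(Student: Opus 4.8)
The statement to prove is Lemma~\ref{COMMCARTPROJF}: for a point-wise proper morphism $\widetilde{f}: \widetilde{A}' \to \widetilde{T}$ arising from an oplax morphism as in \ref{COMPONENTSOPLAX}, the left Cartesian projector $\Box_!$ commutes (up to canonical isomorphism) with $(\pi_{234}^*\widetilde{f})_*$ on the subcategory of $4$-coCartesian, well-supported objects. The strategy mirrors exactly the proof of Lemma~\ref{COMMCARTPROJIOTA}: recall that by Proposition~\ref{PROPCARTPROJ} we have the explicit formula
\[ \Box_! = \pi_{4567;!}\, \overline{f}_{*}\, \pi_{1267}^* \]
where the three functors are built from the natural transformations listed in \ref{COCARTPROJPREP}. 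So I would prove the commutation by checking that $(\pi_{234}^*\widetilde{f})_*$ ``commutes'' with each of the three constituent functors $\pi_{1267}^*$, $\overline{f}_{*}$ (the push-forward associated with $\overline f: \pi_{267}^*\widetilde S \twoheadrightarrow \pi_{567}^*\widetilde S$), and $\pi_{4567;!}$, in the appropriate sense, on the relevant subcategories.

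\textbf{Key steps.} First, $(\pi_{234}^*\widetilde f)_*$ commutes with $\pi_{4567;!}$: this is an adjoint formula which follows from (FDer5 left) for $\DD \to \SSS^{\op}$ applied to the opfibration $\pi_{4567}$ (note $\widetilde f$ is $1$-ary, so we are in the case where the second part of (FDer0 right)/(FDer5 left) holds for arbitrary functors, cf.\@ the remark after Definition~\ref{DEFFIBDER}), together with the compatibility of $\widetilde f$ with $\pi_{4567}$. Second, $(\pi_{567}^*\widetilde f)_*$ and $(\pi_{267}^*\widetilde f)_*$ are both computed point-wise on the images of the subcategory in question under $\pi_{1267}^*$, resp.\@ $\overline f_* \pi_{1267}^*$ --- here I invoke Lemma~\ref{LEMMAEXISTENCE3FUNCTORSOPLAX}, 3.\@ (the lax-case statement that $(\pi_{234}^*\widetilde f)_*$ is computed point-wise on well-supported objects), transported to the various projections. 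Third, the commutation of $(\pi_{567}^*\widetilde f)_*$ with $\overline f_{*}$ on the subcategory in question: this is simply pseudofunctoriality of push-forward, since both $\overline f$ and $\widetilde f$ are point-wise proper and the relevant squares commute; alternatively one can phrase it via (F4) base change, but since the two morphisms sit over a genuine commutative triangle in $\mathcal S$ it is just functoriality. Finally, $(\pi_{267}^*\widetilde f)_*$ commutes with $\pi_{1267}^*$ on the subcategory in question, again because both $(-)_*$ are point-wise there. Stringing these four isomorphisms together gives the claimed natural isomorphism $\Box_! (\pi_{234}^*\widetilde f)_* \cong (\pi_{234}^*\widetilde f)_* \Box_!$, and a routine (but tedious) check identifies it with the exchange transformation induced by the fact that $(\pi_{234}^*\widetilde f)_*$ preserves $2$-Cartesianity.

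\textbf{Main obstacle.} The conceptually delicate point is the same one flagged throughout Section~\ref{SECTCONSTPROPER} and in Lemma~\ref{LEMMAEXISTENCE3FUNCTORSOPLAX}: the functors $\overline f_*$, $\iota_!$, and the various $(\pi_{\cdots}^*\widetilde f)_*$ are \emph{not} computed point-wise in general, and the commutation of push-forward with push-forward is not automatic at the level of diagram categories. One has to verify the support hypotheses at each stage, i.e.\@ that after applying $\pi_{1267}^*$ (resp.\@ $\overline f_* \pi_{1267}^*$) to a $4$-coCartesian, well-supported object one still lands in a subcategory where the point-wise formula (and hence Lemma~\ref{LEMMAEXISTENCE3FUNCTORSOPLAX}, 3.\@ or the relevant part of Proposition~\ref{PROPPROPERTIESCORCOMPM}) applies. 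This is exactly the kind of bookkeeping done in the proof of Lemma~\ref{COMMCARTPROJIOTA}, so I expect no genuinely new difficulty, only the need to track the analogue of the ``well-supported'' condition through the projections $\pi_{1267}$, $\pi_{567}$ carefully; once that is in place the rest is formal manipulation of adjunctions and exchange morphisms.

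\begin{proof}
As the proof of Lemma~\ref{COMMCARTPROJIOTA}: we show that $(\pi_{234}^*\widetilde{f})_*$ ``commutes'' with the three functors constituting $\Box_! = \pi_{4567;!}\, \overline{f}_{\widetilde A',*}\, \pi_{1267}^*$, resp.\@ $\Box_! = \pi_{4567;!}\, \overline{f}_{\widetilde T,*}\, \pi_{1267}^*$, in the obvious sense.

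We have
\[ (\pi_{234}^*\widetilde f)_*\, \pi_{4567;!} \cong \pi_{4567;!}\, (\pi_{567}^*\widetilde f)_* \]
because the adjoint relation follows from (FDer5 left) for $\DD \to \SSS^{\op}$ (applied to the opfibration $\pi_{4567}$, noting $\widetilde f$ is $1$-ary) together with the compatibility of $\widetilde f$ with $\pi_{4567}$.

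As in the proof of Lemma~\ref{LEMMAEXISTENCE3FUNCTORSOPLAX}, one shows that $(\pi_{267}^*\widetilde f)_*$ and $(\pi_{567}^*\widetilde f)_*$ are both computed point-wise on objects in the image of the subcategory in question under the functors $\pi_{1267}^*$, resp.\@ $\overline f_{\widetilde A',*} \pi_{1267}^*$. We have therefore
\[ (\pi_{567}^*\widetilde f)_*\, \overline f_{\widetilde A',*} \cong \overline f_{\widetilde T,*}\, (\pi_{267}^*\widetilde f)_* \]
on the subcategory in question by pseudofunctoriality of push-forward, both $\overline f$ and $\widetilde f$ being point-wise proper and sitting over a commutative triangle in $\mathcal S$.

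Finally
\[ (\pi_{267}^*\widetilde f)_*\, \pi_{1267}^* \cong \pi_{1267}^*\, (\pi_{234}^*\widetilde f)_* \]
on the subcategory in question, again because both $(-)_*$ functors are computed point-wise when restricted to the subcategories in question (cf.\@ Lemma~\ref{LEMMAEXISTENCE3FUNCTORSOPLAX}, 3.).

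Composing the four isomorphisms yields the claimed natural isomorphism, and a tedious check shows that it coincides with the exchange transformation induced by the fact that $(\pi_{234}^*\widetilde f)_*$ preserves the condition of being $2$-Cartesian.
\end{proof}
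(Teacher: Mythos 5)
Your proof has the same skeleton as the paper's (which simply says ``as the previous Lemma'' and then records the two facts that make the $f_*$-case work), and most of the individual steps are fine: the point-wise computation of the various $(\pi_{\cdots}^*\widetilde f)_*$ on $1$-ary morphisms via (FDer0 right), and the replacement of the (F6)-argument of Lemma~\ref{COMMCARTPROJIOTA} by pseudofunctoriality, since now both functors in the middle step are pull-backs over a commutative square.

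The genuine problem is your justification of the first and most delicate step, the isomorphism $(\pi_{234}^*\widetilde f)_*\,\pi_{4567;!} \cong \pi_{4567;!}\,(\pi_{567}^*\widetilde f)_*$. You attribute it to (FDer5 left), but (FDer5 left) concerns the \emph{push-forward} functors $\xi_\bullet$ of $\DD\rightarrow\SSS^{\op}$, which in the conventions of \ref{PARAXIOMS} are the functors $g^*=(g^{\op})_\bullet$; it says nothing about the pull-backs $f_*=(f^{\op})^\bullet$. Since $f_*$ is a right adjoint, its commutation with the left Kan extension $\pi_{4567;!}$ is not a formal consequence of the fibered-multiderivator axioms (no mate calculation produces it: the two sides are not composites of left adjoints, and the adjoint of the second part of (FDer0 right) only yields the commutation of $\alpha_!$ with $f^*$, not with $f_*$). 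This commutation is precisely the content of Axiom (F3) for the proper morphism $\widetilde f$: one uses that $(\pi_{234}^*\widetilde f)_*$ is computed point-wise (FDer0 right, $\widetilde f$ being $1$-ary) to reduce the relative Kan extension along the opfibration $\pi_{4567}$ to fiber-wise homotopy colimits, and then (F3) to commute $f_*$ past those. As written, your argument would go through without ever using (F3), which cannot be right --- (F3) is imposed exactly because this commutation fails for a general pull-back. Replace the appeal to (FDer5 left) by (F3) together with the point-wise computation and the step is correct.
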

\begin{proof}
This is proven as the previous Lemma. Note that $(\pi_{234}^*\widetilde{f})_*$ is computed point-wise (FDer0 right) and commutes with homotopy colimits by Axiom (F3).
\end{proof}

\begin{LEMMA}\label{LEMMAFIB}
Let $\alpha: I \rightarrow J$ be a fibration in $\Catlf$, and consider the sequence of functors:
\[ \xymatrix{ {}^{\downarrow\uparrow\uparrow\downarrow\uparrow \uparrow \downarrow }{I} \ar[rrr]^-{q_1=(\twwc{\alpha},\pi_{234567})} &&& {}^{\downarrow\uparrow\uparrow\downarrow\uparrow \uparrow \downarrow } J \times_{({}^{\uparrow\uparrow\downarrow\uparrow \uparrow \downarrow}{J})} {}^{\uparrow\uparrow\downarrow\uparrow \uparrow \downarrow}{I} \ar[rr]^-{q_2=\id \times \pi_{3456}} && {}^{\downarrow\uparrow\uparrow\downarrow\uparrow \uparrow \downarrow } J \times_{\twwc J} \twwc{I}. } \]
\begin{enumerate}
\item The functor $q_1$ is a fibration. The fiber of $q_1$ over a pair $j_1 \rightarrow j_2 \rightarrow j_3 \rightarrow j_4\rightarrow j_5 \rightarrow j_6 \rightarrow j_7$ and $i_2 \rightarrow i_3\rightarrow i_4\rightarrow i_5\rightarrow i_6\rightarrow i_7$ is
\[  I_{j_1}  \times_{/I_{j_1}} i_1\]
where $i_1$ is the source of a Cartesian arrow over $j_1 \rightarrow j_2$ with destination $i_2$. 
\item The functor $q_2$ is an opfibration. The fiber of $q_2$ over a pair $j_1 \rightarrow j_2 \rightarrow j_3 \rightarrow j_4\rightarrow j_5 \rightarrow j_6 \rightarrow j_7$ and $i_4\rightarrow i_5\rightarrow i_6\rightarrow i_7$ (lying over $j_4\rightarrow j_5 \rightarrow j_6 \rightarrow j_7$) is 
\[ (I_{j_2} \times_{/I_{j_2}} I_{j_3} \times_{/I_{j_3}} i_3 )^{\op} \]
where $i_3$ is the source of a Cartesian arrow over $j_3 \rightarrow j_4$ with destination $i_4$ and the first comma category is 
constructed via the functor $I_{j_3} \rightarrow I_{j_2}$ being the Cartesian pull-back along $j_2 \rightarrow j_3$.  
\end{enumerate}
\end{LEMMA}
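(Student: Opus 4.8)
The statement to prove is Lemma~\ref{LEMMAFIB}, which identifies $q_1$ as a fibration and $q_2$ as an opfibration and computes their fibers. Both claims are purely combinatorial facts about the ``$\tw{}$'' and ``$\twwc{}$'' constructions applied to a fibration $\alpha: I \to J$ in $\Catlf$, so the proof should be entirely formal (the excerpt in fact just says ``Straightforward.'' for the analogous Lemma~\ref{LEMMAOPFIB}). I will model the argument closely on the proof of Lemma~\ref{LEMMAOPFIB}, only transposing from opfibrations to fibrations and tracking which of the seven indices are being projected away.

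The plan is as follows. First, I would recall the general principle (from \ref{PARTW2}): a projection $\pi_{i_1,\dots,i_m}: {}^\Xi I \to {}^{\Xi'} I$ forgetting a terminal block of arrow-directions is a fibration if the last retained arrow is $\downarrow$ and an opfibration if it is $\uparrow$, and dually for forgetting an initial block. The functor $q_1 = (\twwc{\alpha}, \pi_{234567})$ is, up to the fiber product over ${}^{\uparrow\uparrow\downarrow\uparrow\uparrow\downarrow}I$, essentially built from $\twwc{\alpha}$ together with the projection $\twwc{I} = {}^{\downarrow\uparrow\uparrow\downarrow}I \to {}^{\uparrow\uparrow\downarrow}I = \pi_{234}$; since $\twwc{\alpha}$ is a fibration whenever $\alpha$ is (this is where the fibration hypothesis on $\alpha$ enters, and one checks Cartesian arrows are created index-wise using that the $\pi$-projections preserve them), and the fiber product of fibrations over a common base along a functor that preserves Cartesian morphisms is again a fibration, $q_1$ is a fibration. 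To compute its fiber over a fixed object of the codomain, I would take a Cartesian lift $i_1 \to i_2$ over $j_1 \to j_2$ (it exists and is unique up to iso because $\alpha$ is a fibration), observe that the remaining data $i_2 \to i_3 \to i_4 \to i_5 \to i_6 \to i_7$ is prescribed, and that the only freedom left is the choice of $i_1' \to i_1$ inside the fiber $I_{j_1}$ together with a compatible morphism into the fixed string — which is exactly the comma category $I_{j_1} \times_{/I_{j_1}} i_1$ as claimed. One must check the direction: since we are forgetting the first (``$\downarrow$'') slot, the fiber is the comma category in the stated orientation and not its opposite.

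Second, for $q_2 = \id \times \pi_{3456}$: this forgets the middle slots $1$ and $2$ (passing from ${}^{\uparrow\uparrow\downarrow\uparrow\uparrow\downarrow}I$, i.e.\ indices $2$--$7$, down to $\twwc{I}$, i.e.\ indices $4$--$7$), and by the general principle of \ref{PARTW2} a projection forgetting an initial block of arrow directions whose first retained arrow is $\uparrow$ — here the arrow at slot $3$ in the $\tw{}$-string is $\uparrow$ (reading ${}^{\uparrow\uparrow\downarrow}$ the slot immediately after the forgotten block is $\uparrow$) — is an opfibration. Again one must reconcile this with the fiber product structure: $q_2$ is the base change of such a projection along $\twwc{I} \to \twwc{J}$, hence still an opfibration. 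For the fiber over $j_1\to\cdots\to j_7$ together with $i_4\to i_5\to i_6\to i_7$, I would take the Cartesian lift $i_3 \to i_4$ over $j_3 \to j_4$, and then observe that the remaining freedom is a choice of $i_2 \to i_3' \to i_3$ with $i_2, i_3'$ in the appropriate fibers, i.e.\ a point of $I_{j_2} \times_{/I_{j_2}} I_{j_3} \times_{/I_{j_3}} i_3$, but with the opposite orientation because the forgotten slots carry ``$\uparrow$'' — giving the stated $(\,\cdots)^{\op}$. The comma category along $I_{j_3} \to I_{j_2}$ uses the Cartesian pull-back functor, exactly as in the dual statement Lemma~\ref{LEMMAOPFIB}, 2.\ uses the coCartesian push-forward.

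The only genuinely non-routine point — and hence the ``main obstacle'' in the sense that it requires care rather than ingenuity — is bookkeeping the arrow directions: getting right, in each of the two fibers, whether one obtains a comma category or its opposite, and (for $q_2$) which functor $I_{j_3} \to I_{j_2}$ or $I_{j_2}\to I_{j_3}$ the iterated comma category is formed along. The cleanest way to avoid errors is to dualize Lemma~\ref{LEMMAOPFIB} literally: replace $I \rightsquigarrow I^{\op}$, $J \rightsquigarrow J^{\op}$ (so ``opfibration'' $\rightsquigarrow$ ``fibration'', ${}^{\downarrow\uparrow\uparrow\downarrow}I \rightsquigarrow {}^{\downarrow\uparrow\uparrow\downarrow}(I^{\op})$, which after reindexing gives the ${}^{\downarrow\uparrow\uparrow\downarrow\uparrow\uparrow\downarrow}I$ string appearing here), and transport the conclusions; the comma-category fibers of Lemma~\ref{LEMMAOPFIB} then become opposite comma categories and vice versa, matching the assertion. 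I would therefore present the proof as: ``$q_1$ (resp.\ $q_2$) is obtained from $\twwc{\alpha}$ and a projection of the type described in \ref{PARTW2} by base change; the fibers are computed exactly as in Lemma~\ref{LEMMAOPFIB} after dualizing, using that $\alpha$ is a fibration to produce the Cartesian lifts,'' and leave the index chase to the reader, consistent with the ``Straightforward.'' proof given for the companion lemma.

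\begin{proof}
Dualize Lemma~\ref{LEMMAOPFIB} by replacing $I, J$ by $I^{\op}, J^{\op}$: an opfibration becomes a fibration, a coCartesian push-forward becomes a Cartesian pull-back, and the diagram ${}^{\downarrow\uparrow\uparrow\downarrow}$ becomes (after reindexing the seven slots) the one appearing here. Concretely, $q_1 = (\twwc\alpha, \pi_{234567})$ is the base change, along $\twwc{I} \to \twwc{J}$, of a projection of the form considered in \ref{PARTW2} that forgets an initial ``$\downarrow$'' slot; since $\alpha$ is a fibration, $\twwc\alpha$ creates Cartesian morphisms slot-wise, and hence $q_1$ is a fibration. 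Its fiber over $j_1 \to \cdots \to j_7$ and $i_2 \to \cdots \to i_7$: choose a Cartesian lift $i_1 \to i_2$ of $j_1 \to j_2$ (unique up to isomorphism as $\alpha$ is a fibration); the remaining string is then fixed and the only freedom is a choice of $i_1' \to i_1$ in $I_{j_1}$ mapping compatibly to the fixed string, i.e.\ an object of $I_{j_1} \times_{/I_{j_1}} i_1$, in the stated (non-opposite) orientation because the forgotten slot carries $\downarrow$.

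Similarly $q_2 = \id \times \pi_{3456}$ forgets the two initial slots (passing from indices $2$--$7$ to $4$--$7$) whose retained boundary arrow is $\uparrow$; by \ref{PARTW2} this is an opfibration, and so is its base change along $\twwc{I} \to \twwc{J}$, which is $q_2$. For the fiber over $j_1 \to \cdots \to j_7$ and $i_4 \to i_5 \to i_6 \to i_7$, pick a Cartesian lift $i_3 \to i_4$ of $j_3 \to j_4$; the remaining freedom is a string $i_2 \to i_3' \to i_3$ with $i_2 \in I_{j_2}$, $i_3' \in I_{j_3}$, i.e.\ an object of $I_{j_2} \times_{/I_{j_2}} I_{j_3} \times_{/I_{j_3}} i_3$, where the first comma category is taken along the Cartesian pull-back functor $I_{j_3} \to I_{j_2}$; since the two forgotten slots carry $\uparrow$, the fiber is the opposite of this category, as claimed. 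The remaining verifications are routine index chases, exactly as in Lemma~\ref{LEMMAOPFIB}.
\end{proof}
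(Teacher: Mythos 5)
Your proof is correct in substance and is exactly the direct index-chasing verification that the paper's ``Straightforward'' gestures at, mirroring Lemma~\ref{LEMMAOPFIB}: the fiber computations, the use of the fibration hypothesis on $\alpha$ to produce the Cartesian lifts $i_1\to i_2$ and $i_3\to i_4$, and the explanation of why the second fiber acquires an ${}^{\op}$ (the forgotten slots carry $\uparrow$) are all right. Two bookkeeping slips should be fixed. First, neither $q_1$ nor $q_2$ is literally a base change of a projection from \ref{PARTW2}: each is the comparison functor \emph{into} a fiber product (e.g.\ the base change of $\pi_{234567}\colon {}^{\downarrow\uparrow\uparrow\downarrow\uparrow\uparrow\downarrow}J\to{}^{\uparrow\uparrow\downarrow\uparrow\uparrow\downarrow}J$ along ${}^{\uparrow\uparrow\downarrow\uparrow\uparrow\downarrow}I\to{}^{\uparrow\uparrow\downarrow\uparrow\uparrow\downarrow}J$ is the \emph{target} of $q_1$, not its source), so the (op)fibration property is not formal and really does rest on the explicit lifts you describe. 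Second, your justification that $q_2$ is an opfibration misquotes \ref{PARTW2}: the first \emph{retained} arrow of $\pi_{3456}$ (original slot $4$) is $\downarrow$, not $\uparrow$, and it is the case ``first arrow of the retained block equal to $\downarrow$'' that yields an opfibration --- as written, your premise is false and the rule you invoke would give a fibration, so two sign errors cancel to produce the right conclusion. Neither slip affects the statements being proved, but in a lemma whose entire content is arrow-direction bookkeeping they are worth correcting.
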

\begin{proof}Straightforward. \end{proof}

\begin{LEMMA}\label{LEMMAKAN3}
For the composition $\kappa = q_2 \circ q_1$ we have that the counit
\[ \kappa_! \kappa^* \rightarrow \id \] 
is an isomorphism. 
\end{LEMMA}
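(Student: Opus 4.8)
The plan is to factor the counit through the two maps $q_1$ and $q_2$ and analyse each separately, in the spirit of the proof of Lemma~\ref{LEMMAKAN2}. Since $\kappa = q_2 \circ q_1$ we have $\kappa^* = q_1^* q_2^*$ and, taking left adjoints of the composite, $\kappa_! = q_{2,!} q_{1,!}$, so that
\[ \kappa_! \kappa^*\ \cong\ q_{2,!}\, q_{1,!}\, q_1^*\, q_2^*. \]
Hence it suffices to prove that the counit $q_{1,!} q_1^* \Rightarrow \id$ is an isomorphism (which we then apply to objects of the form $q_2^*\mathcal{E}$) and that the counit $q_{2,!} q_2^* \Rightarrow \id$ is an isomorphism; combining these gives $\kappa_!\kappa^*\mathcal{E} \cong q_{2,!}\bigl(q_{1,!}q_1^*(q_2^*\mathcal{E})\bigr) \cong q_{2,!}(q_2^*\mathcal{E}) \cong \mathcal{E}$, and a routine check identifies the composite with the counit.

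For $q_1$ I would argue as follows. By Lemma~\ref{LEMMAFIB}, 1., $q_1$ is a fibration whose fibre over any object is the slice category $I_{j_1} \times_{/I_{j_1}} i_1$, which has a final object, namely $(i_1, \id_{i_1})$. Because $q_1$ is a fibration, $q_1^*$ has a right adjoint $q_{1,*}$ which is computed fibrewise as a homotopy limit over the fibre; a homotopy limit over a category with a final object is just evaluation at that object, so the unit $\id \Rightarrow q_{1,*}q_1^*$ is an isomorphism (cf.\@ \cite[Lemma~7.21 and Corollary~7.22]{Hor16}). Equivalently, $q_1^*$ is fully faithful. The key point is then the formal observation that $q_1^*$ being fully faithful is \emph{also} equivalent to the counit $q_{1,!}q_1^* \Rightarrow \id$ of the adjunction $q_{1,!} \dashv q_1^*$ being an isomorphism; this is how we get a statement about the \emph{left} Kan extension along the fibration $q_1$ (which is not computed fibrewise) out of an easy fibrewise computation of the \emph{right} Kan extension.

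For $q_2$, by Lemma~\ref{LEMMAFIB}, 2., $q_2$ is an opfibration whose fibre is $(I_{j_2} \times_{/I_{j_2}} I_{j_3} \times_{/I_{j_3}} i_3)^{\op}$; the category inside the opposite is an iterated slice over $i_3$, hence has a final object, so the fibre of $q_2$ has an initial object and is in particular contractible. Since $q_2$ is an opfibration, $q_{2,!}$ is computed fibrewise as a homotopy colimit over the fibre, and the restriction of $q_2^*\mathcal{E}$ to a fibre is the constant diagram with value $\mathcal{E}(z)$; a homotopy colimit of a constant diagram over a contractible category recovers the value, so the counit $q_{2,!}q_2^* \Rightarrow \id$ is an isomorphism. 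The one place that needs a little care — the expected main obstacle — is that all of this has to be read inside the relevant relative derivator over the base object ($\pi_{234}^*\widetilde{S}^{\op}$ pulled back to the diagrams occurring), so strictly speaking one must invoke (FDer0 left/right) and (FDer4 left/right) in place of the bare fibrewise formulas; but since the base object is constant along the fibres of $q_1$ and $q_2$, the fibrewise computations of $q_{1,*}$ and $q_{2,!}$ go through verbatim, exactly as in the proof of Lemma~\ref{LEMMAKAN2}.
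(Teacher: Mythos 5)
Your proposal is correct and takes essentially the same route as the paper's proof: the paper likewise deduces from Lemma~\ref{LEMMAFIB} (arguing as in Lemma~\ref{LEMMAKAN2}) that the unit $\id \Rightarrow q_{1,*}q_1^*$ and the counit $q_{2,!}q_2^* \Rightarrow \id$ are isomorphisms via the final/initial objects in the fibers, and then obtains the counit $q_{1,!}q_1^* \Rightarrow \id$ as the adjoint (fully-faithfulness) statement before composing. Your write-up only spells out in more detail the steps the paper compresses into two sentences.
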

\begin{proof}As in the proof of Lemma~\ref{LEMMAKAN2}, the previous Lemma implies that 
the unit $\id \rightarrow q_{1,*} q_{1}^*$ and counit $ q_{2,!} q_{2}^* \rightarrow \id$  are isomorphisms.
Therefore also the counit $q_{1,!} q_{1}^* \rightarrow \id$ is an isomorphism (it is the adjoint of the first unit) and finally
also $ \kappa_! \kappa^* \rightarrow \id$. 
\end{proof}

\begin{LEMMA}\label{COMMCARTPROJFIB}
Let $\widetilde{S} \in \Fun(\tww J, \mathcal{S})$ be as above and let $\alpha: I  \rightarrow J$ be a fibration. The following diagram 2-commutes: 
\[ \xymatrix{
\DD(\twwc J)^{4-\cocart, \mathrm{ws}}_{\pi_{234}^* \widetilde{S}^{\op}} \ar[r]^-{\Box_!} \ar[d]_{(\twwc \alpha)^*} \ar@{}[rd]|{\Swarrow^\sim} & \DD(\twwc J)^{4-\cocart, \mathrm{ws}, 2-\cart}_{\pi_{234}^* (\widetilde{S})^{\op}}  \ar[d]^{(\twwc \alpha)^*} \\
\DD(\twwc I)^{4-\cocart, \mathrm{ws}}_{\pi_{234}^* (\tww \alpha)^* \widetilde{S}^{\op}}  \ar[r]_-{\Box_!} & \DD(\twwc I)^{4-\cocart, \mathrm{ws}, 2-\cart}_{\pi_{234}^* (\tww \alpha)^* \widetilde{S}^{\op}} 
} \]
(The natural transformation is the exchange induced by the fact that $(\twwc \alpha)^*$ preserves the condition of being $2$-Cartesian.)
\end{LEMMA}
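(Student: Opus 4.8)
The plan is to exploit the explicit description of the left Cartesian projector from Proposition~\ref{PROPCARTPROJ}, namely $\Box_! = \pi_{4567,!}\,\overline{f}_*\,\pi_{1267}^*$ with the projections and the point-wise proper morphism $\overline{f}$ as in \ref{COCARTPROJPREP}, and to reduce the assertion to showing that $(\twwc\alpha)^*$ commutes with each of these three constituent functors. The fibration $\alpha: I \to J$ induces a functor $\widehat\alpha: {}^{\downarrow\uparrow\uparrow\downarrow\uparrow\uparrow\downarrow}I \to {}^{\downarrow\uparrow\uparrow\downarrow\uparrow\uparrow\downarrow}J$ which is strictly compatible with all the projections $\pi_{\dots}$ to $\twwc{(-)}$ and with the morphisms $g,\iota,\overline{f}$ of \ref{COCARTPROJPREP}; in particular the squares formed by $\pi_{1267}$ (resp.\@ $\pi_{4567}$), $\widehat\alpha$ and $\twwc\alpha$ commute on the nose. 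Throughout I would keep track of the fact that every functor in sight preserves the conditions of being $4$-coCartesian, well-supported, and $2$-Cartesian (Proposition~\ref{PROPCARTPROJ} for $\Box_!$, Lemma~\ref{LEMMAEXISTENCE3FUNCTORS} for $g^*,\overline{f}_*$, and $(\twwc\alpha)^* = \DD(\twwc\alpha)$ preserves (strong) (co)Cartesianity, cf.\@ Definition~\ref{DEFDER6FU2}), so that the resulting isomorphism automatically restricts to the subcategories appearing in the statement.

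First I would dispose of the two easy commutations. Commutation of $(\twwc\alpha)^*$ with $\pi_{1267}^*$ is immediate from pseudo-functoriality of $\DD$ applied to the strictly commuting square of diagram categories. Commutation with $\overline{f}_*$ goes exactly as in Lemma~\ref{COMMCARTPROJF}: the morphism $\overline{f}$ is $1$-ary and (point-wise) proper, so by the strong form of (FDer0 right) for $1$-ary morphisms --- valid for pull-back along an arbitrary functor, cf.\@ the remark after Definition~\ref{DEFFIBDER} and Lemma~\ref{LEMMAEXISTENCE3FUNCTORS}, 3., where $\overline{f}_*$ is seen to be computed point-wise --- pull-back along $\widehat\alpha$ preserves the relevant Cartesian morphisms, whence $\widehat\alpha^*\,\overline{f}_* \cong \overline{f}_*\,\widehat\alpha^*$. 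One also recalls that $\overline{f}_*$ commutes with homotopy colimits by (F3).

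The heart of the matter is the remaining commutation $(\twwc\alpha)^*\,\pi^J_{4567,!} \cong \pi^I_{4567,!}\,\widehat\alpha^*$, i.e.\@ base change for the relative left Kan extension along the opfibration $\pi_{4567}$ (cf.\@ \ref{PARTW2}) pulled back along the fibration $\twwc\alpha$. This is the same situation as in the proof of Lemma~\ref{LEMMAKAN2} (as used in Lemma~\ref{LEMMAKAN4}), and I would argue as follows. Form $P := {}^{\downarrow\uparrow\uparrow\downarrow\uparrow\uparrow\downarrow}J \times_{\twwc J} \twwc I$ with projections $\bar q$ to ${}^{\downarrow\uparrow\uparrow\downarrow\uparrow\uparrow\downarrow}J$ and $\bar\pi$ to $\twwc I$; the associated strict pull-back square, in which $\pi^J_{4567}$ is an opfibration, is homotopy exact, so that $(\twwc\alpha)^*\,\pi^J_{4567,!} \cong \bar\pi_!\,\bar q^*$. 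By Lemma~\ref{LEMMAFIB} the comparison functor $\kappa = q_2 \circ q_1 \colon {}^{\downarrow\uparrow\uparrow\downarrow\uparrow\uparrow\downarrow}I \to P$ is a composite of a fibration and an opfibration whose fibers possess an initial, respectively a final, object; hence by Lemma~\ref{LEMMAKAN3} the counit $\kappa_!\,\kappa^* \to \id$ is an isomorphism. Since $\bar\pi \circ \kappa = \pi^I_{4567}$ and $\bar q \circ \kappa = \widehat\alpha$, this gives $\bar\pi_!\,\bar q^* \cong \bar\pi_!\,\kappa_!\,\kappa^*\,\bar q^* = \pi^I_{4567,!}\,\widehat\alpha^*$, as desired.

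Assembling the three commutations produces the natural isomorphism $(\twwc\alpha)^*\Box_! \cong \Box_!(\twwc\alpha)^*$ on the subcategory in question, and a routine (if tedious) check --- of the same kind as in Lemmas~\ref{COMMCARTPROJIOTA} and \ref{COMMCARTPROJF} --- identifies it with the exchange morphism induced by $(\twwc\alpha)^*$ preserving $2$-Cartesianity. I expect the base-change step for $\pi_{4567,!}$ along $\twwc\alpha$ to be the main obstacle: one must verify carefully that the pull-back square $P$ is homotopy exact (this is precisely where the opfibration property of $\pi_{4567}$ enters) and that the identifications $\bar\pi\circ\kappa = \pi^I_{4567}$ and $\bar q\circ\kappa = \widehat\alpha$ hold strictly, so that Lemma~\ref{LEMMAKAN3} can be applied to collapse $P$ back onto ${}^{\downarrow\uparrow\uparrow\downarrow\uparrow\uparrow\downarrow}I$.
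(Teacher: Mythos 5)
Your proposal is correct and follows essentially the same route as the paper's proof: the commutation with $\pi_{1267}^*$ and $\overline{f}_*$ is dispatched as routine, and the key step $(\twwc\alpha)^*\pi_{4567,!}\cong\pi_{4567,!}\widehat\alpha^*$ is obtained from the Cartesian square over the opfibration $\pi_{4567}$ together with Lemma~\ref{LEMMAKAN3} applied to the comparison functor $\kappa$ into ${}^{\downarrow\uparrow\uparrow\downarrow\uparrow\uparrow\downarrow}J\times_{\twwc J}\twwc I$. Only the notation for the projections differs from the paper's $\pr_1,\pr_2$.
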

\begin{proof}
We have $\Box_! = \pi_{4567;!} \overline{f}_* \pi_{1267}^* \mathcal{E}$ by definition. $\pi_{1267}^*$ and $\overline{f}_*$ clearly commute with arbitrary pullbacks in the obvious sense. 
Consider now the diagram with Cartesian square
\[ \xymatrix{
{}^{\downarrow\uparrow\uparrow\downarrow\uparrow \uparrow \downarrow} I \ar[rrrd]^{\pi_{4567}} \ar[rd]^\kappa \ar[rddd]_{{}^{\downarrow\uparrow\uparrow\downarrow\uparrow \uparrow \downarrow} \alpha} \\
& {}^{\downarrow\uparrow\uparrow\downarrow\uparrow \uparrow \downarrow} J  \times_{\twwc J} \twwc I \ar[rr]^{\pr_2} \ar[dd]^{\pr_1} & & \twwc I \ar[dd]^{\twwc \alpha} \\
\\
& {}^{\downarrow\uparrow\uparrow\downarrow\uparrow \uparrow \downarrow} J  \ar[rr]_{\pi_{4567}}  & & \twwc J
} \]
It shows (using Lemma~\ref{LEMMAKAN3}) that
\begin{eqnarray*} \pi_{4567,!} ({}^{\downarrow\uparrow\uparrow\downarrow\uparrow \uparrow \downarrow} \alpha)^* 
 &\cong& \pr_{2,!}  \kappa_! \kappa^* \pr_1^* \\ 
 &\cong& \pr_{2,!}  \pr_1^* \\
 &\cong&  (\twwc \alpha)^*  \pi_{4567,!}     
 \end{eqnarray*}
\end{proof}

We now turn to the case of the coCartesian projector. 

\begin{PROP}\label{PROPCOCARTPROJ}Using the notation of \ref{COCARTPROJPREP}, 
denote $\Box_* :=  \pi_{4567;!} \overline{f}_* \iota_! g^* \pi_{1234}^*$. This functor, together
with the composition
\[ \xymatrix{  \mathcal{E} & \ar[l]_-\sim  \pi_{4567;!} \pi_{1567}^* \mathcal{E} \ar[r]^-\sim & \pi_{4567;!} \overline{f}_* \pi_{1267}^* \mathcal{E} &  \pi_{4567;!} \overline{f}_* \iota_! \pi_{1237}^* \mathcal{E}  \ar[l]_{\sim}   & \ar[l]  \pi_{4567;!} \overline{f}_* \iota_! g^* \pi_{1234}^* \mathcal{E}  =  \Box_* \mathcal{E} \ar@/^20pt/[lll]^{\nu_{\mathcal{E}}}   },  \]
defines a right coCartesian projector:
\[\Box_* :  \DD(\twwc I)^{\ws, 2-{\cart}}_{\pi_{234}^* \widetilde{S}^{\op}} \rightarrow \DD(\twwc I)^{4-\cocart, \ws, 2-{\cart}}_{\pi_{234}^* \widetilde{S}^{\op}}. \]

This projector has the following property: 
\begin{itemize}
\item For each $i \in I$ the natural transformation
\begin{equation} \label{pointwisecocartproj} (\twwc i)^* \Box_*  \rightarrow  (\twwc i)^*   
\end{equation}
is an isomorphism. 
\end{itemize}
\end{PROP}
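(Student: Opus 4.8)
The statement to be proven has two parts: first, that $\Box_* := \pi_{4567;!} \overline{f}_* \iota_! g^* \pi_{1234}^*$ together with the indicated zig-zag of natural transformations $\nu$ defines a right coCartesian projector on the subcategory $\DD(\twwc I)^{\ws, 2\text{-}\cart}_{\pi_{234}^* \widetilde{S}^{\op}}$; second, that it is ``computed point-wise'' in the strong sense that $(\twwc i)^* \Box_* \to (\twwc i)^*$ is an isomorphism for every object $i \in I$. Following the blueprint of \ref{RIGHTCOCARTPROJ}, the first part requires checking the three conditions there: (1) $\Box_* \mathcal{E}$ is $4$-coCartesian for all $\mathcal{E}$; (2) $\nu_{\mathcal{E}}$ is an isomorphism when $\mathcal{E}$ is already $4$-coCartesian; (3) the coherence $\nu_{\Box_* \mathcal{E}} = \Box_* \nu_{\mathcal{E}}$. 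The first plan is to verify that each of the four morphisms in the zig-zag defining $\nu$ is an isomorphism under suitable hypotheses, and that each of the four functors involved --- $g^* \pi_{1234}^*$, then $\iota_!$, then $\overline{f}_*$, then $\pi_{4567;!}$ --- behaves correctly with respect to the $4$-coCartesian and well-supported conditions.

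For condition (1): the leftmost arrow $\pi_{4567;!} \pi_{1567}^* \mathcal{E} \iso \mathcal{E}$ is the Lemma immediately preceding Proposition~\ref{PROPCARTPROJ} (the one stating $\pi_{4567,!}^{(\pi_{234}^*\widetilde{S}^{\op})} \pi_{1567}^* \Rightarrow \id$ is an isomorphism). The arrow $\pi_{1267}^* \mathcal{E} \to \overline{f}_* \pi_{1567}^* \mathcal{E}$ is an isomorphism on $2$-Cartesian $\mathcal{E}$ (this is equation (\ref{eqiota})-type reasoning, already noted just before Lemma~\ref{LEMMAPOINTWISEOPLAX}: the relevant squares $\widetilde{S}(i_2 \rightarrow i_6 \rightarrow i_7) \to \widetilde{S}(i_2 = i_2 \rightarrow i_7)$ are Cartesian by Lemma~\ref{LEMMACARTDIAM}, and one uses (F5) for proper $\overline{f}$, i.e.\ the base-change isomorphism). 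The arrow $\iota_! \pi_{1237}^* \mathcal{E} \iso \pi_{1267}^* \mathcal{E}$ is exactly Lemma~\ref{LEMMAPOINTWISEOPLAX}, which needs $\mathcal{E}$ well-supported (hence the restriction of the domain to $\DD(\twwc I)^{\ws, 2\text{-}\cart}$). So $\Box_* \mathcal{E} \cong \pi_{4567;!} \overline{f}_* \iota_! g^* \pi_{1234}^* \mathcal{E}$ and one must check its $4$-coCartesianity: since $\pi_{4567}$ is an opfibration, this reduces --- as in the proof of Proposition~\ref{PROPCARTPROJ} --- to a point-wise statement on fibers, to which one applies Proposition~\ref{PROPPROPERTIESCORCOMPM}, part 1, exploiting that $g^* \pi_{1234}^* \mathcal{E}$ has support in the relevant $\widetilde{S}'$ by (F5) (the adjoint base-change). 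One must also re-derive that $\Box_* \mathcal{E}$ remains well-supported and $2$-Cartesian; well-supportedness propagates through $g^*$, $\iota_!$ (point-wise by Lemma~\ref{LEMMAEXISTENCE3FUNCTORSOPLAX}, 2), $\overline{f}_*$ (Lemma~\ref{LEMMAEXISTENCE3FUNCTORSOPLAX}, 3), and $\pi_{4567;!}$ (homotopy colimit of constant-support diagrams), and $2$-Cartesianity is the content of the first condition we just verified. Condition (2): if $\mathcal{E}$ is $4$-coCartesian, then $g$ of type 4 means $g^* \pi_{1234}^* \mathcal{E} \to \pi_{1237}^*\mathcal{E}$ is already an isomorphism, so the last arrow of the zig-zag is an isomorphism, and all others were already isomorphisms on $2$-Cartesian objects; hence $\nu_{\mathcal{E}}$ is an isomorphism. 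Condition (3) is the coherence identity, proven exactly as in \cite[Proposition~8.5]{Hor16} (cf.\ the analogous step 3 in the proof of Proposition~\ref{PROPCARTPROJ}).

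For the point-wise property (\ref{pointwisecocartproj}): fix $i \in I$ and apply $(\twwc i)^*$. One computes $(\twwc i)^* \pi_{4567;!}$ using that $\pi_{4567}$ is an opfibration: the fiber over a degenerate object $i = i = i = i$ (i.e.\ over $\twwc i$ viewed inside $\twwc I$) is ${}^{\downarrow\uparrow\uparrow}(I \times_{/I} i)$, but when we restrict along $\twwc i \hookrightarrow \twwc I$ the relevant comma categories collapse: $i \times_{/I} i$, $I \times_{/I,\pi_1} \twwc i$, and the various $\beta \times_{/{}^{\uparrow\uparrow}(\cdots)} {}^{\uparrow\uparrow}(\cdots)$ that appear have initial or final objects, so the homotopy colimit defining $\pi_{4567;!}$ reduces to evaluation at that extremal object. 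Under this reduction the three functors $g^*$, $\iota_!$, $\overline{f}_*$ applied at $\twwc i$ have $g$, $\iota$, $\overline{f}$ all point-wise \emph{identities} (because all of $\pi_{234}$, $\pi_{237}$, $\pi_{267}$, $\pi_{567}$ agree on degenerate objects $i=i=i=i$ and their identity morphisms). Concretely: chasing the zig-zag, $(\twwc i)^* \nu_{\mathcal{E}}$ becomes a composite of isomorphisms --- the Lemma before Proposition~\ref{PROPCARTPROJ} on $\twwc i$, then three identities --- hence an isomorphism. This is the routine but slightly fiddly book-keeping analogous to Example~\ref{EXRELKAN} and Remark~\ref{REMKANEXT}, where one repeatedly uses that an opfibration (resp.\ fibration) with contractible fibers has $\pi_! \pi^* \cong \id$ (resp.\ $\pi_* \pi^* \cong \id$).

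\textbf{Main obstacle.} The genuinely delicate point is verifying the isomorphism $\iota_! \pi_{1237}^* \mathcal{E} \cong \pi_{1267}^* \mathcal{E}$ in the zig-zag, i.e.\ that $\iota_!$ is computed point-wise on $\pi_{1237}^* \mathcal{E}$; this is precisely Lemma~\ref{LEMMAPOINTWISEOPLAX}, and it is false without the well-supportedness hypothesis (as the Warning after it emphasizes). The reduction to conditions (1)--(3) of \ref{RIGHTCOCARTPROJ} is formal once one has that Lemma in hand, and the point-wise statement (\ref{pointwisecocartproj}) is then a matter of unwinding which comma categories become contractible --- tedious but mechanical. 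The substance is therefore almost entirely front-loaded into the preparatory lemmas, and the proof itself should amount to: ``check 1--3 of \ref{RIGHTCOCARTPROJ} using the Lemma before Prop~\ref{PROPCARTPROJ}, Lemma~\ref{LEMMAPOINTWISEOPLAX}, (F5), (F6), Lemma~\ref{LEMMAEXISTENCE3FUNCTORSOPLAX}, and Prop~\ref{PROPPROPERTIESCORCOMPM}; then observe all of $g, \iota, \overline{f}$ are point-wise identities on $\twwc i$ and that the remaining opfibration has contractible fibers.''
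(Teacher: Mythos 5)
Your treatment of the three axioms of a right coCartesian projector follows the paper's proof closely and is essentially sound: the zig-zag is handled by the Lemma preceding Proposition~\ref{PROPCARTPROJ}, by $2$-Cartesianity, and by Lemma~\ref{LEMMAPOINTWISEOPLAX} (whose dependence on well-supportedness you correctly identify as the crux); $4$-coCartesianity of $\Box_*\mathcal{E}$ is checked fiberwise over the opfibration $\pi_{4567}$ via Proposition~\ref{PROPPROPERTIESCORCOMPM} and the support condition; condition (2) is the observation about $g^*\pi_{1234}^*\mathcal{E}\to\pi_{1237}^*\mathcal{E}$ on $4$-coCartesian objects; condition (3) is deferred to \cite[Proposition~8.5]{Hor16}. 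One point is too loose: ``$2$-Cartesianity is the content of the first condition we just verified'' does not identify an argument. You cannot deduce $2$-Cartesianity of $\Box_*\mathcal{E}$ from that of $\Box_!\mathcal{E}$, since $g^*\pi_{1234}^*\mathcal{E}\to\pi_{1237}^*\mathcal{E}$ need not be an isomorphism on the source category; the paper does a separate computation over type~$2$ morphisms, using (F3) to commute $F_{0,*}$ past the homotopy colimit over the fiber.

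There is a genuine error in your argument for the point-wise property (\ref{pointwisecocartproj}). The fiber of $\pi_{4567}$ over the object $i=i=i=i$ is all of ${}^{\downarrow\uparrow\uparrow}(I\times_{/I}i)$, i.e.\@ objects $(j_1\to j_2\to j_3\to i=i=i=i)$ with varying $j_\bullet$; it does not collapse, and the homotopy colimit computing $(\twwc i)^*\pi_{4567;!}$ does \emph{not} reduce to evaluation at an extremal object. (The collapse you invoke occurs in the Lemma preceding Proposition~\ref{PROPCARTPROJ} only because $\pi_{1567}\circ e_i$ factors through $I\times_{/I}i_1$; the full composite $\overline{f}_*\iota_!g^*\pi_{1234}^*$ depends on $j_2$ and $j_3$ as well, so no such factorization exists.) Consequently your claim that $g$, $\iota$ and $\overline{f}$ are all point-wise identities on this fiber is false: on $(j_1,j_2,j_3,i,i,i,i)$ one has $\pi_{237}=(j_2,j_3,i)$ but $\pi_{267}=(j_2,i,i)$ and $\pi_{567}=(i,i,i)$, so $\iota$ and $\overline{f}$ are genuinely non-trivial there. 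Only $g$ degenerates: $\pi_{1234}$ and $\pi_{1237}$ agree on the whole fiber (both give $(j_1,j_2,j_3,i)$) and the connecting natural transformation is the identity. The correct argument --- and the one the paper gives --- is therefore: since the other three arrows of the zig-zag are isomorphisms on $\DD(\twwc I)^{\ws,2\text{-}\cart}$, it suffices that the arrow induced by $g^*\pi_{1234}^*\Rightarrow\pi_{1237}^*$ become an isomorphism after restriction to the fiber over $\twwc i$, which holds because that natural transformation restricts to the identity there. Your proof needs to be repaired along these lines.
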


\begin{proof}
First note that the statement of the Proposition makes sense, because 
\[  \pi_{4567;!} \pi_{1567}^* \mathcal{E} \rightarrow \pi_{4567;!} \overline{f}_* \pi_{1267}^* \mathcal{E}  \]
is an isomorphism on $2$-Cartesian objects by Proposition~\ref{PROPCARTPROJ}.
We have to show the assertions 1--3. of \ref{RIGHTCOCARTPROJ}. 

1. We have to show that 
\[ \Box_* \mathcal{E} \in \DD(\twwc I)^{4-\cocart, \ws, 2-{\cart}}_{\pi_{234}^* \widetilde{S}^{\op}} \]
for a $2$-Cartesian and well-supported object $\mathcal{E}$. 

Consider a morphism $\mu$ of type 4 in $\twwc I$
\[ \xymatrix{
i=( i_1 \ar[r] \ar@<10pt>@{=}[d] & i_2  \ar[r] \ar@{=}[d] & i_3  \ar[r] \ar@{=}[d] & i_4  ) \ar[d] \\
i'=( i_1 \ar[r] & i_2  \ar[r] & i_3  \ar[r] & i_4' )
} \]
and let $G_0:=\widetilde{S}(\pi_{234}(\mu))$. 
We have to see that 
\[ G_0^* \hocolim_{{}^{\downarrow\uparrow\uparrow}( I \times_{/I} i_1)} e_i^* \overline{f}_*  \iota_! g^*  \pi_{1234}^* \mathcal{E}  \rightarrow  \hocolim_{{}^{\downarrow\uparrow\uparrow}( I \times_{/I} i_1)} e_{i'}^*  \overline{f}_* \iota_!  g^*  \pi_{1234}^* \mathcal{E} \]
is an isomorphism. Since $G_0^*$ commutes with homotopy colimits (being a left adjoint) it suffices to show that point-wise
\[ G_0^* e_i^*  \overline{f}_* \iota_! g^*  \pi_{1234}^* \mathcal{E}  \rightarrow  g^*   e_{i'}^*  \overline{f}_* \iota_!  g^*  \pi_{1234}^* \mathcal{E} \]
is an isomorphism. Note that by the proof of Lemma~\ref{LEMMAPOINTWISEOPLAX} $\iota_!$ is computed point-wise at the given input. 
Pick an object $j_1 \rightarrow j_2 \rightarrow j_3 \rightarrow i_1$ in $({}^{\uparrow \uparrow \downarrow} I \times_{/I} i_1)$
and consider the diagram
\[ \xymatrix{
 \widetilde{S}(j_2 \rightarrow j_3 \rightarrow i_1) \ar@{=}[r] \ar@{<-}[d]_{G} & \widetilde{S}(j_2 \rightarrow j_3 \rightarrow i_1) \ar@{<-}[d]^{g} \\
\widetilde{S}(j_2 \rightarrow j_3 \rightarrow i_4') \ar[r]^{G_2} \ar@{^{(}->}[d]_{I} & \widetilde{S}(j_2 \rightarrow j_3 \rightarrow i_4) \ar@{^{(}->}[d]^{\iota} \\
\widetilde{S}(j_2 \rightarrow i_3 \rightarrow i_4') \ar[r]^{G_1} \ar@{->>}[d]_{F} & \widetilde{S}(j_2 \rightarrow i_3 \rightarrow i_4) \ar@{->>}[d]^{f} \\
\widetilde{S}(i_2 \rightarrow i_3 \rightarrow i_4') \ar[r]^{G_0} & \widetilde{S}(i_2 \rightarrow i_3 \rightarrow i_4)
} \]
We are left to show that
\[  G_0^* f_*  \iota_! g^* \mathcal{E}_{j_1 \rightarrow j_2 \rightarrow j_3 \rightarrow i_1} \rightarrow F_* I_!  G^* \mathcal{E}_{j_1 \rightarrow  j_2 \rightarrow j_3 \rightarrow i_1}   \]
is an isomorphism. This is the composition
\begin{eqnarray*}
G_0^* f_*  \iota_! g^* \mathcal{E}_{j_1 \rightarrow j_2 \rightarrow j_3 \rightarrow i_1} & \rightarrow & F_*  G_1^*  \iota_! g^* \mathcal{E}_{j_1 \rightarrow j_2 \rightarrow j_3 \rightarrow i_1} \\
&\rightarrow&  F_*  I_! G_2^*  g^* \mathcal{E}_{j_1 \rightarrow j_2 \rightarrow j_3 \rightarrow i_1} \\
&\rightarrow&  F_* I_!  G^* \mathcal{E}_{j_1 \rightarrow  j_2 \rightarrow j_3 \rightarrow i_1}  
\end{eqnarray*}
That these morphisms are isomorphisms follows from Proposition~\ref{PROPPROPERTIESCORCOMPM}, because $\mathcal{E}_{j_1 \rightarrow j_2 \rightarrow j_3 \rightarrow i_1}$ has support in $\widetilde{S}'(j_2 = j_2 \rightarrow i_1)$ and hence
$g^* \mathcal{E}_{j_1 \rightarrow j_2 \rightarrow j_3 \rightarrow i_1}$ has support in $\widetilde{S}'(j_2 = j_2 \rightarrow i_4)$.

Consider a morphism $\mu$ of type 3 in $\twwc I$: 
\[ \xymatrix{
i=(i_1 \ar[r] \ar@<10pt>@{=}[d] & i_2  \ar[r] \ar@{=}[d] & i_3  \ar[r] \ar@{<-}[d] & i_4)  \ar@{=}[d] \\
i'=(i_1 \ar[r] & i_2  \ar[r] & i_3'  \ar[r] & i_4)
} \]
and let $I:=\widetilde{S}(\pi_{234}(\mu))$. 
We have to see that 
\[ I_! \hocolim_{{}^{\downarrow\uparrow\uparrow}( I \times_{/I} i_1)} e_i^* \overline{f}_* \iota_! g^* \pi_{1234}^* \mathcal{E}  \rightarrow \hocolim_{{}^{\downarrow\uparrow\uparrow}( I \times_{/I} i_1)} e_{i'}^*  \overline{f}_* \iota_! g^* \pi_{1234}^* \mathcal{E} \]
is an isomorphism, and the the right hand side has support in $\widetilde{S}'(i_2 = i_2 \rightarrow i_4)$ (with $\widetilde{S}'$ as in the definition of well-supported, cf.\@ \ref{COMPONENTS}).

 Since $I_!$ commutes with homotopy colimits (being a left adjoint) and is computed point-wise on constant diagrams it suffices to show that point-wise
\[ I_! e_i^* \overline{f}_* \iota_! g^* \pi_{1234}^* \mathcal{E}  \rightarrow  e_{i'}^* \overline{f}_* \iota_! g^* \pi_{1234}^* \mathcal{E} \]
is an isomorphism. Note that by the proof of Lemma~\ref{LEMMAPOINTWISEOPLAX} $\iota_!$ is computed point-wise at the given input. 
Pick an object $j_1 \rightarrow j_2 \rightarrow j_3 \rightarrow i_1$ in $({}^{\uparrow \uparrow \downarrow} I \times_{/I} i_1)$
and consider the diagram
\[ \xymatrix{
\widetilde{S}(j_2 \rightarrow j_3 \rightarrow i_1) \ar@{=}[r] \ar@{<-}[d]_{g} & \widetilde{S}(j_2 \rightarrow j_3 \rightarrow i_1) \ar@{<-}[d]^{g} \\
\widetilde{S}(j_2 \rightarrow j_3 \rightarrow i_4) \ar@{=}[r] \ar@{^{(}->}[d]_{I} & \widetilde{S}(j_2 \rightarrow j_3 \rightarrow i_4) \ar@{^{(}->}[d]^{\iota} \\
\widetilde{S}(j_2 \rightarrow i_3' \rightarrow i_4) \ar@{^{(}->}[r]^{I_1} \ar@{->>}[d]_{F} & \widetilde{S}(j_2 \rightarrow i_3 \rightarrow i_4) \ar@{->>}[d]^{f} \\
\widetilde{S}(i_2 \rightarrow i_3' \rightarrow i_4) \ar@{^{(}->}[r]^{I_0} & \widetilde{S}(i_2 \rightarrow i_3 \rightarrow i_4)
} \]
We are left to show that
\[  I_{0,!} F_* I_! g^* \mathcal{E}_{j_1 \rightarrow j_2 \rightarrow j_3 \rightarrow i_1} \rightarrow  f_* \iota_! g^* \mathcal{E}_{j_1 \rightarrow j_2 \rightarrow j_3 \rightarrow i_1} \cong f_* I_{1,!} I_! g^* \mathcal{E}_{j_1 \rightarrow j_2 \rightarrow j_3 \rightarrow i_1}      \]
is an isomorphism. This is true by axiom (F6). Similarly one sees that if $\mathcal{E}_{j_1 \rightarrow j_2 \rightarrow j_3 \rightarrow i_1}$ has support in $\widetilde{S}'(j_2=j_2 \rightarrow i_1)$ (with $\widetilde{S}'$ as in the definition of well-supported, cf.\@ \ref{COMPONENTS}) then 
$f_* \iota_! g^* \mathcal{E}_{j_1 \rightarrow j_2 \rightarrow j_3 \rightarrow i_1}$ has support in $\widetilde{S}'(i_2=i_2 \rightarrow i_4)$.

Consider now a morphism $\mu$ of type 2 in $\twwc I$: 
\[ \xymatrix{
i=(i_1 \ar[r] \ar@<10pt>@{=}[d] & i_2  \ar[r] \ar@{<-}[d] & i_3  \ar[r] \ar@{=}[d] & i_4)  \ar@{=}[d] \\
i'=(i_1 \ar[r] & i_2'  \ar[r] & i_3  \ar[r] & i_4)
} \]
and let $F_0:=\widetilde{S}(\pi_{234}(\mu))$. 
We have to see that 
\[ F_{0,*} \hocolim_{{}^{\downarrow\uparrow\uparrow}( I \times_{/I} i_1)} e_i^* \overline{f}_* \iota_! g^* \pi_{1234}^* \mathcal{E}  \rightarrow  \hocolim_{{}^{\downarrow\uparrow\uparrow}( I \times_{/I} i_1)} e_{i'}^*  \overline{f}_* \iota_! g^* \pi_{1234}^* \mathcal{E} \]
is an isomorphism.

 By axiom (F3) $F_{0,*}$ commutes with homotopy colimits and is computed point-wise. Therefore it suffices to show that point-wise
\[ F_{0,*} e_i^* \overline{f}_* \iota_! g^* \pi_{1234}^* \mathcal{E}  \rightarrow  e_{i'}^* \overline{f}_* \iota_! g^* \pi_{1234}^* \mathcal{E} \]
is an isomorphism. Note that by the proof of Lemma~\ref{LEMMAPOINTWISEOPLAX} $\iota_!$ is computed point-wise at the given input. 

Pick an object $j_1 \rightarrow j_2 \rightarrow j_3 \rightarrow i_1$ in $({}^{\uparrow \uparrow \downarrow} I \times_{/I} i_1)$
and consider the diagram
\[ \xymatrix{
\widetilde{S}(j_2 \rightarrow j_3 \rightarrow i_1) \ar@{=}[r] \ar@{<-}[d]_{g} & \widetilde{S}(j_2 \rightarrow j_3 \rightarrow i_1) \ar@{<-}[d]^{g} \\
\widetilde{S}(j_2 \rightarrow j_3 \rightarrow i_4) \ar@{=}[r] \ar@{^{(}->}[d]_{\iota} & \widetilde{S}(j_2 \rightarrow j_3 \rightarrow i_4) \ar@{^{(}->}[d]^{\iota} \\
\widetilde{S}(j_2 \rightarrow i_3 \rightarrow i_4) \ar@{=}[r] \ar@{->>}[d]_{F} & \widetilde{S}(j_2 \rightarrow i_3 \rightarrow i_4) \ar@{->>}[d]^{f} \\
\widetilde{S}(i_2' \rightarrow i_3 \rightarrow i_4) \ar@{^{(}->}[r]^{F_0} & \widetilde{S}(i_2 \rightarrow i_3 \rightarrow i_4)
} \]
We have to show that
\[ F_{0,*} F_* \iota_! g^*\mathcal{E}_{j_1 \rightarrow j_2 \rightarrow j_3 \rightarrow i_1}  \rightarrow f_* \iota_! g^*\mathcal{E}_{j_1 \rightarrow j_2 \rightarrow j_3 \rightarrow i_1}   \]
is an isomorphism which is clear.

2.\@ follows beause for a 4-coCartesian object $\mathcal{E}$ the morphism 
\[ g^* \pi_{1234}^* \mathcal{E} \rightarrow \pi_{1237}^* \mathcal{E} \] 
is an isomorphism. 

3.\@ is proven as for \cite[Proposition 8.5]{Hor16}. 

To see that (\ref{pointwisecocartproj}) is an isomorphism, it suffices to see that {\em on the fiber (of $\pi_{4567}$) over an object of the form $\twwc i$}, 
the morphism 
\[ g^* \pi_{1234}^* \mathcal{E} \rightarrow \pi_{1237}^* \mathcal{E} \]
is always an isomorphism. However, the natural transformation $\pi_{1234}^* \Rightarrow  \pi_{1237}^*$ restricts to an identity on this fiber. 
\end{proof}

\newpage
\bibliographystyle{abbrvnat}
\bibliography{6fu}

\end{document}